\theoremstyle{plain}
\newtheorem{maintheorem}{Theorem}
\newtheorem{teo}{Theorem}[section]
\newtheorem{lema}[teo]{Lemma}
\newtheorem{rema}[teo]{Remark}
\newtheorem{prop}[teo]{Proposition}
\newtheorem{ques}{Question}
\newtheorem{clai}{Claim}
\newtheorem{defi}[teo]{Definition}
\newcommand{\RR}{{\mathbb R}}
\newcommand{\NN}{{\mathbb N}}
\newcommand{\ZZ}{{\mathbb Z}}
\newcommand{\DD}{{\mathbb D}}
\newcommand{\TT}{{\mathbb T}}
\newcommand{\CC}{{\mathbb C}}
\newcommand{\eqdef}{\stackrel{\scriptscriptstyle\rm def}{=}}
\renewcommand{\epsilon}{\varepsilon}
\newcommand{\interior}{\operatorname{int}}
\newcommand{\bR}{\mathbb{R}}
\newcommand{\Si}{\Sigma}
\def \AA {{\mathbb A}}
\def \CC {{\mathbb C}}
\def \DD {{\mathbb D}}
\def \NN {{\mathbb N}}
\def \RR {{\mathbb R}}
\def \SS {{\mathbb S}}
\def \TT {{\mathbb T}}
\def \UU {{\mathbb U}}
\def \XX {{\mathbb X}}
\def \ZZ {{\mathbb Z}}
\def \cA {{\mathcal A}}
\def \cC {{\mathcal C}}
\def \cE {{\mathcal E}}
\def \cF {{\mathcal F}}
\def \cH {{\mathcal H}}
\def \cL {{\mathcal L}}
\def \cO {{\mathcal O}}
\def \cR {{\mathcal R}}
\def \cU {{\mathcal U}}
\begin{document}

\title{Upper, down, two-sided Lorenz attractor, collisions, merging and switching}

\author{Diego Barros, Christian Bonatti and Maria Jos\'e Pacifico
\footnote{
{DB and MJP were partially supported by CAPES-Finance Code 001. CB and MJP were  partially supported by the Brazilian-French Network in Mathematics,
MJP was partially supported  by CNPq-Brazil Grant No. 302565/2017-5, FAPERJ (CNE) Grant-Brazil No. E-26/202.850/2018(239069), Pronex: E-26/010.001252/2016.}}
}
\maketitle

\begin{abstract}\footnotesize{
We present a  modified version of the well-known geometric Lorenz attractor. 
It consists of an $C^1$ open set $\cO$ of vector fields
in $\RR^3$ having an attracting region $\cU$ satisfying three properties. 
Namely, a unique singularity $\sigma$; 
a unique attractor $\Lambda$ including the singular point and
the maximal invariant in $\cU$ has at most $2$ chain recurrence classes, which are $\Lambda$ and (at most) one hyperbolic horseshoe.
The horseshoe and the singular attractor have a collision along with the union of $2$ 
codimension $1$ submanifolds which split 
$\cO$ into $3$ regions.
By crossing this collision locus, the attractor and the horseshoe may
merge into a two-sided Lorenz attractor,
or they may exchange their nature: the Lorenz attractor expels the singular point $\sigma$ and becomes a horseshoe, and the horseshoe absorbs $\sigma$ becoming a Lorenz attractor.}
\end{abstract}

\section{Introduction}


Lorenz presented in  \cite{Lo63} an example of a  parameterized 2-degree polynomial system of differential equations (\ref{e.Lorenz})
as a very simplified model for the convection of thermal fluid, motivated by an attempt to understand long-term weather forecasting. 
The concrete model can be stated as
\begin{equation}\label{e.Lorenz}
(\dot{x}, \dot{y}, \dot{z})= (10(y-x), 28x-y-xz, xy-8/3z).
\end{equation}
\noindent Numerical simulations for an open neighborhood of the chosen parameters suggested that almost all points in the phase space tend to a strange attractor, nowadays called the {{\em{ Lorenz attractor}}}.
Ever since its discovery in 1963, the {\em Lorenz attractor} has been playing a central role in the research of singular flows, i.e., flows generated by smooth vector fields with {\em singularities}, {i.e., points where the flow vanishes.}
But  Lorenz's equations turned out to be very resistant to rigorous mathematical analysis, from both  conceptual (existence of a singularity accumulated by regular orbits prevents the attractor  from being hyperbolic)  as well as numerical (solutions slow down as they pass near the singularity, which means unbounded return times and, thus, unbounded integration errors) points of view \cite{Sparrow}. 
Moreover, for almost every pair of nearby initial conditions, the corresponding solutions move apart from each other exponentially fast as they converge to the attractor. That is, the attractor is {\em{sensitive to the initial conditions}}. This unpredictability is a characteristic of {\em{chaos}}. 
{Most remarkably, this attractor is robust: it can not be destroyed by any small perturbation of the original flow.}

{\noindent A very fruitful approach was undertaken, independently, by  Afraimovich,  Bykov, Shil'nikov \cite{ABS82, ABS77} and  by  Guckenheimer and Williams \cite{GW, Williams}. They constructed a {\em{geometric Lorenz attractor}} that reproduces the behaviour observed by Lorenz. 
As an abstract object, the geometric Lorenz attractor is the inverse limit of a semiflow on a branched $2$-manifold (with boundary). The flow has a singularity, 
 on the boundary of the surface, and orbits leaving a neighborhood of this singularity follow either of two branches which return to (and are glued together along) an interval of branch points transverse to the stable manifold of the singularity.
From the geometrical point of view, geometric Lorenz attractors  are flows on the
$3$-dimensional space that contains a singularity  accumulated by regular orbits, i.e., 
{orbits $\gamma$ where $X^t(x)\neq 0$ for all $x\in \gamma$ and $t>0$.}}
They have a natural cross-section given by a two-dimensional square crossed by all orbits of the flow inside the attractor except the singularity. 

  While the discovery of the Lorenz attractor
leveraged fundamental developments in Dynamical Systems, 
the  equations (\ref{e.Lorenz}) themselves have continued to resist all the attempts to prove that they exhibit a sensitive attractor. 
Bunimovich and Sinai \cite{Bu79,Si81}, have indicated a program that could prove that the Lorenz equations have a transitive attractor. 

{Rychlik and Robinson gave a proof for the so-called Shilnikov criteria \cite{Sh81} providing necessary and sufficient conditions for the birth of the Lorenz attractor from  certain classes of codimension-3 bifurcation (codimension-2 bifurcations for systems with the Lorenz symmetry), \cite{Ry90,Rob89,Rob92}.}
{From the bifurcation point of view, besides these results, 
we cite the works in \cite{Rob00, LTu20, GTS09, MPS05, MPS06,K21, LZ83}. }

A successful approach for the  Lorenz flow was through
rigorous numerics. 
Many authors have performed numerical simulations
of these equations, and 
the book by Sparrow gives an account of many of these
results proved in the seventies, \cite{Sparrow}.
Later, in this way, it was proved 
\cite{HHTZ94,HT92,MM95,MM98} that the Lorenz system of
equations exhibits a suspended Smale horseshoe \cite{Sm67}.
It implies, in particular, the existence of infinitely
many closed orbits.
 However, proving the existence of an
attractor as in the geometric models is an even harder task.
Indeed, one cannot avoid the fact that solutions slow down
as they pass near the singularity, which, as said before,
means unbounded return times and unbounded integration
errors, see \cite{Sparrow, Tu2}.  This was finally settled by Tucker in
\cite{Tu00, Tu2} around the turn of the century.
  
  The theory of dynamical systems indicates that the way to analyze equations (\ref{e.Lorenz}) is
to prove that they possess a strong stable foliation. 
{For the geometric model of the Lorenz equations proposed by Guckenheimer and Williams in  \cite{GW, Williams},  
an invariant $C^1$ strong stable bundle and the resulting strong stable foliation are assumed to exist.}
 With this assumption the theory of normally hyperbolic
attractors allows one to show that an attractor exists.
It also gives a criterion to check if
it is transitive. 
Once {the strong stable foliation} is
known to exist with the right set of
expansion rates for one set of equations, then they persist for perturbations of the
equations; see \cite{Rob84, GW, W67, W74} for a discussion of the geometric equations and the resulting attractor. Recently, in \cite{Smania18}, the authors proved the existence of 
$C^k$-invariant foliations for Lorenz-type maps. 
Nowadays, in the literature, there is an expressive number of
results about the Lorenz geometric attractor, both from the geometric as well  as the 
measure-theoretic point of view, and we refer to the reader to see \cite{GH83, Vi00,ArPa10,AP11, OT17, Gu76} and references therein. 

  In the '90s a breakthrough was obtained by Morales, Pacifico and Pujals, \cite{MPP04}, following the very original ideas developed by Ma\~n\'e in the proof of the $C^1$ stability conjecture, \cite{Ma88}. 
They  provide a characterization of {robustly} transitive  attractors for 3-dimensional flows, of which the Lorenz attractor is the more significant example.

 After this seminal work, significant advances in this theory were achieved through the work of many authors, who gradually turned to the statistical and stochastic point of view of the Lorenz attractor, see  \cite{LMP05, AP11, AGP14, Ar21, AMV15, AM16, APPV09,AV12, SP10, GPN18, PT10, PYY}.

Taking into account that the divergence of the vector field induced by the
system (\ref{e.Lorenz}) is negative, it follows that the Lebesgue measure of the Lorenz attractor
is zero. Henceforth, it is natural to ask  about its Hausdorff dimension. 
Numerical experiments give that this value is approximately
equal to 2.062 (cf. \cite{Divakar}) and also, for some parameters, the dimension of the physical invariant measure  lies in the interval $[1.24063,1.24129]$ (cf. \cite{GN}). 
In \cite{Afra4} and \cite{St00}, this  dimension is characterized in terms of the pressure of the system and in terms of the Lyapunov exponents and the entropy concerning  a good invariant measure associated with the geometric model. But, in both cases, the authors prove that the Hausdorff dimension is greater or equal than  $2$, but not necessarily strictly greater than $2$.
A first attempt to obtain the strict inequality was given in \cite{ML},
where the authors achieve this result in the particular case that  both branches of the unstable manifold of the singularity meet the stable manifold of the singularity. 
Finally, in \cite{MPR20} the authors prove 
that the Hausdorff dimension of a geometric attractor is strictly greater than $2$.

 Despite all this progress, we still are far from  a topological 
classification of singular hyperbolic attracting sets in dimension $3$. 
Moreover,  there is also a huge gap in the understanding of unfolding parameterized families of singular hyperbolic attractors in any dimension.

 The  Lorenz attractor has a unique hyperbolic singularity, whose strong stable manifold $W^{ ss}(\sigma) ${separates the stable manifold $W^{s}(\sigma) $ into components, called the upper and lower local stable sets of $\sigma$.}  One of these local stable sets is \emph{disjoint} from the attractor. 
These properties are shared by the geometrical Lorenz model.

 This paper was motivated by a question from Adriana da Luz to the second author. 
How to construct a 3-dimensional flow presenting a singular hyperbolic attractor, 
containing a unique Lorenz-like singularity $\sigma$,  
intersecting robustly upper and lower local {stable sets} of $W^{s}(\sigma) \setminus W^{ss}(\sigma)$. 
One can easily construct examples exhibiting this phenomenon. Among those, a particular one displays a very intriguing behaviour under perturbations. The goal of the present paper is to address this matter. 
That is, this work aims to build an open set of vector fields, for which the attractors have intersections with the upper, lower and both local {stable sets}. 

 We describe a $C^1$-open set $\cO_1$ of vector fields on a three-dimensional space having a common attracting region $U$ which contains a unique singularity $\sigma$   of Lorenz type, see Definition \ref{d.lorenzlike}. 
As in the  geometric Lorenz attractor, each 
flow $X \in \cO_1$ has a global cross-section $\Sigma$, which is a topological annulus and
a first return map $P:\Sigma \to \Sigma$ possessing an invariant stable foliation $\mathcal{F}^s$.
 The intersection of the stable manifold of $\sigma$ with the cross-section  splits $\Sigma$ into two connected components,
$\Sigma^1$ and $\Sigma^2,$  and the intersection of the upper (lower) stable component of
$W^{s}(\sigma)$ 
contains each a segment, $\gamma^s_+$ and $\gamma^s_-$, respectively, transverse to the boundary $\partial \Sigma$ and connecting the two boundary components of the annulus $\Sigma$. We denote the upper (lower) stable component of $W^{s}(\sigma)\setminus W^{ss}(\sigma)$ by $W^s_+(\sigma)$ ($W^s_-(\sigma)$) respectively, {see Figure \ref{figure0}}. {We assume that both $\gamma^s_+$ and $\gamma^s_-$ belong to $\mathcal{F}^s$. The unstable manifold of $\sigma$ is one-dimensional and 
$W^u(\sigma)\setminus \{\sigma\}$ consists of two separatrices $W^u_i(\sigma), \, 1 \leq i \leq 2,$
each one having a   first intersection point $q_i$ with $\Sigma$. See Figure \ref{figure0}.}

 \begin{figure}[h!]
	\centering
\includegraphics[scale=0.2]{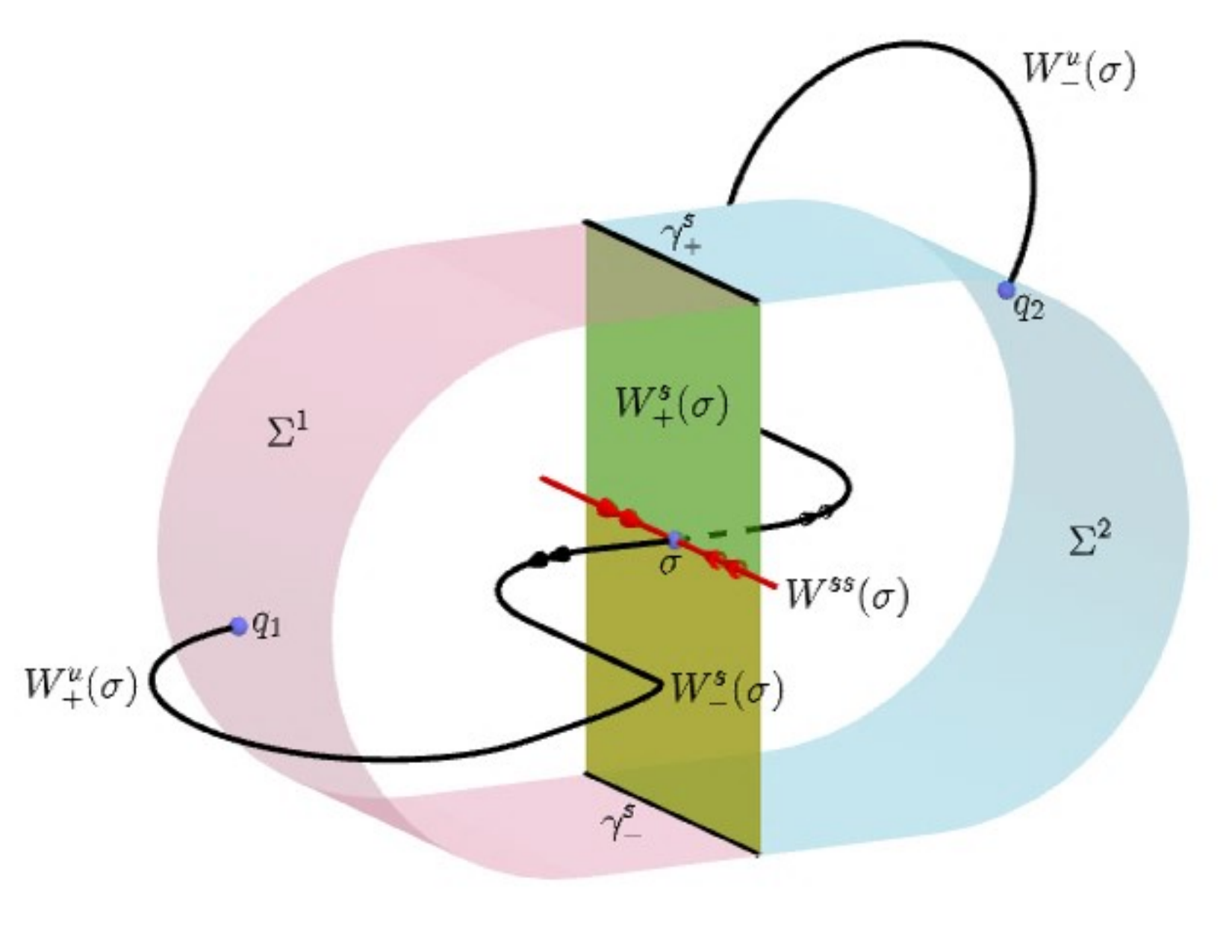}
	\caption{ The diagram displays $\Sigma = \Sigma^1 \cup \Sigma^2$ and $W^s(\sigma) \setminus W^{ss}(\sigma)=W^s_{+}(\sigma) \cup W^s_{-}(\sigma)$, and the points $q_i, 1\leq i \leq 2,$ defined above.}
	\label{figure0}
\end{figure}

{The first  result gives the topological nature of  the class of flows in $\mathcal{O}_1$, whose attractor intersects  just one or both components of the stable sets of $\sigma$, proving the existence of three disjoint open and non-empty sets in
$\mathcal{O}_1$, roughly described as:  $ \mathcal{L}^-$, whose attractors intersect just   $W^s_-(\sigma)$ , $\mathcal{L}^+$, whose attractors intersect just  $W^{s}_-(\sigma) $ and $\mathcal{L}^{-,+}$, whose attractors intersect both  $W^s_+(\sigma)$ and $W^s_-(\sigma)$.
We call the attractors in $\mathcal{L}^-$  {\em{down-Lorenz}}, the attractors in $\mathcal{L}^+$  as {\em{upper-Lorenz}} and the attractors in $\mathcal{L}^{-,+}$  {\em{two-sided Lorenz}}.
 We denote $\mathcal{L}^{-,+} \cup \mathcal{L}^+ \cup \mathcal{L^-}=\mathcal{L}$. See  Section \ref{ss.region} for the precise  definitions of these sets.}

 Now, let us explain what we mean by a fake horseshoe. It consists of a sequence of operations on the unit square, quite similar to the horseshoe, the  only difference being the way back to the square of the resulting folded rectangle: the bottom of the folded rectangle fits back like the top of the starting square. Figure \ref{fakehorseshoe} displays the main features of a fake horseshoe. {Note that a fake horseshoe   preserves the orientation, while the horseshoe does not.}

 \begin{figure}[th]
	\centering
	\includegraphics[scale=0.17]{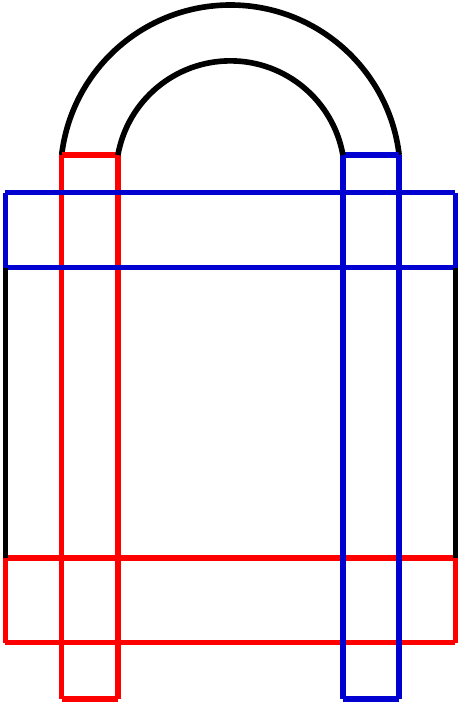}
	\hspace{0.5cm}
	\includegraphics[scale=0.15]{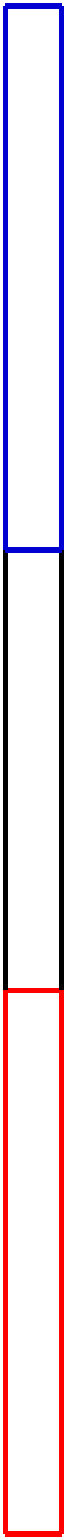}
	\hspace{0.5cm}
	\includegraphics[scale=0.17]{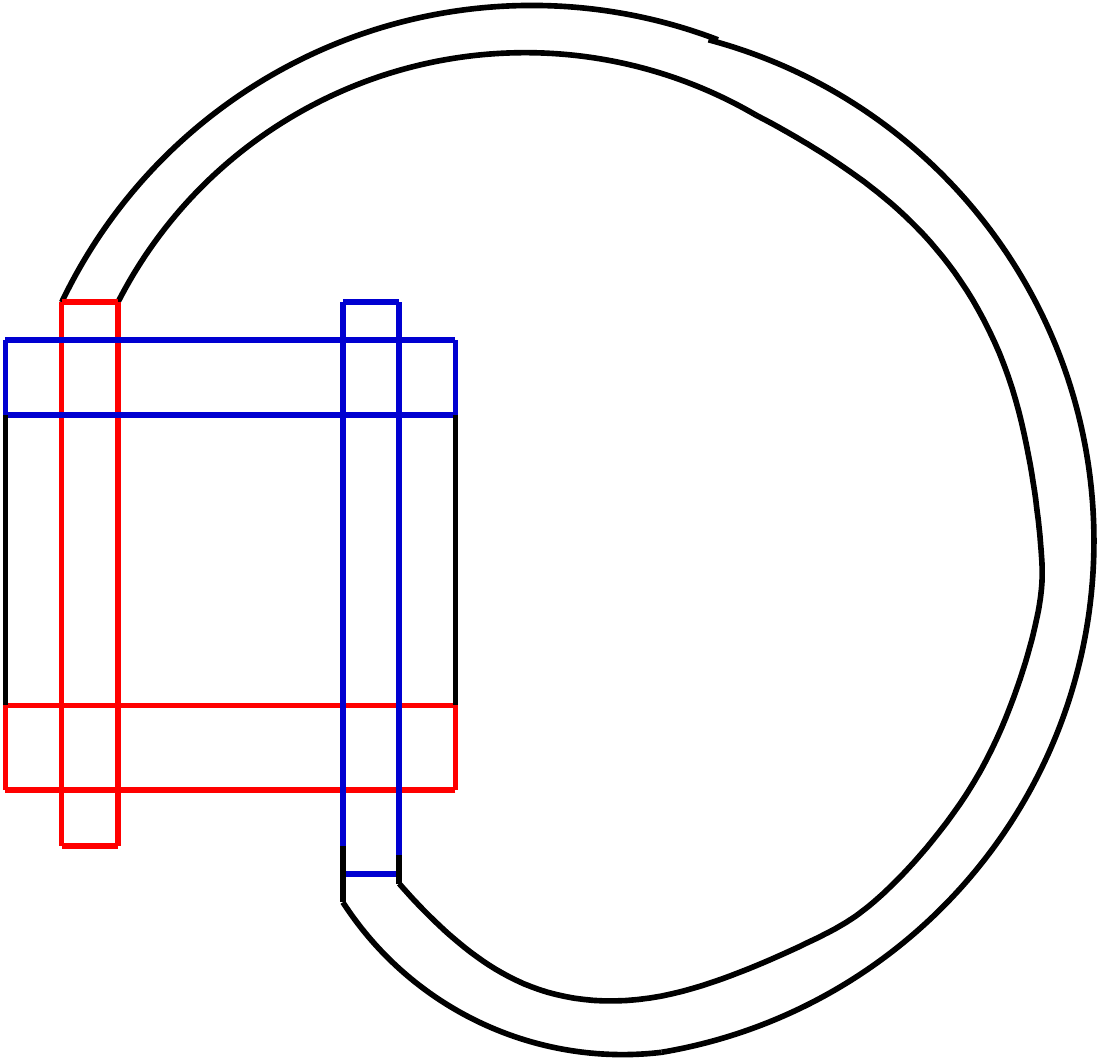}
	\caption{ The usual horseshoe and a fake horseshoe}\label{f.horseshoe-1}
	\label{fakehorseshoe}
\end{figure}

\begin{maintheorem}\label{igual-a-t.up} 
Any vector field $X\in\cL^+$ admits exactly $2$ chain recurrence classes: one is an upper-Lorenz attractor, and the other is a hyperbolic basic set, topologically equivalent to the suspension of a fake horseshoe. 
	The symmetric statement holds for $\cL^-$, interchanging the upper for the  down. 
\end{maintheorem}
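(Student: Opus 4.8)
The plan is to push the whole analysis down to the first-return map of the flow to the cross-section $\Sigma$ and then to its one-dimensional quotient. Fix $X\in\cL^+$ and let $R$ denote the Poincar\'e return map on $\Sigma\setminus(\gamma^s_+\cup\gamma^s_-)$, defined off the two segments where the return time blows up. By the defining cone and contraction conditions of $\cO_1$, the map $R$ uniformly contracts a stable foliation $\cF^s$ of the annulus $\Sigma$; quotienting $\Sigma$ by $\cF^s$ produces a circle $S^1$ and a well-defined piecewise-expanding map $f\colon S^1\setminus\{c_+,c_-\}\to S^1$, where $c_\pm$ is the image of $\gamma^s_\pm$. The two points $c_+,c_-$ cut $S^1$ into the arcs $I^1,I^2$ that are the projections of $\Sigma^1,\Sigma^2$. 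First I would record that, because the contraction along $\cF^s$ is uniform, the chain recurrence classes of $X$ inside $\cU$ are in bijective correspondence with the chain recurrent pieces of the maximal invariant set of $f$, the only subtlety being the singular point $\sigma$, which is attached to whichever of $c_\pm$ is recurrent for $f$.

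Next I would translate the membership $X\in\cL^+$ into the combinatorics of $f$. That the attractor meets the upper separatrix but not the lower one means precisely that $c_+$ lies in the closure of its own forward $f$-orbit, while a whole neighborhood of $c_-$ is swept by $f$ into the region where orbits accumulate on $c_+$ and never returns near $c_-$. From this I would extract two disjoint compact invariant sets for $f$: a transitive set $K_\Lambda\ni c_+$ whose basin is an arc-neighborhood, and a locally maximal set $K_H$ on which the two surviving full branches of $f$ realize a complete $2$-shift. The point to prove here is that these are the only recurrent pieces: any point not captured by $K_\Lambda$ must, after finitely many iterates, enter the two branches defining $K_H$, by the uniform expansion of $f$ together with the covering relations forced by the $\cL^+$ configuration of the four one-sided branches at $c_+$ and $c_-$.

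I would then lift the two pieces back to the flow. Saturating $K_\Lambda$ by $\cF^s$ and by the flow yields a transitive attracting set $\Lambda$ that contains $\sigma$ (since returns accumulate on $\gamma^s_+\subset W^s_+(\sigma)$), meets $W^s_+(\sigma)$ but is disjoint from $W^s_-(\sigma)$, and carries a partially hyperbolic splitting with volume-expanding center; by the criterion of Morales, Pacifico and Pujals \cite{MPP04} it is singular hyperbolic, hence an up-Lorenz attractor. The set $H$ obtained from $K_H$ is invariant, transitive and locally maximal; since $K_H$ stays a definite distance from both $c_+$ and $c_-$, its return times to $\Sigma$ are uniformly bounded and, in particular, $H$ is disjoint from $\sigma$. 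A compact invariant set of a singular-hyperbolic flow that contains no singularity is uniformly hyperbolic, so $H$ is a hyperbolic basic set, and reading off the action of $R$ on the two strips carrying $K_H$ identifies the suspension of $H$ with the suspension of the full $2$-shift realized as a fake horseshoe, giving the stated topological equivalence. The symmetric statement for $\cL^-$ follows by exchanging the roles of $c_+$ and $c_-$.

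The main obstacle I anticipate is the \emph{separation} of the two classes: proving both that $\Lambda$ and $H$ are genuinely distinct chain recurrence classes (no pseudo-orbit chains back and forth between them) and that together they exhaust the chain recurrent set inside $\cU$. This is where the $\cL^+$ hypothesis must be used sharply, since one has to show that the lower critical value is never revisited, so that the escape from a neighborhood of $c_-$ is one-directional and feeds only into $K_\Lambda$, while the horseshoe branches are forward-invariant under $f$ and cannot chain into $\Lambda$. Controlling this requires careful book-keeping of the covering relations of the four one-sided branches of $f$ at $c_+$ and $c_-$, combined with the uniform expansion estimate, to rule out any residual recurrence; the uniform boundedness of return times on $K_H$, needed to upgrade singular hyperbolicity to genuine hyperbolicity, is a by-product of this same combinatorial control.
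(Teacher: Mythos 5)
Your overall architecture coincides with the paper's: split the cross-section dynamics into an attracting Lorenz piece around $\gamma^s_+$ and a complementary horseshoe piece around $\gamma^s_-$ (you do this in the one-dimensional quotient, the paper does it directly on $\Sigma$ with the return map $P$; that difference is cosmetic). But there is a genuine gap at the step where you ``translate the membership $X\in\cL^+$ into the combinatorics of $f$''. What you translate is the statement ``the attractor meets the upper separatrix but not the lower one'' --- and that is the \emph{conclusion} of the theorem, not its hypothesis. The set $\cL^+$ is defined in Section~\ref{ss.region} purely by the position of the critical values: $X\in\cO_\varphi^{+,+}$ (so $P$ has fixed points $p_1\in\Sigma_1$ and $p_2\in\Sigma_2$) and both $q_1,q_2$ lie in the component $\Sigma_+$ of $\Sigma\setminus(W^s(p_1)\cup W^s(p_2))$ containing $\gamma^s_+$. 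As written, your second paragraph assumes what is to be proved, and the subsequent claims --- that $K_\Lambda$ has an arc-neighborhood as basin, that everything not captured by $K_\Lambda$ falls into two full branches, that the two classes neither chain into one another nor leave any recurrence unaccounted for --- are all deferred to unspecified ``covering relations''.

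The missing construction, which is exactly where $q_1,q_2\in\Sigma_+$ is used, is the paper's choice of two auxiliary stable leaves: $L_1$ separating $W^s(p_1)$ from $q_2$, and $L_2$ separating $W^s(p_2)$ from $q_1$. These cut $\Sigma$ into a rectangle $R_L$ containing $q_1$, $q_2$ and $\gamma^s_+$, on which $P$ restricts to a genuine geometric Lorenz return map with expansion larger than $\varphi>\sqrt2$ and for which $R_L$ is a \emph{forward-invariant attracting region} (the images of the two halves of $R_L\setminus\gamma^s_+$ are cuspidal triangles with cusps at $q_1,q_2$ contained in $R_L$), together with a complementary region $R_H$ containing $\gamma^s_-$ whose two halves map across $R_H$ in a Markov way, so that the maximal invariant set in $R_H$ stays uniformly far from $\gamma^s_-$ and is a fake horseshoe (fake because $DP$ preserves the orientation of the unstable cone field). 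The attractingness of $R_L$ is what separates the two chain classes and shows there are no others; without producing $L_1,L_2$ (or the corresponding points on your quotient circle) your argument for ``exactly two classes'' does not close. The remaining ingredients you list --- disjointness of $K_H$ from the discontinuities giving bounded return times and genuine hyperbolicity, and the lift from the section back to the flow --- are fine and agree with the paper.
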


{A saddle $p$  (i.e., a hyperbolic singularity point with nontrivial stable and unstable bundles) of a flow has a homoclinic tangency if its stable and unstable manifolds have some nontransverse intersection. In this case, we say that $p$ has a homoclinic loop. Two saddles with different u-indices (i.e., dimension of the unstable bundle) have a heterodimensional cycle if their stable and unstable manifolds intersect cyclically (by dimension deficiency, one of these intersections is necessarily nontransverse), see \cite{Lorenzo95}. }

{Two saddles of a $C^1$ vector field $ X $ are homoclinically related if the invariant manifolds of their orbits intersect transversely and cyclically. To be homoclinic related defines an equivalence relation on the set of saddles of $X$. Two saddles that are homoclinically related have the same u-index. The homoclinic class of a saddle $p$ of $X$ is the closure of the saddles of $X$ that are homoclinically related to $p$. A homoclinic class is a transitive set (i.e., it contains a point whose orbit is dense in the set). }

Consider $\cH^i\subset \cO_1$, $i=1, 2$, the hypersurface corresponding to the vector fields $X$ for which the first intersection  point $q_i$ of $W^u_i(\sigma)$ with $\Sigma$   
belongs to $\gamma^s_+\cup\gamma^s_-$. 
In other words, $X$ belongs to $\cH^i$ if $\sigma$ admits a homoclinic loop for the unstable separatrices $W^u_i(\sigma)$ so that the homoclinic loop cuts $\Sigma$ at a unique point $q_i$ {(see Figure \ref{figure0})}.  

\begin{maintheorem}\label{igual-prop-p.Hi}
	For any $X\in\cO_1$  in one of the hypersurfaces $\cH^1$ or $\cH^2$, the maximal invariant in $U$ is a transitive singular attractor meeting both {stable sets} $W^s_{-}(\sigma)$ and $W^s_{+}(\sigma)$. 
\end{maintheorem}

 The statement of the theorem above does not announce a two-sided Lorenz attractor, because the hypersurface  $\cH^i, i= 1, 2,$ is not an open subset, and then the robustness of the transitivity is not ensured. We split each $\cH^i$ into two $\cH^i=\cH^i_+\cup \cH^i_-$ according to $\{q_i\in\gamma^s_+\}$ and $\{q_i\in\gamma^s_-\}$ respectively. We then prove the following result, which characterizes the topological nature of the attractors for flows in the hypersurfaces $\cH^i_+$ and $ \cH^i_-$.
 There is still the case of a double homoclinic loop, that  
 occurs when both  intersection points $q_1$ and $q_2$ of
 the unstable manifold of $\sigma$ with the cross-section fall in the same segment  
 of the intersection of the upper (lower) stable component of
 $W^{s}(\sigma)$ with the cross-section.
 
 We denote with $\cH^{1,2}_+$ the collection of flows in $\mathcal{O}_1$ such that
 $q_1,q_2\in \gamma^s_+$.
 Similarly, we denote with $\cH^{1,2}_-$ the collection of flows in $\mathcal{O}_1$ such that  $q_1,q_2\in \gamma^s_-$, see Figure \ref{f.p.Hi2}.
 Thus, both unstable separatrices of $\sigma$ are homoclinic connections and are included in the same connected component of $W^{s}(\sigma) \setminus W^{ss}(\sigma)$.
 Clearly, $\cH^{1,2}_+$ and $\cH^{1,2}_-$ are codimension $2$ submanifolds included in $\cH^1\cup \cH^2$. 
 The next result ensures that for $X\in\cH^i$ out of $\cH^{1,2}_+$ and $\cH^{1,2}_-$, the transitivity of the attractor is robust, 
 showing that the attractor is a two-sided Lorenz attractor.  
 
 \begin{maintheorem}\label{L-igual a Lemma l.Hi} If $X\in \cH^i\setminus (\cH^{1,2}_+ \cup\cH^{1,2}_-)$, 
 	there is a neighborhood $\cU(X)$ of $X$ such that the maximal invariant set for every $Y\in \cU(X)$ is a two-sided Lorenz attractor.
 \end{maintheorem}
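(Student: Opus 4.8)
By Theorem~\ref{igual-prop-p.Hi}, at the parameter $X_0$ itself the maximal invariant set in $U$ is already a transitive singular attractor meeting both separatrices $W^s_\pm(\sigma)$; the content of the present statement is therefore the \emph{robustness} of these three properties on a whole $C^1$-neighborhood $\cU(X_0)$. Fix such an $X_0\in\cH^i\setminus(\cH^{1,2}_+\cup\cH^{1,2}_-)$; without loss of generality $i=1$ and $q_1(X_0)\in\gamma^s_+$, and since $X_0\notin\cH^{1,2}_+$ the second tip satisfies $q_2(X_0)\notin\gamma^s_+$. Singular hyperbolicity is the easy ingredient: a dominated splitting together with volume expansion of the central bundle is a $C^1$-open condition, and it holds throughout $\cO_1$ by the construction, so after shrinking we may assume the maximal invariant set $\Lambda_Y=\bigcap_{t\ge 0}\phi^t_Y(U)$ is singular hyperbolic for every $Y\in\cU(X_0)$. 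What must be upgraded to an open condition is transitivity together with the intersection of \emph{both} separatrices, and in particular the exclusion of the coexisting hyperbolic horseshoe produced in Theorem~\ref{igual-a-t.up}.

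First I would pass to the one-dimensional reduction. Contracting the stable foliation of the first-return map $R_Y$ on the annulus $\Sigma$, the dynamics factors through an expanding circle map $f_Y\colon S\setminus\{c_+,c_-\}\to S$, where $S$ is the leaf space of $\Sigma$ and $c_\pm$ are the points representing $\gamma^s_\pm$; the two lateral limits of $f_Y$ other than those forced by $c_\pm$ are the images $\bar q_1,\bar q_2$ of the tips $q_1,q_2$. In this language the statements translate faithfully: $\Lambda_Y$ meets $\gamma^s_\pm$ if and only if the core of $f_Y$ (the minimal forward-invariant arc carrying the recurrence) contains $c_\pm$, and $\Lambda_Y$ is transitive with no extra horseshoe if and only if $f_Y$ is locally eventually onto an arc containing both $c_+$ and $c_-$. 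The hypothesis $X_0\in\cH^1$ becomes the homoclinic relation $\bar q_1=c_+$ — one critical value of the Lorenz map sits exactly on a discontinuity point — while $\bar q_2\neq c_+$.

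The heart of the argument is to show that this single-homoclinic configuration lies in the \emph{interior} of the locus of two-sided, locally eventually onto maps. Because $\bar q_1=c_+$, the branch of $f_{X_0}$ ending at $c_+$ maps a one-sided neighborhood of $c_+$ onto an arc abutting $c_+$; by uniform expansion its forward iterates grow, and using that $\bar q_2$ lies uniformly away from $c_+$ and on the side needed to carry images across, these iterates cover a fixed arc around $c_-$ as well. I would then check that this covering is Markov-stable: for every $f_Y$ that is $C^1$-close, with $\bar q_1(Y)$ on either side of $c_+$ (overshoot or undershoot), the same finitely many branches still cover a fixed neighborhood of $\{c_+,c_-\}$, so $f_Y$ is robustly locally eventually onto an arc containing both discontinuities. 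This simultaneously yields robust transitivity of $\Lambda_Y$, robust intersection with both separatrices, and the absence of any wandering arc that could host a separate basic set — hence the maximal invariant set is the single two-sided Lorenz attractor for all $Y\in\cU(X_0)$.

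The main obstacle I anticipate is exactly the robustness \emph{across} the homoclinic parameter: the contact $\bar q_1=c_+$ is tangential, the image endpoint resting on the discontinuity rather than crossing it transversally, so the covering of a neighborhood of $c_+$ is not visibly open and must be recovered from the expansion only after finitely many iterations. The delicate point is to handle both perturbation directions of $\bar q_1$ uniformly and to verify that the undershoot does not create a small invariant arc near $c_-$ that would detach a horseshoe. This is precisely where the exclusion of $\cH^{1,2}_+\cup\cH^{1,2}_-$ enters: when both tips accumulate on the same discontinuity (the double-homoclinic, collision case) such a decoupling arc genuinely appears and two-sidedness ceases to be robust, whereas keeping $\bar q_2$ bounded away from $c_+$ closes this gap and makes the single-homoclinic point interior to the two-sided regime.
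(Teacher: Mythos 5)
Your proposal correctly isolates the two points at stake---robustness of transitivity across the unfolding of the homoclinic loop, and the role of the exclusion of $\cH^{1,2}_+\cup\cH^{1,2}_-$---but the mechanism you propose for the first point does not close. After passing to the one-dimensional quotient $f_Y$, you claim that a finite, perturbation-stable family of branches covering a fixed neighborhood of $\{c_+,c_-\}$ makes $f_Y$ ``robustly locally eventually onto,'' whence transitivity of the whole maximal invariant set. That implication is the gap: transitivity of $\Lambda_Y$ requires that the iterates of \emph{every} unstable segment eventually cut every stable leaf up to finitely many, and arbitrarily short segments keep being cut by the discontinuities $\gamma^s_\pm$ into pieces that no finite Markov collection of branches controls. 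The only tool available for this is the growth bookkeeping of Lemmas~\ref{l.golden}, \ref{l.cusptogamma} and~\ref{l.cortaestavefixo}, which uses the expansion rate $\lambda>\varphi$ in an essential way and is carried out region by region in Propositions~\ref{p.--}, \ref{p.+-}, \ref{p.++} and~\ref{p.Hi}; your step ``by uniform expansion its forward iterates grow'' fails exactly for the small pieces produced by repeated cutting, so robust local eventual ontoness is asserted rather than proved.

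The paper's route is softer and reuses that earlier work: Lemma~\ref{l.Hi} shows a whole neighborhood of $X$ is contained in $\cO^{-,-}_\varphi\cup\cO^{+,-}_\varphi\cup\cO^{-,+}_\varphi\cup\widetilde{\cO^{+,+}_\varphi}\cup\cH^1\cup\cH^2$, each of which already carries a transitive attractor meeting both separatrices, so the entire content is the case-check of where the unfolded $q_1$ lands. This is also where your use of the hypothesis must be made precise, and where your picture is slightly off. Say $q_1\in\gamma^s_+$. If $q_1$ moves into $\Sigma_1$ you land in $\cO_\varphi^{-,-}\cup\cO_\varphi^{-,+}$ and are done. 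If $q_1$ moves into $\Sigma_2$ by $\epsilon$, a fixed point $p_1$ is created in $\Sigma_1$ at distance $O(\epsilon)$ from $\gamma^s_+$, so the strip $\Sigma_+\cap\Sigma_1$ between $W^s(p_1)$ and $\gamma^s_+$ has width $O(\epsilon)$, while $q_2$ stays a definite distance from $\gamma^s_+$ precisely because $X\notin\cH^{1,2}_+$; hence $q_2\in\Sigma_-$ and $Y\in\widetilde{\cO_\varphi}^{+,+}$ rather than $\cL^+$. Note that in the $\cL^+$ scenario of Theorem~\ref{igual-a-t.up} the small forward-invariant region is the one around $\gamma^s_+$ (bounded by leaves near $W^s_1$ and $W^s_2$), and the fake horseshoe occupies the large complementary region containing $\gamma^s_-$---not a ``small invariant arc near $c_-$'' as you write. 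To repair your argument you must either redo the golden-ratio growth estimates uniformly over the neighborhood of $X$, or, as the paper does, reduce to the regions where they have already been established.
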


Before we announce the next result, recall that $W^s_+(\sigma)$ ($W^s_-(\sigma)$) 
is the upper (lower) stable component of $W^{s}(\sigma)\setminus W^{ss}(\sigma)$ and  
the sets $\mathcal{L}^+, \mathcal{L}^-, \mathcal{L}^{+,-}, \mathcal{L}$ are as above.

\begin{maintheorem} 
\label{igual-a-p.L+-} For any $X\in\cL^{+,-}$ the maximal invariant set in $U(X)$ is a transitive singular hyperbolic attractor meeting {$W^{s}_{-}(\sigma)$ and $W^s_{+}(\sigma)$}. That is, it is a two-sided Lorenz attractor. 
\end{maintheorem}

From our construction, we get that every  $X \in \mathcal{L}$
 contains a robust attractor (that is, there exists an open neighborhood $\mathcal{U}$ of $X$ in the space of vector fields, such that every $Y \in \mathcal{U}$ has an attractor). Besides that, although the complement of $\mathcal{L}$
 in $\mathcal{O}_1$ is nonempty it has an empty interior.	 
	 Thus, the next step is to understand the reasons 
why an attractor stops intercepting just one component of $W^{s}(\sigma)\setminus W^{ss}(\sigma)$ and suddenly  starts to intersect both (and vice versa).
 
We will see that this phenomenon occurs due to the appearance of certain types of homoclinic and heteroclinic connections related to the position of the intersection points $q_1,\,q_2$ of $W^u_i(\sigma)$ with the cross-section.

Recall that $\Sigma$ is split into two connected components $\Sigma^1$ and
  $\Sigma^2$, determined by the intersections of $W^{s}(\sigma)$ with
  $\Sigma$. Assuming that the first return map has an expansion rate greater 
  than the golden number $\varphi=\frac{1+\sqrt{5}}{2}$, when $q_i$ falls into  $\Sigma^j$ with $i \neq j$, the Poincar\'e map
has a fixed point $p_i\in \Sigma^j$.   Typically,  a heteroclinic cycle arises when 
 $q_i$ belongs to the stable leaf $W^s_j$ through $p_j$.

 We denote by $\cO_{\varphi}$ the subset of flows in
$\cO_{1}$ for which the first return map has an expansion bigger than $\varphi$,  {note that $\Sigma \setminus (W^s_1 \cup W^s_2)$ splits into two connected components $\Sigma_{-}$ and $\Sigma_{+}$, see Figure \ref{Mais-f-Up-Lorenz-em-OTilda++}.}
Let $\cH\cE_i \subset \cO_{\varphi}$ be the subset of flows corresponding to the vector fields $X$ for which $q_i\in W^s_j$.  In other words, for $X\in \cH\cE_i$ the unstable separatrix of $\sigma$ corresponding to $q_i$ is a heteroclinic connection with $p_j$.
 The subsets $\cH\cE_i$ are codimension $1$ submanifolds of $\cO_\varphi$.

   \begin{maintheorem}\label{igual-a-p.HE1HE2} For any $X\in \cH\cE_1\cap\cH\cE_2$, there is a unique chain recurrence class, which is not transitive. Both $\Si_-$ and $\Si_+$ are invariant by $P$.  
   	The maximal invariant set of the restriction of $P$ to $\Sigma^i, 1\leq i \leq 2,$ is transitive: every unstable segment in $\Sigma^i$ has its iterates which cut every segment in $\Sigma^i$. 
 The open set $U$ splits into $2$ regions, each containing a  \emph{full Lorenz} (see Definition \ref{fullLorenz}),   intersecting along $W^s(p_1)\cup W^s(p_2)\cup W^u(\sigma)$.  See Figure \ref{f.p.HE1HE2}.
 \end{maintheorem}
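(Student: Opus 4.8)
The plan is to reduce everything to the quotient one-dimensional dynamics and then read off the four assertions from the special combinatorial configuration that the two heteroclinic connections force. First I would use the uniform contraction of the strong-stable foliation $\cF^s$ of the cross-section $\Sigma$ (as in the analysis of $\cO_1$) to pass to the quotient expanding circle map $g\colon\SS^1\to\SS^1$ obtained by collapsing stable leaves: since $\Sigma$ is an annulus and its stable leaves join the two boundary circles, the quotient is $\SS^1$, and $g$ has exactly two discontinuities $c_+=\pi(\gamma^s_+)$, $c_-=\pi(\gamma^s_-)$, expands by a factor $>\varphi$, and has its four one-sided limits at $c_\pm$ equal to the projections $\pi(q_1),\pi(q_2)$ of the separatrix returns. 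Because two points on the same stable leaf are forward asymptotic, chain-recurrence classes, invariant regions and transitivity for $P$ correspond to the analogous notions for $g$; the fixed points $p_1,p_2$ project to the two fixed points $a=\pi(p_1)$, $b=\pi(p_2)$ of $g$, one in each branch, whose existence is guaranteed by the expansion $>\varphi$ via the fact recalled before the statement.

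Next I would translate the hypothesis $X\in\cH\cE_1\cap\cH\cE_2$ into the identities $\{\pi(q_1),\pi(q_2)\}=\{a,b\}$, i.e. each critical value of $g$ lands exactly on a fixed point. I claim this pins down the combinatorics of $g$ entirely: the two fixed points cut $\SS^1$ into two arcs $J_+,J_-$, each containing exactly one discontinuity, and the fact that the critical values are the endpoints $a,b$ forces $g(J_\pm)=J_\pm$. Lifting through $\cF^s$, the stable leaves $W^s(p_1)=W^s(a)$ and $W^s(p_2)=W^s(b)$ together with $W^u(\sigma)$ (whose trace on $\Sigma$ is the forward orbit of $q_1,q_2$, now pinned at $a,b$) cut $\Sigma$ into two $P$-invariant pieces $\Si_+,\Si_-$. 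This yields the $P$-invariance in the second assertion and the splitting of $U$ into two regions meeting along $W^s(p_1)\cup W^s(p_2)\cup W^u(\sigma)$ in the fourth.

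For the transitivity within each piece, observe that each restriction $g|_{J_\pm}$ is now a genuine Lorenz interval map with a single discontinuity whose two branches are \emph{both} onto $J_\pm$ (the critical values sit precisely at the two fixed endpoints): a \emph{full} Lorenz map. For such a map, expansion $>\varphi>1$ gives at once the locally-eventually-onto property — any subinterval grows under iteration until it meets the discontinuity, after which its image contains an endpoint and subsequent images exhaust $J_\pm$ — so $g|_{J_\pm}$ is transitive. Lifting, every unstable segment in $\Si_\pm$ has iterates crossing every stable leaf over $\Si_\pm$, which is the third assertion and exhibits a full Lorenz attractor $A_\pm$ in each region. Finally, for the first assertion I would argue separately that the system is not transitive yet forms a single chain-recurrence class: non-transitivity is immediate from the $P$-invariance of $\Si_\pm$, since a forward orbit entering one piece never leaves it; while, because the unique singularity $\sigma$ lies in the closure of both attractors (each $A_\pm$ being a Lorenz attractor contains $\sigma$) and each $A_\pm$ is chain-transitive, the union $A_+\cup A_-$ is chain-transitive, hence one class — after checking that the maximal invariant set in $U$ is exactly $A_+\cup A_-$ together with the wandering orbits on the separating locus, so no extra class survives.

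The step I expect to be the main obstacle is the precise determination of the combinatorics of $g$ from the two heteroclinic conditions, namely verifying that the critical values really are the two fixed points in the order that forces $g(J_\pm)=J_\pm$ with both branches onto, rather than some other configuration producing a proper subshift or a leftover wandering/horseshoe piece (recall that in $\cL^+$ a horseshoe genuinely persists). This requires tracking which side of each discontinuity $c_\pm$ maps to which return $q_i$, and using the expansion $>\varphi$ not merely for the existence of $a,b$ but to rule out spurious periodic data; once this is settled, the transitivity and chain-recurrence arguments are comparatively routine.
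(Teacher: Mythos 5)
Your proposal is correct and follows essentially the same route as the paper, whose proof simply observes that the return map restricted to each rectangle $\overline{\Sigma_\pm}$ is the classical (full) Lorenz map with expansion rate larger than $\sqrt 2<\varphi$; you supply the details via the quotient circle map. The combinatorial step you flag as the main obstacle --- that both branches over $J_\pm$ map onto $J_\pm$ rather than across --- is settled by the standing hypothesis (item 9 in the definition of $\cO_1$) that $DP$ preserves the transverse orientation of the unstable cone field, which forces the image of each half-arc $[a,c_\pm)$ and $(c_\pm,b]$ to be the positively oriented arc joining the fixed endpoint to the critical value $b$ or $a$.
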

  \noindent {A similar construction for a full Lorenz was observed in \cite{GKKK22} where it was called conjoined Lorenz twins.}
      \begin{figure}[th]
\centering
	\includegraphics[scale=0.17]{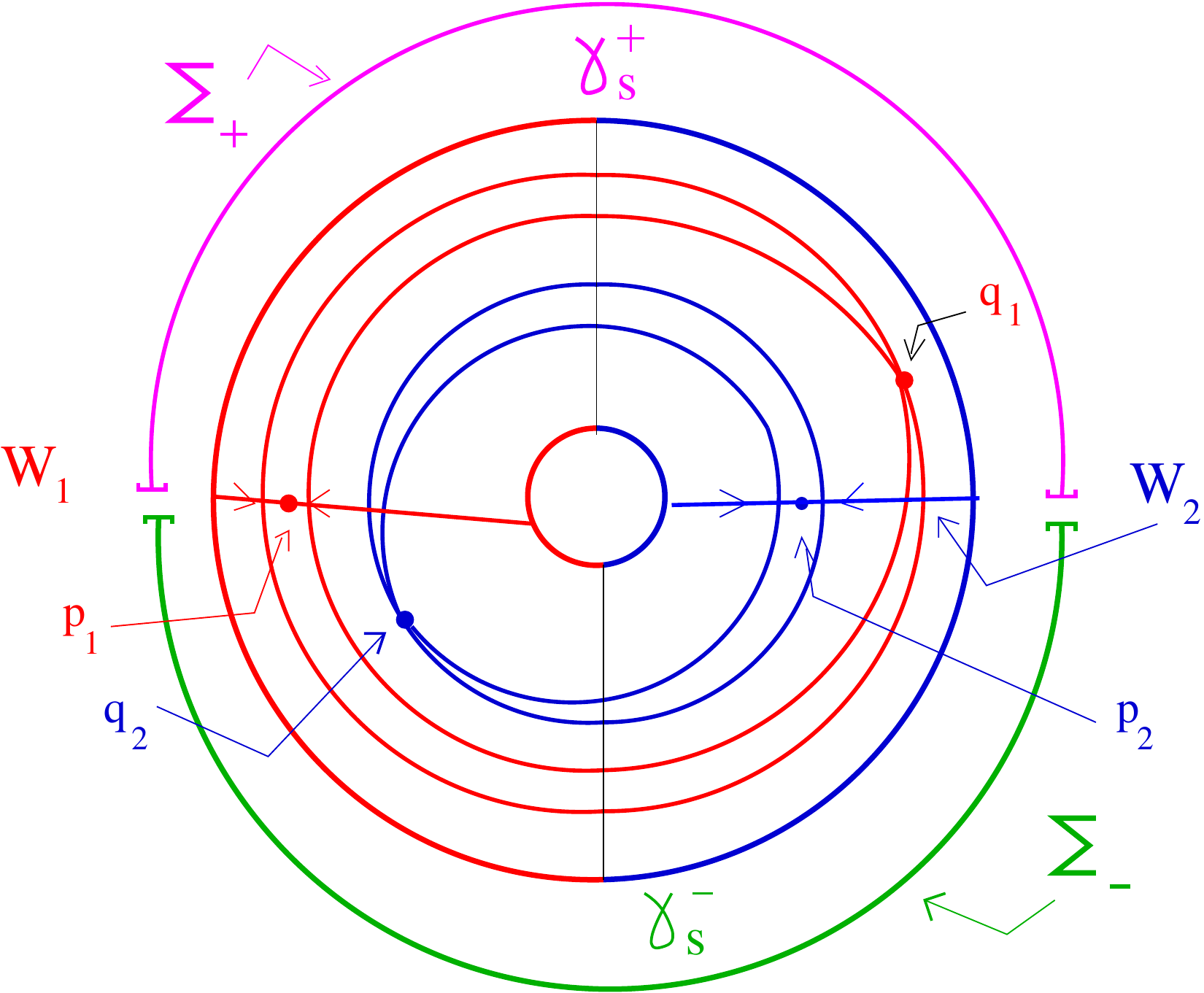}
	\caption{ $ \Sigma_+$ and $\Sigma_-$:  determined by $W_1$ and $W_2$, the stable manifolds of $p_1$ and $p_2$ respectivelly. }\label{Mais-f-Up-Lorenz-em-OTilda++}
\end{figure}
 
  The goal now is to describe the drastic changes in the behaviour appearing in the topological dynamics when a family $X_\mu,
 \,  \mu=(\mu_1,\mu_2) \in [-1,1]^2, $ crosses the boundary {of the regions considered at Theorems \ref{igual-a-t.up}, \ref{igual-prop-p.Hi}, \ref{L-igual a Lemma l.Hi}, 
 \ref{igual-a-p.L+-}, and \ref{igual-a-p.HE1HE2}  above.}
 
{We can refine the analysis of the topological behaviour of  flows in $\cO_{\varphi}$  based on the  position of $q_{1}$ and $q_{2}$ in $\Sigma$.  
 This introduces a new region, denoted by $\widetilde{\cO_\varphi}^{+,+}$, defined as 
 the open subset where $q_1$ and $q_2$ belong to different components $\Sigma_\pm$ and $\Sigma_\mp$.}
 
   The behaviour of the topological dynamics in all these regions is established in the next theorem. 		
 \begin{maintheorem}\label{igual-a-l.switch}  With the previous notations, we obtain the following:  
 	\begin{itemize}
 		\item the vector field $X_\mu$ belongs to $\cL^+$  if and only if $\mu$ belongs to the quadrant $\mu_1>0, \mu_2>0$.  In other words, the upper full Lorenz of $X_{0,0}$ becomes a  (robust) Lorenz attractor, and the down full Lorenz of $X_{0,0}$ becomes a fake horseshoe. 
 		\item the vector field $X_\mu$ belongs to $\cL^-$  if and only if  $\mu$ belongs to the quadrant $\mu_1<0, \mu_2<0$. In other words, the upper full Lorenz  of $X_{0,0}$ becomes a  (robust) Lorenz attractor, and the down full Lorenz  of $X_{0,0}$ becomes a fake horseshoe. 
 		\item the vector field $X_\mu$ belongs to $\widetilde{\cO_\varphi}^{+,+}$ if and only if $\mu$ belongs to the quadrant $\mu_1<0, \mu_2 > 0$ or  the quadrant $\mu_1>0, \mu_2<0$.  In other words, the two full Lorenz of $X_{0,0}$ merges into a two-sided Lorenz attractor. 
 	\end{itemize}
	 \end{maintheorem}
	
	The next theorem in the paper reads as follows:

\begin{maintheorem}\label{1-igual-a-l.switch}  
With the previous notations, 
	considering  a $1$-parameter family {$ X_{\mu=(\mu_1,\alpha\mu_1)}$ } for some $\alpha>0$,
	we obtain the following:
 	\begin{itemize}
 		\item for $\mu_1<0$,  the vector field admits a down Lorenz attractor and an upper fake horseshoe, 
 		\item when $\mu_1=0$ the fake horseshoe becomes a full Lorenz  and, simultaneously, enters in collision with the upper Lorenz attractor, which becomes a full Lorenz, 
 \item for $\mu_1>0$, the upper full Lorenz  becomes a fake horseshoe when the down full Lorenz becomes a  robust Lorenz attractor. 
 	\end{itemize}
 \end{maintheorem}
\noindent  Figure \ref{f.l.doubleconn}
below describes the main features established in Theorems \ref{igual-a-l.switch} and \ref{1-igual-a-l.switch}.
\vspace{0.2cm}

\begin{figure}[h!]
	\centering
	\includegraphics[scale=0.25]{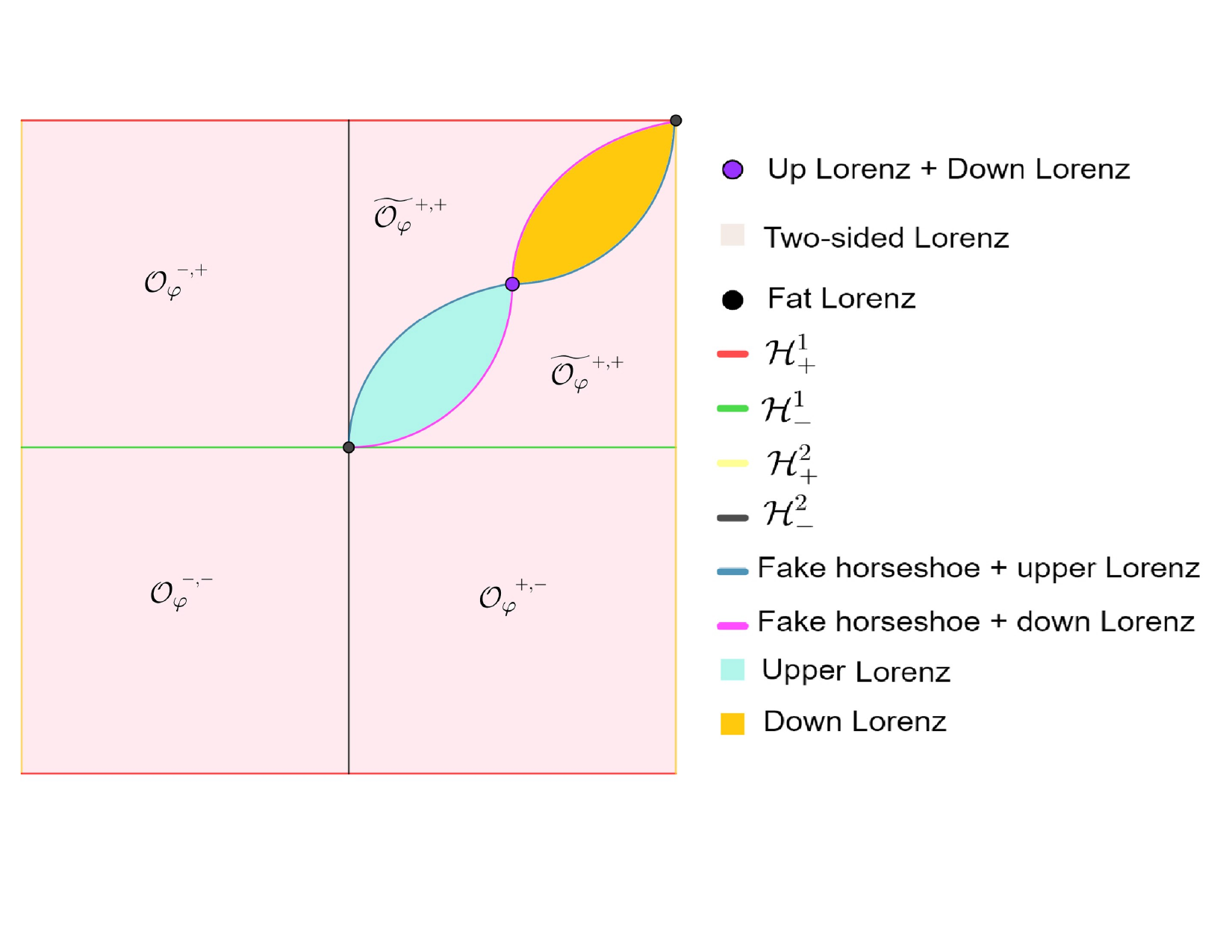}
	\vspace{0.3cm}
\caption{The bifurcation diagram of flows in $\mathcal{O}_{\varphi}$.}
\label{f.l.doubleconn}
\end{figure}

 Next,  we study in larger depth  the topological nature of a flow in $\mathcal{O}_{1}$. { Recall that if $X\in \cO_1$ then it contains a unique singularity $\sigma$   of Lorenz type, see Definition \ref{d.lorenzlike}. 
Moreover, 
 $X $ has a global cross-section $\Sigma$, which is a topological annulus and
a first return map $P:\Sigma \to \Sigma$ possessing an invariant stable foliation $\mathcal{F}^s$.
 The leaves of the foliation $\cF^s$ are segments crossing $\Sigma$ (connecting the two boundary components of $\Sigma$). 
  The leaves space  $\Sigma/\cF^s$ is a (topological) circle $\SS^1_X$.  The segments $\gamma^s_+$ and $\gamma^s_-$ are leaves of $\cF^s$ and induce each a point $c_+$ and $c_-$ respectively, on $\SS^1_X$. 
 We assume that $X$ preserves $\cF^s$ and thus the first return map $P$ preserves the foliation $\cF^s$.
As a consequence, $P$ passes to the quotient as a map $f=f_X$ defined from $\SS^1_X\setminus\{c_+,c_-\}$ to $\SS^1_X$. As in the geometric model, 
we will see that $f_X$ restricted to each interval of $\SS^1_X\setminus\{c_+,c_-\}$ is a diffeomorphism.}

As for the  geometric model of the Lorenz attractor {presented in \cite{GW},} we will see  that  to any vector field in $\cO_1$,  we can associate  combinatorial data, called the itinerary of the discontinuities. 
These itineraries   
induce a semi-conjugacy of $(\Sigma, P)$ with $(\XX,\mathfrak{S})$, where $\mathfrak{S}$ is the shift map on $\XX$, and $\XX$ is an appropriate alphabet chosen to represent the crossing points of the unstable manifold of the singularity with the cross-section. 

 Being {precise}, $P$ is defined on $\Sigma\setminus (\gamma^s_+\cup \gamma^s_-)$ which is not invariant under $P$. Thus we get conjugacy in the restriction of $\Sigma\setminus W^s(\sigma)$.
But, since the Poincar\'e map preserves a stable  foliation and the itineraries for the Poincar\'e map are functions of the stable leaf,  they pass to the quotient on the leaves' space, inducing 
itineraries for the one-dimensional quotient map. 
We use this symbolic analysis  to show that each neighborhood of a flow in $\mathcal{O}_{1}$, in the $C^{1}$ topology, contains flows whose attractor exhibits all of an uncountable family of topological types.
As in the case of the  geometric Lorenz attractor, we prove that this phenomenon is co-dimension two stable. The topological types exhibited by the various versions of this example can be distinguished by two parameters. Moreover,  we construct  a two-parameter family of flows
in $\mathcal{O}_{1}$ which is $C^{1}$ stable. This is the content of our last result below.

 \begin{maintheorem}\label{igual-a-t.conjugado} The restrictions to the maximal invariant sets  of $X$ and $ Y \in \mathcal{O}_\varphi$ are topologically equivalent by a homeomorphism close to identity if, and only if, $X$ and $Y$ have the same itineraries.
\end{maintheorem}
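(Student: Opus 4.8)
The plan is to prove both implications by reducing the two-dimensional problem for the Poincar\'e map $P$ to the one-dimensional quotient map obtained by collapsing the stable foliation, and then invoking a Milnor--Thurston type kneading rigidity for expanding Lorenz-like interval maps. Since $P$ uniformly contracts the leaves of the stable foliation $\cF^s$ and preserves it, the leaves are dynamically characterized: two points lie on the same stable leaf if and only if their forward $P$-orbits stay exponentially close. Consequently any topological equivalence close to the identity must carry stable leaves to stable leaves, hence descends to a conjugacy between the one-dimensional quotient maps $f_X$ and $f_Y$ on the leaf space $\Sigma/\cF^s$; conversely, a conjugacy of the quotient maps lifts back to a conjugacy of the Poincar\'e maps by sending the stable leaf of $x$ to the stable leaf of its image and locating the unique point on it with matching itinerary. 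A standard suspension argument turns a conjugacy of $P$ into a topological equivalence of the flows on the maximal invariant set, extending over $\sigma$, $W^s(\sigma)$ and $W^u(\sigma)$ using that a close-to-identity equivalence fixes $\sigma$ and preserves its invariant manifolds. Thus the theorem reduces to the statement that two expanding quotient maps in $\cO_\varphi$ are conjugate by a homeomorphism close to the identity if and only if they have the same itineraries.

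For the easy direction (conjugacy $\Rightarrow$ equal itineraries), a close-to-identity equivalence sends the discontinuity locus $\gamma^s_+\cup\gamma^s_-=W^s(\sigma)\cap\Sigma$ to itself and the crossing points $q_1,q_2$ of the unstable separatrices to the corresponding crossing points. These are intrinsic objects: $q_i$ is exactly the one-sided limit of $P$ at a discontinuity, since an orbit approaching $W^s(\sigma)$ lingers near $\sigma$ and exits along $W^u(\sigma)$. Because the itinerary of a point is the sequence of symbols recording which component of $\Sigma\setminus W^s(\sigma)$ its orbit visits, and a conjugacy intertwines the dynamics with the shift $\mathfrak S$, the itineraries of $q_1,q_2$ are preserved.

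For the hard direction (equal itineraries $\Rightarrow$ conjugacy) I would argue by kneading theory. The expansion rate being larger than the golden number $\varphi$ guarantees that the itinerary (coding) map $\iota_X$ from the leaf space $\Sigma/\cF^s$ into the shift space $(\XX,\mathfrak S)$ is injective off the countable set of preimages of the discontinuities, is order preserving for the natural order on the leaf space, and has image whose closure is completely determined by the admissibility conditions encoded by the itineraries of $q_1,q_2$; this is precisely where $\varphi$ enters, forbidding the combinatorial degeneracies (wandering intervals, attracting or neutral periodic leaves) that would otherwise obstruct the coding from being a bijection. If $X$ and $Y$ have the same kneading data, their admissible symbol sets coincide, so $h:=\iota_Y^{-1}\circ\iota_X$ is a well-defined order-preserving bijection on a dense subset that intertwines $f_X$ with $f_Y$; monotonicity lets it extend to a homeomorphism of the leaf spaces, and the intertwining persists by continuity. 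Lifting $h$ through the stable foliation and suspending yields the equivalence, and closeness to the identity follows because the construction depends continuously on the vector field within $\cO_\varphi$, where the combinatorial skeleton is locally constant.

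The main obstacle is the hard direction, and within it two points are genuinely delicate. First, establishing injectivity and surjectivity of the coding for discontinuous (Lorenz-like) maps on the annulus: the argument must treat the two discontinuity curves $\gamma^s_\pm$ simultaneously, accommodate the non-transitive strata such as $\cH\cE_1\cap\cH\cE_2$ of Theorem \ref{igual-a-p.HE1HE2} where the attractor splits into two full Lorenz pieces, and rule out wandering intervals using the $>\varphi$ expansion. Second, verifying that the reconstructed $h$ is genuinely \emph{close to the identity} rather than merely a homeomorphism, which requires controlling the stable holonomy uniformly so that the lift through $\cF^s$ does not introduce displacements beyond the size prescribed by the $C^1$-proximity of $X$ and $Y$.
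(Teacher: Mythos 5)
Your proposal follows essentially the same route as the paper: reduce to the one-dimensional quotient map $f_X$ on the leaf space, prove the easy direction by noting that a close-to-identity equivalence preserves the discontinuity leaves and hence the itineraries, and prove the hard direction by realizing admissible itineraries (the paper's Lemma~\ref{l.realisacao}, Proposition~\ref{p.order} and Theorem~\ref{L.T}) to build an order-preserving conjugacy of the quotient maps, then lifting through the stable foliation and suspending along orbit segments. The one minor discrepancy is your attribution of the coding bijectivity to the golden-ratio bound: in the paper that step needs only the uniform expansion $\lambda>1$ of the unstable cone field (exponential growth of unstable segments rules out wandering intervals), the bound $\lambda>\varphi$ being used earlier for transitivity rather than for the symbolic classification.
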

 
This paper is organized as follows. 
 Section \ref{s-preliminaries}  gives  the concepts, definitions, and results proved elsewhere that will be needed in the text.  
 Section \ref{s-the-open-set-1}, 
constructs  the open set of flows $\cO_{1}$ that  will be analyzed in the remainder of the text.
 Section \ref{s-the-open-set-2} gives the background and auxiliary results to prove  the main results.
 In Section \ref{s.cartografia} we prove theorems \ref{igual-a-t.up}, \ref{igual-a-p.L+-}, \ref{igual-prop-p.Hi}, \ref{L-igual a Lemma l.Hi} and
 \ref{igual-a-p.HE1HE2}.
 In Section \ref{s-collisions} we prove theorems \ref{igual-a-l.switch} and \ref{1-igual-a-l.switch}. { In Section \ref{ss-symbolic} we prove theorem \ref{igual-a-t.conjugado}}.
\vspace{0.2cm}

{\em Acknowledgements}. The authors extend their gratitude to the referee for the valuable comments which  improved the final version of the manuscript.
\section{Preliminaries}\label{s-preliminaries}
		
\subsection{Basic topological notions}

Let  $\mathfrak{X}^r(\mathbb{R}^3)$, $\,r\geq 1,$ be the space of vector fields defined on $\mathbb{R}^3$, with the $C^r$ topology. 
For $X \in \mathfrak{X}^r(\mathbb{R}^3) $, we denote by $X^t$ the  flow of $X$.	If $U\subset M$, $\overline{U}$ denotes the closure of $U$, $\interior U$ the interior of $U$ and $\partial U$ the border of $U$.
An invariant compact set $\Lambda$ of $X$ is \emph{transitive} if there is $p\in\Lambda$ so that its positive orbit is dense in $\Lambda$ (this notion is called \emph{topological ergodicity} by some authors). 

Recall that a sequence $\{p_i\}$ is a $\varepsilon$-pseudo-orbit for $X$ if there is $t_i>1$ such that $d(p_{i+1},X^{t_i}(p_i))<\varepsilon$, for all $i$. 
A point $p$ is \emph{chain recurrent} if for each $\varepsilon>0$ there is a $\varepsilon$-pseudo orbit ~$p=p_0,p_1,\dots,p_n=p$. 
The recurrent chain set $\cR(X)$ is the set of all chain recurrent points. It is a compact invariant set. 
An invariant compact set $\Lambda$ is \emph{chain recurrent} or \emph{chain transitive} if for every $\varepsilon>0$ it admits a $\varepsilon$-pseudo orbit $\{x_i\}$ , $x_i\in \Lambda$, dense in $\Lambda$.

Two points $p,q$ are chain equivalent if for any $\varepsilon>0$ there are $\varepsilon$-pseudo orbits from $p$ to $q$ and from $q$ to $p$. 
Conley's theory \cite{Co} proves that  chain-equivalence is an equivalence relation whose equivalence classes are called the \emph{chain recurrence classes}. They define a partition of $\cR(X)$ by invariant pairwise disjoint compact sets. A compact region $U$ is \emph{attracting} for a vector field $X$ of $\mathbb{R}^3$  if its boundary is a codimension $1$ submanifold transverse to $X$ and $X$ is entering in $U$. 

\begin{defi}\label{d-atrator}
\begin{itemize}
\item A compact invariant set $\Lambda_X$ of $X^t$  is   \emph{attracting}  if there exists an open neighborhood $U \supset \Lambda_X$  which is an attracting region and such that $\Lambda_X$ is the maximal invariant set in $U$, that is: 
$$\Lambda_X= \underset{t >0}{\bigcap}X^t(U).$$ 
\item  $\Lambda_X$ is \emph{an attractor} if it is  attracting and chain transitive (in particular, 
$\Lambda_X$ is a chain recurrence class). 
\item $\Lambda_X$ is  $C^r$ robust  if  there is an attracting region $U\supset \Lambda_X$ such that  $\Lambda_Y$ is an attractor of $Y$ for every vector field $Y$ which is $C^r$ close to $X$.

\item An invariant compact set $K$ is a \emph{quasi-attractor} if it is a chain recurrence class  and admits a basis of neighborhoods that are attracting regions. 
There is a decreasing sequence $U_i\subset U_{i-1}$ of attracting regions so that $K=\bigcap_i U_i$. 
\end{itemize}
\end{defi}		

Notice that the existence of an attractor is not ensured a priori.
 But, Conley, \cite{Co}, proves the existence of quasi-attractors in any attracting region (for a vector field on a compact manifold). 

\begin{defi}
	Two vector fields $X$ and $Y$ defined on $\mathbb{R}^3$ are topologically equivalent if there exists a homeomorphism $h:\mathbb{R}^3 \to \mathbb{R}^3$ such that $h(\mathcal{O}_X(p))=\mathcal{O}_Y(h(p))$ and 
 $\forall \,\, p \in M$ and $\varepsilon>0, \exists \,\, \delta>0$ so that for $t \in (0,\delta)$ there is $s \in (0, \varepsilon)$ satisfying $h(X^t(p))=Y^s(h(p))$.	
\end{defi}	

\subsection{Singular points}

A point $p$ is singular if $X$ vanishes at $p$,  otherwise $p$ is regular.  The set of singular points of $X$ is denoted by $Zero(X)$. 
A point $\sigma\in Zero(X)$ is hyperbolic if the real part of the eigenvalues of $DX(\sigma)$ does not vanish.

\begin{defi} \label{d.lorenzlike}  A singularity $\sigma$ of $X$ is Lorenz-like if the eigenvalues $\lambda_i \in \mathbb{R},\, i \in \{ss, s, u\},$ of $DX(\sigma)$ satisfy {$0<-\lambda_s<\lambda_{u}<-\lambda_{ss}$.} 
\end{defi}

\begin{defi}\label{naoressonancia}
In addition,  $\sigma$ is non-resonant if, for all $N>2$ and any choice of nonnegative integer numbers $m_1,m_2,m_3$ with $2\le \overset{3}{\underset{j=1}{\sum}}m_j < N$, we have 
	$\overset{3}{\underset{j=1}{\sum}} m_j\lambda_j \neq \lambda_i$.
\end{defi}

Hartman Grobman's theorem asserts that a vector field is locally topologically equivalent to its linear part in a small neighbourhood of a hyperbolic singular point, \cite{Hartman}.  Then Sternberg  provides conditions to guarantee that this local topological equivalence is true of class $C^2$:

	\begin{teo}		
		Let $X$ a vector field and let $n \in \mathbb{Z}^+$ be given. Then there exists an integer $N = N(n) \ge 2$ such
		that: if $A$ is a real non-singular $d \times d$ matrix with eigenvalues {$\lambda_1, \ldots ,\lambda_d$}  satisfying
		$$2\le \overset{d}{\underset{j=1}{\sum}}m_j < N \,\,\,\mbox{and} \,\,\,
		\overset{d}{\underset{j=1}{\sum}} m_j\lambda_j \neq \lambda_i$$ 
		for any choice of nonnegative integers $m_1,m_2 \ldots m_d$ and if the application $X(v)=A(v)+\psi(v)$ and  $\psi$ is of class  $C^n$ for small $||v||$ with  $\psi(0)=0$ {and} $\partial_v\psi(0)=0$; then there exists a $C^N$ diffeomorphism  from a neighbourhood of $v = 0$ to a  neighbourhood of $\xi = 0$ that define a topologic conjugation between $X$ and $A$.
	\end{teo}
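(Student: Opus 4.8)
The plan is to follow the classical two-stage strategy behind Sternberg's linearization theorem: first use the non-resonance conditions to remove, by \emph{polynomial} changes of coordinates, every nonlinear term of the vector field up to degree $N-1$ (the Poincar\'e--Dulac normalization); then exploit the hyperbolicity of $A$ (all eigenvalues have nonzero real part, which is the standing assumption since $\sigma$ is a hyperbolic singular point) to absorb the remaining flat-of-order-$N$ part into a genuinely $C^N$ conjugacy by a contraction argument. Throughout I interpret the non-resonance hypothesis as in Definition \ref{naoressonancia}, namely that $\sum_{j=1}^d m_j\lambda_j\neq\lambda_i$ for every $i$ and every multi-index $m$ with $2\le |m|=\sum_j m_j<N$.

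First I would set up the homological formalism. Writing the sought conjugacy as $h=\mathrm{Id}+g$, the infinitesimal conjugacy equation $Dh(v)\,X(v)=A\,h(v)$ becomes
$$\mathcal{L}g:=Dg\cdot Av-Ag=-\psi-Dg\cdot\psi,$$
where $\mathcal{L}$ is the homological operator. On vector-valued monomials $v^m e_i$, $\mathcal{L}$ acts diagonally,
$$\mathcal{L}(v^m e_i)=\Big(\sum_{j=1}^d m_j\lambda_j-\lambda_i\Big)\,v^m e_i,$$
so its eigenvalue on the degree-$|m|$ component is $\sum_j m_j\lambda_j-\lambda_i$. Non-resonance says precisely that these numbers are nonzero for $2\le|m|<N$; hence $\mathcal{L}$ is invertible on each homogeneous component of degree $k$ for $2\le k<N$, with $\|\mathcal{L}^{-1}\|$ bounded in terms of $A$ and $N$ only. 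I then normalize inductively on $k=2,\dots,N-1$: assuming the field has the form $Av+(\text{terms of degree}\ge k)$, I solve $\mathcal{L}h_k=-(\text{degree-}k\text{ part of the nonlinearity})$ for a homogeneous polynomial map $h_k$ of degree $k$ and change coordinates by $\mathrm{Id}+h_k$. A direct computation shows this cancels the degree-$k$ term while only altering terms of degree $>k$. After finitely many steps the field reads $Av+R(v)$ with $R\in C^N$ whose $(N-1)$-jet at $0$ vanishes, so $R(v)=O(|v|^{N})$.

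Finally I would remove the flat remainder by a non-polynomial $C^N$ conjugacy, working directly with the flow. Writing it again as $h=\mathrm{Id}+u$, the relation $h\circ X^t=e^{tA}\circ h$ translates into
$$u\big(X^t(v)\big)=e^{tA}u(v)+\big(e^{tA}v-X^t(v)\big),$$
whose inhomogeneous term is flat of order $N$ because $R$ is. Using the hyperbolic splitting $\RR^d=E^s\oplus E^u$ determined by the sign of the real parts of the eigenvalues, I invert the associated operator by integrating (a variation-of-constants / telescoping expression) forward in time on $E^s$ and backward on $E^u$: hyperbolicity makes $e^{tA}$ contract, resp.\ expand, exponentially while flatness makes the integrand decay, so the improper integrals converge and define a map $\mathcal{T}$. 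On a sufficiently small ball $\mathcal{T}$ is a contraction of the complete space of $C^N$ maps flat to order $N$ at the origin; its fixed point $u$ is the desired correction, and composing all the coordinate changes produces the $C^N$ diffeomorphism conjugating $X^t$ to $e^{tA}$ near $0$.

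The main obstacle is this last step: establishing the contraction \emph{and} the $C^N$ regularity of the fixed point. The delicate point is that each differentiation of the fixed-point equation costs a factor governed by ratios of the contraction/expansion rates, so the order $N$ up to which smoothness survives must be taken large relative to the spectral spread of $A$ — this is exactly why $N=N(n)$ is chosen large and why only finite, rather than $C^\infty$, regularity is asserted. Note that because the normalization stops at finite order, the eigenvalues $\sum_j m_j\lambda_j-\lambda_i$ remain bounded away from $0$ and no genuine small-divisor phenomenon occurs; the entire difficulty is analytic (uniform derivative estimates for the conjugacy), not arithmetic.
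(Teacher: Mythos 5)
The paper does not prove this statement at all: it is quoted as Sternberg's classical linearization theorem and used as a black box (the only pointer given is to \cite{Pa84} for the continuous dependence of the linearizing diffeomorphism on the vector field). So there is no in-paper argument to compare yours against; what can be assessed is whether your sketch is the standard proof and whether it is complete. Your two-stage strategy is indeed the classical one: Poincar\'e--Dulac normalization via the homological operator $\mathcal{L}(v^m e_i)=\bigl(\sum_j m_j\lambda_j-\lambda_i\bigr)v^m e_i$, which the non-resonance hypothesis (correctly read as $2\le\sum_j m_j<N$ --- the paper's displayed condition is garbled, as is its use of ``non-singular'' where hyperbolic is meant and of $\gamma_j$ versus $\lambda_j$) makes invertible degree by degree, followed by removal of the order-$N$ flat remainder using the hyperbolic splitting. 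The first stage as you describe it is complete and correct.

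The gap is exactly where you locate it, and it is not a minor one: the second stage is the entire content of Sternberg's theorem, and your proposal only names it. Concretely, the operator you call $\mathcal{T}$ is a contraction only in a $C^0$-type norm weighted by $|v|^N$; it is \emph{not} a contraction on the space of $C^N$ maps flat to order $N$ in the $C^N$ topology without further work. Each derivative of the fixed-point equation introduces factors of $\|e^{tA}\|^k\,\|e^{-tA}|_{E^s}\|$ (and the dual expression on $E^u$), and one must check that for $k\le n$ these are dominated by the decay coming from flatness --- this is precisely the inequality that dictates how large $N(n)$ must be in terms of the ratio of the extreme real parts of the spectrum, and it is the reason the conclusion is $C^n$ (not $C^N$, another slip in the paper's statement) rather than $C^\infty$. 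Without writing out that estimate and the argument that the higher derivatives of the iterates actually converge (or an appeal to a $C^r$-section theorem), the proposal establishes a $C^0$ conjugacy plus a polynomial normal form, not the smooth conjugacy claimed. Since the paper treats the theorem as citable, the honest conclusion is that your outline is the right one but would need either the full quantitative argument or an explicit reference to carry the statement.
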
	
	
	Furthermore, the linearizing diffeomorphisms depend continuously on the $C^2$ topology from the vector field $X$, see for instance \cite[Corollary in the Appendix]{Pa84} which provides a stronger statement. 

\subsection{The shift map $\mathfrak{S}$}

We consider $\XX=\{A_0,A_1,B_0,B_1\}^\NN$ the space of infinite positive words in the alphabet $\{A_0,A_1,B_0,B_1\}$. 
We endow the alphabet $\{A_0,A_1,B_0,B_1\}$ with the total order 
$A_0<A_1<B_0<B_1<1.$
We endow $\XX$ with the corresponding lexicographic order that we denote by $\prec$ (and $\preccurlyeq$ for the non-strict order). 
The shift map $\mathfrak{S} : \XX \to \XX$ is defined as
$$ 
(w_j)_{j\geq 0} \in \XX \mapsto \mathfrak{S} (w_j)_{j\geq 0}= (w_{j+1})_{j\geq 0}.$$

\noindent We also define the {\em{ star map}} (denoted by $\star$) on $\XX$ as follows:
given a sequence $w =(w_0, w_1,\cdots ) \in \XX$ and a letter $L \in \{A_0,A_1,B_0,B_1\}$, {we set}
$
L \star w \eqdef (L, w_0, w_1, \cdots).
$

\subsection{Hyperbolic notions}	\label{s-hyberbolic}

\begin{defi} Let $X$ be a vector field of a manifold $M$. A compact invariant set $\Lambda\subset M\setminus Zero(X)$ is hyperbolic if the tangent bundle $TM|_\Lambda$ over $\Lambda$ admits a splitting   
$TM|\Lambda= E^s\oplus \RR X\oplus E^u \quad \mbox{such that}$
\begin{itemize}
 \item the bundles $E^s$ and $E^u$ are continuous and invariant under the  derivative of the flow. 
 \item  there exists $ T>0$ so that  $\forall \,\,\, p\in\Lambda$ and  $\forall$  unit vectors $u\in E^s(p)$ and $v\in E^u(p)$, it holds 
 $\|DX^T(u)\|<\frac 12 \,\, \mbox{ and } \,\, \|DX^T(v)\|>2$  
 (one says that $E^s$ is \emph{uniformly contracted} and $E^u$ is \emph{uniformly expanded}). 
\end{itemize}
\end{defi}

A hyperbolic invariant compact set $K$ is called a \emph{ basic hyperbolic set} if it is transitive and admits a neighbourhood on which it is the maximal invariant set. 
If $K$ is a hyperbolic set, we denote $E^{cs}=E^s\oplus \RR X$ and 
$E^{cu}= E^u \oplus \RR X$, and these bundles are called, respectively, the weak stable and weakly unstable bundles. 

\begin{defi}\label{defsinghip} A compact invariant set  $\Lambda \subset M$ for $X^t$ is \emph{partially hyperbolic} if there exists a continuous and $DX^t$-invariant splitting {$T_{\Lambda}M=E^{s} \oplus E^{cu}$} and constants $\lambda\in]0,1[, K>0$ such that for all
	$x\in \Lambda$ and $t\geq 0$, the following inequalities hold:
	\begin{enumerate}
\item[(a)] 
$||DX^t|_{E^s_{x}}||\cdot \|DX^{-t}|_{E^{cu}_{X_t(x)}}\|\le K\lambda^t
		\quad \mbox{(domination)};$ 
		\item[(b)] $\|DX^t|_{E^{s}_x}\| \le K\lambda^t \quad \mbox{(uniform contraction). }$
	\end{enumerate}	
	In addition, if $E^{cu}_{\Lambda}$ is volume expanding, that is, $|\det(DX^t|_{E^{cu}_x})|>Ke^{\lambda t}$ for all $x \in \Lambda$ and $t \ge 0$, $\Lambda$ is, by definition,  a {\em{singular hyperbolic}} set.
\end{defi}

Note that any hyperbolic set is also partially hyperbolic. 
Notice that if $\Lambda_X$ an invariant compact set disjoint from $Zero(X)$ is partially hyperbolic if and only if it is hyperbolic. 
In dimension $3$, if $\Lambda_X$ is partially hyperbolic, then every singular point in $\Lambda_X$ is a Lorenz-like singularity, see \cite{MPP04}. 

\subsection{Invariant manifold and foliations}

Let $\Lambda$ be a compact invariant set for the flow $X^t$ and  $p \in \Lambda$. The stable $W^{s}_X(p$  and unstable $W^{u}_X(p)$ sets at $p$ are defined by
\begin{eqnarray*}
	W^{s}_X(p)&=&\{q \in M\,: \mbox{dist}(X^t(q),X^t(p)) \underset{t \to +\infty}{\longrightarrow} 0\}\\
	W^{u}_X(p)&=&\{q \in M\,: \mbox{dist}(X^t(q),X^t(p)) \underset{t \to -\infty}{\longrightarrow} 0\}.
\end{eqnarray*}	
If  $\mathcal{O}=\mathcal{O}_X(p) \subset \Lambda$ denotes the orbit of $p\in M$, the stable set of the orbit of $p$ is  $W^{s}_X(\mathcal{O})= \underset{t \in \mathbb{R}}{\bigcup}W^{s}(X^t(p))$. Analogously,  the unstable set of the orbit of $p$ is  $W^{u}_X(\mathcal{O})= \underset{t \in \mathbb{R}}{\bigcup}W^{u}(X^t(p))$.

If $\Lambda$ is a hyperbolic set and $p$ is a point in an orbit $\cO$ in $\Lambda$,  then $W^s(p)$ and $W^s(\cO)$ are manifolds of the same regularity as $X$ and are tangent (at $p$) to the stable bundle $E^s(p)$ and $E^{cs}(p)$, respectively. 
If $\Lambda$ is  partially hyperbolic  the stable sets of the points in $\Lambda$ are submanifolds of the same regularity as $X$, tangent to $E^s$ and varying continuously with the point. 
Assume now $U$ is an attracting region and 
that $\Lambda=\Lambda_U$, the maximal invariant  in $U$, is a partially hyperbolic attracting invariant compact set. Let $E^s$ denote the stable bundle defined over $\Lambda$.  Then the bundle  $E^s$ always extends  uniquely to $U$ as an invariant bundle (still denoted by $E^s$) tangent to a topological foliation. Recently Ara\`ujo and Melbourne, \cite[Theorem 4.12 and Remark 4.13(b)]{ararujomelbourne}, provided bunching conditions ensuring the smoothness of the stable foliation, as stated below:

\begin{teo}\label{holonomia}
 \cite[Theorem 4.12]{ararujomelbourne}:	 Let be $M$ a differentiable Riemannian manifold of dimension 3, $X$ a $C^r$ vector field,  $U$ an attracting region and $\Lambda \subset M$ a maximal invariant set in $U$  partial hyperbolic attracting set.  Let  $\{W^s_x\}$ denote the stable foliation in $U$. 
	  Let $q \in [0,r]$ and suppose that there exists $t>0$ such that
$\|DX^t|_{E^s_x}\|\cdot \|DX^{-t}|_{E^{cu}_{X^t(x)}}\|\cdot \|DX^t|_{E_x^{cu}}\|^q<1 \label{eq.cont.}\quad \mbox{ for all $x \in \Lambda$}.$	  
Then the foliation $\{W^s_x\}$ is of class $C^q$.
	\end{teo}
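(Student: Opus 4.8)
The plan is to fix a single $t>0$ for which the bunching inequality \eqref{eq.cont.} holds and pass to the time-$t$ diffeomorphism $f=X^t$, reducing a statement about the flow to one about a single map. Since the flow direction $\RR\cdot X$ lies inside $E^{cu}$, the partially hyperbolic splitting $E^s\oplus E^{cu}$ of $X$ is a dominated splitting for $f$ with $E^s$ uniformly contracted, and the foliation $\{W^s_x\}$, being tangent to the one-dimensional $E^s$, is exactly the strong stable foliation of $f$ and hence $f$-invariant. The existence of the continuous extension of $E^s$ to $U$ and of the associated topological foliation, together with the fact that each individual leaf is as smooth as $X$, is a routine consequence of partial hyperbolicity (plaque-family theory); what genuinely must be proved is the transverse, i.e. holonomy, regularity of the foliation.

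I would realize $\{W^s_x\}$ as the unique invariant section of a fiber-contracting bundle map and then invoke the $C^q$ Section Theorem of Hirsch, Pugh and Shub. Concretely, I would encode a candidate leaf near $x$ as the graph of a map from $E^s(x)$ into $E^{cu}(x)$, so that a candidate foliation becomes a continuous field of such transverse graphs, i.e. a section of an appropriate bundle over $U$. The graph transform $\Gamma_f$ induced by $f$ acts on this space of sections, and $\{W^s_x\}$ is its unique fixed point. To upgrade mere continuity to $C^q$, I would lift $\Gamma_f$ to the bundle $J^q$ of $q$-jets of these transverse graphs; this lift is again a fiber contraction over the base map $f$, and the Section Theorem guarantees that the invariant section is $C^q$ precisely when the fiber contraction rate, multiplied by the $q$-th power of the base expansion along $E^{cu}$, is strictly less than one.

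The heart of the argument, and the step I expect to be the main obstacle, is matching this abstract $q$-bunching condition to the stated inequality \eqref{eq.cont.}. The fiber contraction rate of $\Gamma_f$ at $x$ is governed by the ratio of the stable contraction to the center-unstable expansion, that is by $\|DX^t|_{E^s(x)}\|\cdot\|DX^{-t}|_{E^{cu}(X^t(x))}\|$, where the domination term $\|DX^{-t}|_{E^{cu}}\|$ records the \emph{least} expansion along $E^{cu}$; differentiating the graph transform $q$ times along the base contributes the factor $\|DX^t|_{E^{cu}_x}\|^q$. Multiplying these gives exactly the left-hand side of \eqref{eq.cont.}, so the hypothesis is literally the statement that the fiber contraction beats the base expansion to order $q$. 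Verifying that these are indeed the correct cocycle quantities — in particular that the domination norm, and not a crude operator norm, is the relevant one — and handling the fractional part of $q$ through a Hölder fiber-contraction estimate is where the delicate bookkeeping lies.

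Finally, the Section Theorem yields that the field of transverse tangent graphs, and hence the holonomy maps of $\{W^s_x\}$, are $C^q$. To conclude that $\{W^s_x\}$ is $C^q$ \emph{as a foliation}, I would combine this transverse $C^q$ regularity with the $C^r$ (hence $C^q$) regularity of the individual leaves; in the borderline or non-integer case this last synthesis uses a Journé-type lemma deducing joint regularity from regularity along two complementary foliations. The existence and continuity inputs of the first paragraph are standard, so the entire weight of the proof rests on the sharp bunching estimate of the third paragraph.
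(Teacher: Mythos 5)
This theorem is not proved in the paper at all: it is quoted verbatim from Ara\'ujo--Melbourne \cite{ararujomelbourne}, who obtain it by adapting the $C^r$ Section Theorem of Hirsch--Pugh--Shub \cite{HPS}, which is exactly the route your sketch takes (graph transform as a fiber contraction, $q$-jet lift, matching the bunching inequality \eqref{eq.cont.} to the section theorem's rate condition). Your outline is consistent with that reference's argument and correctly isolates the genuinely delicate step, so there is nothing in the paper to contrast it against beyond the citation.
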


\subsection{Cone fields}

\noindent Let $f:M \to M$ be a $C^1$ diffeomorphism and $T_xM = F^u(x) \oplus F^s(x)$ a continuous splitting. We define the stable and unstable cone fields of size $\gamma< 1$ as
\begin{eqnarray*}
	C^s_{\gamma}(x)&=& \{(v_1,v_2) \in F^u(x) \oplus F^s(x)\,: \|v_1\|\le \gamma \|v_2\|\},\\
	C^s_{\gamma}(x)&=&\{(v_1,v_2) \in F^u(x) \oplus F^s(x)\,: \|v_2\| \le \gamma \|v_1\|\}.
\end{eqnarray*}	
We say that $C^s_{\gamma}$ (reciprocally $C^u_{\gamma}(x)$) is strictly invariant by $Df^{-1}$ (reciprocally by $Df$) if  there is $\alpha < 1$ such that $Df^{-1}(C^s_{\gamma}(f(x))) \subset C^s_{\alpha\gamma}(x)$ (reciprocally $Df(C^u_{\gamma}(x)) \subset C^u_{\alpha\gamma}(f(x))$).

$\lambda_i \in \mathbb{R},\, i \in \{ss, s, u\},$ of $DX(\sigma)$ satisfy $0<-\lambda_s<\lambda_{u}<-\lambda_{ss}$.

\subsection{Geometric Lorenz attractor}

{Here we recall the construction of the  geometric Lorenz model, following \cite{GW}. For more on this construction, the interested reader can consult \cite{SP10}, 
and \cite[Section 3.3.2]{{ArPa10}} for a detailed construction of this model.}
{ See Figure 5.}

{ Let $S\subset \bR^3$  and a parametrization such that $S = \{(x, y, 1)|\,\, x,y \in \mathbb{R}\,\,\, \mbox{with}\,\, |x|,|y| \le 1/2\}$.  Let $\Gamma=\{(0, y,1) \in S, |y| \le 1/2\}$ and
$\Sigma^\pm = \{(x, y, z)\,\,x=\pm 1 \}.$ }

Consider a $C^1$-vector field $X$ on $\mathbb{R}^3$ satisfying the  following conditions:
	
\begin{enumerate}
	\item[(1)] {For any point $(x, y,z)$ in a neighborhood of the origin $(0, 0, 0)$, $X$ is given by
	$(x',y',z') = (\lambda_1 x, \lambda_{2} y, \lambda_3 z)$
		where $0<-\lambda_3<\lambda_{1}<-\lambda_{2}$},
\item[(2)] {For all $p \in S \setminus \Gamma$, there exist the smaller positive $t_p$ such that $X^{t_p}(p) \in S$, }
	\item[(3)]  {$S$ is a cross-section to $X$ and the first return map $P:S \setminus \Gamma \to S$ is a piecewise $C^1$-diffeomorphism  given by
		$P(x,y) = (f(x), g(x,y))$, satisfying:}
	{$f:[-1/2,1/2]\setminus \{0\} \to [-1/2,1/2]$ is piecewise $C^1$ map,}
	{$|f'|>\sqrt{2}$ where it is defined,}
	{$f(0^+)=-1/2$ and $f(0^-)=1/2$,}
	{$|f'(0^{\pm})|=+\infty$} and
	{$\partial_y g(x,y) = C x^{\beta}$, for $C>0$ and $\beta>1$.}
\end{enumerate}

{Let $X^t$ the flow of $X$. 
The maximal invariant set $\Lambda= 
\bigcap_{t\leq 0} X^t(S)$ is called a geometric Lorenz attractor. Figure 5 
describes the main features of the construction of a geometric Lorenz attractor.}
  \begin{figure}[!h]
\centering
\includegraphics[width=2in]{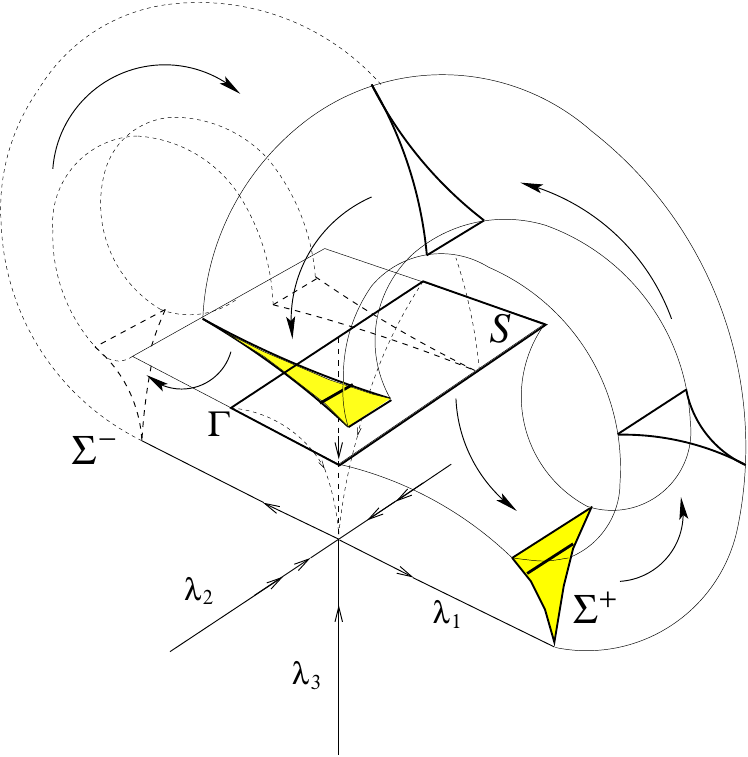}\label{f-Lorenz-Geo}
\hspace{0.4cm}
\includegraphics[scale=0.28]{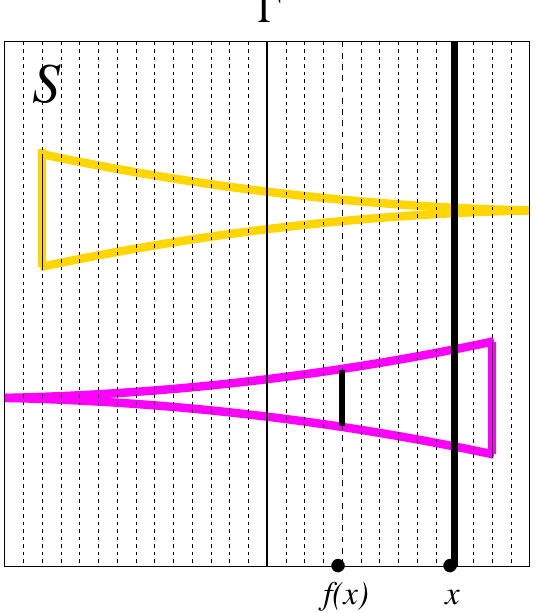}
\hspace{0.3cm}
\includegraphics[scale=0.31]{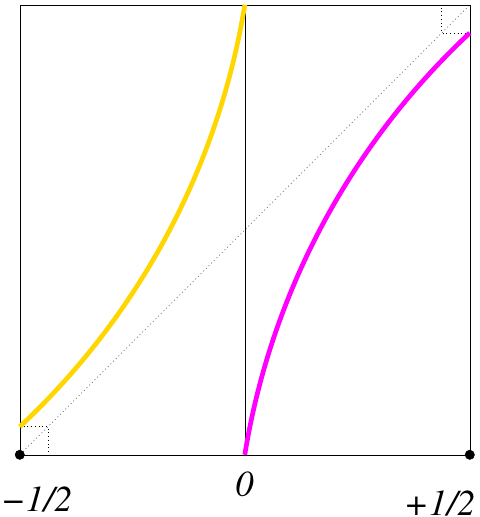}
\label{f-geometric}
\caption{A geometric Lorenz flow, projection on $I$ through the stable leaves and a sketch of the image of one leaf under the return map,  and the 1-dimensional map defined in the space of leaves of $\cF^s$.}
\end{figure}

{
 \begin{defi}\label{fullLorenz}
A positively invariant region of a vector field $X$ contains \emph{a full Lorenz} $\Lambda$ if  it satisfies items (1) and (2) of the definition of Lorenz attractors above, but  item (3) is replaced by 
\begin{enumerate}
\item[(3a)]  The first return map $P:S \setminus \Gamma \to S$ is a piecewise $C^1$-diffeomorphism  defined by
		$P(x,y) = (f(x), g(x,y))$, satisfying:
	$f:[-1/2,1/2]\setminus \{0\} \to [-1/2,1/2]$ is piecewise $C^1$ map,
	$|f'|>\sqrt{2}$ where it is defined,
	$f(0^+)=-1/2$ and $f(0^-)=1/2$,  $f(1/2)=1/2$ $f(-1/2)=(-1/2)$
	$|f'(0^{\pm})|=+\infty$ and
	$\partial_y g(x,y) = C x^{\beta}$, for $C>0$ and $\beta>1$.
\end{enumerate}
\end{defi}
}

\section{An open set of singular hyperbolic flows}\label{s-the-open-set-1}

We consider an open set  $\cO_1$ of vector fields $X$ on $\RR^3$ with the following properties described in the two next sections.

\subsection{A geometric model for $X \in \cO_1$}\label{ss-the-open-set-1}

\begin{enumerate}
 \item $X$ has  a Lorenz-like singularity $\sigma= \sigma(X)$ varying continuously with $X$ and satisfying the Sternberg of non-resonance conditions. We set $W^s_+(\sigma)$ and $W^s_-(\sigma)$  for the connected components of $W^s(\sigma)\setminus W^{ss}(\sigma)$. 

  \begin{figure}[h!]
 	\centering
\includegraphics[scale=0.17]{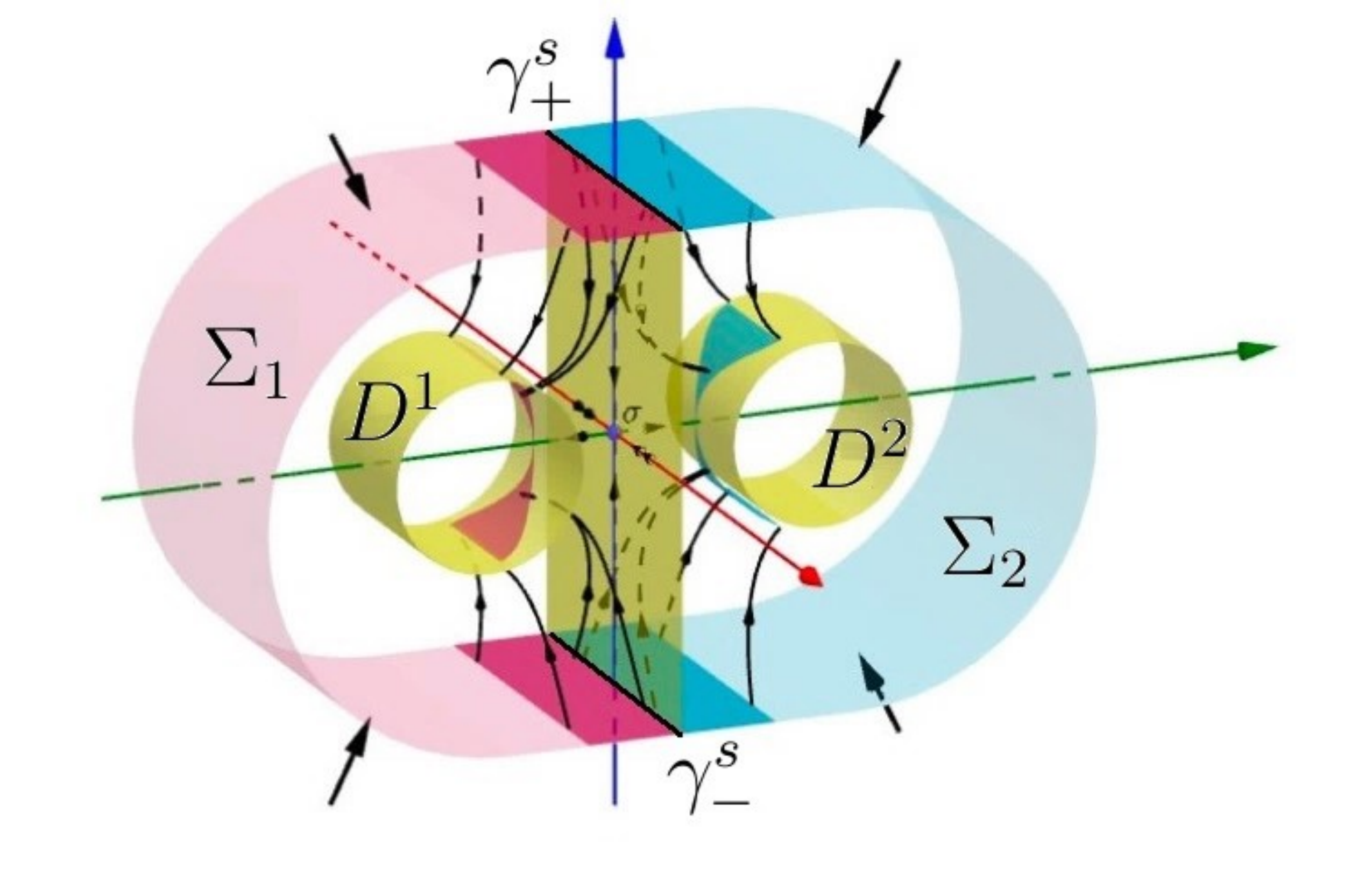}
 	\caption{{Trajectories of $X$  crossing  transversally  $\Sigma$,  $D^1$ and $D^2.$}}
 	\label{figure1}
 \end{figure}
 \item the vector field $X$ is transverse to  $3$ embedded, disjoint annuli denoted by $\Sigma$, $D^1$ and $D^2$, see Figure \ref{figure1}. 
 
 \item for any point $p\in D^i$ there is $t(p)$ depending smoothly on $p$ so that $X^{t(p)}(p)\in \Si$ and $X^s(p)\notin \Sigma\cup D^1\cup D^2$ for $s\in]0,t(p)[$. In  other words, $X^{t(p)}(p)$ is the first return of the orbit of $p$ on $\Sigma\cup D^1\cup D^2$, and we denote it $R(p)=X^{t(p)}(p)$.  
 
 \item Note $R$ is a diffeomorphism from $D^1\cup D^2$ to its image in $\Sigma$. In particular, $R(D^1)$ and $R(D^2)$ are annuli. We require that they are disjoint and each of them is \emph{an essential annulus} in $\Sigma$ (that is, it is not homotopic to a point). 
 
Note that the union of the orbit segments $\bigcup_{p\in D^1}X^{[0,t(p)]}(p)$ is the product of the annulus $D^1$ by a segment. The same holds for $D^2$. 
 
 \item $\Sigma\cap W^s_+(p)$ and $\Sigma\cap W^s_-(\sigma)$ contain each a segment, $\gamma^s_+$ and $\gamma^s_-$, respectively, transverse to the boundary $\partial \Sigma$ and connecting the two boundary components of the annulus $\Sigma$. 
  The positive orbits of points in $\gamma^s_+$ and $\gamma^s_-$ go directly to the singular point and are disjoint from $\Sigma\cup D^1\cup D^2$. 
  Then $\Sigma\setminus ( \gamma^s_+ \cup \gamma^s_-)$ consists  of two components  $\Sigma^1$ and $\Sigma^2.$ See Figure \ref{figure1}.
 
 \item For any $p\in \Sigma^1\cup \Sigma^2$ there is $t(p)$ depending smoothly on $p$ so that $X^{t(p)}(p)\in D^1\cup D^2$ and $X^s(p)\notin \Sigma\cup D^1\cup D^2$ for $s\in]0,t(p)[$. 
 Thus, $X^{t(p)}(p)$ is the first return of the orbit of $p$ on $\Sigma\cup D^1\cup D^2$, and we denote it $S(p)=X^{t(p)}(p)$.  
  Using item 3) one gets that $P=R\circ S\colon \Sigma\setminus(\gamma^s_+ \cup\gamma^s_-)\to \Sigma$ is the first return map of $X$ on $\Sigma$.   
  \begin{figure}[!h]\label{figure7}
 	\centering
	\includegraphics[scale=0.36]{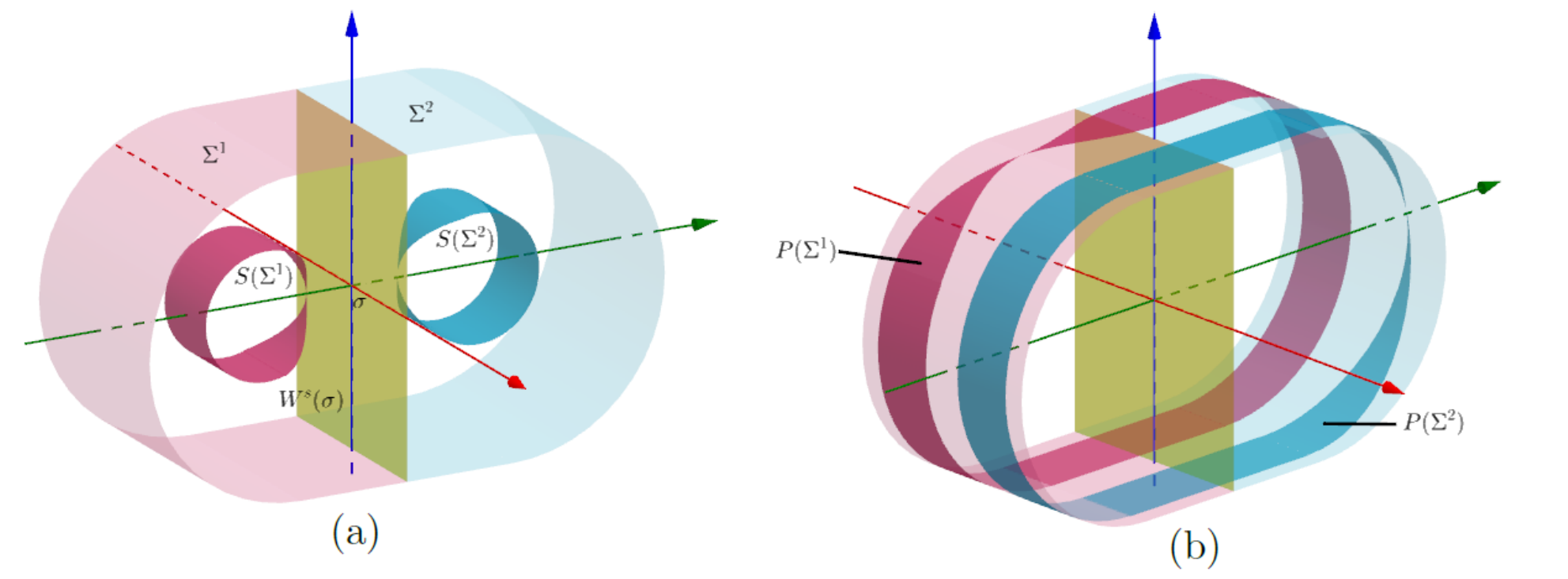}
 	\caption{(a) The  cross-section $\Sigma=\Sigma^1 \cup \Sigma^2, $ the image $S(\Sigma^1)\subset D^1$ and $S(\Sigma^2)\subset D^2$ \hspace{3cm} 
	(b) The image $P(\Sigma^1\cup\Sigma^2)\subset \Sigma$.}
 \end{figure}
 \vspace{-0.2cm}
 
 \item Let $W^u_1(\sigma)$ and $W^u_2(\sigma)$ denote the  unstable separatrices of $\sigma$, that is, the connected components of $W^u(\sigma)\setminus\sigma$. 
 Then the interior of  $D^i$, $1\leq i \leq 2,$ contains the first intersection point $\widetilde{q}_i$ of $W^u_i(\sigma)$ with $\Sigma\cup D^1\cup D^2$. 
 Furthermore, $S(\Sigma^i)\cup \{\widetilde{q}_i\}$ is an annulus pinched at $\widetilde{q}_i$. This pinched annulus is essential in $D^i$ (see Figure 7(a)): by pinched, we mean that in a neighbourhood of $\widetilde{q}_i$, the set  $S(\Sigma^i)\cup \{\widetilde{q}_i\}$ consists of two cuspidal triangles, with a cusp at $\widetilde{q}_i$ and tangent at this point to the same line but oriented in opposite direction.
  By construction, the closure of  $P(\Sigma^1\cup \Sigma^2)$ consists of $2$ parallel essential annuli in $\Sigma$, pinched at $q_i=R(\widetilde{q}_i)$. See Figure 7(b). 
\end{enumerate}

\subsection{Singular hyperbolic conditions}
 
\begin{enumerate}
\setcounter{enumi}{7}
\item The maximal invariant set $\Lambda$ in $U$ is singular hyperbolic with bundles $E^s$ and $E^{cu}$. This is equivalent to  the request that the first return map $P$ is hyperbolic. We will request  more. 

 \item There is a conefield $\cC^u$ on the annulus $\Sigma\simeq \SS^1\times [-1,1]$, transverse to the fibers $\{\theta\}\times[-1,1]$, and strictly invariant by the derivative $DP$ of $P$, and so that the vectors in $\cC^u$ are uniformly expanded by a factor $\lambda>1$.
  The cone $\cC^u(p)$ has two components: vectors cutting the fiber positively or negatively.  
  
\item  There exists $t>0$ such that
$\|DX^t|_{E^s_x}\|\cdot \|DX^{-t}|_{E^{cu}_{X^t(x)}}\|\cdot \|DX^t|_{E_x^{cu}}\|<1 \label{eq.cont.}\quad \mbox{ for all $x \in \Lambda$}.$
\end{enumerate}
As a consequence of item (10) and  \cite[Theorem 4.12]{ararujomelbourne}, 
the stable foliation is well defined in $U$ and it is of class $C^1$.  
 This foliation is not tangent to $\Sigma$. However, there is a $2$ dimensional \emph{center-stable foliation } well defined on $U\setminus \sigma$, obtained by considering the $X^t$ orbits of the stable foliation: this foliation is $C^1$ too. This center stable foliation cuts the annulus $\Sigma$ transversely along a $C^1$, one-dimensional foliation $\cF^s$, which is  the stable foliation of the return map $P$. 
 The segments $\gamma^s_+$ and $\gamma^s_-$ are leaves of $\cF^s$. 
 The foliation $\cF^s$ is transverse to the unstable cone field $\cC^u$.  The leaves of the foliation $\cF^s$ are segments crossing $\Sigma$ (connecting the two boundary components of $\Sigma$). 

\subsection{{$X\in \cO_1$ can be constructed in any $3$-dimensional manifold}}
{To see that  $X$ satisfying itens 1. to 10. can be realized in any $3$-manifold,  we first construct an attracting region $\UU\subset \RR^3$ containing $\Sigma$, $D^1$, $D^2$ and the singularity $\sigma$ so that every regular orbit of $X$ in $\UU$ crosses $\Sigma$ transversally.  The maximal invariant set $\Lambda$ in $\UU$ is an attracting invariant compact set.}

{To construct $\UU$ we proceed as follows.
Let $\DD\subset \CC$ be the unit disk in the complex plane and consider the solid cylinder $\AA_0=\DD \times [-1,1]$ with coordinates $z\in \CC$  and $ t \in [-1,1]$.
Define the solid cylinders 
$$\CC_1= \{ (z, t); \|z\|< 1-\varepsilon, t \in (-1+\varepsilon, -\varepsilon) \} 
\,\,\mbox{and}\,\,
\CC_2= \{ (z, t); \|z\|< 1-\varepsilon, t\in (\varepsilon, 1-\varepsilon) \}.$$}

 \begin{figure}[h!] 
 	\centering
\includegraphics[scale=0.14]{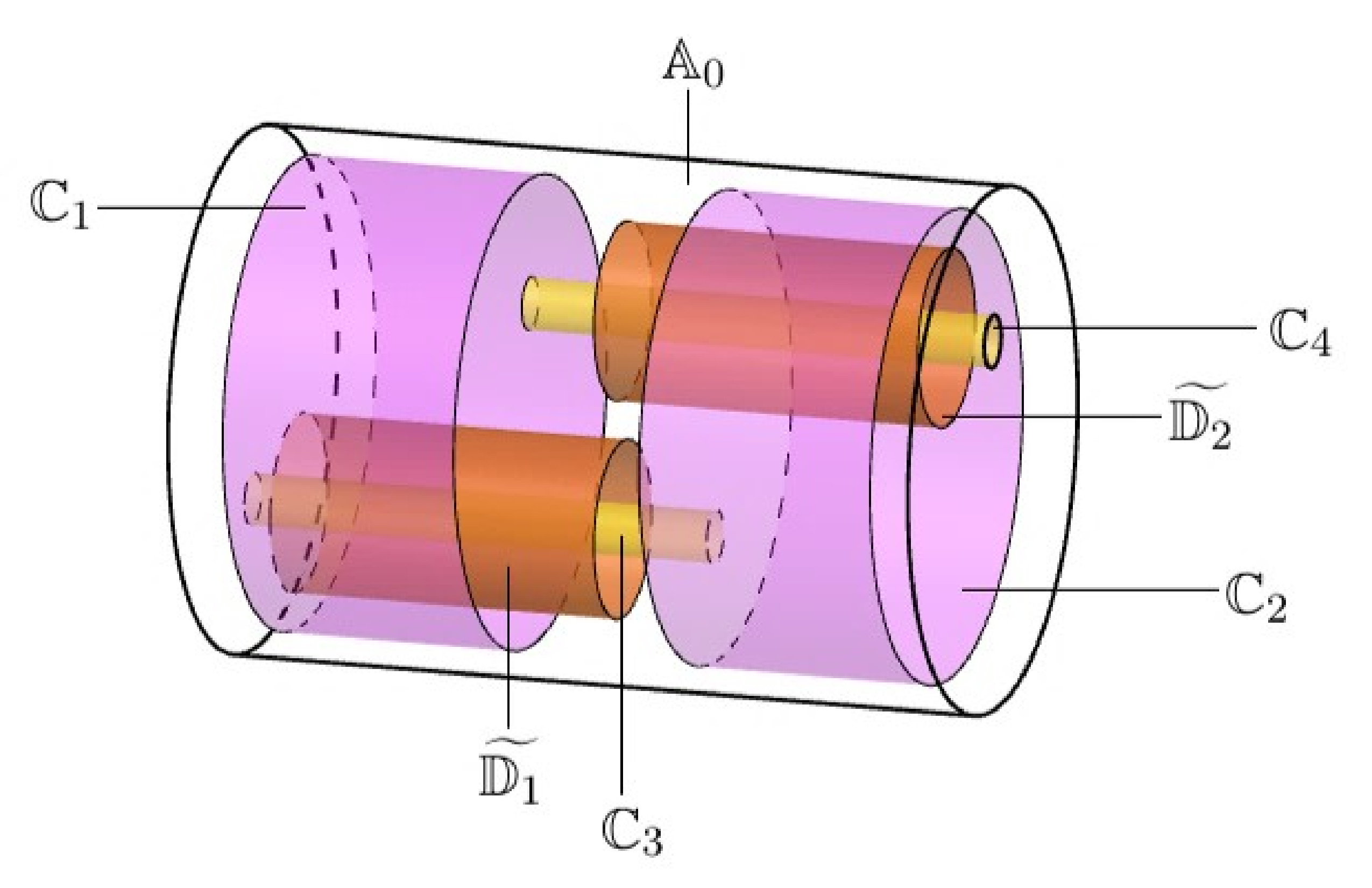} \label{atracting}
\caption{Determining the attracting region}
\end{figure}

{Let $\AA_1=\AA_0\setminus (\CC_1 \cup \CC_2)$ and pick the points
$x=1/2, y=-1/2 \in \CC$. Define
$$
\widetilde{\DD_1}=\{(z,t); \|z-x\|< 1/4,\, t \in (-1 + \varepsilon,0)\} \,\, \mbox{and}\,\,\,\, 
\widetilde{\DD_2}=\{(z,t); \|z-y\|< 1/4,\, t \in (0,1-\varepsilon)\}.
$$}

{Consider $\AA_2= \AA_1 \cup (\widetilde{\DD_1} \cup \widetilde{\DD_2}),$ and 
define
$$
\CC_3=\{(z,t); \|z-x\|< 1/8,\, t \in (-\varepsilon,1)\}, \,\,
\CC_4=\{(z,t); \|z-y\|< 1/4,\, t \in (-1,\varepsilon)\}.
$$
 Consider 
 \begin{equation}\label{region-A}
 \UU=\AA_2 \setminus \big(\CC_3 \cup\CC_4\big).
 \end{equation}}
 
{We define  $X$ in the interior of the region $\UU$  as follows.
We can assume that the singularity $\sigma$ is in the interior of $\UU$ and
that its stable set  $W^s(\sigma)$ intersects 
$\Sigma = \{(z,t) \in \UU \times [-\varepsilon,\varepsilon]\,\,;\,\, \|z\|=1-\varepsilon\}$
into the two  disjoint curves $\gamma^s_-$ and $\gamma^s_+$ that split $\Sigma$ into two regions $\Sigma^1$ and $\Sigma^2$.
Let $D^1=\{(z,t); \|z\|= 1/4, \,\,t \in (-\varepsilon/2,\varepsilon/2)\} $ and $D^2=\{(z,t); \|z\|= 1/4, \,\,t \in (-\varepsilon/2,\varepsilon/2)\}$.}

{Define $X$ in such a way that it crosses inward transversally $\Sigma$ and, for each point 
$p\in \Sigma^1$, (resp. $\Sigma^2$) there is a first positive time $t_p$ such that
$X^{t_p}(p) \in D^1$ (resp. $D^2$). We set, to keep the previous notation, $X^{t_p}(p)\eqdef S(p)$.}
  \begin{figure}[h!]
 	\centering
\includegraphics[scale=0.15]{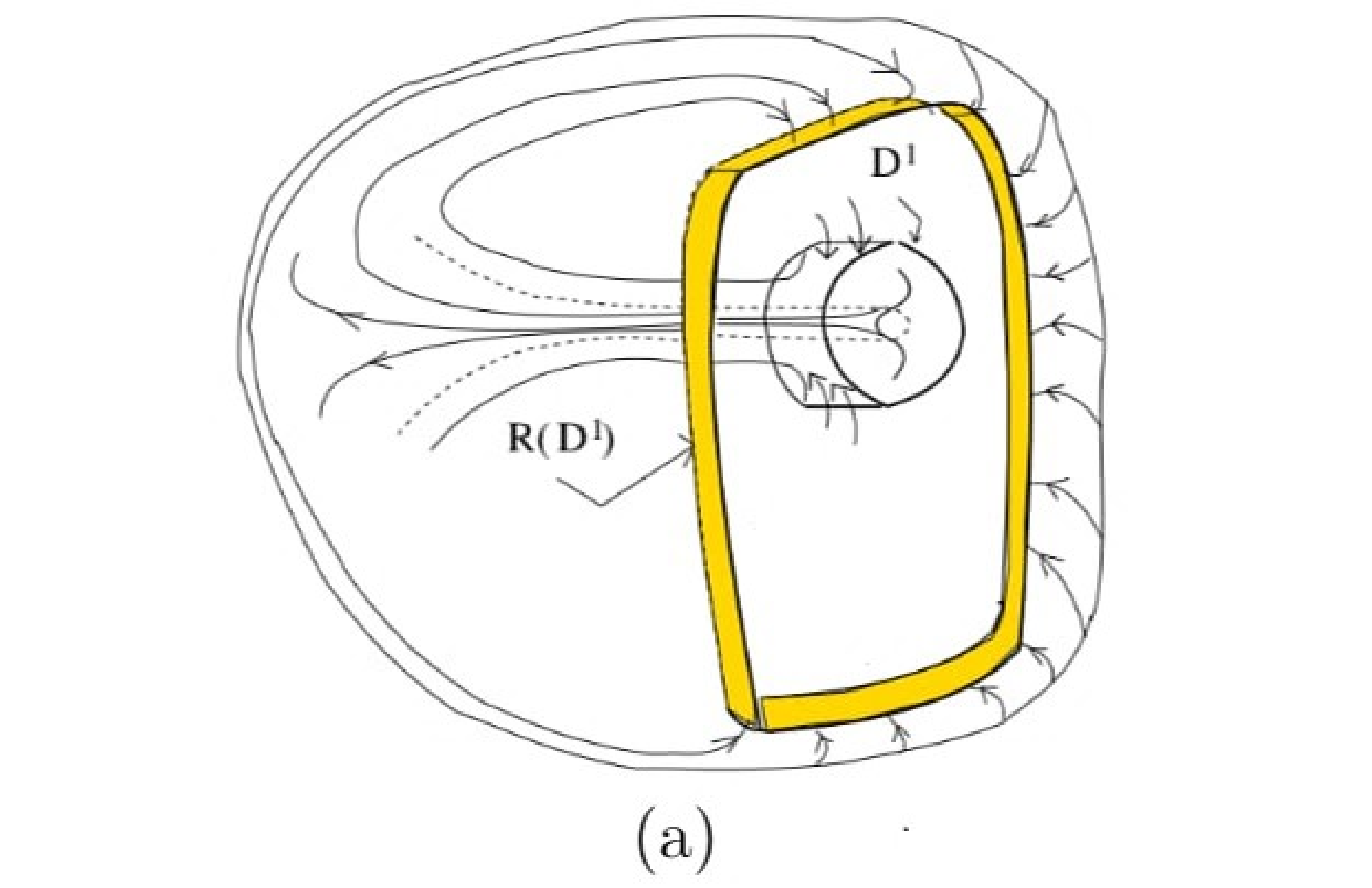} 
\hspace{-2cm}
\includegraphics[scale=0.15]{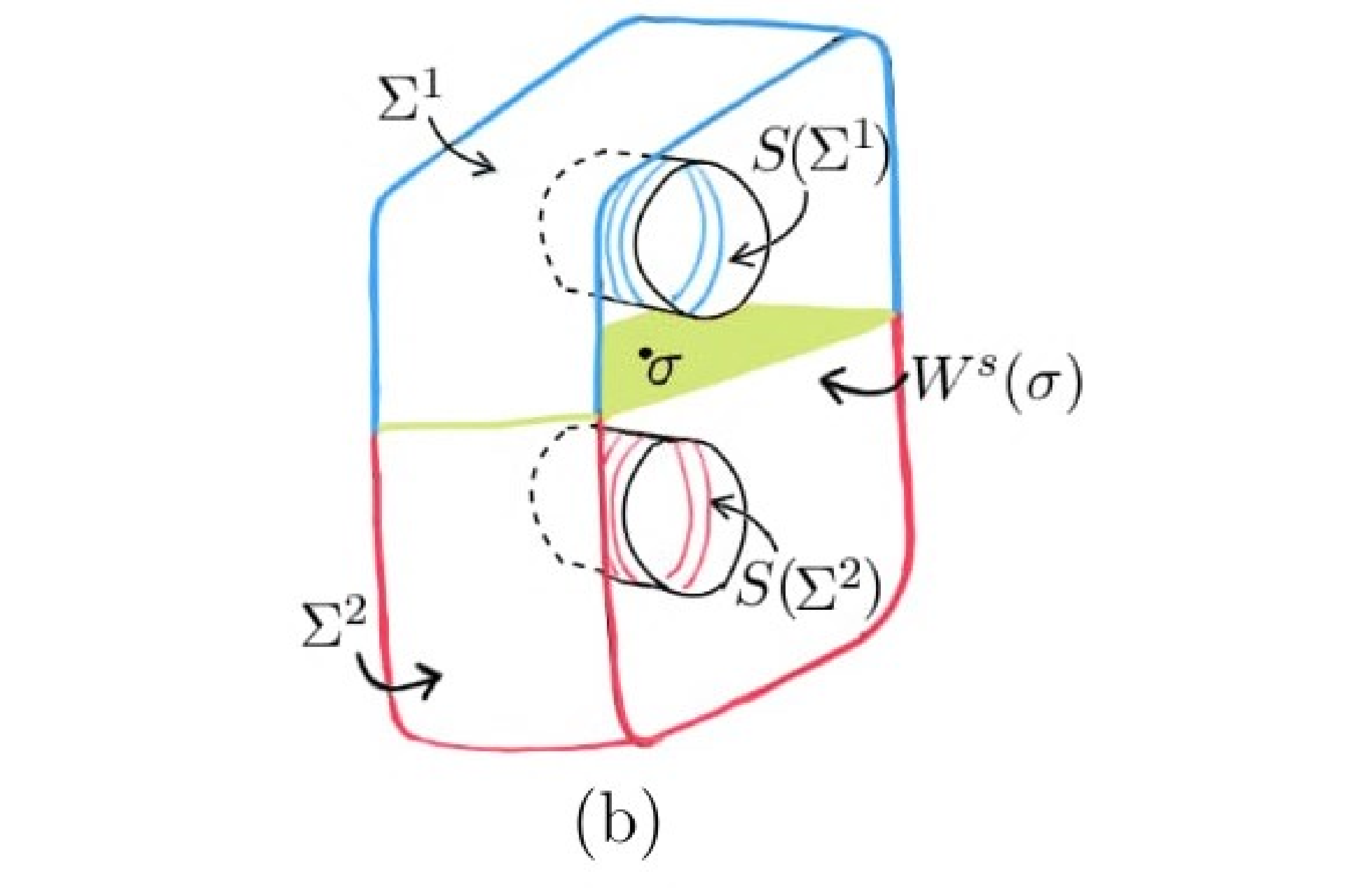}
\hspace{-2cm}
\includegraphics[scale=0.15]{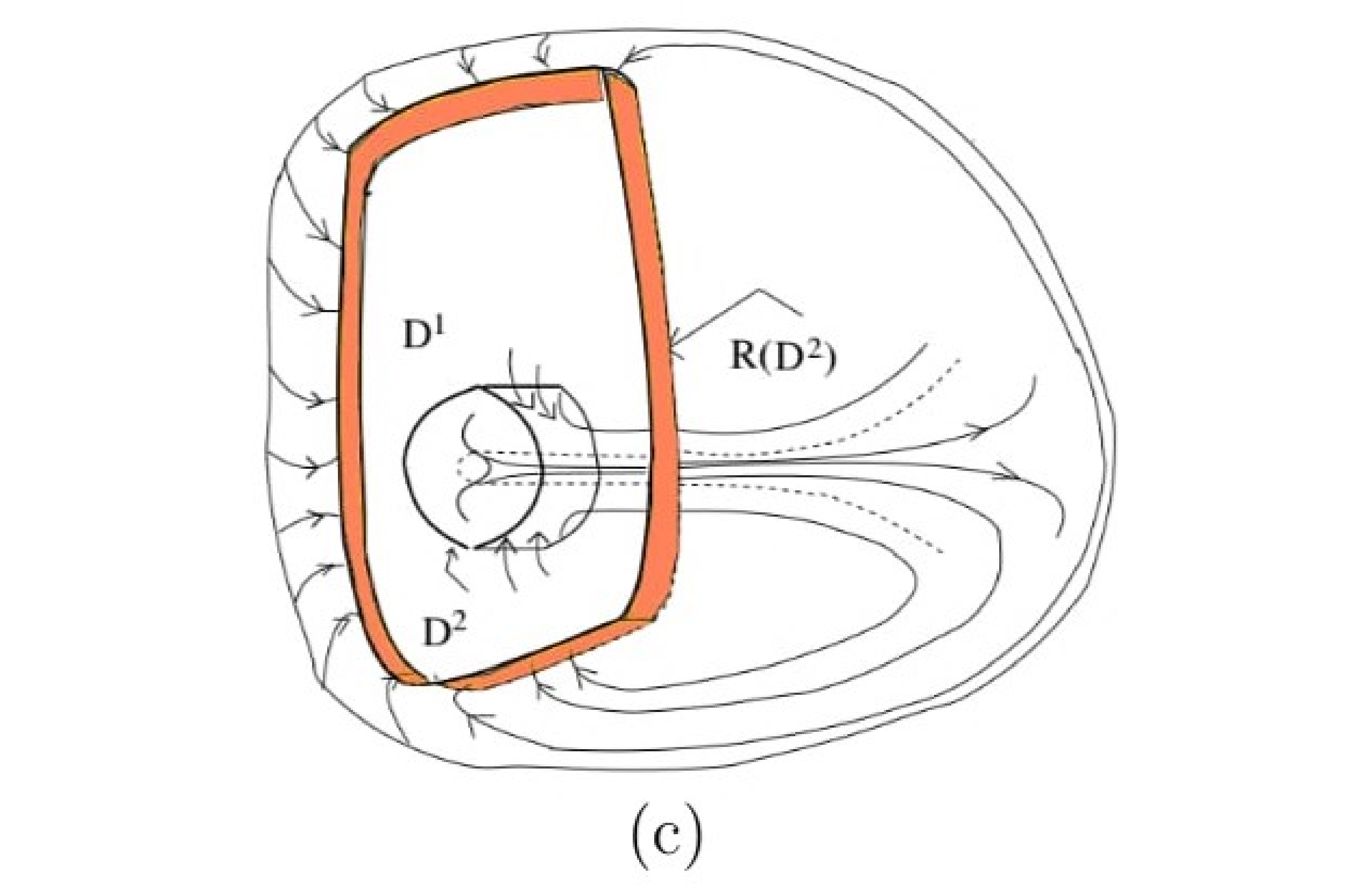}
\caption{(a) Trajectories of $X$  from $D^1$ to $\Sigma$, (b) the cross-section $\Sigma=\Sigma^1\cup \Sigma^2$ and the images $S(\Sigma^1) \subset D^1$ and $S(\Sigma^2) \subset D^2$, (c) Trajectories of $X$  from $D^2$ to $\Sigma$.}
 	\label{vacafria}
 \end{figure}

{Moreover, for each point $\tilde{q}=S(p) \in D^1$ (resp. $D^2$), there is a first positive time $t_{\tilde{q}}$
such that $X^{t_{\tilde{q}}}(q)$ crosses inward  transversally $\Sigma$. Again, to keep the previous notation, we set $X^{t_{\tilde{q}}}(q) \eqdef R(\tilde{q})$.  
This completes 
the definition of $X$ on the cross-section $\Sigma$. 
Clearly $\Sigma$ is a global cross-section to $X$ and the first return map  $P: \Sigma\to \Sigma$ is equal to  $R\circ S$. Then we easily complete the definition of $X$ on $\bigcup_{t\geq 0}\Sigma$ satisfying  all the properties described above.}

{By construction, the region $\UU$ defined in (\ref{region-A}) is homeomorphic to the attracting region for our flow $X^t$, and it is  contained in $\RR^3$.
Figure \ref{vacafria} represents the geometric realization of the union $\bigcup_{p\in D^i}X^{[0,t(p)]}(p)$, where $1\leq i\leq 2$. This illustration demonstrates that this construction can be realized within a ball contained in $\mathbb{R}^3$ and, consequently, within any $3$-dimensional compact manifold.}  \hspace{14cm} $\square$

\section{Topological dynamics: the attractor and the chain recurrence classes}\label{s-the-open-set-2}


\begin{defi}\begin{itemize}\item We say that $X\in \cO_1$ exhibits a \emph{two-sided (geometric model)  Lorenz attractor} if, for any $Y$ in a neighbourhood of $X$, the maximal invariant set $\Lambda_Y$ in the attracting region $U$ is transitive and has a non-trivial intersection with both {stable sets} $W^s_+(\sigma,Y)$ and $W^s_-(\sigma, Y)$. 
\item We say that $X$ exhibits an \emph{upper-Lorenz attractor} if it admits a geometric model of Lorenz attractor  disjoint from the component $W^s_-$. 
\item We say that $X$ exhibits a \emph{down-Lorenz attractor} if it admits a geometric model of Lorenz attractor  disjoint from the component $W^s_+$.
\end{itemize}
\end{defi}

This section aims to show that, under a certain condition on the expansion in the unstable direction, the vector fields $X$ in an open and dense subset exhibit either a two-sided or an upper- or a down-Lorenz attractor, and the $3$ cases appear.  

\subsection{Quasi-attractor}

We started by noticing that there is non-ambiguity on what can be the attractor: 

\begin{prop}\label{p.quase-atrator} For any $X\in \cO_1$, the chain recurrence class of the singularity $\sigma$ is the unique quasi-attractor in the attracting region $U.$
\end{prop}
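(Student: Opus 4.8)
The plan is to combine two facts: Conley's theorem (recalled just after the definition of quasi-attractor) guarantees that the attracting region $U$ contains at least one quasi-attractor, while, on the other hand, I will show that \emph{any} quasi-attractor $K\subset U$ must contain $\sigma$. Since a quasi-attractor is by definition a single chain recurrence class, the inclusion $\sigma\in K$ forces $K$ to coincide with the chain recurrence class of $\sigma$; this simultaneously yields that $\sigma$ is chain recurrent, that its class is a quasi-attractor, and that it is the only one.

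First I would establish a saturation property: if $K=\bigcap_i U_i$ is a quasi-attractor, with $U_i$ a nested basis of attracting regions, then $W^u(x)\subset K$ for every $x\in K$. Indeed, each $U_i$ is strictly forward invariant ($X^t(\overline{U_i})\subset \operatorname{int}(U_i)$ for $t>0$), and the compact set $K\subset \operatorname{int}(U_i)$ lies at positive distance $\rho_i$ from $\partial U_i$. Given $y\in W^u(x)$, for $t$ sufficiently negative one has $d(X^t y,X^t x)<\rho_i$, so $X^t y\in U_i$; forward invariance then gives $y=X^{-t}(X^t y)\in U_i$ for every $i$, whence $y\in\bigcap_i U_i=K$. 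Consequently, for a regular point $\bar x\in K\cap\Sigma$, the trace on $\Sigma$ of the unstable set of $\bar x$ is an unstable curve $\delta_0\subset K\cap\Sigma$ of positive length, tangent to the cone field $\cC^u$; moreover all its forward return images $P^n(\delta_0)$ remain in $K\cap\Sigma$ as long as they avoid $\gamma^s_+\cup\gamma^s_-$, because $K$ is flow-invariant and $P$ is the return map along the flow.

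Next I would run an expansion argument to force $\delta_0$ to reach $W^s(\sigma)$. By item (9) the curves tangent to $\cC^u$ are graphs over the $\SS^1$-factor of $\Sigma\simeq\SS^1\times[-1,1]$ (the cone is transverse to the fibers) and are uniformly expanded by $\lambda>1$ under $DP$. If no iterate $P^n(\delta_0)$ met $\gamma^s_+\cup\gamma^s_-$, each would be a connected unstable curve contained in a single component $\Sigma^1$ or $\Sigma^2$, hence of $\theta$-extent bounded by the circumference of $\SS^1$ and therefore of uniformly bounded length; this contradicts $\operatorname{length}(P^n(\delta_0))\ge \lambda^n\operatorname{length}(\delta_0)\to\infty$. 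Thus there is a least $m$ with $P^m(\delta_0)$ meeting $\gamma^s_+\cup\gamma^s_-$, and by minimality $P^m(\delta_0)\subset K$, so $K$ contains a point $z\in(\gamma^s_+\cup\gamma^s_-)=W^s(\sigma)\cap\Sigma$. The forward orbit of $z$ converges to $\sigma$ (positive orbits of points of $\gamma^s_\pm$ go directly to the singularity), and since $K$ is closed and invariant we conclude $\sigma\in K$, i.e. $K$ equals the chain recurrence class of $\sigma$.

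Combining the two implications closes the proof: the quasi-attractor furnished by Conley equals the class of $\sigma$, so this class is a quasi-attractor; and since every quasi-attractor contains $\sigma$ and is a single chain recurrence class, it is the unique one. I expect the main obstacle to be the geometric control in the expansion step — rigorously bounding the length of an unstable curve trapped in $\Sigma^i$ and guaranteeing that its intersection with $\gamma^s_\pm$ genuinely lies in the closed set $K$ — together with handling the degenerate possibility $K=\{\sigma\}$, which is immediate since then $\sigma\in K$ already. One should also note that $\{\sigma\}$ alone cannot be a quasi-attractor, as the separatrices $W^u_i(\sigma)$ leave every small neighborhood of $\sigma$ to reach $q_i$, so no small neighborhood of $\sigma$ is an attracting region.
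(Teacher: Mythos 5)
Your overall strategy is sound and genuinely different from the paper's. The paper reduces the proposition to Lemma \ref{l.quase-atrator} (density of $W^s(\sigma)$ in $U$): since a quasi-attractor admits arbitrarily small attracting neighborhoods, each such (open) neighborhood must meet the dense set $W^s(\sigma)$, and forward invariance then forces $\sigma$ into the quasi-attractor. You instead work from inside the quasi-attractor $K$: you prove a saturation property ($W^u(x)\subset K$ for $x\in K$, via the nested attracting neighborhoods), extract an unstable curve $\delta_0\subset K\cap\Sigma$, and iterate it until it hits $\gamma^s_+\cup\gamma^s_-$. The expansion-versus-bounded-length dichotomy you use is exactly the engine of the paper's Lemma \ref{l.quase-atrator} (an unstable segment avoiding $\gamma^s_\pm$ stays in one component $\Sigma^i$ and hence has uniformly bounded length, contradicting expansion by $\lambda>1$), so the two proofs share their analytic core; the difference is which segment gets iterated. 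The paper's choice (an arbitrary segment in an arbitrary open subset of $\Sigma$) is cheaper, because it only needs the segment to exist, not to lie in $K$.

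That cheapness points to the one genuine gap in your write-up: the assertion that ``the trace on $\Sigma$ of the unstable set of $\bar x$ is an unstable curve $\delta_0\subset K\cap\Sigma$ of positive length.'' Your saturation property only gives $W^u(\bar x)\subset K$; it does not by itself produce a curve of positive length inside $W^u(\bar x)$. For that you need the existence of local unstable manifolds of uniform size through points of $K\cap\Sigma$ (e.g.\ by a graph-transform argument using the backward invariance and backward contraction of $\cC^u$ along the backward $P$-orbit, which exists for points of $K\cap\Sigma\subset\Lambda_P$, plus the special treatment of points on $W^u(\sigma)$ where the backward orbit falls into the singularity), together with the verification that this local unstable manifold is contained in the unstable \emph{set} $W^u(\bar x)$ as the paper defines it. All of this is true and standard for singular hyperbolic attractors, but none of it is available at the point in the paper where this proposition is proved, and you do not supply it. If you want to keep your route, you should either prove the local unstable manifold theorem for $P$ on $\Lambda_P$, or switch to the paper's shortcut: prove density of $W^s(\sigma)$ by iterating an arbitrary unstable segment in an arbitrary open set (no need for it to lie in $K$), and then use only the definition of quasi-attractor to conclude $\sigma\in K$. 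Your closing remarks (the degenerate case $K=\{\sigma\}$ and the observation that $\{\sigma\}$ alone is not attracting) are correct but not needed once $\sigma\in K$ is established.
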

 As $U$ is an attracting region, Conley's theory implies that $U$ contains at least one 
 quasi-attractor. 
 Now Proposition~\ref{p.quase-atrator} is a direct consequence of the following lemma:
 \begin{lema}\label{l.quase-atrator} The stable manifold of $\sigma$ is dense in $U.$
 \end{lema}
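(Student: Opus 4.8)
The plan is to reduce the statement to a density assertion on the cross-section $\Sigma$ and then to exploit the uniform expansion of the first return map $P$ along the unstable direction. First I would observe that $\gamma^s_+\cup\gamma^s_-=W^s(\sigma)\cap\Sigma$, since by item 4) the positive orbits of points in $\gamma^s_\pm$ go directly into $\sigma$. Consequently every backward iterate $P^{-n}(\gamma^s_+\cup\gamma^s_-)$ again lies in $W^s(\sigma)\cap\Sigma$: a point whose orbit returns to $\gamma^s_\pm$ after $n$ passages through $\Sigma$ subsequently flows into $\sigma$. Because the stable foliation $\cF^s$ is $P$-invariant and $\gamma^s_\pm$ are themselves leaves of $\cF^s$, each set $P^{-n}(\gamma^s_+\cup\gamma^s_-)$ is a union of full stable leaves crossing $\Sigma$. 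The goal thus becomes to show that $\bigcup_{n\ge 0}P^{-n}(\gamma^s_+\cup\gamma^s_-)$ is dense in $\Sigma$.

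The key step is the expansion argument. Let $J\subset\Sigma$ be a short arc transverse to $\cF^s$ and tangent to the cone field $\cC^u$ (an unstable arc). As long as $J,P(J),\dots,P^{n-1}(J)$ avoid the discontinuity set $\gamma^s_+\cup\gamma^s_-$, the map $P^n$ restricts to a diffeomorphism on $J$ and, by item 9), stretches its transverse length by at least the factor $\lambda^n$. Since the total transverse extent of the annulus $\Sigma$ is finite, the lengths $\lambda^n\,\length(J)$ cannot grow indefinitely; hence there is a least $k$ for which $P^k(J)$ meets $\gamma^s_+\cup\gamma^s_-$, equivalently $J\cap P^{-k}(\gamma^s_+\cup\gamma^s_-)\ne\emptyset$. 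As $J$ may be taken arbitrarily short about any prescribed point of $\Sigma$, and $P^{-k}(\gamma^s_+\cup\gamma^s_-)$ consists of entire stable leaves, this produces leaves of $W^s(\sigma)\cap\Sigma$ arbitrarily close to every point of $\Sigma$. Hence $W^s(\sigma)\cap\Sigma$ is dense in $\Sigma$.

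It remains to pass from $\Sigma$ to $U$. By item 7) every regular orbit in $U$ crosses $\Sigma$, so for $p\in U\setminus\{\sigma\}$ one may choose $\tau$ with $z=X^\tau(p)\in\Sigma$. Picking $z_k\in W^s(\sigma)\cap\Sigma$ with $z_k\to z$, and using the flow-invariance of $W^s(\sigma)$ together with the continuity of $X^{-\tau}$, the points $X^{-\tau}(z_k)\in W^s(\sigma)$ converge to $p$. Since trivially $\sigma\in W^s(\sigma)$, this establishes that $W^s(\sigma)$ is dense in $U$.

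I expect the main obstacle to be the bookkeeping in the expansion step: $P$ is defined only off $\gamma^s_+\cup\gamma^s_-$ and the iterated images of $J$ may be cut by these curves, so one must argue cleanly that an unstable arc whose entire forward orbit avoids the discontinuity set would expand without bound, which is exactly what the uniform factor $\lambda>1$ of item 9) forbids. An equivalent and perhaps cleaner route is to quotient $\Sigma$ by $\cF^s$, obtaining a piecewise expanding circle map $f$ with two discontinuity points (the images of $\gamma^s_\pm$), and to run the same argument one-dimensionally: any arc of the circle eventually covers a discontinuity under forward iteration, so the backward orbit of the discontinuity set is dense. Verifying that $f$ is genuinely uniformly expanding in an adapted metric and that its discontinuity set corresponds exactly to $W^s(\sigma)\cap\Sigma$ is the technical heart of the proof, but both facts follow from the invariance of $\cF^s$ and the cone-field expansion recorded in items 9) and 10).
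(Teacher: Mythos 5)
Your argument is correct and is essentially the paper's own proof: reduce to showing that every open set of $\Sigma$ contains a point of $\bigcup_{n}P^{-n}(\gamma^s_+\cup\gamma^s_-)$, then iterate an unstable arc and use the uniform expansion factor $\lambda>1$ from item 9) together with the boundedness of unstable segments that avoid the leaves $\gamma^s_\pm$ to force a forward iterate to meet the discontinuity set. The extra bookkeeping you flag (the passage from $\Sigma$ to $U$ via the flow, and the fact that the preimages consist of whole stable leaves) is handled implicitly in the paper and poses no real obstacle.
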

 
 This proof is identical to the similar statement for the classical geometric model of the Lorenz attractor and is very simple. We include it here for completeness.
 \begin{proof} As any orbit cuts the cross-section $\Sigma$, we only need to prove that, for any open set $O\subset \Sigma$, there
  is $n>0$ so that $P^n(O)\cap (\gamma^s_+\cup \gamma^s_-)\neq \emptyset$.
  
  We consider a segment $S\subset O$ {such that for any $x\in S$ the tangent vector at $x\in S$ belongs to the cone} $\cC^u$. By item 9) there is $\lambda>1$ so that the vector in $\cC^u$ is expanded by a factor larger than $\lambda$. Thus:
  \begin{itemize}
   \item either $S$ cuts $(\gamma^s_+\cup \gamma^s_-)$ and we are done, 
   \item or $P(S)$ is a segment tangent to $\cC^u$ whose length is at least $\lambda$ times the length of $S$.
  \end{itemize}
  
  Iterating the process, one gets that either there is $n$ so that  $P^n(S)$ cuts $\gamma^s_+\cup \gamma^s_- $ or $P^n(S)$ is a segment tangent to $\cC^u)$ whose length tends to infinity. 
  As $\gamma^s_+$ and $\gamma^s_+$ are leaves of the foliation $\cF^s$, which is a fibration by segments on the annulus $\Sigma$,
   there is a bound for the length of any segment in the unstable cone which does not cut every leaf, ending the proof.
 \end{proof}
 \subsection{Iterating unstable segments and transitive properties}
 We fix a vector field $X\in\cO_1$, $U$  is the attracting region in the definition, $\Sigma$ is the global cross-section, and $P$ is the first return map. 
 First, notice that the closures $\bar P^n(\Sigma)$, $n\geq 0$, is a nested family of compact subsets of $\Sigma$.
 Let $\Lambda_P$ denote the intersection of the $\bar  P^n(\Sigma)$
 $$\Lambda_P=\bigcap_{n\geq 0} \bar P^n(\Sigma).$$
 
 \begin{lema} The compact set $\Lambda_P$ is the intersection of the maximal invariant set $\Lambda_X$ with $\Sigma$.  Furthermore, $\Lambda_X$ is the union of $\sigma$ with the saturated  flow of $\Lambda_P$. 
 \end{lema}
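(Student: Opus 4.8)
The plan is to prove the two assertions in turn: first that $\Lambda_P = \Lambda_X \cap \Sigma$, and then that $\Lambda_X = \{\sigma\} \cup \bigcup_{t\in\RR} X^t(\Lambda_P)$; the second will follow almost formally from the first together with item 7 of the construction.

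For the inclusion $\Lambda_P \subseteq \Lambda_X \cap \Sigma$, I would start from the observation that, since $U$ is an attracting region and $\Sigma\subset U$, the forward orbit of any $p\in\Sigma$ automatically stays in $U$; hence $\Lambda_X\cap\Sigma$ is exactly the set of $p\in\Sigma$ whose backward orbit remains in $U$. Given $p\in\Lambda_P$, for each $n$ pick $p_n\in P^n(\Sigma)$ with $p_n\to p$. The backward orbit of $p_n$ crosses $\Sigma$ at least $n$ times while staying in $U$; since the three annuli $\Sigma,D^1,D^2$ are disjoint and transverse to $X$, consecutive crossings of $\Sigma\cup D^1\cup D^2$ are separated by a time bounded below by some $\delta>0$, so the backward orbit of $p_n$ stays in $U$ on a time interval of length at least $n\delta$. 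Letting $n\to\infty$ and using continuity of the flow together with compactness of $\overline U$, the backward orbit of $p$ stays in $\overline U$ for all time; transversality of $X$ to $\partial U$ (the flow entering $U$) then forces it into $U$, so $p\in\Lambda_X\cap\Sigma$.

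For the reverse inclusion I would split according to the backward itinerary of $p\in\Lambda_X\cap\Sigma$. If the backward orbit of $p$ crosses $\Sigma$ infinitely often, then $p\in P^n(\Sigma)\subseteq\bar P^n(\Sigma)$ for every $n$, so $p\in\Lambda_P$. Otherwise the backward orbit crosses $\Sigma$ only finitely many times and, remaining in $U$, must accumulate on $\sigma$; thus $p$ lies on $W^u(\sigma)$, and after finitely many backward returns one reaches a first crossing, which is one of the pinch points $q_i=R(\widetilde q_i)$. The subtlety is that $q_i$ has no backward $\Sigma$-crossing, so it is not literally in $P(\Sigma)$: it appears only after taking closures, as the cusp of the pinched essential annulus $\bar P(\Sigma^i)$. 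I would show $q_i\in\bar P^n(\Sigma)$ for all $n$ using the essential-annulus structure of item 6 together with the fact that $\cF^s$ is a foliation of $\Sigma$ by segments crossing it: an essential annulus meets every such leaf, so each $\bar P^k(\Sigma)$ meets the stable separatrix $\gamma^s_\pm$ bounding $\Sigma^i$, and points of $P^k(\Sigma)$ approaching that separatrix have $P$-images approaching the cusp $q_i$; hence $q_i\in\bar P^{k+1}(\Sigma)$, and inductively $q_i$, and therefore $p=P^m(q_i)$, lies in $\Lambda_P$. This pinch-point bookkeeping is the main obstacle, since it is precisely where the role of the closures in the definition of $\Lambda_P$ must be justified.

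Finally, for the second assertion, $\supseteq$ is immediate: $\sigma\in\Lambda_X$, and $X^t(\Lambda_P)=X^t(\Lambda_X\cap\Sigma)\subseteq\Lambda_X$ by invariance. For $\subseteq$, take $x\in\Lambda_X$ with $x\neq\sigma$; then $x$ is regular with full orbit in $U$, so by item 7 its orbit meets $\Sigma$, say $X^{t_0}(x)\in\Sigma$. Since $X^{t_0}(x)\in\Lambda_X\cap\Sigma=\Lambda_P$, we obtain $x=X^{-t_0}(X^{t_0}(x))$ in the flow-saturation of $\Lambda_P$, which completes the proof.
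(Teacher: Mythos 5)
Your proposal is correct and follows essentially the same route as the paper's proof: the key dichotomy is identical (orbits of $\Lambda_X$ other than $\sigma$ cut $\Sigma$; those not on $W^u(\sigma)$ cut it for an infinite sequence of negative times because their $\alpha$-limit set is not reduced to $\sigma$, hence lie in $\bigcap_{n\ge 0}P^n(\Sigma)$; passing to the closures $\bar P^n(\Sigma)$ adds exactly $\Sigma\cap W^u(\sigma)$). You simply supply the details the paper leaves implicit, namely the compactness/backward-escape-time argument for $\Lambda_P\subseteq\Lambda_X\cap\Sigma$ and the essential-annulus bookkeeping showing $q_i\in\bar P^n(\Sigma)$ for all $n$, both of which are sound.
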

 \noindent {\em{Proof}}. Any orbit of $\Lambda_X$, but $\sigma$, cuts $\Sigma$.  Any orbit but $W^u(\sigma)$ cut $\Sigma$ for an infinite sequence of negative times (tending to $-\infty$): indeed the $\alpha$-limit set of such an orbit is not reduced to $\sigma$, and therefore cuts $\Sigma$. 
   Thus every orbit  of $\Lambda_X$ not in $W^u(\sigma)$ cuts $\Sigma$ in $\bigcap_{n\geq 0} P^n(\Sigma)$. 
    In $\Lambda_P$ we consider the closures $\bar P^n(\Sigma)$.  This consists in adding $\Sigma\cap W^u(\sigma)$ to $\bigcap_{n\geq 0} P^n(\Sigma)$. The proof follows from these observations. $\square$
 
 Note that for every $n>0$, the closure $\bar P^n(\Sigma)$ consists in the union of $2^n$ pinched essential annuli, each annulus admits a finite set of pinched (cuspidal) points, and the boundary is tangent to the unstable cone. 
 The intersection of two annuli is contained in this finite set of pinched points. 
 Finally, the thickness of these annuli is exponentially decreasing with $n$. 
 The intersection is an essential circle for any nested sequence of such pinched annuli in 
$\bar P^n(\Sigma)$, $n>0$. 
In particular, the topological dimension of $\Lambda_P$ is $1$. 
 Moreover, the boundary of any annulus is tangent to the unstable cone, and the image by $DP^n$ of this unstable cone.  The intersection of the iterates of the  unstable cones converges to a well-defined continuous, invariant line field $E^u$ on $\Lambda_P$. Each of the circles is tangent to $E^u$. 

 \begin{lema} Let $X$ be a vector field in $\cO_1$ and $P$ the first return map on the cross-section $\Sigma$.  Suppose for every segment  $S\subset \Sigma$ transverse to the stable foliation $\cF^s$, the union of stable leaves through  the iterates $P^n(S)$, $n\geq 0$, covers an open and dense subset of $\Sigma$. Then, the maximal invariant set $\Lambda_X$ is transitive.  
 \end{lema}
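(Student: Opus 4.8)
The plan is to reduce the statement to a topological transitivity property of the first return map $P$ on $\Lambda_P=\Lambda_X\cap\Sigma$, and then to lift that property to the flow. Recall from the previous lemma that $\Lambda_X$ is the union of $\sigma$ with the flow saturation of $\Lambda_P$, and that $\Lambda_X$ is a compact invariant set for the \emph{continuous} flow $X^t$. Since for a continuous flow on a compact metric space topological transitivity (for every pair of nonempty relatively open sets $A_1,A_2$ there is $t>0$ with $X^t(A_1)\cap A_2\neq\emptyset$) is equivalent, by Baire's theorem, to the existence of a dense orbit, it suffices to establish this topological transitivity. As every orbit of $\Lambda_X$ other than $\sigma$ meets $\Sigma$, and as $W^s(\sigma)$ is dense in $U$ by Lemma~\ref{l.quase-atrator} (so that the forward orbit of any point of $\Lambda_P$ passes arbitrarily close to $\gamma^s_+\cup\gamma^s_-$, hence to $\sigma$), it is enough to prove the corresponding statement for $P$: for any two nonempty open subsets $V_1,V_2$ of $\Lambda_P$ there is $n\geq 0$ with $P^n(V_1)\cap V_2\neq\emptyset$.

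So I fix such $V_1,V_2$. Let $\pi\colon\Sigma\to\SS^1$ be the projection onto the leaf space of the fibration $\cF^s$, and let $f$ be the one dimensional quotient of $P$, which is well defined because $\cF^s$ is $P$-invariant. Using the structure of $\Lambda_P$ described above (it is tangent to the invariant line field $E^u$ and contains a local unstable arc through each of its points, the boundaries of the nested pinched annuli being tangent to $\cC^u$), I choose inside $V_1$ an unstable segment $S\subset\Lambda_P$; being tangent to $E^u$ it is transverse to $\cF^s$, so the hypothesis applies to $S$ and tells us that the set of leaves meeting $\bigcup_{n\ge0}P^n(S)$ is dense, that is, $\Theta\eqdef\bigcup_{n\ge0}f^n(\pi(S))$ is dense in $\SS^1$. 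On the target side I choose inside $V_2$ a small product box $R_2=\pi^{-1}(I_2)\cap\{\,\dist^s(\cdot,\tau_2)<\delta\,\}$ for a small unstable arc $\tau_2\subset\Lambda_P\cap V_2$ with $\pi(\tau_2)=I_2$, chosen so that $R_2\cap\Lambda_P\subset V_2$.

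The heart of the argument is to convert the purely ``horizontal'' information furnished by the hypothesis (density of $\Theta$ in the leaf space) into a genuine intersection, by exploiting the uniform contraction of $\cF^s$ under $P$. Because the leaves of $\cF^s$ are segments of uniformly bounded length and $P^{-1}$ expands them by a definite factor $\rho^{-1}>1$ (item 8), there is $m\geq 0$ and a connected component $B$ of $P^{-m}(R_2)$ whose stable extent is a \emph{full} leaf of $\cF^s$ over a small arc $\alpha\eqdef\pi(B)\subset\SS^1$; equivalently $B\cap\Lambda_P\supset\pi^{-1}(\alpha)\cap\Lambda_P$ while $P^m(B)\subset R_2$. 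Since $\Theta$ is dense it meets $\alpha$, so there are $n\geq0$ and $y\in P^n(S)$ with $\pi(y)\in\alpha$; as $y\in P^n(\Lambda_P)=\Lambda_P$ this gives $y\in B$, whence $P^m(y)\in R_2\cap\Lambda_P\subset V_2$. As $P^m(y)\in P^{n+m}(S)\subset P^{n+m}(V_1)$, we obtain $P^{n+m}(V_1)\cap V_2\neq\emptyset$, which is exactly what we wanted.

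The step I expect to be the main obstacle is producing the component $B$ above, i.e. showing that a full stable strip over some small arc is carried inside the prescribed box $R_2$. This requires combining the uniform stable contraction with the bounded geometry of the leaves, and, above all, controlling the discontinuity locus $\gamma^s_+\cup\gamma^s_-$: when pulling the stable segment of $R_2$ backwards it grows in length and may meet the cuts, so one must argue that some backward branch avoids them long enough for the segment to fill an entire leaf, and that the corresponding horizontal arc $\alpha$ stays in the domain of the chosen inverse branch of $f$. Once these contraction/expansion estimates and the $C^1$ regularity of $\cF^s$ (Theorem~\ref{holonomia}, item~\ref{i.am}) are in place, the remaining bookkeeping is routine.
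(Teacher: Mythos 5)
Your argument is correct and follows essentially the same route as the paper: reduce to a transitivity statement for $P$ on $\Lambda_P$, pull the target set backwards until its stable fibers fill entire leaves of $\cF^s$ (using the uniform stable contraction), and then invoke the density hypothesis on the forward iterates of an unstable segment in the source to hit one of those full leaves, pushing forward to get the intersection. The only differences are presentational (you work in the quotient leaf space via $f$ and $\pi$, while the paper manipulates the stable saturations directly in $\Sigma$), and you are more explicit than the paper about the two steps it leaves to the reader, namely the flow-to-$P$ reduction and the full-leaf property of the backward iterates.
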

 
 One easily checks that is enough to prove the following 
 \begin{lema} Under the same hypothesis, given any non-empty open subsets $U\cap \Lambda_P$, $V\cap\Lambda_P$ ($U$ and $V$ open sets of $\Sigma$) there is $n>0$ so that 
 $P^n(V\cap \Lambda_P)\cap (U\cap \Lambda_P)\neq \emptyset.$
  
 \end{lema}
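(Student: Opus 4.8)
The plan is to project everything to the leaf space of the stable foliation, solve the problem there using the standing hypothesis together with the uniform expansion, and then lift back using the uniform contraction along stable leaves. Write $\pi\colon\Sigma\to\SS^1$ for the quotient of $\Sigma$ by the foliation $\cF^s$ (its leaf space is a circle, since the leaves are essential segments crossing $\Sigma$), and let $\bar P\colon\SS^1\to\SS^1$ be the map induced by $P$. Because $\cF^s$ is $P$-invariant and $\cC^u$ is transverse to $\cF^s$ and expanded by the factor $\lambda>1$, the map $\bar P$ is a (piecewise) expanding map of the circle; meanwhile the fibres of $\pi$ are exactly the stable leaves, along which $P$ is a uniform contraction, $\dist(P^k x,P^k y)\le K\nu^k\,\dist(x,y)$ for some $K>0$, $\nu<1$, whenever $x,y$ lie on a common leaf. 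Recall also that through every point of $\Lambda_P$ passes a local arc tangent to $E^u$ and contained in $\Lambda_P$, namely the essential circle obtained as the nested intersection of the pinched annuli through that point.

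First I would reduce to unstable arcs. Given open sets $U,V$ of $\Sigma$ meeting $\Lambda_P$, choose an unstable arc $S\subset V\cap\Lambda_P$ and an unstable arc $J\subset U\cap\Lambda_P$, together with $\delta>0$ such that the $\delta$-neighbourhood of $J$ is still contained in $U$; both arcs exist by the remark above. Set $I_S=\pi(S)$ and $I_J=\pi(J)$, two open arcs of $\SS^1$. The crucial move is to pull back the \emph{target} arc $J$ rather than the individual points of $S$: using that $\bar P$ is locally eventually onto (see below), for every large $m$ the arc $I_S$ contains a full branch component $I'_m$ of $\bar P^{-m}(I_J)$, on which $\bar P^m$ is a homeomorphism onto $I_J$. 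By invariance of $\cF^s$ this branch lifts to an unstable arc $J_{-m}\subset\Lambda_P$ with $\pi(J_{-m})=I'_m$ and $P^m(J_{-m})=J$.

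Next, fix a base point $\bar c\in I'_m\subset I_S$ and let $s^\ast\in S$ and $j^\ast\in J_{-m}$ be the unique points with $\pi(s^\ast)=\pi(j^\ast)=\bar c$. Since the fibres of $\pi$ are stable leaves, $s^\ast$ and $j^\ast$ lie on one and the same leaf, whose diameter is at most a uniform constant $C$. Applying $P^m$ and the contraction estimate gives $\dist\bigl(P^m(s^\ast),P^m(j^\ast)\bigr)\le K\nu^m C$, while $P^m(j^\ast)\in J$ because $P^m(J_{-m})=J$. Choosing $m$ large enough that $K\nu^m C<\delta$, the point $P^m(s^\ast)$ lands in the $\delta$-neighbourhood of $J$, hence in $U$; as $s^\ast\in S\subset V\cap\Lambda_P$ and $P^m(s^\ast)\in\Lambda_P$, this yields $P^m(s^\ast)\in P^m(V\cap\Lambda_P)\cap(U\cap\Lambda_P)$, which is exactly the conclusion.

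The step I expect to be the main obstacle is precisely this passage from the quotient back to $\Sigma$: a naive argument only produces a point $P^m(s^\ast)$ lying on the same stable leaf as a point of $J$, not inside $U$, and forward iteration cannot close this gap because it displaces the fixed target $J$. The device above circumvents this by pulling back the entire arc $J$, so that $s^\ast$ and $j^\ast$ automatically share a leaf and the uniform stable contraction brings $P^m(s^\ast)$ arbitrarily close to $J$. The remaining technical input is that $\bar P$ is locally eventually onto, i.e. $\bar P^m(I)=\SS^1$ for every open arc $I$ and all large $m$; this follows from the standing hypothesis, which forces $\bigcup_n\bar P^n(I)$ to be dense, together with the uniform expansion by $\lambda>1$, in the usual way for expanding Lorenz maps: the length of $\bar P^n(I)$ grows by the factor $\lambda$ until the arc contains a full domain of injectivity of $\bar P$, after which its next image is all of $\SS^1$, while density guarantees the arc is never trapped away from such a domain.
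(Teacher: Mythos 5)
There is a genuine gap, and it sits exactly at the step you single out as your ``device''. You claim that the full branch component $I'_m\subset I_S$ of $\bar P^{-m}(I_J)$ ``lifts to an unstable arc $J_{-m}\subset\Lambda_P$ with $\pi(J_{-m})=I'_m$ and $P^m(J_{-m})=J$''. This lift does not exist in general. The branch of $P^m$ whose domain is the saturated rectangle $\pi^{-1}(I'_m)$ has image a strip over $I_J$ that is exponentially thin in the stable direction; the arc $J$ crosses the same stable leaves as this strip, but at stable heights that have nothing to do with it, so in general $J\not\subset P^m\left(\pi^{-1}(I'_m)\right)$ and the branch inverse of $P^m$ is simply not defined on $J$. (The arc $J$ does admit a genuine $P^{-m}$-preimage arc, because $J\subset\Lambda_P\subset\overline{P^m(\Sigma)}$, but that preimage lives over the particular component of $\bar P^{-m}(I_J)$ dictated by the address of $J$ inside $\Lambda_P$ --- not over the component $I'_m$ you selected inside $I_S$.) Consequently the point $j^*$ with $\pi(j^*)=\bar c$ and $P^m(j^*)\in J$ need not exist, and without it the contraction estimate only tells you that $P^m(s^*)$ lies on the same stable leaf as a point of $J$ --- precisely the ``naive'' conclusion you were trying to improve. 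A second, independent weakness is the justification of the locally-eventually-onto property: the assertion that the length of $\bar P^n(I)$ grows by the factor $\lambda$ until the arc contains a full domain of injectivity ignores that an arc meeting $c_+$ or $c_-$ is cut into components, each of which may be much shorter than $\lambda\,\ell(I)$ (only $\lambda>1$ is assumed at this stage), and density of $\bigcup_n\bar P^n(I)$ does not bound these components from below.

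The paper's own proof avoids both problems by pulling back not the target arc but its stable saturation: it takes $S_U\subset U\cap\Lambda_P$ and the union $W^s_\varepsilon(S_U)\subset U$ of stable $\varepsilon$-segments through it; under $P^{-n}$ these stable segments expand to whole leaves, so $P^{-n}(W^s_\varepsilon(S_U))$ contains the full $\cF^s$-saturation of the genuine preimage $P^{-n}(S_U)$, a nonempty open set. The standing hypothesis, applied to a segment $S_V\subset V\cap\Lambda_P$, then yields $m$ with $P^m(S_V)$ meeting a stable leaf through $P^{-n}(S_U)$, and applying $P^n$ once more puts a point of $P^{m+n}(S_V)\cap\Lambda_P$ inside $W^s_\varepsilon(S_U)\subset U$. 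If you wish to keep your leaf-space formulation, the repair is the same: you must intersect the forward iterates of $S$ with the leaves through the \emph{actual} backward images of $J$ (which is exactly what the hypothesis delivers), rather than prescribing in advance which branch the preimage of $J$ should occupy.
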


 \begin{proof} The open subset $U\cap \Lambda_P$ contains a segment $S_U$ tangent to the unstable {cone.}  There is $\varepsilon >0$ so that the segments of stable leaves through $S_U$ (shrinking $S_U$ if necessary) are contained in $U$. Let $W^s_\varepsilon(S_U)$ be the union of these segments. 
 Note that for any $n>0$, $P^{-n}$ is defined on $S_U$ up to a finite set (the first $n$ iterates of the unstable manifold of $\sigma$).  Now $P^{-n}(W^s_\varepsilon(S_U))$ contains the saturated by $\cF^s$ of $P^{-n}(S_U)$, which contains open segments in $\Lambda_P$. 
 Consider now a segment $S_V$ contained in $V\cap \Lambda_P$.  By assumption, there is $m>0$ so that $P^m(S_V)$ intersects stable leaves in $P^{-n}(W^s_\varepsilon(S_U))$.  In other words, $P^{m+n}(S_V)$ cuts $W^s_\varepsilon(S_U)$ at points that belong to $\Lambda_\Sigma$ (because $S_V$ is contained in $\Lambda_\Sigma$, which is positively invariant by $P$). Thus these intersection points belong to $U\cap \Lambda_\Sigma$ concluding the proof. 
 \end{proof}
 
 We now present  tools to ensure that the orbit of an \emph{unstable segment} (i.e. a segment in $\Sigma$ tangent to the unstable cone ) cuts almost every stable leaf.

 \begin{lema}\label{l.cusptogamma}  Let $S$ be an unstable segment joining a cuspidal point $q_i$ in $\bar P(\Sigma)$ to a point $q_\pm$ in $\gamma^s_\pm$. Assume that   $q_i$ belongs to $\Sigma_i$. 
 Then there is $n>0$ so that the union of the iterates $P^i(S)$, $i\in\{0,\dots,n\}$ cut every stable leaf but a finite number. 
  
 \end{lema}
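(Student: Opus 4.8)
The plan is to push the whole problem to the leaf space of the stable foliation. Since $P$ preserves $\cF^s$ and expands the transverse (unstable) direction by a factor $\lambda>1$, it induces a one--dimensional quotient map $f$ on the space of leaves of $\cF^s$, which is a circle $C\simeq\SS^1$; the two leaves $\gamma^s_+,\gamma^s_-$ project to two discontinuity points $c_+,c_-$ dividing $C$ into the two arcs $J^1,J^2$ that are the projections of $\Sigma^1,\Sigma^2$. Cutting a stable leaf is exactly the same as covering the corresponding point of $C$, so the statement is equivalent to showing that $\bigcup_{j=0}^{n}f^{j}(I)=C\setminus F$ with $F$ finite, where $I$ is the image of $S$. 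Because $S$ is tangent to the unstable cone (hence transverse to $\cF^s$), $I$ is a nondegenerate arc; by hypothesis one endpoint of $I$ is the projection $v_i$ of the cusp $q_i$ and the other is $c_\pm$, and the assumption $q_i\in\Sigma_i$ says that $I\subset\ov{J^{i}}$ and that $v_i$ lies in the interior of the branch $J^{i}$.

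\textbf{Two structural facts.} I would first record the two ingredients that drive the argument. First, \emph{expansion}: by item 9, $f$ multiplies the length of any arc contained in a single branch by at least $\lambda>1$. Second, \emph{full branches}: since $\ov{P(\Sigma^{i})}$ is an essential annulus pinched exactly at $q_i$ (items 5 and 6), the restriction $f|_{J^{i}}$ maps the open arc $J^{i}$ onto $C\setminus\{v_i\}$, with \emph{both} endpoints of $J^{i}$ sent to $v_i$. In particular the one--sided limits of $f$ at each discontinuity $c_\pm$ are the critical values $v_1,v_2$, i.e.\ the projections of the cusps. This is the precise sense in which the endpoint--at--a--separatrix of $S$ is sent by one iterate to a cusp.

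\textbf{Growth and anchoring.} Using these facts I would follow the forward orbit of $I$. Because the $c_\pm$--endpoint of $I$ is a discontinuity and $I\subset\ov{J^i}$, the image $f(I)$ is a single arc of length at least $\lambda\,|I|$ whose endpoint coming from $c_\pm$ equals the critical value $v_i$; its other endpoint is $f(v_i)$. Thus $f(I)$ is again an arc having one endpoint at $v_i$, and since $v_i$ lies in the interior of $J^i$, the orientation preservation of $DP$ (item 9) determines on which side of $v_i$ the new arc grows. Tracking the connected component of $\bigcup_{j\le n}f^{j}(I)$ through $v_i$ then gives a nested increasing family of arcs whose length is multiplied by at least $\lambda$ at each step for which it avoids $c_+,c_-$. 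As $C$ has finite length, after finitely many steps this arc must contain some discontinuity $c_\pm$ in its interior, hence it contains an entire sub--branch adjacent to $c_\pm$.

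\textbf{Conclusion and the main difficulty.} Once an iterate $f^{m}(I)$ contains a whole branch $J^{i'}$, the full--branch property gives $f(J^{i'})=C\setminus\{v_{i'}\}$, so $f^{m+1}(I)$ covers $C$ minus a single point; equivalently, by expansion the arcs of continuity of $f^{n}$ shrink to $0$, so $I$ contains a full $n$--cylinder $Z$ for large $n$ and $f^{n}(Z)=C\setminus\{v\}$. Pulling back to $\Sigma$, the iterates $P^{j}(S)$, $j\le n+1$, meet every stable leaf except the finitely many passing through the excluded critical values (the pinch leaves), which is the assertion. I expect the genuinely delicate step to be the bookkeeping of connectivity and overlaps when the growing arc first crosses a discontinuity: it is exactly here that the essential--pinched--annulus (full--branch) structure and the orientation preservation are indispensable, since they forbid the orbit of $I$ from being trapped in a proper forward--invariant sub--arc, which is the standard failure mode for merely expanding interval maps. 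The hypotheses that $S$ joins a cusp to a separatrix and that $q_i\in\Sigma_i$ are precisely what anchor the endpoints to the critical orbit and make this growth argument clean.
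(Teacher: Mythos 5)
Your main argument is correct and is essentially the paper's own proof, transported to the one--dimensional leaf space: the paper concatenates $S_n=P(S_{n-1})\cup S$ at the cusp $q_i$ (using that the $\gamma^s_\pm$--endpoint of $S_{n-1}$ is sent by $P$ to the cusp, and that $DP$ preserves the orientation of the unstable cone so the concatenation is again a monotone unstable arc), obtains an arc anchored at $q_\pm$ whose length grows geometrically, concludes that some $S_{n_0}$ must cross $\gamma^s_+$ and hence contain a full crossing of $\ov{\Sigma_i}$, and applies $P$ once more to cut every stable leaf except the one through $q_i$. Your ``growth and anchoring'' paragraph is exactly this, since $\bigcup_{j\le n}f^j(I)$ is the projection of the paper's $S_n$.

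One genuine error to flag: the clause ``equivalently, by expansion the arcs of continuity of $f^n$ shrink to $0$, so $I$ contains a full $n$--cylinder $Z$ for large $n$ and $f^n(Z)=C\setminus\{v\}$'' is false, and it is not equivalent to the preceding (correct) argument. Only the two branches of $f$ itself are full; an $n$--cylinder with $n\ge 2$ is mapped by $f^n$ homeomorphically onto an arc that is in general a \emph{proper} sub--arc of $C$ (for instance, a component of $J^i\setminus\{v_i\}$ is sent by $f$ to an arc running from $v_i$ to $f(v_i)$, not onto $C$ minus a point). If your cylinder claim were true it would apply to an arbitrary unstable segment, with no hypothesis on its endpoints, and would prove transitivity of the attractor for every $X\in\cO_\varphi$ --- contradicting the existence of the regions $\cL^+$ and $\cL^-$, where the attractor misses an entire band of stable leaves. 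The anchoring hypotheses (one endpoint at the cusp $q_i\in\Sigma_i$, the other on $\gamma^s_\pm$) are precisely what make the successive images chain into a single connected, unboundedly growing arc; they cannot be dispensed with, and the cylinder shortcut should be deleted. The rest of the write--up (including the small conflation of $f^m(I)$ with the union $\bigcup_{j\le m}f^j(I)$, which is what actually contains the full branch) is sound.
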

 \begin{proof}The unstable cone is oriented, inducing an orientation over every unstable segment. Assume for instance that $S$ is starting (for this orientation) at $q_i\subset\Sigma_1$ and  ends on $q_-\in \gamma^s_-$: in particular, $S\subset \Sigma^1$.  Now, $P(S)$ is an unstable segment ending at $q_1$ and of length $\ell(P(S)\geq \lambda \ell(S)$. 
 So $S_1= P(S)\cup S$ is an unstable segment of length at least twice $\ell(S)$ and ending at $q_-$. 
 
 We define by induction a finite sequence $S_n$ as follows: if $S_{n-1}$ is contained in {$\Sigma^1$} then $S_n=P(S_{n-1})\cup S$. Otherwise, the sequence ends and $S_n$ is not defined. 
 Thus for every $n$, $S_n$ is an unstable segment ending at $q_-$ and of size at least $n$ times the size of $S$. In particular, this sequence ends at some $n_0$, and  $S_{n_0}$ is not contained in {$\Sigma^1$}: it cuts $\gamma^s_+$.
 Now $P(S_{n_0})$ cuts every stable leaf up to one of $q_1$. 
 \end{proof}

 \begin{rema}In Lemma~\ref{l.cusptogamma}, the unique stable leaf which may not intersect the iterates  $P^n (S)$, $n\geq 0$, is the leaf through $q_1$. Furthermore, if $q_1$ is distinct from $q_2$, then further iterates of $S$ cut $q_1$ so that every leaf intersects some iterate of $S$. 
  
 \end{rema}
 
 \begin{lema}\label{l.cortaestavefixo}  Assume now that $\Sigma_i$ contains a fixed point $p_i$ of $P$.  Let $S$ be an unstable segment  whose interior cuts the stable leaf through $p_i$.  Then there is $n_0$ so that for any $n\geq n_0$,   the iterate $P^n(S)$ cuts every stable leaf but the one through 
 $q_i$.  
 \end{lema}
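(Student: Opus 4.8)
The plan is to use the hyperbolicity of the fixed point $p_i$ to drag the forward iterates of $S$ onto the unstable manifold $W^u(p_i)$, and then to reduce the statement to Lemma~\ref{l.cusptogamma}. First I record the local structure at $p_i$. Since the first return map $P$ is hyperbolic (item~8) and uniformly expands the cone field $\cC^u$ by a factor $\la>1$ (item~9), the fixed point $p_i\in\Sigma_i$ is a saddle: its stable set is the leaf $\ell:=W^s(p_i)$ of $\cF^s$ through $p_i$, which is $P$-invariant and uniformly contracted towards $p_i$, while its unstable set $W^u(p_i)$ is an unstable curve through $p_i$. Writing $x$ for the transverse intersection of the interior of $S$ with $\ell$, we then have $P^n(x)\to p_i$, and each $P^n(S)$ is an unstable segment through $P^n(x)$.

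The core step is to show that a single iterate of the local unstable manifold $\tau_0:=W^u_{\mathrm{loc}}(p_i)\subset\Sigma_i$ already cuts every stable leaf but the one through $q_i$. Since $P$ expands $\cC^u$, the iterates $\tau_k:=P^k(\tau_0)$ form a nested increasing family of unstable segments with $\length(\tau_{k+1})\ge\la\,\length(\tau_k)$ as long as they remain inside $\Sigma_i$. By the length bound from Lemma~\ref{l.quase-atrator} they cannot stay in $\Sigma_i$ forever, so some $\tau_{k_0}$ reaches the boundary $\gamma^s_+\cup\gamma^s_-$ of $\Sigma_i$, say $\gamma^s_-$, along a sub-segment $\tau'\subset\overline{\Sigma_i}$ joining $p_i$ to a point $y\in\gamma^s_-$. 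Because $\overline{P(\Sigma_i)}$ is the essential annulus pinched at $q_i$ (item~6), the image $P(\tau')$ is an unstable segment joining $p_i$ to the cusp $q_i$; as $\tau',P(\tau')\subset\tau_{k_0+1}$ by nestedness, the union $P(\tau')\cup\tau'$ is an unstable segment joining the cuspidal point $q_i$ to $y\in\gamma^s_-$ and contained in $\tau_{k_0+1}$. This places us in the situation of Lemma~\ref{l.cusptogamma}; applying it, together with the remark following it and the nestedness of the $\tau_k$, yields an integer $n_2$ such that $\tau_{n_2}=P^{n_2}(\tau_0)$ cuts every stable leaf but the one through $q_i$.

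It remains to transfer this to $S$ and to make the conclusion uniform in $n$. Fixing a small box $V$ around $p_i$ contained in the interior of $\Sigma_i$ on which $P$ is a genuine hyperbolic local diffeomorphism, the $\la$-lemma applies in $V$: since $S$ is transverse to $\ell$ at $x$ and $P^n(x)\to p_i$, for all large $n$ the component of $P^n(S)$ through $P^n(x)$ contains an unstable sub-segment of definite length that is $C^1$-close to $\tau_0$. Applying $P^{n_2}$ and using that cutting a leaf transversally is an open condition, we obtain $n_0$ such that $P^{n_0}(S)$ cuts every stable leaf but the one through $q_i$. To propagate this to every $n\ge n_0$, note that such a segment $\Gamma=P^{n_0}(S)$ meets $\gamma^s_+$ and $\gamma^s_-$, hence contains a full-width unstable segment $\Gamma_i\subset\overline{\Sigma_i}$; its image $P(\Gamma_i)$ runs from $q_i$ once around the pinched annulus $\overline{P(\Sigma_i)}$ back to $q_i$, so it already cuts every leaf but the one through $q_i$, and therefore so does $P(\Gamma)\supset P(\Gamma_i)$. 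Induction then gives the conclusion for all $n\ge n_0$.

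The step I expect to be the main obstacle is the transfer itself: the $\la$-lemma must be invoked only in the region where $P$ is an honest local diffeomorphism, and one has to control the image under the fixed finite power $P^{n_2}$ of segments that are merely $C^1$-close to $\tau_0$, since their orbits pass near the discontinuity locus $\gamma^s_+\cup\gamma^s_-$ where $P$ fails to be defined. Ensuring that these orbits cross $\gamma^s_\pm$ only transversally, so that the piecewise continuity of $P$ preserves $C^1$-closeness, is the delicate point.
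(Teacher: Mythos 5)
Your overall strategy --- use the $\lambda$-lemma to drag $P^n(S)$ onto $W^u(p_i)$, show that the iterates of $W^u_{\mathrm{loc}}(p_i)$ eventually cut every stable leaf except the one through $q_i$, then propagate to all $n\ge n_0$ --- is the same as the paper's, and your transfer step and your argument that the property persists for all $n\ge n_0$ are actually spelled out more carefully than in the paper's three-sentence proof. But the middle step has a genuine gap: the reduction to Lemma~\ref{l.cusptogamma} is not legitimate. That lemma explicitly assumes that the cuspidal endpoint $q_i$ of the segment belongs to $\Sigma_i$ (same index), and this hypothesis is exactly what makes its self-concatenation $S_n=P(S_{n-1})\cup S$ work: a segment contained in $\Sigma_i$ ending on $\gamma^s_\pm$ has image ending at the cusp of $P(\Sigma_i)$, which is again $q_i$, i.e.\ at the point where $S$ starts. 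In the setting of Lemma~\ref{l.cortaestavefixo}, however, the presence of the fixed point $p_i$ in $\Sigma_i$ forces $q_i\in\Sigma_j$ with $j\neq i$ (Lemma~\ref{f-Lema-Out-H1eH2}). Your segment from $y\in\gamma^s_-$ to $q_i$ therefore lies in $\overline{\Sigma_j}$, and its image under $P$ ends at $q_j$, not at $q_i$; it cannot be concatenated with itself, so the growth mechanism of Lemma~\ref{l.cusptogamma} breaks down precisely in the case you need it.

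The gap is repairable, and the repair is simpler than your detour. Do not stop when one end of $\tau_k$ reaches $\gamma^s_+\cup\gamma^s_-$: keep iterating the component of $p_i$ in $P^k(\tau_0)\cap\overline{\Sigma_i}$. Each of its two ends either lies in the interior of $\Sigma_i$, in which case it is pushed outward by a factor at least $\lambda$, or it already lies on $\gamma^s_+\cup\gamma^s_-$ and stays there under iteration. By the bound on the length of unstable segments that do not cut every leaf, both ends eventually lie on $\gamma^s_+\cup\gamma^s_-$; and since an unstable arc is monotone across the stable foliation of the rectangle $\overline{\Sigma_i}$, it cannot have both endpoints on the same leaf, so it must cross $\overline{\Sigma_i}$ completely from $\gamma^s_+$ to $\gamma^s_-$. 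Its image is then an essential curve running through the whole pinched annulus $\overline{P(\Sigma_i)}$ from $q_i$ around to $q_i$, hence it cuts every stable leaf except the one through $q_i$. This full-crossing argument is the route the paper takes; your $\lambda$-lemma transfer (with the care you already flag about iterating near the discontinuity locus) and your induction giving the conclusion for every $n\ge n_0$ can then be kept essentially as written.
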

 \begin{proof}By the inclination lemma (also known as $\lambda$-lemma), the positive iterates $P^n(S)$ accumulate the unstable manifold $W^u(p_i)$. Some of these iterates will cross completely {$\Sigma^i$} (crossing both $\gamma^s_+$ and $\gamma^s_-$).  Then the next iterate will cross the whole essential pinched annulus $P(\Sigma_i)$, ending the proof. 
 \end{proof}

 \subsection{Fixing the expansion rate bigger than the golden number $\varphi= \frac{1+\sqrt5}2$}\label{s-golden}
 For any flow $X\in \cO_1$, the return map $P$ is defined on the cross-section $\Sigma$ but  the two stable leaves $\gamma^s_+$ and $\gamma^s_-$.  
 So we get two rectangles and  send them  into $\Sigma$ as a pinched essential annulus.  Therefore the expansion rate $\lambda$ by $P$ in the unstable cone cannot be required uniformly larger than $2$.  
 However, we will see that we can require a uniform rate expansion arbitrarily close to $2$. Figure \ref{f.fixingexpansion} displays the main features of the return map $P$.
 
 \begin{figure}[th]
\centering
	\includegraphics[scale=.17]{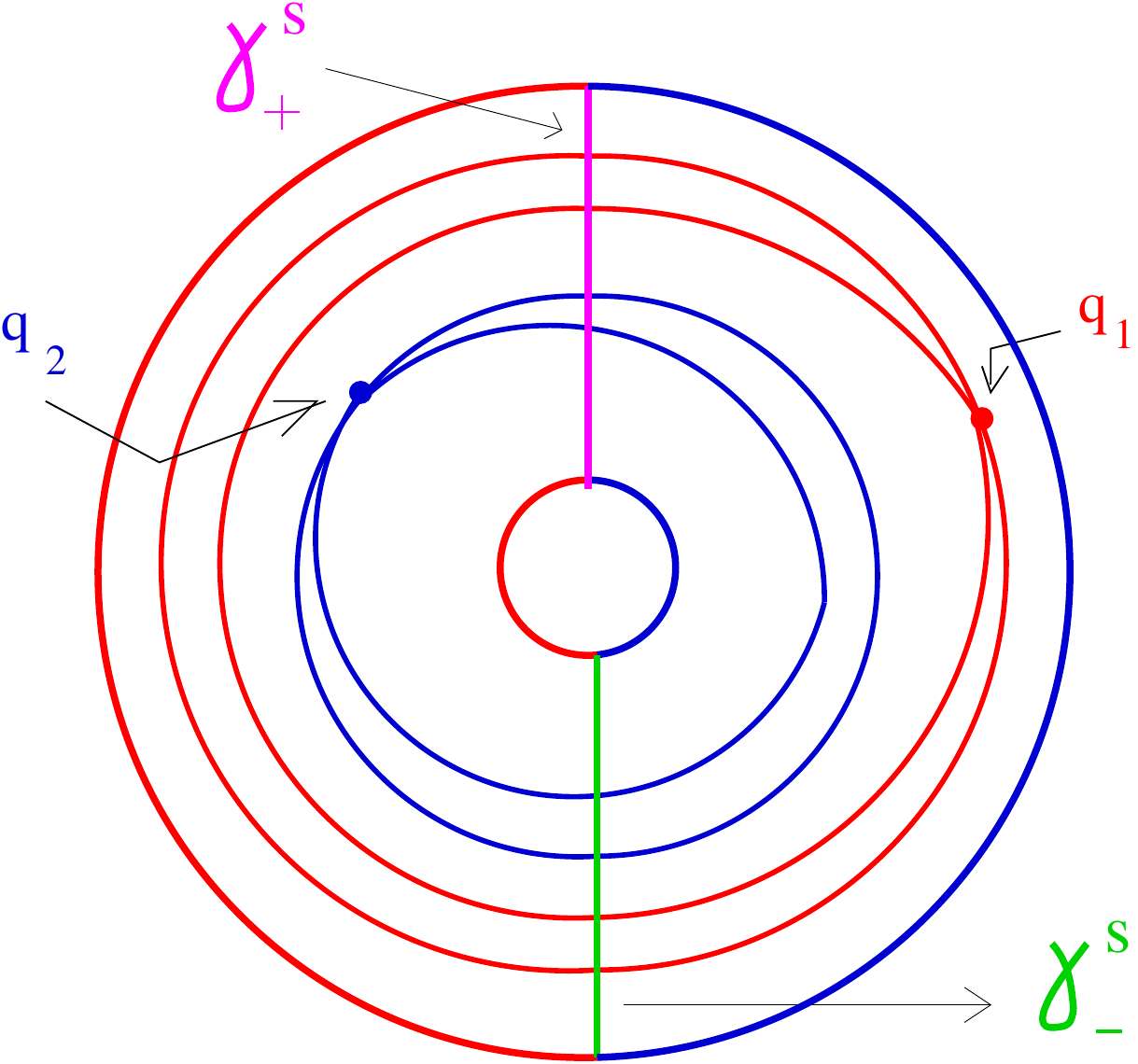}
\caption{Another view of the cross-section $\Sigma$ and the image of $\Sigma$ under the Poincar\'e map. }\label{f.fixingexpansion}
\end{figure}	
 
The proof of the main topological properties consists of iterating unstable segments (tangent to the unstable cone) and estimating the length of these iterated components. 
In this section, we will choose a rate $\lambda$ which will allow us to estimate these lengths. 
{Let us start with a very elementary observation: 
Consider a segment $[a,c]\subset \RR$ and pick $b\in ]a,c[$. Then one of the lengths 
$\lambda^2\ell([a,b])$ or $\lambda^2\ell([b, c])$ is strictly larger than $\ell([a,c]$ if $\lambda>\sqrt{2}$.  
The choice of $\lambda>\sqrt2$ ensures the increase of a segment split into two components after two iterations if those components are not split again. This is the standard  rate of expansion to guarantee the transitivity of a Lorenz attractor. 
We note that there are examples of one-dimensional Lorenz maps with a rate of expansion smaller than $\sqrt{2}$ that are not transitive, see \cite[pp. 126]{ArPa10}. 
\noindent Consider the maximum of the lengths $\lambda\ell( [a,b])$  and $\lambda^2\ell([b,c])$.  Notice that the golden ratio $\varphi= \frac{1+\sqrt5}2\in]1,2[$. 
\begin{lema}\label{l.golden}{ For any $a<b<c$, for any $\lambda\geq \varphi$ on gets}
{$\max\{\lambda\ell([a,b]),\lambda^2\ell([b,c])\}\geq \frac{\lambda}{\varphi}\ell([a,c]).$}
\end{lema}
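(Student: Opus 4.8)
The plan is to reduce the claimed inequality to a short algebraic fact about the golden ratio. First I would set $x=\ell([a,b])$ and $y=\ell([b,c])$, so that $\ell([a,c])=x+y$ with $x,y>0$, and divide both sides by $\lambda>0$. The assertion then becomes
$$\max\{x,\lambda y\}\geq \tfrac1\varphi\,(x+y).$$
Since $\lambda\geq\varphi$ forces $\lambda y\geq\varphi y$, the left-hand side is non-decreasing in $\lambda$, so it suffices to establish the sharper statement at $\lambda=\varphi$, namely $\max\{x,\varphi y\}\geq \tfrac1\varphi(x+y)$; the general case then follows at once because the right-hand side does not depend on $\lambda$.

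Next I would split into the two cases determined by which term realizes the maximum. The only input needed is the defining identity $\varphi^2=\varphi+1$, equivalently $\varphi-1=\tfrac1\varphi$ and $\varphi^2-1=\varphi$. If $x\geq\varphi y$, then $\max\{x,\varphi y\}=x$, and the target inequality $\varphi x\geq x+y$ rewrites as $(\varphi-1)x\geq y$, i.e. $\tfrac{1}{\varphi}x \geq y$, i.e. $x\geq \varphi y$, which is exactly the case hypothesis. If instead $x\leq\varphi y$, then $\max\{x,\varphi y\}=\varphi y$, and the target inequality $\varphi^2 y\geq x+y$ rewrites as $(\varphi^2-1)y\geq x$, i.e. $\varphi y\geq x$, again the case hypothesis. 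Both cases therefore close, which completes the argument.

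There is essentially no obstacle here beyond choosing the right normalization and reduction to $\lambda=\varphi$: the value $\varphi$ is precisely the threshold at which the two cases balance, the extremal configuration being $x=\varphi y$, where both sides coincide and equality holds. This is exactly why the estimate is sharp, and why the golden ratio (rather than $\sqrt2$) is the relevant rate once an unstable segment may be cut into two pieces that receive different numbers of iterations before recombining.
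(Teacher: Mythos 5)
Your proof is correct and is essentially the paper's argument: the paper normalizes by $\lambda\,\ell([a,c])$, sets $\alpha=\ell([b,c])/\ell([a,c])$, and splits on $\alpha\gtrless\frac{\varphi-1}{\varphi}$ using $\varphi^2=\varphi+1$, which is exactly your case split $x\gtrless\varphi y$ after your reduction to $\lambda=\varphi$. The only cosmetic difference is that you isolate the worst case $\lambda=\varphi$ up front, whereas the paper carries $\lambda$ through and bounds it inside one of the two cases.
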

\begin{proof}
	Taking $\alpha=\frac{\ell([b,c])}{\ell([a,c])}$, we only have to show that
	$\max\{(1-\alpha),\lambda\alpha \}\geq \frac{1}{\varphi}.$
Case $\alpha \ge \frac{\varphi - 1}{\varphi}$, we have $\lambda \alpha \ge \lambda \left(\frac{\varphi -1 }{\varphi} \right)\ge \varphi\left(\frac{\varphi -1 }{\varphi} \right)\ge \frac{\varphi^2-\varphi}{\varphi}\ge \frac{1}{\varphi}$ and we are done. 
On the other hand, if $\alpha < \frac{\varphi - 1}{\varphi}$, we get $1-\varphi > 1- \left(\frac{\varphi - 1}{\varphi}\right) = \frac{1}{\varphi}$ and we are also done.
\end{proof}
 
\noindent We consider now the open subset $\cO_\varphi\subset \cO_1$ consisting of flows $X$ so that the expansion rate $\lambda$ of the return map in the unstable cone  is larger than 
 $\varphi$. 
 \subsection{Cutting $\cO_\varphi$ in regions}\label{ss.region}
 Consider $\cH^i\subset \cO_1$, $i=1, 2$, the subset corresponding to the vector fields $X$ for which the point $q_i$ (first intersection with $\Sigma$ of the unstable separatrix $W^u_i(\sigma)$) belongs to $\gamma^s_+\cup\gamma^s_-$. 
 In other words, $X$ belongs to $\cH^i$ if $\sigma$ admits a homoclinic loop for the unstable separatrix $W^u_i(\sigma)$ so that the homoclinic loop cuts $\Sigma$ at a unique point $q_i$.  
 We split each $\cH^i$ into two $\cH^i=\cH^i_+\cup \cH^i_-$ according to $\{q_i\in\gamma^s_+\}$ and $\{q_i\in\gamma^s_-\}$, respectively. Figure \ref{f.p.Hi1mais} displays the feature of a vector field $X \in \cH^1$.
 It is well known that a homoclinic connection corresponds to a codimension $1$ phenomenon, as expressed below: 
 
 \begin{lema}\label{l.homo} The subsets $\cH^i$ are codimension $1$ submanifolds of $\cO_1$. 
  \end{lema}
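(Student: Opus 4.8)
The plan is to realize each branch $\cH^i_\pm$ locally as the zero set of a real-valued submersion on $\cO_1$ and then invoke the regular value theorem in Banach spaces. Fix $X_0\in\cH^i$. Near $X_0$ the condition $q_i\in\gamma^s_+\cup\gamma^s_-$ reads $q_i\in\gamma^s_+$ or $q_i\in\gamma^s_-$ (the two curves are disjoint), so it suffices to treat one branch, say $\cH^i_+$, and to prove it is a $C^1$ submanifold of codimension $1$ in a neighbourhood of $X_0$.

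\textbf{Regularity of the data.} First I would record how the relevant objects depend on $X$ in the $C^1$ topology. Since $\sigma$ is a nondegenerate hyperbolic zero, the implicit function theorem gives a $C^1$ continuation $X\mapsto\sigma(X)$, with $DX(\sigma(X))$ and its eigenspaces depending $C^1$ on $X$. By the standard smooth dependence of invariant manifolds of hyperbolic singularities, and of the transverse first-hitting map to $\Sigma$, the unstable separatrix $W^u_i(\sigma(X))$ and its first crossing point $q_i(X)=R(\widetilde q_i)\in\Sigma$ depend $C^1$ on $X$. Likewise the local stable manifold $W^s_{loc}(\sigma(X))$, hence the curves $\gamma^s_\pm(X)=W^s_\pm(\sigma(X))\cap\Sigma$ (the points of $\Sigma$ whose forward orbit goes directly into $\sigma$), depend $C^1$ on $X$; the continuity of the linearising charts recalled after Sternberg's theorem together with Theorem \ref{holonomia} gives the same for the foliation $\cF^s$. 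Choosing a $C^1$ signed transverse coordinate to $\gamma^s_+(X)$ in a tubular neighbourhood of $\gamma^s_+(X_0)$ in $\Sigma$, the function
$$F(X)=\operatorname{sdist}\bigl(q_i(X),\gamma^s_+(X)\bigr)\in\RR$$
is then $C^1$ near $X_0$ and satisfies $\cH^i_+=F^{-1}(0)$ there.

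\textbf{Submersion property (the heart of the argument).} The core step is to show that $0$ is a regular value, i.e. that $DF_{X_0}\colon\mathfrak{X}^r(M)\to\RR$ is onto, by exhibiting a direction $Y$ with $DF_{X_0}(Y)\neq0$. I would pick a point $z$ on the orbit of $W^u_i(\sigma)$ strictly between $\sigma$ and $q_i$, and a small flow box $B\ni z$ disjoint from $\sigma$, from $W^s_{loc}(\sigma)$, and from $\Sigma\cup D^1\cup D^2$ together with the forward tube along which $\gamma^s_\pm$ flows into $\sigma$ (this is possible since, near $\sigma$, $W^u_i$ leaves along the unstable direction while $\gamma^s_\pm$ enters along the stable side). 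Take $Y\in\mathfrak{X}^r(M)$ supported in $B$. Because $Y$ vanishes near $\sigma$, the singularity, its hyperbolic type, its eigendirections and $W^s_{loc}(\sigma)$ are unchanged; because the forward orbits of the points of $\gamma^s_\pm$ avoid $B$, they still flow directly into $\sigma$, so $\gamma^s_\pm(X_0+sY)=\gamma^s_\pm(X_0)$. Hence the $\gamma^s_+$-term in $F$ is frozen, and $\tfrac{d}{ds}\big|_{0}F(X_0+sY)$ equals the transverse component of $\tfrac{d}{ds}\big|_{0}q_i(X_0+sY)$. This last variation is the value at $q_i$ of the first-variation field (the solution of the linearised flow along the orbit with forcing $Y$); since the transition map from a transversal inside $B$ to $\Sigma$ is a local diffeomorphism, its derivative is onto $T_{q_i}\Sigma$, so $Y$ can be chosen to make this displacement have nonzero component transverse to $\gamma^s_+$. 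This yields $DF_{X_0}(Y)\neq0$.

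\textbf{Conclusion.} With $DF_{X_0}$ surjective, its kernel is a closed hyperplane admitting the one-dimensional complement spanned by $Y$, so the Banach-space regular value theorem shows that $\cH^i_+=F^{-1}(0)$ is a $C^1$ submanifold of codimension $1$ near $X_0$; the same argument applies verbatim to $\cH^i_-$. Since $\gamma^s_+$ and $\gamma^s_-$ are disjoint, $\cH^i=\cH^i_+\sqcup\cH^i_-$ locally coincides with one of the two branches, so $\cH^i$ is a codimension one submanifold of $\cO_1$. I expect the main obstacle to be the regular-value step, and within it the explicit construction of the flow-box perturbation $Y$ moving $q_i$ transversally to $\gamma^s_+$ while keeping $\sigma$, $W^s(\sigma)$ and the leaves $\gamma^s_\pm$ fixed; the bookkeeping of the $C^1$ dependence of $q_i(X)$ and $\gamma^s_\pm(X)$ is routine given the cited dependence results.
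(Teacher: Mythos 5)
Your proof is correct, but there is nothing in the paper to compare it against: the authors give no proof of Lemma~\ref{l.homo} at all, contenting themselves with the remark that ``a homoclinic connection corresponds to a codimension $1$ phenomenon'' is well known (and the companion Lemma~\ref{l.hetero} is likewise stated without proof). Your submersion/regular-value argument is precisely the standard substantiation of that folklore remark, and it is carried out correctly: the $C^1$ dependence of $\sigma(X)$, $q_i(X)$ and $\gamma^s_\pm(X)$ is as you say, and the delicate point --- choosing the flow box $B$ on the orbit segment of $W^u_i(\sigma)$ between $\sigma$ and $q_i$ so that the perturbation moves $q_i$ without touching $\sigma$, $W^s_{loc}(\sigma)$ or the forward tube of $\gamma^s_\pm$ --- is handled properly. (Note that this segment is indeed disjoint from that tube even at a parameter where $q_i\in\gamma^s_+$, since by item~4 of the construction the forward orbits of points of $\gamma^s_\pm$ never return to $\Sigma\cup D^1\cup D^2$, whereas the backward orbit of any point of the segment tends to $\sigma$ without crossing $\Sigma$; so your flow box exists.) The only cosmetic caveat is that the regular value theorem is being applied in the Banach space $\mathfrak{X}^r(M)$ for finite $r$, which is consistent with the paper's use of the $C^1$ topology. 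In short, you have supplied a complete proof of a statement the paper leaves as an assertion.
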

  
  \begin{figure}[th]
\centering
	\includegraphics[scale=0.17]{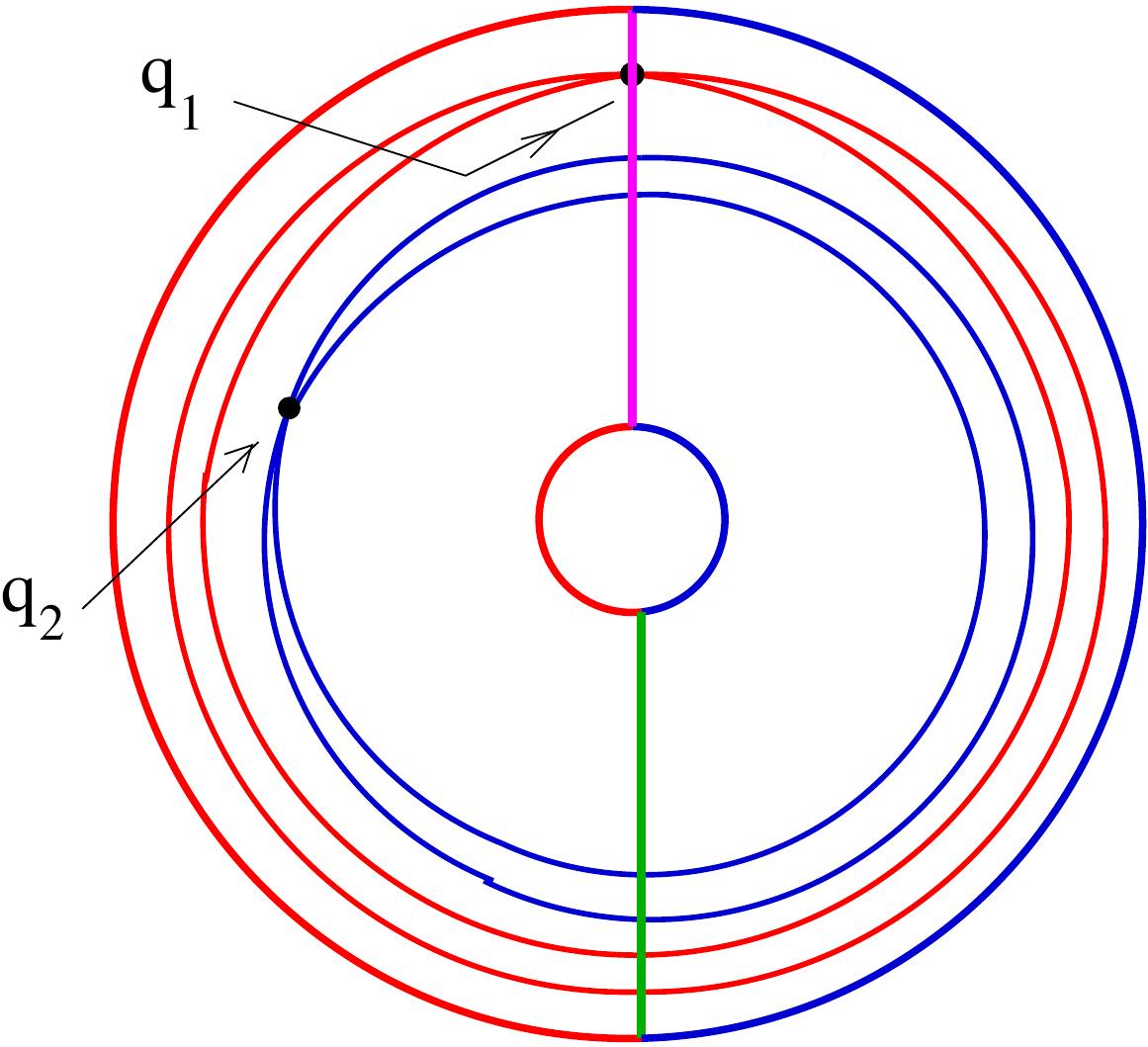}
		\hspace{0.4cm}
\includegraphics[scale=0.17]{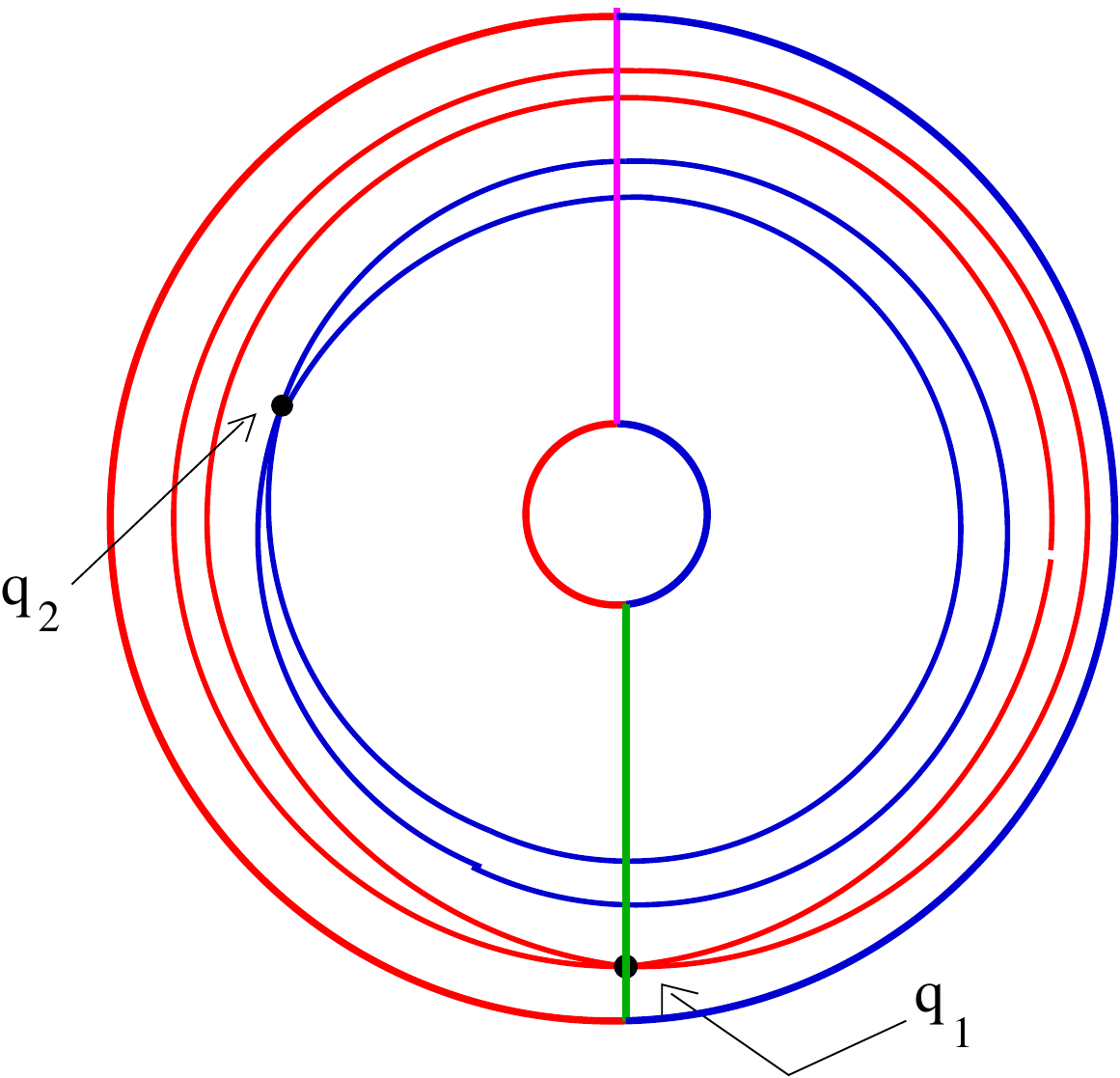}
\caption{(a) $ q_1 \in \gamma_+^s$ implying that $X \in \cH^1_+ $ and (b) $ q_1 \in \gamma_-^s$ implying that $X \in \cH^1_- $.}
\label{f.p.Hi1mais}
\end{figure}
  
\noindent The submanifolds $\cH^1$ and $\cH^2$ cut $\cO_\varphi$ in $4$ regions $\cO_\varphi^{\omega_1,\omega_2}$,
  $\omega_i\in\{+,-\}$, so that $\omega_i=-$ if and only if  $q_i\in \Sigma^i$.  
  Then: 
  
  \begin{lema}\label{f-Lema-Out-H1eH2} Let $X$ be a vector field in $\cO_\varphi$ out of $\cH^1$ and $\cH^2$.
   Then the first return map $P$ admits a  fixed point in {$\Sigma^1$  (resp. $\Sigma^2$)} if and only if $X$ belongs to $\cO_\varphi^{+,-}\cup \cO_\varphi^{+,+}$ (resp. $\cO_\varphi^{-,+}\cup \cO_\varphi^{+,+}$). 
  \end{lema}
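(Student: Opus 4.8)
The plan is to reduce the statement to a one-dimensional problem on the leaf space of the stable foliation $\cF^s$ and then to settle it with a short intermediate-value argument. First I would quotient $\Sigma$ by the $C^1$ fibration $\cF^s$. Since every leaf is a segment joining the two boundary circles of the annulus $\Sigma$, the leaf space is a circle $\bar\Sigma\cong\SS^1$, and the two distinguished leaves $\gamma^s_+,\gamma^s_-$ project to two marked points that cut $\bar\Sigma$ into two arcs $I_1=[\Sigma_1]$ and $I_2=[\Sigma_2]$. Because $P$ carries leaves of $\cF^s$ to leaves of $\cF^s$, it induces a map $\bar P\colon\SS^1\setminus\{[\gamma^s_+],[\gamma^s_-]\}\to\SS^1$. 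I would then record the three features of $\bar P$ inherited from the construction: (i) $\bar P$ is orientation preserving and expanding by $\lambda>1$, because $\cC^u$ is transverse to $\cF^s$, strictly $DP$-invariant, $\lambda$-expanded, and $DP$ preserves the transverse orientation (item~9); (ii) by item~6, $\overline{P(\Sigma^i)}$ is a single \emph{essential} annulus pinched at $q_i$, so $\bar P|_{I_i}$ is an orientation-preserving homeomorphism of degree one onto $\SS^1\setminus\{\bar q_i\}$, where $\bar q_i=[q_i]$; (iii) both endpoints of $I_i$ are sent to $\bar q_i$ in the limit (the pinching). Note $\bar q_i\neq[\gamma^s_\pm]$ precisely because $X\notin\cH^1\cup\cH^2$.

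Next I would establish the correspondence between fixed points of $P$ and of $\bar P$. Since $\cF^s$ is the stable foliation of $P$, the map $P$ contracts distances inside each leaf. Hence, if $\bar x\in I_i$ satisfies $\bar P(\bar x)=\bar x$, then $P$ maps the compact leaf $\ell_{\bar x}$ into itself as a contraction and so has a unique fixed point on $\ell_{\bar x}$; conversely a fixed point of $P$ projects to a fixed point of $\bar P$. Thus $P$ has a fixed point in $\Sigma_i$ if and only if $\bar P$ has one in $I_i$, and it suffices to decide the latter.

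The core computation is then a lift argument. The leaf space carries a natural length, measured along any transversal tangent to $\cC^u$, which $\bar P$ expands by $\lambda>1$; I would normalise the total length of $\bar\Sigma$ to $1$, write $\ell_1<1$ for the length of $I_1$, and identify $\bar\Sigma$ with $\RR/\ZZ$ so that $I_1=[0,\ell_1]$ and $I_2=(\ell_1,1)$. Lifting $\bar P|_{I_1}$ to a continuous $\lambda$-expanding (hence strictly increasing) map $F\colon[0,\ell_1]\to\RR$, the degree-one property gives $F(\ell_1)-F(0)=1$ and the pinching gives $F(0)\equiv\bar q_1\pmod 1$. Fixed points of $\bar P$ in $I_1$ are the $x$ with $\psi(x):=F(x)-x\in\ZZ$; as $F$ is expanding, $\psi$ is strictly increasing with $\psi(\ell_1)-\psi(0)=1-\ell_1\in(0,1)$, so $\psi$ attains an integer value, and then exactly one, if and only if the interval $\bigl(\psi(0),\psi(0)+1-\ell_1\bigr)$ contains an integer. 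Writing $\beta=F(0)$ for a lift of $\bar q_1$, this happens precisely when the fractional part of $\beta$ exceeds $\ell_1$, that is, exactly when $\bar q_1\in I_2$, i.e. $q_1\in\Sigma_2$. By the sign convention this reads $\omega_1=+$, i.e. $X\in\cO_\varphi^{+,-}\cup\cO_\varphi^{+,+}$, which proves the claim for $\Sigma_1$; the statement for $\Sigma_2$ follows by exchanging the two indices, giving a fixed point in $\Sigma_2$ iff $q_2\in\Sigma_1$, i.e. $\omega_2=+$, i.e. $X\in\cO_\varphi^{-,+}\cup\cO_\varphi^{+,+}$.

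I expect the main obstacle to be not the one-dimensional computation, which is short, but the careful justification of the reduction: verifying that $\bar P|_{I_i}$ has degree exactly one (using that the image is a \emph{single} essential pinched annulus, item~6, rather than a higher-degree wrapping) and that fixed points of $P$ and $\bar P$ are in genuine bijection via the contraction of $\cF^s$-leaves. The bookkeeping of the sign convention ($\omega_i=+\Leftrightarrow q_i\in\Sigma_j$ with $j\neq i$) must also be tracked with care so as to land on the correct union of regions.
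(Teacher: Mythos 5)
Your proof is correct, but it takes a genuinely different route from the paper's. The paper argues directly in $\Sigma$, in two separate cases: when $q_1\in\Sigma_2$, the essential pinched annulus $P(\Sigma_1)$ has its cusp outside $\Sigma_1$ and therefore crosses $\Sigma_1$ in a Markovian way, and hyperbolicity yields the (unique) fixed point; when $q_1\in\Sigma_1$, the intersection $\Sigma_1\cap P(\Sigma_1)$ consists of two cuspidal triangles, and a fixed point $p$ in one of them would make the strip between $W^s(p)$ and the adjacent leaf $\gamma^s_\pm$ forward invariant, which is impossible because the unstable segment of $W^u(p)$ reaching $\gamma^s_\pm$ is expanded under iteration while confined to a region crossing a bounded family of stable leaves. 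Your reduction to the leaf-space circle and the single lift computation $\psi=F-\mathrm{id}$ is more economical: it treats both directions and uniqueness in one stroke, and it isolates exactly where the hypothesis $X\notin\cH^1\cup\cH^2$ enters ($\bar q_i$ avoiding the endpoints of $I_i$). What it costs you is the one point you should justify more carefully: the ``only if'' direction needs $\psi$ strictly monotone, i.e.\ $F'>1$ in your chosen parametrization of the leaf space, and this is not literally item~9, which gives $\lambda$-expansion of vectors in the cone $\cC^u$ upstairs; passing to the quotient degrades the expansion constant by the holonomy of $\cF^s$ and the opening of the cone, so in an arbitrary chart you only get $F'\geq\lambda/K$ for some $K\geq 1$. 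This is repairable (the iterates satisfy $(F^n)'\geq\lambda^n/K$, so an adapted metric on $\SS^1_X$ makes $f_X$ uniformly expanding, and the paper's later section on the $1$-dimensional reduction provides the $C^1$ structure needed), or you can replace monotonicity of $\psi$ by the paper's confinement argument: a fixed point with $q_1\in\Sigma_1$ produces an invariant proper arc of leaves containing an unstable segment whose iterates grow unboundedly while crossing a bounded set of leaves. Either patch closes the argument; as written, the step ``the leaf space carries a natural length which $\bar P$ expands by $\lambda$'' is asserted rather than proved. The existence half, the degree-one verification via item~6, and the bijection between fixed points of $P$ and of $\bar P$ through leafwise contraction are all sound.
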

  
  \begin{proof} If {$q_1\in\Sigma^2$} means that the cuspidal point of the pinched annulus {$P(\Sigma^1)$ belongs to $\Sigma^2$. } Thus $P(\Sigma^1)$ crosses $\Sigma^1$ in a hyperbolic way: $\Sigma^1$ is a rectangle with a stable boundary $\gamma^s_\pm$ and its image $P(\Sigma^1)$ crosses completely $\Sigma^1$ along a sub-rectangle
  with a stable side contained in each of $\gamma^s_\pm$,
    leading to a unique fixed point in $\Sigma^1$, see Figure \ref{f-Lema-Out-H1eH2}(a). 
  If $q_1\in \Sigma^1$ then $\Sigma^1\cap P(\Sigma^1)$ has $2$ connected components which are cuspidal triangles $T_1^+$ (bounded by $\gamma^s_+$), $T_1^-$ (bounded by $\gamma^s_-$).  Assume that there is a fixed point $p$ in $T_1^-$ (for instance, the other case being equivalent).  Then the region $R$ bounded by $\gamma^s_+$ and $W^s(p)$ is invariant by $P$, see Figure \ref{f-Lema-Out-H1eH2}(b). 
  But $W^u(p)$ contains a segment joining $p$ to $\gamma^s_+$.  This segment is expanded by $P$, contradicting the invariance of $R$. This ends the proof.
   \begin{figure}[th]
\centering
	\includegraphics[scale=0.16]{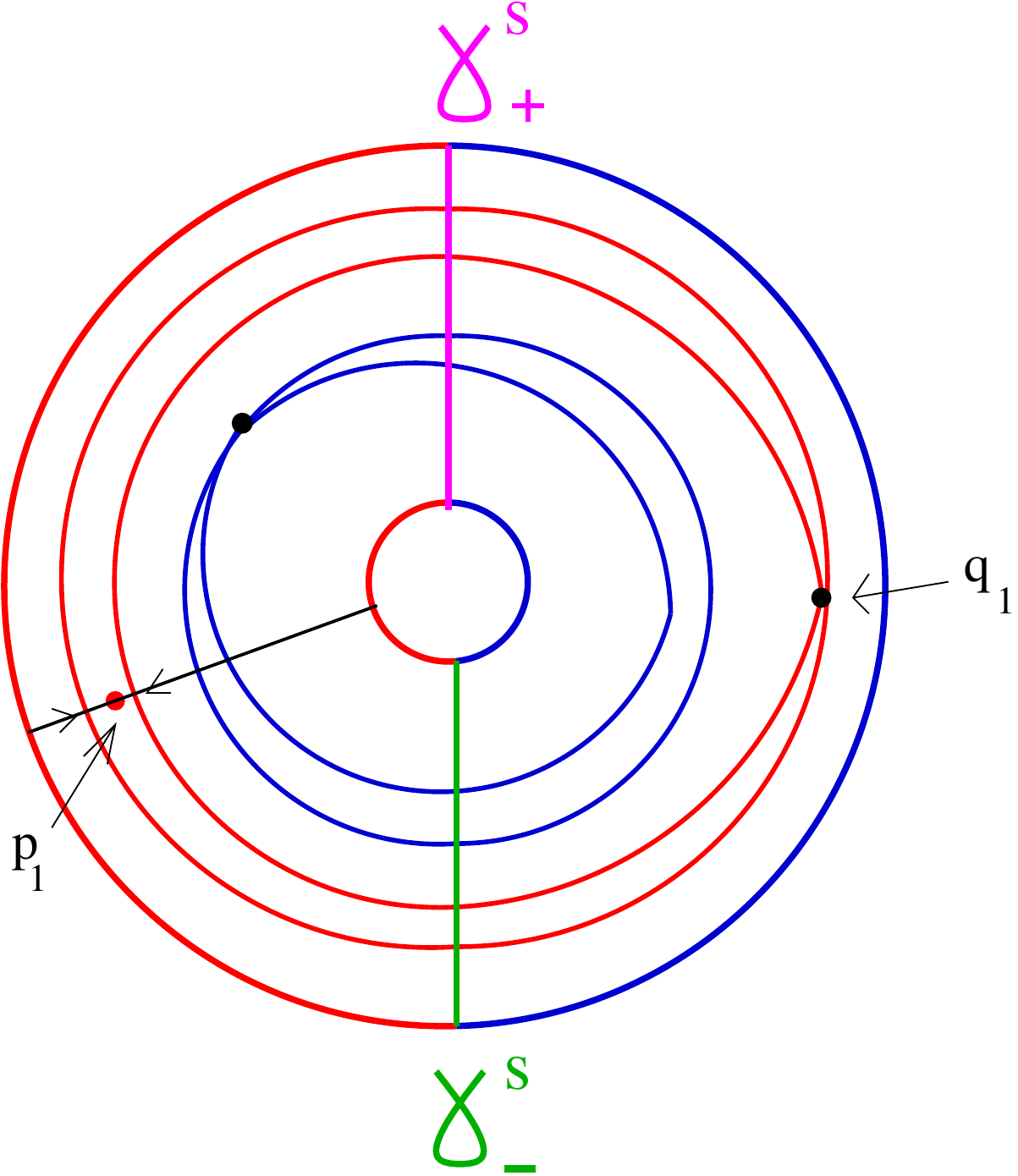}
		\hspace{0.2cm}
\includegraphics[scale=0.16]{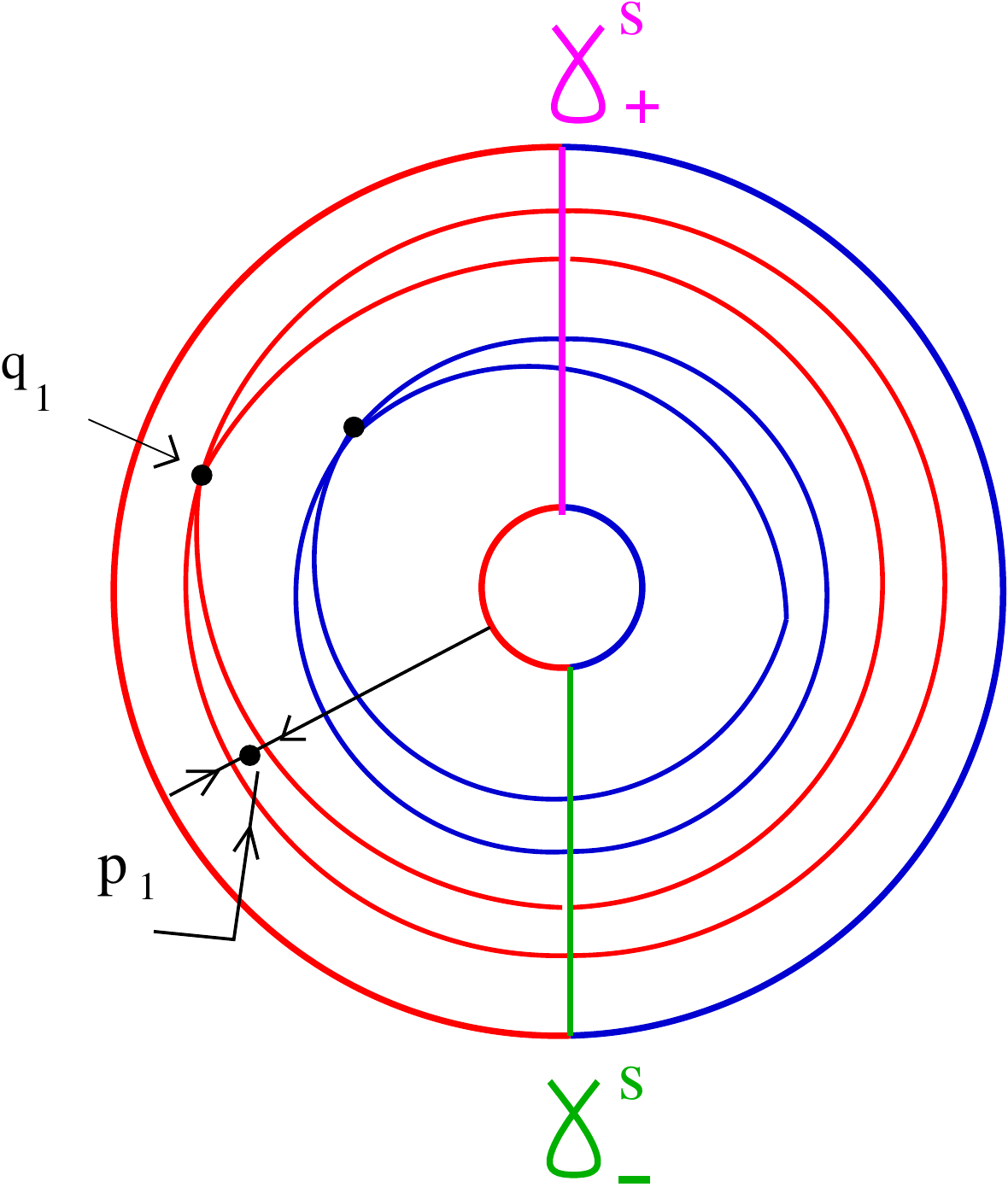}
\caption{$ p_1$ and $p_2$ are fixed points of $X \in \cO_{\varphi}\setminus \cH^{1}\cup \cH^{2}$ }
\label{f-Lema-Out-H1eH2}
\end{figure}
  \end{proof}
\vspace{-0.5cm}

Consider now the region $\cO_\varphi^{+,+}$: every vector field $X\in \cO_\varphi^{+,+}$ has exactly $1$ fixed point $p_1$ in {$\Sigma^1$ and $1$ fixed point $p_2$ in $\Sigma^2$.} Let $W^s_i$ be the stable leaf through $p_i$. 
Note that $q_1$ and $p_2$ are both in $\Sigma^2$ and in the same way $q_2$ and $p_1$ are both in $\Sigma^1$. 
This remark allows, \emph{a priori}, that $q_1$ and $p_2$ belong to the same stable leaf or that $q_2$ and $p_1$ belong to the same stable leaf.  This corresponds to our next splitting of the region $\cO_\varphi^{+,+}$.
Let us denote $\cH\cE_i$ the subset of $\cO_\varphi^{+,+}$ corresponding to the vector fields $X$ for which $q_i\in W^s_j$.  In other words, for $X\in \cH\cE_i$, the unstable separatrix of $\sigma$ corresponding to $q_i$ is a heteroclinic connection with $p_j$, see Figure \ref{f-HE1eHE2}.  As in Lemma \ref{l.homo}, one has:

    \begin{figure}[h!]
\centering
	\includegraphics[scale=0.15]{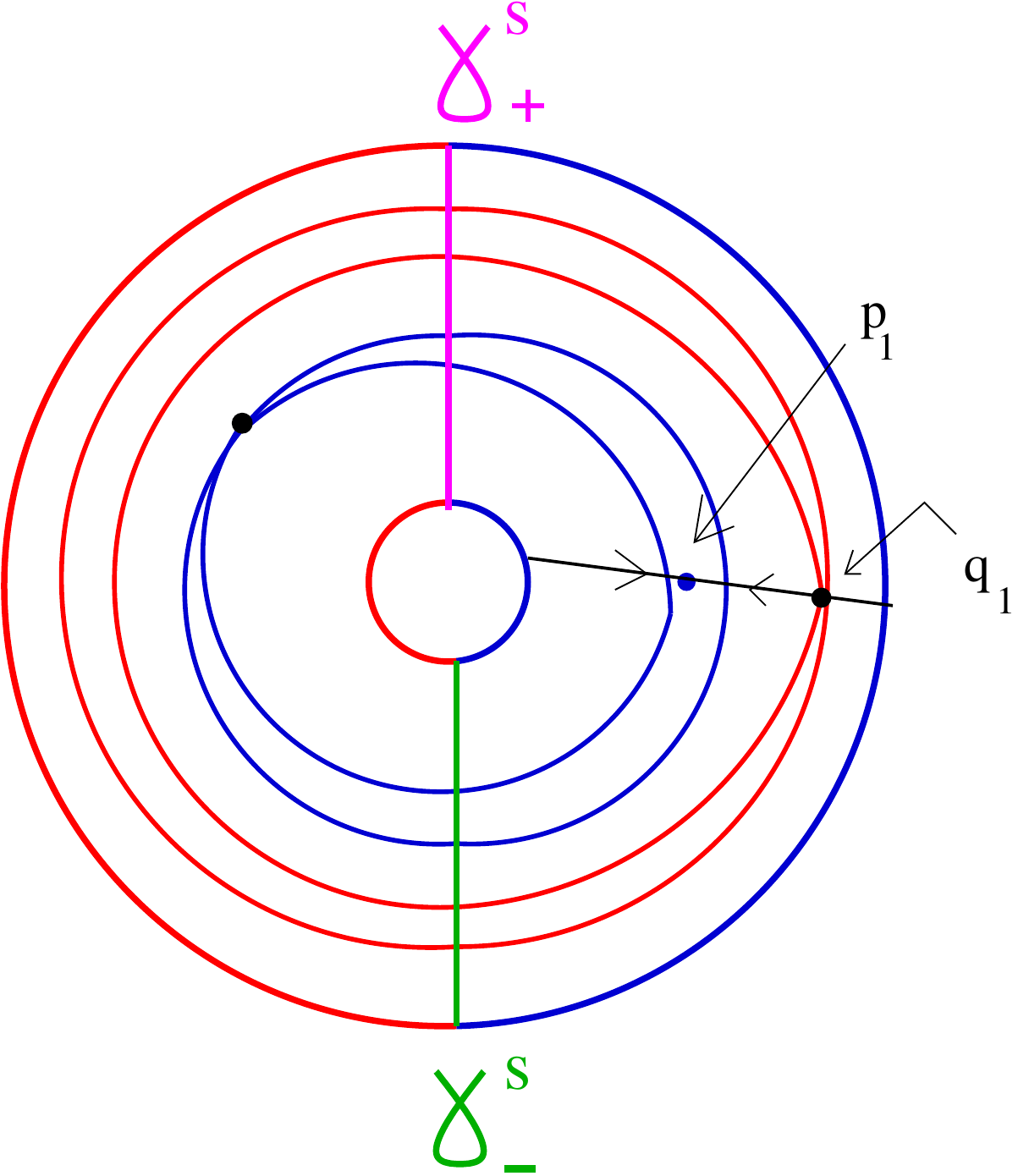}
		\hspace{0.6cm}
\includegraphics[scale=0.15]{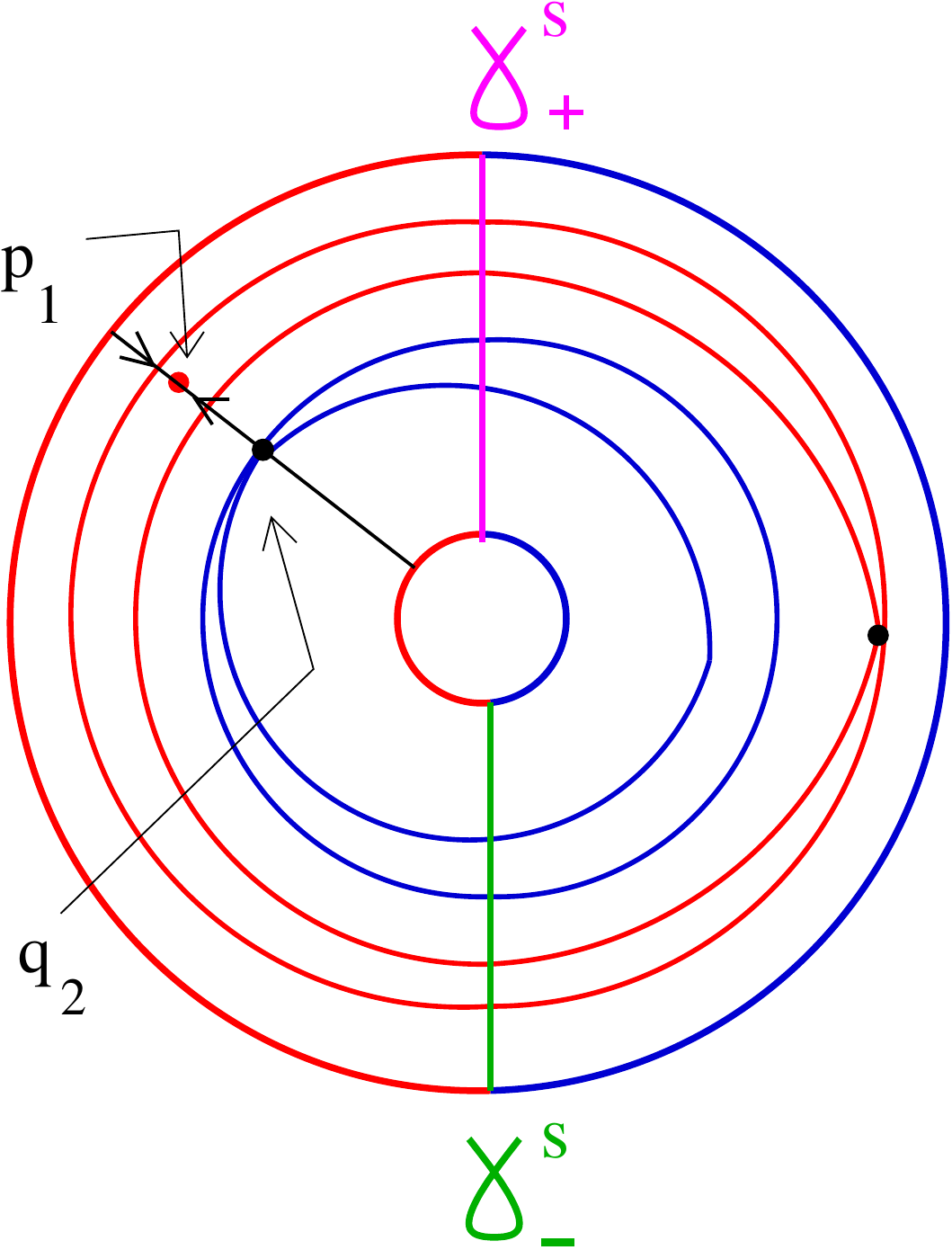}
\caption{(a) $X \in  \cH\cE_{1}$ and (b) $X \in  \cH\cE_{2}$}
\label{f-HE1eHE2}
\end{figure}
\vspace{-0.2cm}

\begin{lema}\label{l.hetero} The subsets $\cH\cE_i$ are codimension $1$ submanifolds of $\cO_\varphi$. 
  \end{lema}
  
Consider $X\in\cO_\varphi^{+,+}\setminus (\cH\cE_1\cup\cH\cE_2)$.  Then 
$\Sigma\setminus (W^s_1\cup W^s_2)$ has exactly $2$ connected components: one,  denoted by $\Sigma_+$ contains $\gamma^s_+$, and the other, denoted by $\Sigma_-$ contains $\gamma^s_-$. 
We denoted by $\cL^+$ the open subset of $\cO_\varphi^{+,+}$ where  both points $q_1,q_2$ belong to $\Sigma_+$ and $\cL^-$ the open set where  both points $q_1,q_2$ belong to $\Sigma_-$. We denote by $\widetilde{\cO_\varphi}^{+,+}$ the open subset where $q_1$ and $q_2$ belong to different components $\Sigma_\pm$ and $\Sigma_\mp$. 
We will denote $\cL^{+,-}$ the union
$$\cL^{+,-}=\cO_\varphi^{-,-}\cup \cO_\varphi^{+,-}\cup\cO_\varphi^{-,+}\cup\widetilde{\cO_\varphi}^{+,+}\cup \left((\cH^1\cup\cH^2)\setminus ((\cH^1_+\cap \cH^2_+) \cup (\cH^1_-\cap \cH^2_-)\right).$$
Recall that $\cH^i$ corresponds to a homoclinic loop and $\cH^i_\pm$ distinguishes the up or down connected components of $W^{s}(\sigma)\setminus W^{ss}(\sigma)$ involved in that loop. 

\section{The topological dynamics in the different regions of $\cO_\varphi$}\label{s.cartografia}
\subsection{Upper and down Lorenz attractor: the regions $\cL^+$ and $\cL^-$}

This section aims  to prove the following result:

\setcounter{maintheorem}{0}
\begin{maintheorem}\label{t.up} With the notation of Section~\ref{ss.region}, any vector field $X\in\cL^+$ admits precisely $2$ chain recurrence classes: one is an upper-Lorenz attractor, and the other is a basic hyperbolic set, topologically equivalent to the suspension of a fake horseshoe. 
The symmetric statement holds in $\cL^-$, interchanging the up and down. 
\end{maintheorem}
 \begin{figure}[h]
\centering
\includegraphics[scale=0.16]{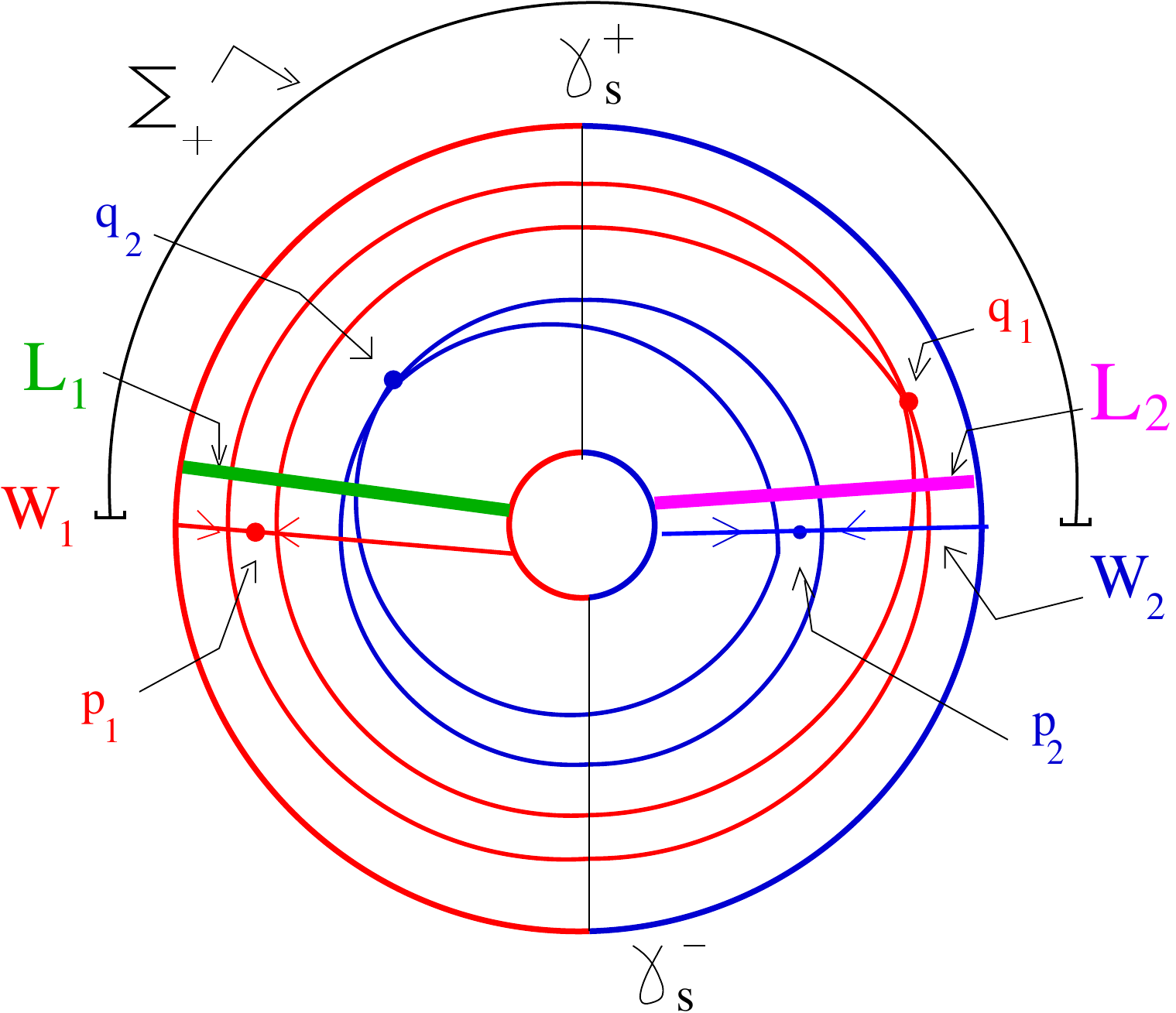}
\caption{The leaves $L_1$ and  $L_2$, and the region $\Sigma_+$.}\label{Teo-up}
\end{figure}	
 \vspace{-0.3cm}
 
 \begin{proof} The component $\Sigma_+$ is a rectangle.  The stable leaf $\gamma^s_+$ cuts $\Sigma_+$ into two components.  One is $\Sigma^1\cap\Sigma_+$ and is  bounded by $W^s_1=W^s(p_1)$ and the other is $\Sigma^2\cap\Sigma_+$ and is  bounded by $W^s_2$. Note that 
 $q_1$ belongs to $\Sigma^2\cap\Sigma_+$  and $q_2$ belongs to $\Sigma^1\cap\Sigma_+$. 
 Consider a stable leaf $L_1\in \Sigma^1\cap\Sigma_+$  separating $W^s_1$ from $q_2$,  
 and a stable leaf $L_2\in \Sigma^2\cap\Sigma_+$  separating $W^s_2$ from $q_1$, see Figure \ref{Teo-up}.
 Then $L_1\cup L_2$ cut $\Sigma^+$ in $3$ components: one is bounded by $W^s_1$ another by $W^s_2$ and the third, denoted by $R_L$ is a rectangle bounded by both $L_1$ and $L_2$ and containing $q_2$,$q_1$ and $\gamma^s_+$.  The leaf $\gamma^s_+$ cuts $R_L$ in two components, 
 $R_{L,1}\subset\Sigma_1$ and $R_{L_2}\subset \Sigma_2$. 
 
 Consider the restriction of $P$ to $R_L\setminus \gamma^+_s$. 
 The images of $R_{L_1}$ and $R_{L_2}$ are cuspidal triangles with cusps at $q_1$ and $q_2$ and are contained in $R_L$.  Recall that $P$ is hyperbolic.  
 Thus the restriction or $P$ to $R_L$ satisfies all the properties of the return map in the geometric model of the Lorenz attractor with a rate expansion larger than $\varphi>\sqrt2$. The rectangle $R_L$ is an attracting region for $P$.  
 One deduces that $U$ contains an attracting sub-region, in which $X$ is a geometric model of the Lorenz attractor. 
 
 Consider now the component $R_H$ of $\Sigma\setminus (L_1\cup L_2)$ disjoint from $R_L$, and containing $\gamma^s_-$. Consider the restriction of $P$ to $R_H\setminus\gamma^s_-$.  
 The image of each  of these components crosses $R_H$ in a Markovian way. Thus the maximal invariant in $R_H$ is far from the discontinuity and is conjugated to the \emph{fake horseshoe}, as the map $P$ preserves the orientation of the unstable cone field. This ends the proof. 
 The down case is similar.
 \end{proof}
 \subsection{Two-sided Lorenz attractor in $\cO_\varphi^{-,-}$}
 \begin{prop}\label{p.--} For any $X\in \cO_{\varphi}^{-,-}$, the maximal invariant set in $U$ is transitive and consists of a two-sided Lorenz attractor. 
 \end{prop}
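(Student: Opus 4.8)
The plan is to fix $X\in\cO_\varphi^{-,-}$ and exploit that, by definition, $q_1\in\Sigma_1$ and $q_2\in\Sigma_2$, so in particular $q_1\neq q_2$, while by Lemma~\ref{f-Lema-Out-H1eH2} the return map $P$ has \emph{no} fixed point in $\Sigma_1$ or $\Sigma_2$. Thus Lemma~\ref{l.cortaestavefixo} is unavailable and the argument must be driven entirely by the two cuspidal points. Since $\Lambda$ is the maximal invariant set of the attracting region $U$ and is singular hyperbolic, it is already an attracting singular hyperbolic set; as $\cO_\varphi^{-,-}$ is open, it suffices to prove that $\Lambda$ is transitive and meets both stable separatrices, both conclusions then holding on a whole neighbourhood of $X$. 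By the transitivity criterion established above, transitivity of $\Lambda$ reduces to showing that for every unstable segment $S\subset\Sigma$ the union of the stable leaves through the iterates $P^n(S)$, $n\ge 0$, is dense in $\Sigma$. It is enough to treat unstable segments, since $\cC^u$ is strictly $DP$-invariant, so any segment transverse to $\cF^s$ contains, after finitely many iterates of $P$, an unstable sub-segment.

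\textbf{Main step: every unstable segment eventually crosses some $\Sigma_i$ completely.} Let $S$ be an unstable segment. Its iterates are expanded by the factor $\lambda>\varphi$ of item~9), but each time an iterate meets $\gamma^s_+\cup\gamma^s_-$ it is split into two pieces iterated independently. The hard part will be controlling this splitting, and I would do it by tracking a single descendant sub-segment and invoking Lemma~\ref{l.golden}: whenever a piece of length $\ell([a,c])$ is cut at an interior point $b$ (its intersection with $\gamma^s_\pm$), one of the descendants $[a,b]$, $[b,c]$ has, after one, respectively two, further uncut iterates, length at least $\tfrac{\lambda}{\varphi}\,\ell([a,c])$, with $\tfrac{\lambda}{\varphi}>1$. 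Iterating this selection yields unstable sub-segments of iterates of $S$ whose lengths grow geometrically, hence are unbounded. Since there is a uniform bound on the length of an unstable segment that does not cross every leaf of the fibration $\cF^s$ (the bound already used in Lemma~\ref{l.quase-atrator}), some iterate $P^n(S)$ must contain a component crossing $\Sigma$ from $\gamma^s_+$ to $\gamma^s_-$, hence a sub-segment crossing one of the $\Sigma_i$ completely. This bookkeeping of the splitting at the two discontinuity leaves is the delicate point.

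\textbf{From a complete crossing to density, and conclusion.} Suppose $P^n(S)$ contains a segment $c$ crossing $\Sigma_i$ completely. Then $c$ meets every stable leaf of $\overline{\Sigma_i}$, and since $P$ carries $\Sigma_i$ onto the pinched essential annulus $P(\Sigma_i)$ preserving $\cF^s$, the single unstable segment $P(c)$ meets every stable leaf of $\Sigma$ except possibly the leaf $\ell_{q_i}$ through the cusp $q_i$ (this is exactly the mechanism of Lemma~\ref{l.cusptogamma} and the remark following it). As $q_i\in\Sigma_i$, the leaf $\ell_{q_i}$ lies in $\Sigma_i$, so $P(c)$ crosses $\Sigma_j$ ($j\neq i$) completely; applying the same step once more, a further iterate meets every stable leaf except $\ell_{q_j}$. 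Because $q_1\neq q_2$ forces $\ell_{q_1}\neq\ell_{q_2}$, the union of these two iterates meets \emph{every} stable leaf, so the stable saturation of $\bigcup_n P^n(S)$ is all of $\Sigma$; this verifies the criterion and gives transitivity of $\Lambda$. Finally, each $\overline{P^n(\Sigma)}$ is a union of essential pinched annuli crossing every leaf, hence meeting both $\gamma^s_+$ and $\gamma^s_-$; these compact sets are nested, so $\Lambda_P=\bigcap_n\overline{P^n(\Sigma)}$ meets both $\gamma^s_\pm$, i.e. $\Lambda$ meets both separatrices $W^s_\pm(\sigma)$. Transitivity, the attracting property, and the two intersections all persist on the open set $\cO_\varphi^{-,-}$, so $\Lambda$ is a two-sided Lorenz attractor.
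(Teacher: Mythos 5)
Your reduction to the unstable-segment criterion, your passage from a complete crossing of one $\Sigma_i$ to meeting every stable leaf (using that the cusp leaves lie in different components $\Sigma_1$, $\Sigma_2$, hence are distinct), and your argument that $\Lambda_P$ is a union of essential circles meeting both $\gamma^s_\pm$ are all sound and agree with the paper. The gap is in your main step. You want every unstable segment to have iterates with a component of unbounded length, and you propose to obtain this by greedily selecting, at each cut, the descendant favoured by Lemma~\ref{l.golden}. But that lemma's estimate for the descendant $[b,c]$ is $\lambda^2\ell([b,c])$, which is only realized after \emph{two consecutive uncut} iterates; nothing prevents $P([b,c])$ from being cut again by $\gamma^s_+\cup\gamma^s_-$ before the second iterate. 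If, say, a piece is cut near its midpoint at every step, the tracked descendants have length of order $(\lambda/2)^n$ times the original, which tends to $0$ since $\lambda<2$, and your geometric growth fails. You explicitly flag this bookkeeping as ``the delicate point'' but do not carry it out, and it is exactly where the content of the proposition lies. Note also that the paper never establishes unbounded component lengths: in the bounded case its conclusion is reached through the \emph{union} of iterates, not through a single long component, so it is not even clear that the statement you are trying to prove in this step is true.

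The paper closes the argument differently, and this is the ingredient you are missing. Assuming for contradiction that every component of every iterate of (a suitably chosen) $S$ has length at most $\delta\ell(S)$ with $1<\delta<\lambda^2/2$ (possible because $\lambda>\varphi>\sqrt2$; Lemma~\ref{l.golden} is not needed in this region), one deduces that $S$ is cut by exactly one of $\gamma^s_\pm$, that the longer half $S_1$ (of length at least $\ell(S)/2$) has $P(S_1)$ again meeting $\gamma^s_+\cup\gamma^s_-$, and at that moment $P(S_1)$ is an unstable segment with one endpoint at the cusp $q_i\in\Sigma_i$ reaching a discontinuity leaf. This is precisely the hypothesis of Lemma~\ref{l.cusptogamma} --- and precisely where $X\in\cO^{-,-}_\varphi$ (i.e.\ $q_i\in\Sigma_i$) is used --- which then yields that the union of the iterates of $S$ cuts all stable leaves but finitely many. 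To repair your proof you should replace the iterated golden-ratio selection by this dichotomy: either an iterate of the selected piece avoids the discontinuities and its length exceeds the assumed bound, or it meets them and the cusp mechanism of Lemma~\ref{l.cusptogamma} takes over.
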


 \noindent {\em{Proof.}} According to Lemma~\ref{l.cortaestavefixo}, for proving the transitivity of the maximal invariant, it is enough to verify that  the iterates of an unstable segment $S$ in $\Sigma$  cut all the stable leaves with the possible exception of a set with an empty interior,
 see Figure \ref{f.p--}.
 
  \begin{figure}[h]
\centering
	\includegraphics[scale=0.16]{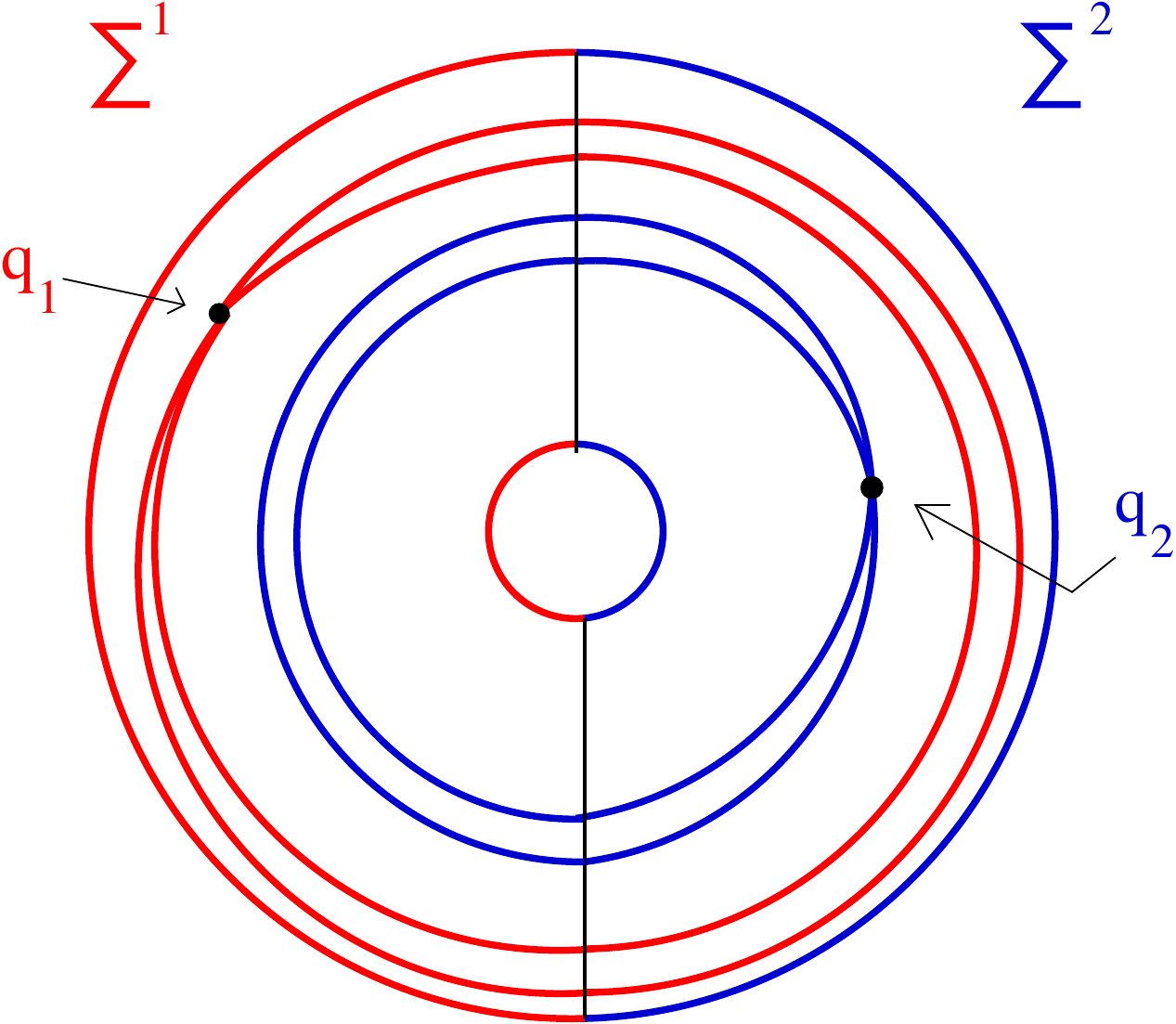}
	\caption{$q_1 \in \Sigma^1$ and $q_2 \in \Sigma^2$}\label{f.p--}
\end{figure}

 So consider an unstable segment $S\subset \Sigma$. Consider the set of lengths of the connected components of all positive iterates $P^n(S)$.  If this set of lengths is not bounded,  some component cuts every stable leaf, and we are done. 
 Otherwise, given any $\delta>1$, up to replace $S$ by a segment in one of its iterates, one may assume that any connected component $S'$ in any iterate $P^n(S)$ has a length bounded by $\delta\ell(S)$.  We now fix $\delta\in]1,\frac{\lambda^2}2[$. 
 
 If $S$ is disjoint from $\gamma^s_+\cup\gamma^s_-$, then $\ell(P(S))>\lambda \ell(S)>\delta\ell(S)$ contradicts the choice of $S$.  So $S$ cuts $\gamma^s_+$ or $\gamma^s_-$.  If it cuts both, then it crosses completely {$\Sigma^1$ or $\Sigma^2$} so that $P(S)$ cuts all the stable leaves but $1$, and we are done. 
 So we may assume that $S$ cuts exactly one of $\gamma^s_+$ or $\gamma^s_-$, say $\gamma^s_+$, for instance. Let $S_1$ be a component of $S\setminus\gamma^s_+$ with a length larger or equal to $\frac12\ell(S)$. The length of $P(S_1)$ is at least $\frac\lambda2 \ell(S)$. 
 
 If $P(S_1)$ is disjoint from $\gamma^s_+\cup\gamma^s_-$ then 
 $$\ell(P(S_1)>\lambda \ell(S_1)> \frac{\lambda^2}2\ell(S)> \delta\ell(S).$$
 contradicting the choice of $S$.  So $P(S_1)$ cuts $\gamma^s_+$ or $\gamma^s_-$.
 Once again, if it cuts both of them, we are done, so one may assume that $P(S_1)$ cuts exactly one of $\gamma^s_+$ or $\gamma^s_-$.  Note that one of the endpoints of $P(S_1)$ is a cuspidal point $q_i\in \Sigma^i$ as $X\in \cO_1^{-,-}$.
 Thus Lemma~\ref{l.cusptogamma} applies and concludes the proof of the transitivity of the maximal invariant set $\Lambda_X$. 
 Note that the compact set $\Lambda_P$ consists of a union of essential circles in $\Sigma$ and therefore always cuts $\gamma^s_+$ and $\gamma^s_-$, so that the maximal invariant set $\Lambda_X$ intersects non-trivially both stable sets $W^s_+(\sigma)$ and $W^s_-(\sigma)$. Thus $\Lambda_X$ is a two-sided Lorenz attractor, ending the proof. See Figure \ref{f.p--}. $\square$
\subsection{Two-sided Lorenz attractor in $\cO^{+,-}_\varphi$ and $\cO^{-,+}_{\varphi}$}

\begin{prop}\label{p.+-} For any $X\in \cO_{\varphi}^{+,-}\cup \cO_{\varphi}^{-,+}$, the maximal invariant set in $U$ is transitive and consists of a two-sided Lorenz attractor. 
 \end{prop}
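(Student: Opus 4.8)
The plan is to adapt the length-estimate scheme of the proof of Proposition~\ref{p.--}, the new feature being that one of the two cuspidal points now sits on the ``wrong'' side of the discontinuity. By the symmetry exchanging the indices $1$ and $2$ it is enough to treat $X\in\cO_\varphi^{+,-}$. For such $X$ Lemma~\ref{f-Lema-Out-H1eH2} gives a unique fixed point $p_1\in\Sigma_1$ of $P$ and no fixed point in $\Sigma_2$, while the defining conditions $\omega_1=+$, $\omega_2=-$ read $q_1\in\Sigma_2$ and $q_2\in\Sigma_2$: both cuspidal points lie in $\Sigma_2$. As in the transitivity criteria established above, to get transitivity of $\Lambda_X$ it suffices to show that the forward iterates of an arbitrary unstable segment $S\subset\Sigma$ cut every stable leaf of $\cF^s$ with the exception of a set of empty interior.

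First I would run the dichotomy on lengths exactly as in Proposition~\ref{p.--}. If the connected components of the iterates $P^n(S)$ have unbounded length, some component crosses $\Sigma$ completely and we are done. Otherwise, after replacing $S$ by one of its iterate-components, I may assume every component of every $P^n(S)$ has length at most $\delta\,\ell(S)$ with $\delta\in\,]1,\lambda^2/2[$; peeling off a half and using the golden-ratio expansion (Lemma~\ref{l.golden}) forces, after at most two iterations, an unstable segment whose $P$-image has a cuspidal endpoint $q_i$ and crosses exactly one of $\gamma^s_+,\gamma^s_-$ (if it crosses both we are already done).

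The two cases are distinguished by which cusp $q_i$ is produced. If $i=2$, then $q_2\in\Sigma_2$ and Lemma~\ref{l.cusptogamma} applies verbatim and finishes the argument (only the leaf through $q_2$ may fail to be met). The delicate case is $i=1$: here $q_1\in\Sigma_2$, so the self-anchoring growth of Lemma~\ref{l.cusptogamma}, which requires the cusp to lie in the component it bounds, is not available. I would resolve this by transferring the anchor to $q_2$: the sub-segment $T$ joining $q_1$ to the first $\gamma^s$ it meets lies entirely in $\Sigma_2$, so its image $P(T)$ is an unstable segment of length $\ge\lambda\,\ell(T)$ whose endpoint coming from $\partial\Sigma_2$ is the cusp $q_2\in\Sigma_2$. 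If $P(T)$ already crosses a $\gamma^s$ we are back to the previous case; if not, $P(T)\subset\Sigma_2$ keeps being expanded by $P$ and, being in the bounded-length regime, must cross a $\gamma^s$ after finitely many steps, at which moment Lemma~\ref{l.cusptogamma} (now anchored at $q_2$) concludes. Alternatively, since $\overline{P(\Sigma_1)}$ crosses $\Sigma_1$ in a Markov fashion, the inclination lemma guarantees that the forward iterates of $T$ eventually meet $W^s(p_1)$, and Lemma~\ref{l.cortaestavefixo} concludes with the leaf through $q_1$ as the only possible exception.

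It remains to note that $\Lambda_P=\bigcap_{n\ge0}\overline{P^n(\Sigma)}$ is a union of essential circles of $\Sigma$, each of which necessarily meets both $\gamma^s_+$ and $\gamma^s_-$; hence $\Lambda_X$ intersects both stable separatrices $W^s_\pm(\sigma)$. Since $\cO_\varphi^{+,-}$ is open and every ingredient used above (expansion rate larger than $\varphi$, the positions $q_1,q_2\in\Sigma_2$, and the fixed point $p_1$) persists on a neighborhood of $X$, the transitivity is robust, so $\Lambda_X$ is a two-sided Lorenz attractor. The main obstacle is precisely the cusp $q_1$ lying in $\Sigma_2$: the mechanism of Lemma~\ref{l.cusptogamma} breaks there, and the crux of the proof is to recover its conclusion either by moving the anchor to the admissible cusp $q_2$ via one extra iterate, or by routing the segment through the fixed point $p_1$.
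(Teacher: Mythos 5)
Your reduction, your treatment of the ``good'' cusp, and the final observations (essential circles meeting both $\gamma^s_\pm$, openness of $\cO_\varphi^{+,-}$ giving robustness) all match the paper. The problem is your \emph{delicate case} $i=1$: both of the mechanisms you propose there have genuine gaps. For the anchor transfer, once you are in the sub-case $P(T)\subset\Sigma_2$, applying $P$ again destroys the anchor: $P^2(T)$ has endpoint $P(q_2)$, which is not a cuspidal point, so when some later iterate $P^n(T)$ finally crosses a $\gamma^s$ it is no longer a segment joining a cusp $q_i\in\Sigma_i$ to $\gamma^s_\pm$, and Lemma~\ref{l.cusptogamma} simply does not apply to it. Nor can you fall back on a length contradiction at that point, because $\ell(T)$ (a sub-segment of $P(S_1)$ cut off at its first encounter with a $\gamma^s$) is not bounded below in terms of $\ell(S)$. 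Your alternative is also not available: the inclination lemma describes the accumulation of iterates of a disk that \emph{already} meets $W^s(p_1)$ transversally; it does not produce such an intersection for an arbitrary unstable segment --- asserting that the iterates of $T$ eventually cut $W^s(p_1)$ is essentially the transitivity you are trying to prove.

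The paper's proof closes exactly this hole by never iterating the piece attached to the bad cusp more than once. Keep \emph{both} components $S_1=S\cap\Sigma_1$ and $S_2=S\cap\Sigma_2$ (do not peel off only the longer half), and run the dichotomy on $S_2$, the piece whose image is anchored at the good cusp $q_2\in\Sigma_2$: either $P(S_2)$ meets $\gamma^s_+\cup\gamma^s_-$ and Lemma~\ref{l.cusptogamma} finishes, or $P(S_2)$ is disjoint from the discontinuities and $\ell(P^2(S_2))\geq\lambda^2\ell(S_2)$. In the latter case combine this with $\ell(P(S_1))\geq\lambda\ell(S_1)$ and the asymmetric estimate of Lemma~\ref{l.golden},
$$\max\{\ell(P(S_1)),\ell(P^2(S_2))\}\geq\max\{\lambda\ell(S_1),\lambda^2\ell(S_2)\}\geq\tfrac{\lambda}{\varphi}\,\ell(S)>\delta\,\ell(S),$$
contradicting the bounded-length assumption. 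This is precisely why the threshold $\delta$ must be taken in $]1,\lambda/\varphi[$ (and why the expansion rate is required to exceed the golden number) rather than the $]1,\lambda^2/2[$ you carried over from Proposition~\ref{p.--}; with that choice the cusp $q_1\in\Sigma_2$ never needs to serve as an anchor at all.
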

 The proof of the proposition for $X\in \cO_{\varphi}^{-,+}$ is identical to the proof for
 $X\in\cO_{\varphi}^{+,-} $ interchanging the components {$\Sigma^1$ and $\Sigma^2$}, so we will write the proof only for $X\in \cO_{\varphi}^{+,-}$. See Figure \ref{f-Up-Lorenz2-em-O++}.

 \noindent {\em{Proof.}} Recall that the rate of expansion of $X$ is $\lambda>\varphi$. 
  \begin{figure}[th]
\centering
	\includegraphics[scale=0.17]{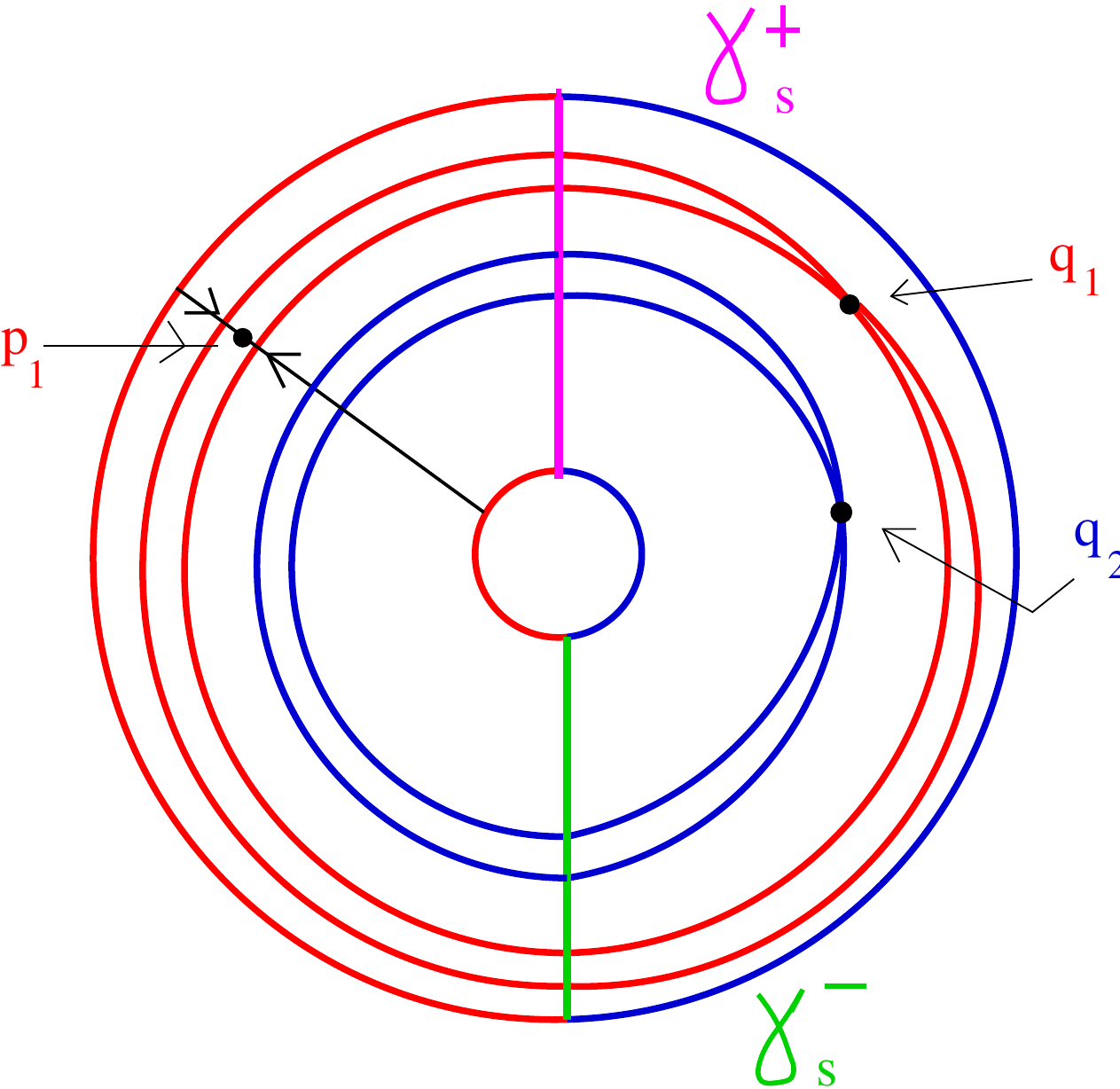}
	\caption{$q_1, q_2 \in \Sigma^2$ for $X\in \cO^{-,+}$}\label{f-Up-Lorenz2-em-O++}
\end{figure}
 
 As for Proposition~\ref{p.--}, it is enough to prove that  the iterates of an unstable segment $S$ in $\Sigma$  cut all the stable leaves with a possible exception of a set with an empty interior. See Figure \ref{f-Up-Lorenz2-em-O++}.
 So consider an unstable segment $S\subset \Sigma$. Consider the set of lengths of the connected components of all positive iterates $P^n(S)$.  If this set of lengths is not bounded, some component cuts every stable leaf, and we are done. 
 Otherwise, given any $\delta>1$, up to replace $S$ by a segment in one of its iterates, one may assume that any connected component $S'$ in any iterate $P^n(S)$ has a length bounded by $\delta\ell(S)$.  We now fix $\delta\in]1,\frac{\lambda}\varphi[$. 
 
 If $S$ is disjoint from $\gamma^s_+\cup\gamma^s_-$, then $\ell(P(S)>\lambda \ell(S)>\delta\ell(S)$ contradicts the choice of $S$.  So $S$ cuts $\gamma^s_+$ or $\gamma^s_-$.  If it cuts both, then it crosses completely $\Sigma^1$ or $\Sigma^2$ so that $P(S)$ cuts all the stable leaves but $1$, and we are done. 
 
 So we may assume that $S$ cuts exactly one of $\gamma^s_+$ or $\gamma^s_-$, say $\gamma^s_-$, for instance.  Thus $S$ is cut by $\gamma^s_-$ into two components 
{$S_i=S\cap \Sigma^i$. }
 Consider $P(S_2)$. If $P(S_2)\cap(\gamma^s_-\cup \gamma^s_+)\neq \emptyset$ then Lemma~\ref{l.cusptogamma} applies (because $q_2\in \Sigma^2$ by our assumption $X\in\cO^{+,-}_\varphi$).  Thus the iterate of $S_2$ cuts every stable leaf but a finite number of them so that we are done. 
 
 Thus we may assume that $P(S_2)$ is disjoint from $\gamma^s_-\cup  \gamma^s_+$. Hence $P(S_2)$ is an unstable segment of length larger than $\lambda^2\ell(S_2)$.  On the other hand $\ell(P(S_1))\geq\lambda \ell(S_1)$. Now Lemma~\ref{l.golden} implies that 
 $\max\{\ell(P(S_1)),\ell(P^2(S_2))\}\geq \frac\lambda\varphi \ell(S)>\delta\ell(S)$,
contradicting the choice of the segment $S$, finishing the proof. $\square$

 \subsection{Two-sided Lorenz attractor in $\widetilde{\cO_\varphi}^{+,+}$}
\begin{prop}\label{p.++} For any $X\in \widetilde{\cO_{\varphi}}^{+,+}$ the maximal invariant set in $U$ is transitive and consists of a two-sided Lorenz attractor. 
 \end{prop}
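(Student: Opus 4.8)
The plan is to follow the same scheme as in Propositions~\ref{p.--} and~\ref{p.+-}: reduce the statement to a property of the one–dimensional dynamics of unstable segments, and then push such a segment around by the return map $P$ until it meets every stable leaf. As in the proof of Proposition~\ref{p.--}, since $\Lambda_P$ is a union of essential circles and therefore always meets both $\gamma^s_+$ and $\gamma^s_-$, it is enough to prove that the forward iterates of an arbitrary unstable segment $S\subset\Sigma$ cut all the stable leaves with the possible exception of a set of empty interior; transitivity together with the attracting character of $\Lambda_X$ and the fact that $\Lambda_X$ meets both $W^s_+(\sigma)$ and $W^s_-(\sigma)$ will then give that $\Lambda_X$ is a two–sided Lorenz attractor.

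The new feature, compared with the previous two regions, is that neither cusp lies in its own component: after possibly exchanging the roles of $\Sigma_+$ and $\Sigma_-$ we have $q_1\in\Sigma_2\cap\Sigma_+$ and $q_2\in\Sigma_1\cap\Sigma_-$, so Lemma~\ref{l.cusptogamma} is never directly available. Instead the main tool will be Lemma~\ref{l.cortaestavefixo} applied to the two fixed points $p_1\in\Sigma_1$ and $p_2\in\Sigma_2$ furnished by the region $\cO_\varphi^{+,+}$. Concretely, I would first reduce to the single claim that every unstable segment has a forward iterate whose interior crosses $W^s_1$ or $W^s_2$. Indeed, once an iterate crosses $W^s_i$, Lemma~\ref{l.cortaestavefixo} gives that all sufficiently high iterates cut every stable leaf but the one through $q_i$; since $q_1\ne q_2$ (they lie in the distinct components $\Sigma_+$ and $\Sigma_-$), such a large iterate in particular crosses the other leaf $W^s_j$, and a second application of Lemma~\ref{l.cortaestavefixo} recovers the missing leaf through $q_i$. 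Hence crossing a single fixed-point leaf already forces the iterates to cut all stable leaves.

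To produce such a crossing I would run the length dichotomy of Propositions~\ref{p.--}–\ref{p.+-}. If the lengths of the connected components of the iterates $P^n(S)$ are unbounded, some component cuts every stable leaf and we are done. Otherwise, fixing $\delta\in\,]1,\lambda/\varphi[$ and replacing $S$ by a subsegment of an iterate, we may assume every component of every iterate has length at most $\delta\,\ell(S)$; then $S$ meets exactly one of $\gamma^s_+,\gamma^s_-$, and writing $S=S_1\cup S_2$ with $S_i\subset\Sigma_i$, the segments $P(S_1)$ and $P(S_2)$ end at the cusps $q_1\in\Sigma_+$ and $q_2\in\Sigma_-$ respectively. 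If either $P(S_i)$ is disjoint from $\gamma^s_+\cup\gamma^s_-$, then applying $P$ once more produces a component of length at least $\lambda^2\ell(S_i)$, and Lemma~\ref{l.golden} applied to the splitting of $S$ yields a component of length at least $\tfrac{\lambda}{\varphi}\ell(S)>\delta\ell(S)$, a contradiction. So both $P(S_1)$ and $P(S_2)$ meet $\gamma^s_+\cup\gamma^s_-$. Here the placement $q_1\in\Sigma_+$, $q_2\in\Sigma_-$ is decisive: a segment issued from $q_1\in\Sigma_+$ that meets $\gamma^s_-\subset\Sigma_-$, or a segment issued from $q_2\in\Sigma_-$ that meets $\gamma^s_+\subset\Sigma_+$, must cross the frontier $W^s_1\cup W^s_2$ separating $\Sigma_+$ from $\Sigma_-$, hence crosses $W^s_1$ or $W^s_2$, and Lemma~\ref{l.cortaestavefixo} applies.

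The remaining, and genuinely delicate, case is when $P(S_1)$ meets only $\gamma^s_+$ (staying in $\Sigma_+$) while $P(S_2)$ meets only $\gamma^s_-$ (staying in $\Sigma_-$); this is where I expect the main difficulty. The plan is to iterate once more and to track on which side of $W^s_1\cup W^s_2$ the new cusp endpoints land: the piece of $P(S_1)$ lying in $\Sigma_2\cap\Sigma_+$ is mapped to a segment ending at the cusp $q_2\in\Sigma_-$, so if its other endpoint remains in $\Sigma_+$ it already crosses a fixed-point leaf, whereas if that image avoids $\gamma^s_+\cup\gamma^s_-$ it gains the extra expansion factor needed to contradict the length bound through Lemma~\ref{l.golden}. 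Carrying out this bookkeeping, one shows that the ``both pieces stay on their own side'' configuration cannot be reproduced indefinitely under $P$: either a component eventually reaches a discontinuity on the opposite side (forcing a crossing of $W^s_1$ or $W^s_2$), or it avoids the discontinuities long enough to be expanded by a factor exceeding $\delta$, contradicting boundedness. In either alternative we obtain an iterate crossing a fixed-point leaf, which by the second paragraph finishes the proof of transitivity, and the two-sided Lorenz conclusion then follows exactly as at the end of the proof of Proposition~\ref{p.--}.
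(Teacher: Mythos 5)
Your overall scheme is the paper's: the reduction to iterating unstable segments, the length dichotomy with $\delta\in\,]1,\lambda/\varphi[$, the elimination via Lemma~\ref{l.golden} of the sub-case where some $P(S_i)$ misses $\gamma^s_+\cup\gamma^s_-$, and the observation that a cusp segment travelling from one component of $\Sigma\setminus(W^s_1\cup W^s_2)$ to the other must cross $W^s_1$ or $W^s_2$, after which Lemma~\ref{l.cortaestavefixo} finishes the argument (your double application to recover the leaf through $q_i$ is a nice, if unnecessary, refinement). The problem is your ``remaining, genuinely delicate case'': there you do not give a proof, only a plan (``carrying out this bookkeeping, one shows\dots''), and as written that bookkeeping is not obviously convergent, since the pieces keep splitting under further iteration. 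This is a genuine gap.

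The gap closes once you use the one ingredient of the paper's proof you never invoke: the orientation of the unstable cone field, which $DP$ preserves (item 9 of the construction). Say $S$ cuts $\gamma^s_-$ and $q_1\in\Sigma_2\cap\Sigma_+$. Then $S_1=S\cap\Sigma_1$ \emph{ends} (for the positive orientation) on $\gamma^s_-$, so $P(S_1)$ is an oriented unstable segment \emph{ending} at $q_1$, i.e.\ lying on the negative side of $q_1$; only $P(S_2)$ \emph{starts} at its cusp. Travelling backwards from $q_1\in\Sigma_2\cap\Sigma_+$ one meets, in this order, $W^s_2$, then $\gamma^s_-$, then $W^s_1$, and only then $\gamma^s_+$. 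Hence $P(S_1)$ cannot meet $\gamma^s_+$ without having met $\gamma^s_-$ first, and cannot meet $\gamma^s_-$ without having crossed $W^s_2$ in its interior: your ``both pieces stay on their own side'' configuration is empty. So as soon as the Lemma~\ref{l.golden} step forces $P(S_1)$ to meet $\gamma^s_+\cup\gamma^s_-$ at all, it has crossed $W^s_2$ and Lemma~\ref{l.cortaestavefixo} applies directly --- no further iteration or bookkeeping is needed. The case where $S$ cuts $\gamma^s_+$ is symmetric, using $P(S_2)$ and $W^s_1$.
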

 
 Vector fields $X$ in  $\widetilde{\cO_\varphi}^{+,+}$ are characterized by the fact that the points $q_1$ and $q_2$ belong to different components $\Sigma_+$ (containing $\gamma^s_+$) and $\Sigma_-$ (containing $\gamma^s_-$) of $\Sigma\setminus (W^s_1\cup W^s_2)$ where $W^s_i=W^s(p_i)$ and $p_i$ is the fixed point of $P$ in $\Sigma^i$. 
 Thus $\widetilde{\cO_\varphi}^{+,+}$ is the union of two disjoint open subsets, defined by $q_1\in \Sigma_+$ or $q_1\in \Sigma_-$.  
 The proof of the proposition is symmetrical in these two open sets.  We provide here the proof where $q_1\in \Sigma_+$. See Figure \ref{f-Up-Lorenz-em-OTilda++}.

  \begin{figure}[th]
\centering
	\includegraphics[scale=0.17]{f-Up-Lorenz-em-OTilda++.pdf}
	\caption{$q_1 \in \Sigma_+$ for $X\in \widetilde{\cO_\varphi}^{+,+}$}\label{f-Up-Lorenz-em-OTilda++}
\end{figure}
 
 \noindent {\em{Proof.}} Most of the proof is identical to the proofs of Propositions~\ref{p.--} and~\ref{p.+-} and allows us to consider a segment $S$ so that any component in any iterate $P^n(S)$ has a length bounded by $\delta\ell(S)$ with $1<\delta<\frac\lambda\varphi$ where $\lambda$ is the expansion rate of $X$. Furthermore, $S$ cuts exactly one of the stable leaves $\gamma^s_+$ or $\gamma^s_-$. 
 Let us assume that $S$ cuts $\gamma^s_-$, and denote $S_i=S\cap \Sigma_i$. If one of $P(S_1)$ or $P(S_2)$ is disjoint from $\gamma^s_+ \cup\gamma^s_-$, then one concludes in the same way as in the proof of Proposition~\ref{p.+-} that, using Lemma~\ref{l.golden}, one of the iterates
 $P(S_1)$, $P^2(S_1)$, $P(S_2)$ or $P^2(S_2)$ contains a segment of length larger than $\frac\lambda\varphi$ contradicting the choice of $S$.  
 Thus one may assume that both $P(S_1)$ and $P(S_2)$ cut $\gamma^s_+ \cup\gamma^s_-$. 
 For the orientation of $\cC^u$, $S_1$ has its endpoint at $\gamma^s_-$.  Thus $P(S_1)$ ends at $q_1\in \Sigma_+$. As seen above, $P(S_1)$ cuts $\gamma^s_+ \cup\gamma^s_-$ and  its orientation implies that indeed it cuts $\gamma^s_-$. 
 In particular, it goes out of $\Sigma_+$.  One deduces that $P(S_1)$ cuts transversely $W^s_2=W^s(p_2)$, where $p_2$ is the fixed point in $\Sigma_2$  (recall that $X\in\cO_\varphi^{+,+}$). Thus Lemma~\ref{l.cortaestavefixo} ensures that the iterates of $P(S_2)$ cross all stable leaves but a finite number of them, concluding that case. 

The case  where $S$ cuts $\gamma^s_+$ is similar, just replacing $S_1=S\cap \Sigma_1$ and $W^s(p_2)$ by $S_2=S\cap \Sigma_2$ and $W^s(p_1)$. This concludes the proof. $\square$
  
 \subsection{Two-sided Lorenz attractor in  hypersurfaces $\cH^i$ of homoclinic loops}
 \begin{maintheorem}\label{p.Hi}   For any $X\in\cO_1$  in one of the hypersurfaces $\cH^1$ or $\cH^2$, the maximal invariant in $U$ is a transitive singular attractor meeting both stable {sets} $W^s_\pm(\sigma)$. 
 \end{maintheorem}
 \begin{proof} We assume that one of the points $q_1$,$q_2$, say $q_1$ belongs to one of the stable leaves $\gamma^s_+$ or $\gamma^s_-$, say $q_1\in \gamma^s_+$ (all the cases admit an identical proof, \emph{mutatis mutant}). 

\begin{figure}[th]
\centering
	\includegraphics[scale=0.17]{f-pHi1.pdf}
		\hspace{0.2cm}
		\includegraphics[scale=0.17]{f-pHi1-.pdf}
\caption{$X \in \cH^1 \cup \cH^2$}
\label{f.p.Hi1}
\end{figure}

Similarly to the proof of the Propositions~\ref{p.--}, ~\ref{p.+-} and ~\ref{p.++},  the proof consists in considering an unstable segment $S$  which does not admit any segment $\tilde S$ of length larger than $\delta\ell(S)$, $1<\delta<\frac\lambda\varphi$, so that $\tilde S$, excepted a finite subset,  is contained in the union of the iterate $P^n(S)$, $n\geq0$. One needs to prove that the iterates of such a segment $S$ cut any stable leaves but a finite number of them.   
  Again, the choice of $S$ implies that $S$ cuts $\gamma^s_+$ or $\gamma^s_-$ and if it cuts both, then $P(S)$ already cuts all stable leaves but one.  So we assume that $S$ cuts only $1$ of these leaves.

 Assume first that $S$ cuts $\gamma^s_+$. Consider {$S_1=S\cap \Sigma^1$}. It is a segment starting at a point in $\gamma^s_+$ and contained in $\Sigma^1$.   Then $P(S_1)$ is a segment a length larger than $\lambda\ell(S)$ and starting at $q_1\in \gamma^s_+$. If $P(S_1)$ is not included in  {$\Sigma^1 $}, then  it cuts both $\gamma^s_+$ and $\gamma^s_-$ and $P^2(S_1)$ cuts all leaves but one.  
 If $P(S^1)\subset \Sigma^1$, then $P^2(S_1)$ is a segment of length larger than $\lambda^2\ell(S)$ and starting at $q_1$. 
 Iterating the process, one gets that one of the iterates $P^k(S_1)$ crosses completely {$\Sigma^1$} so that $P^{k+1}(S_1)$ cuts all stable leaves but one, and we are done. 
 
 Assume now that $S$ cuts $\gamma^s_-$. Consider {$S_i=S\cap \Sigma^i$}. Then $P(S_1)$ is a segment ending at $q_1\in\gamma^s_+$, and $P(S_1)$ either crosses completely {$\Sigma^2$} (and we are done) or is contained in {$\Sigma^2$}. Now $P^2(S_1)$ is a segment ending at $q_2$. On the other hand, $P(S_2)$ is a segment starting at $q_2$. 
 Now $P^2(S_1)\cup\{q_2\}\cup P(S_2)$ is a segment of length at least $\lambda \ell(S)$.  This contradicts the choice of $S$, ending the proof.
 \end{proof}
 
 The statement of Theorem~\ref{p.Hi} does not announce a two-sided Lorenz attractor, because the $\cH^i$ are not open subsets, and then the robustness of the transitivity is not ensured.  However, we will see below that $X\in \cH^i$ exhibits indeed  a two-sided Lorenz attractor excepted for $X$ in a codimension $2$ submanifold. 
 Let $\cH^{1,2}_+$ and $\cH^{1,2}_-$ the codimension $2$ submanifolds included in $\cH^1\cup \cH^2$ consisting in vector fields $X$ so that $q_1,q_2\in \gamma^s_+$  or $q_1,q_2\in \gamma^s_-$, respectively, see Figure \ref{1-f.p.Hi2}. Thus both unstable separatrices of $\sigma$ are homoclinic connections and are included in the same connected component of  $W^s(\sigma)\setminus W^{ss}(\sigma)$. 
 
  \begin{figure}[th]
\centering
\includegraphics[scale=0.17]{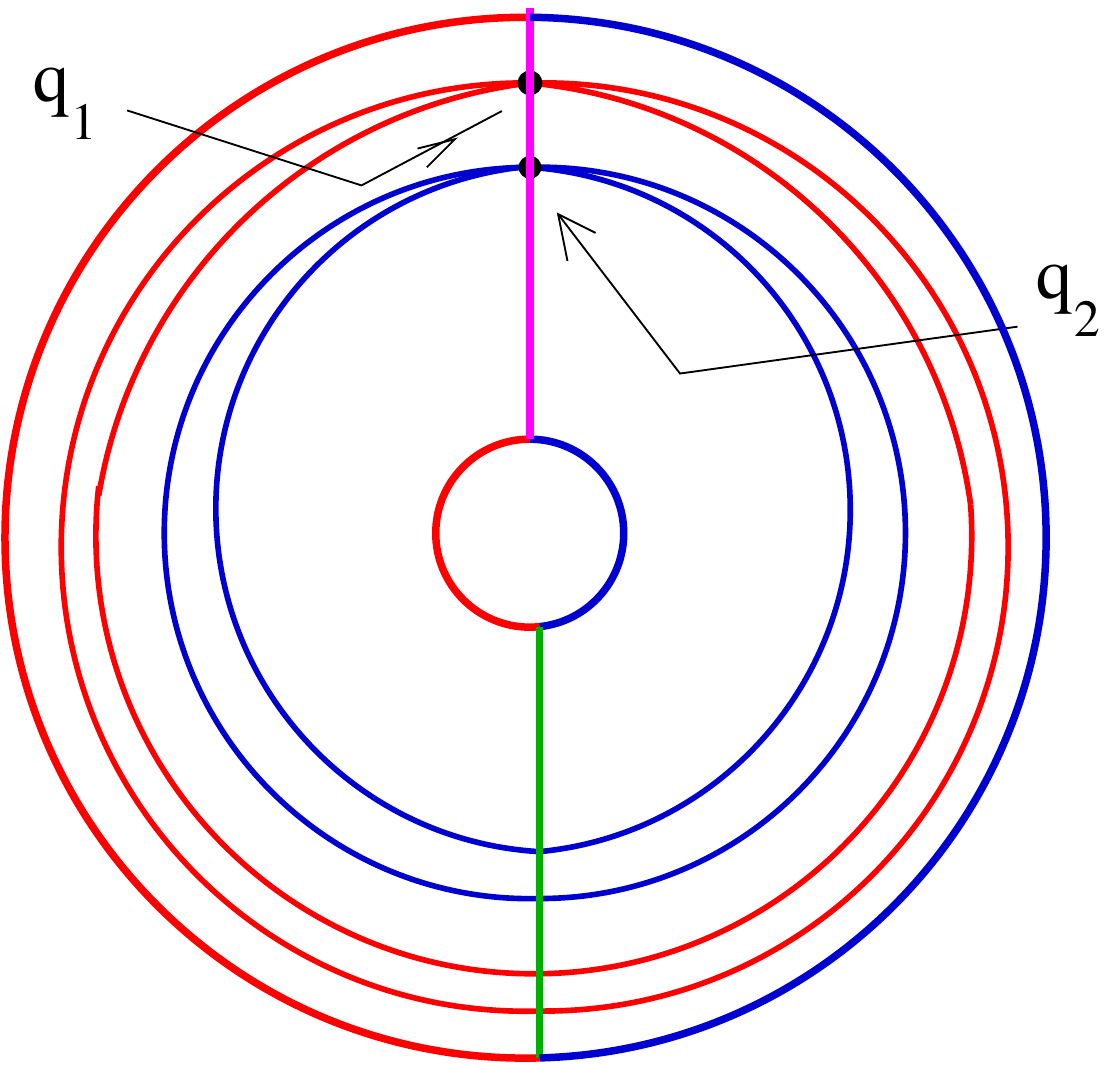}
\hspace{0.5cm}
\includegraphics[scale=0.17]{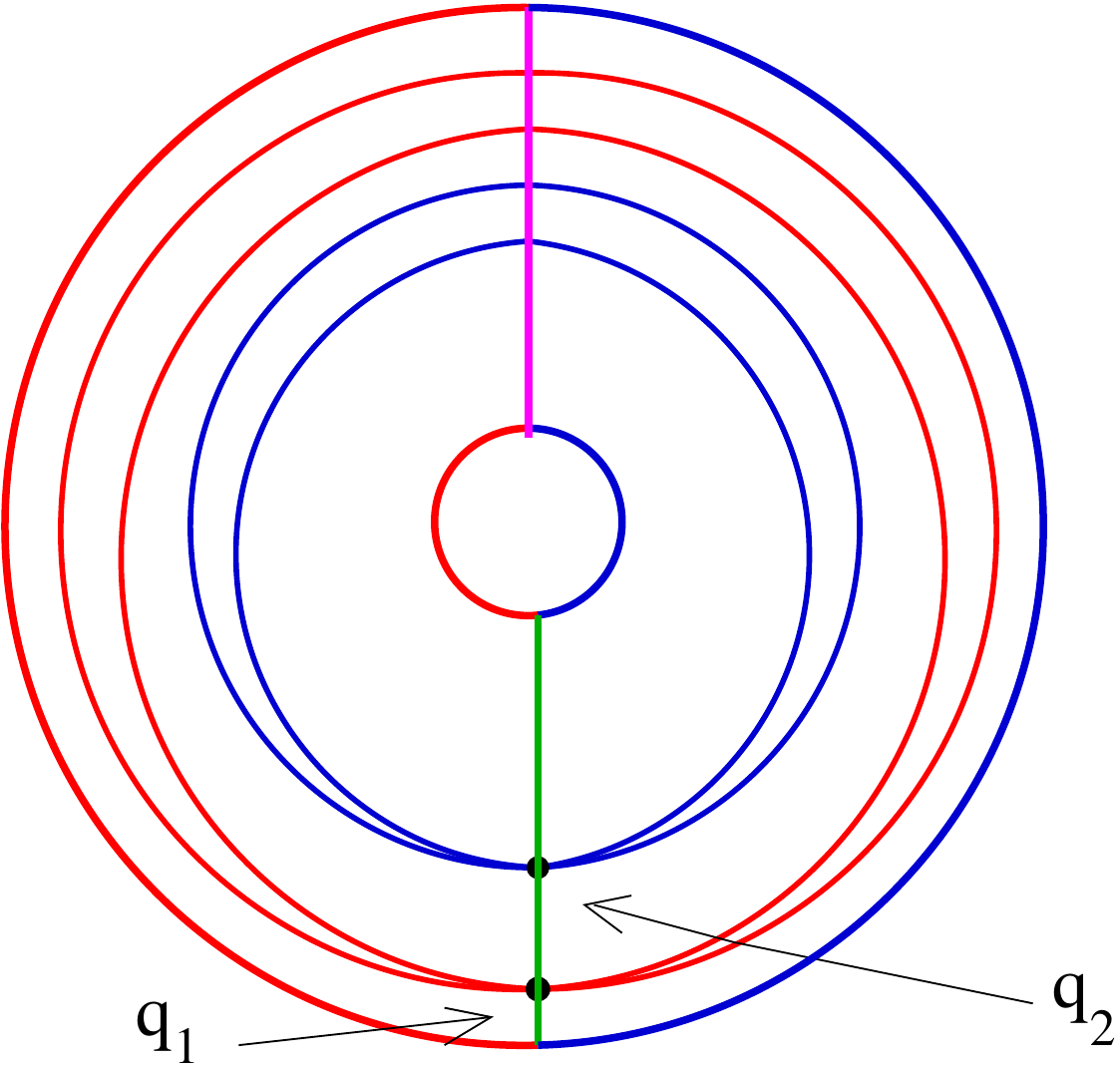}
\caption{$ X \in \cH^1\cup \cH^2$, and $q_1,q_2\in \gamma^s_+$ and  $q_1,q_2\in \gamma^s_-$}\label{1-f.p.Hi2}
\end{figure}
 
 The next lemma ensures that for $X\in\cH^i$ out of $\cH^{1,2}_+$ and $\cH^{1,2}_-$, the transitivity of the attractor is robust so that the attractor is a two-sided Lorenz attractor. 
 
 \begin{maintheorem}\label{l.Hi}
 	 If $X\in \cH^i\setminus (\cH^{1,2}_+ \cup\cH^{1,2}_-)$, 
 	there is a neighborhood $\cU(X)$ of $X$ such that the maximal invariant set for every $Y\in \cU(X)$ is a two-sided Lorenz attractor.
 \end{maintheorem}
 \begin{proof}The proof consists of unfolding the homoclinic loops and checking that all the possibilities lead to one of the $\cO^{-,-}_\varphi\cup\cO^{+,-}_\varphi\cup\cO^{-,+}_\varphi\cup\widetilde{\cO^{+,+}_\varphi}\cup \cH^1\cup\cH^2$.
 \end{proof}
 
 \subsection{Two-sided Lorenz attractor in $\cL^{+-}$}
 
 Note that Propositions~\ref{p.--}, ~\ref{p.+-}, \ref{p.++} and \ref{p.Hi} and Lemma~\ref{l.Hi} together prove
 
 \begin{maintheorem}\label{p.L+-} For any $X\in\cL^{+,-}$, the maximal invariant set in $U$ is a transitive singular hyperbolic attractor meeting both {stable sets} $W^{s}_{-}(\sigma)$ and $W^s_{+}(\sigma)$; that is, it is a two-sided Lorenz attractor. 
 \end{maintheorem}

 \noindent The region $\cL^{+,-}$ has been defined as a union of many regions, and the propositions listed above prove the conclusion of Proposition~\ref{p.L+-} in each of these regions. Finally, some of these regions are not open, so Lemma~\ref{l.Hi} checks that the vector fields in these non-open regions admit neighbourhoods contained in the union of the other regions.

 \subsection{The collisions of the Lorenz attractor and a fake horseshoe: vector fields in $\cH\cE_i$}
 In this section, we consider  vector fields $X$ in $\cO^{+,+}_\varphi$, that is,  so that the return map $P$ has $2$ fixed point $p_i\in\Sigma^i$, $i=1,2$,  a heteroclinic connection between $\sigma$ and one of the points $p_i$. More precisely, $q_i$ belongs to the stable leaf $W^s_j$ through $p_j$; note that $j\neq i$ because $q_i$ is not in $\Sigma^i$ when $\Sigma^i$ contains a fixed point. 
 The case when both $q_1$ and $q_2$ belong to $W^s_1\cup W^s_2$ corresponds to $X\in\cH\cE_1\cap\cH\cE_2$, which is a codimension $2$ submanifold.

 \begin{maintheorem}\label{p.HE1HE2} For any $X\in \cH\cE_1\cap\cH\cE_2$, there is a unique chain recurrence class, which is not transitive. Both $\Si_-$ and $\Si_+$ are invariant by $P$.  
 The maximal invariant set of the restriction of $P$ to $\Sigma_i$ is transitive: every unstable segment in $\Sigma_i$ has its iterates which cut every segment in $\Sigma_i$. 
{The attracting set $U$ splits into $2$ connected compact sets, each containing a  \emph{full Lorenz} intersecting along $W^s(p_1)\cup W^s(p_2)\cup W^u(\sigma)$.}  

 \end{maintheorem}
 \begin{proof}The first return map is illustrated in Figure \ref{f.p.HE1HE2}. The study of the first return map in each rectangle $\overline \Sigma_i$ is classical for the expansion rate larger than  $\sqrt2<\varphi$. 
 \end{proof}
\begin{figure}[th]\label{figu-p.HE1HE2}
\centering
\includegraphics[scale=0.17]{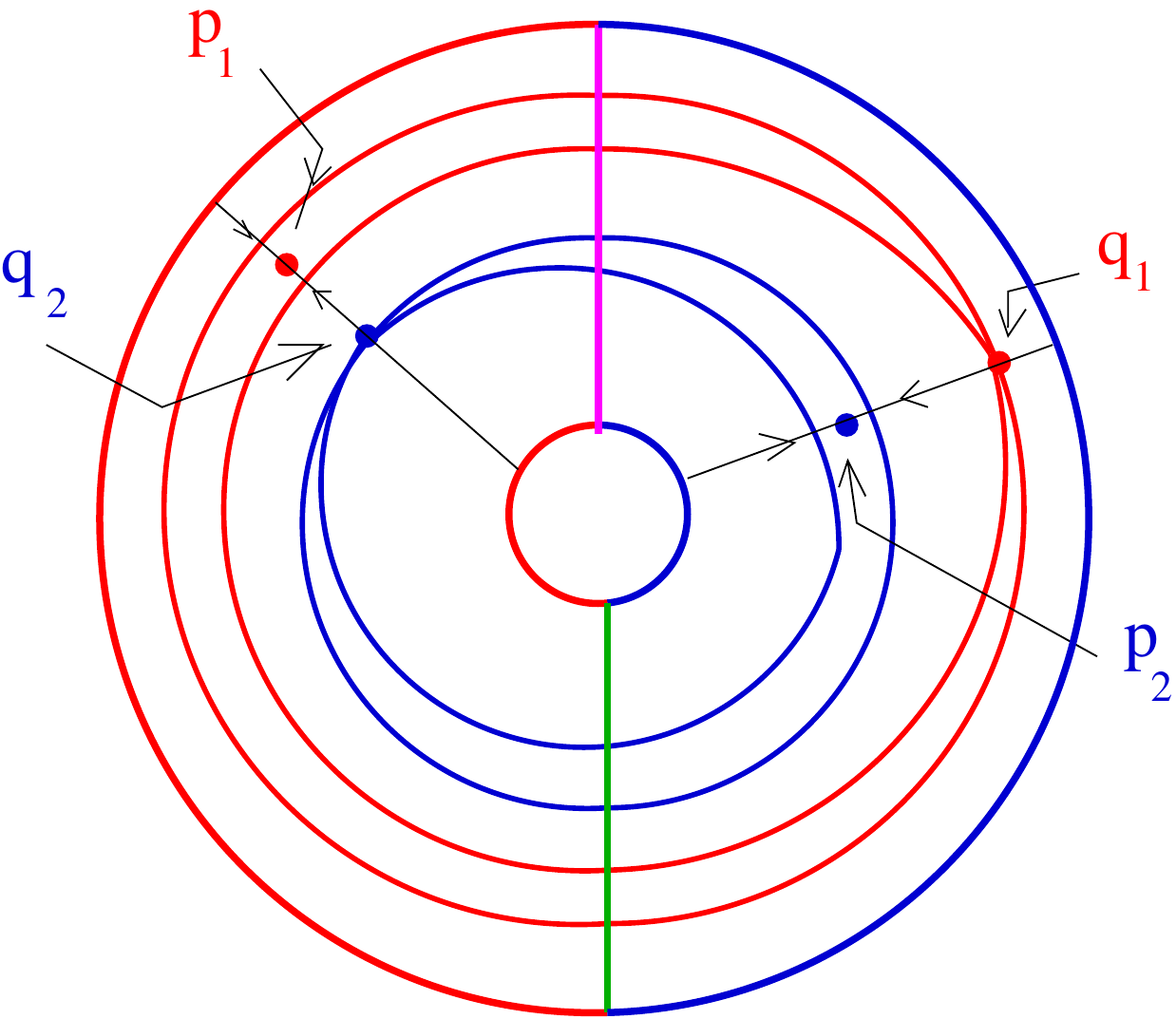}
\caption{$X\in \cH\cE_1\cap\cH\cE_2$}\label{f.p.HE1HE2}
\end{figure}
The submanifold  $\cH\cE_1\cap\cH\cE_2$ cuts $\cH\cE_1$ into two (relative) open subsets, as $q_2$ belongs either to $\Sigma_+$ or $\Sigma_-$ (connected components of $\Sigma\setminus (W^s_1\cup W^s_2)$. In the same way, the submanifold  $\cH\cE_2\cap\cH\cE_2$ cuts $\cH\cE_2$ into two (relative) open subsets, as $q_1$ belongs either to $\Sigma_+$ or $\Sigma_-$ (connected components of $\Sigma\setminus (W^s_1\cup W^s_2)$. 

We will consider  $X\in \cH\cE_1\setminus (\cH\cE_1\cap\cH\cE_2)$, and  $q_2\in \Sigma_+$, the other cases are similar. 

Then  the stable leaf through $q_2$ cuts $\Sigma_+$ in two components, one, denoted as $\Sigma^+_1$ bounded by the stable leaf $W^s_1$ trough $p_1$ and the other, denoted as $\Si^+_2$ bounded by $W^s_2$ (which contains $q_1$ by definition of $\cH\cE_1$);  notice that $\Sigma^+_2$ contains the stable leaf $\gamma^s_+$ , because $q_2\in\Sigma^1$. 
 Note that  if $X\in \cH\cE_1\setminus (\cH\cE_1\cap\cH\cE_2)$ then $q_1\in W^s(p_2)$ and
 $q_2 \notin W^s(p_1)$. 
 
 \begin{prop}\label{p.HE} Consider $X\in \cH\cE_1\setminus (\cH\cE_1\cap\cH\cE_2)$, 
 and assume $q_2\in \Sigma_+$. Then :
 \begin{itemize}
  \item  $X$ has a unique chain recurrence class in $U$, but this class is not transitive and consists of two (singular) homoclinic classes $K_-,K_+$ containing $\sigma$. 
  \item The iterates by $P$ of any unstable segment in $\Sigma$ cut any stable leaf in $\Sigma^+_2$ but finitely many of them.  
  Furthermore, the return map $P$ restricted to the rectangle $\overline{\Sigma^+_2}$ is a 
{Lorenz map }
  for the parameter corresponding to one homoclinic connection.  See Figure \ref{f.p.HE}.
  
  \begin{figure}[th]
\centering
\includegraphics[scale=0.15]{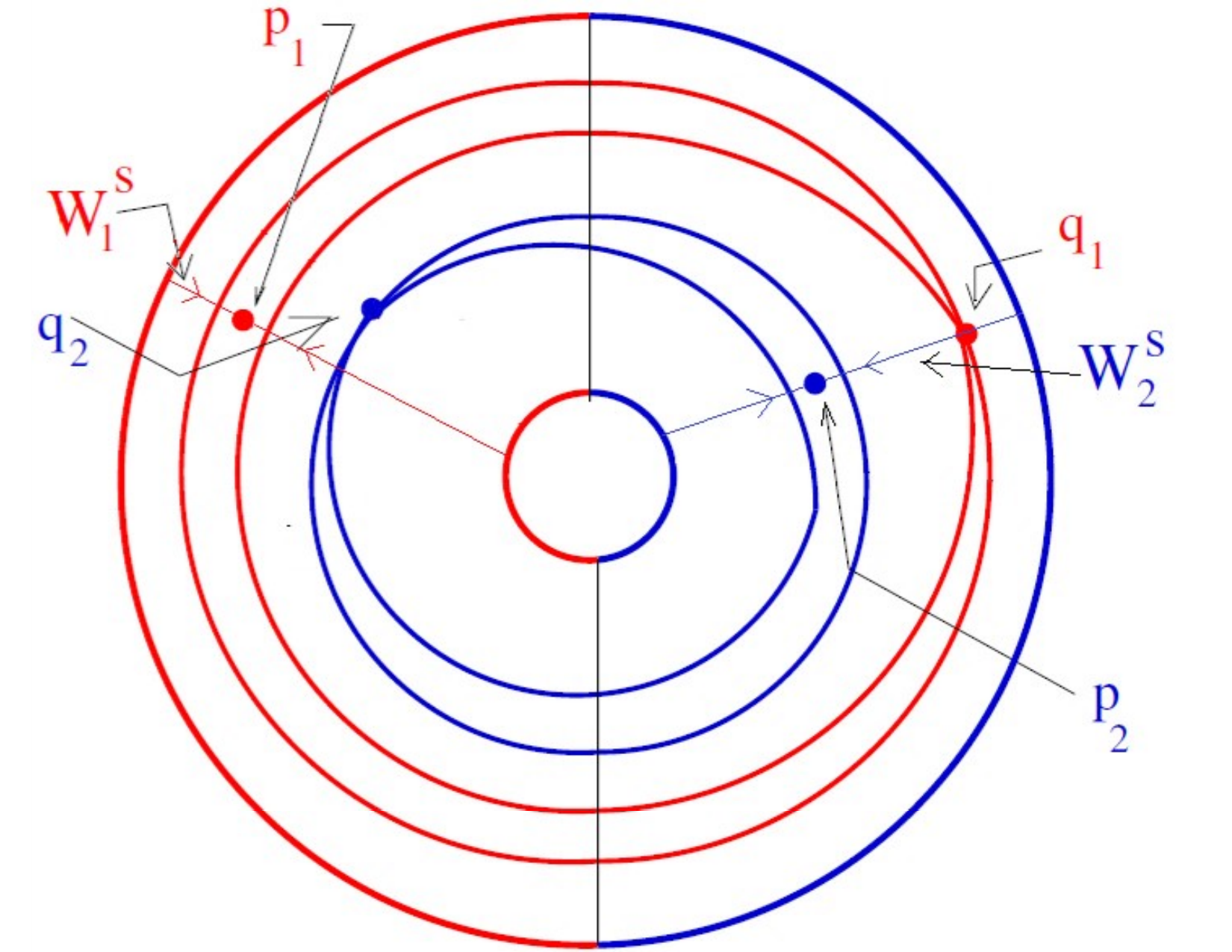}
		\hspace{0.5cm}
\includegraphics[scale=0.15]{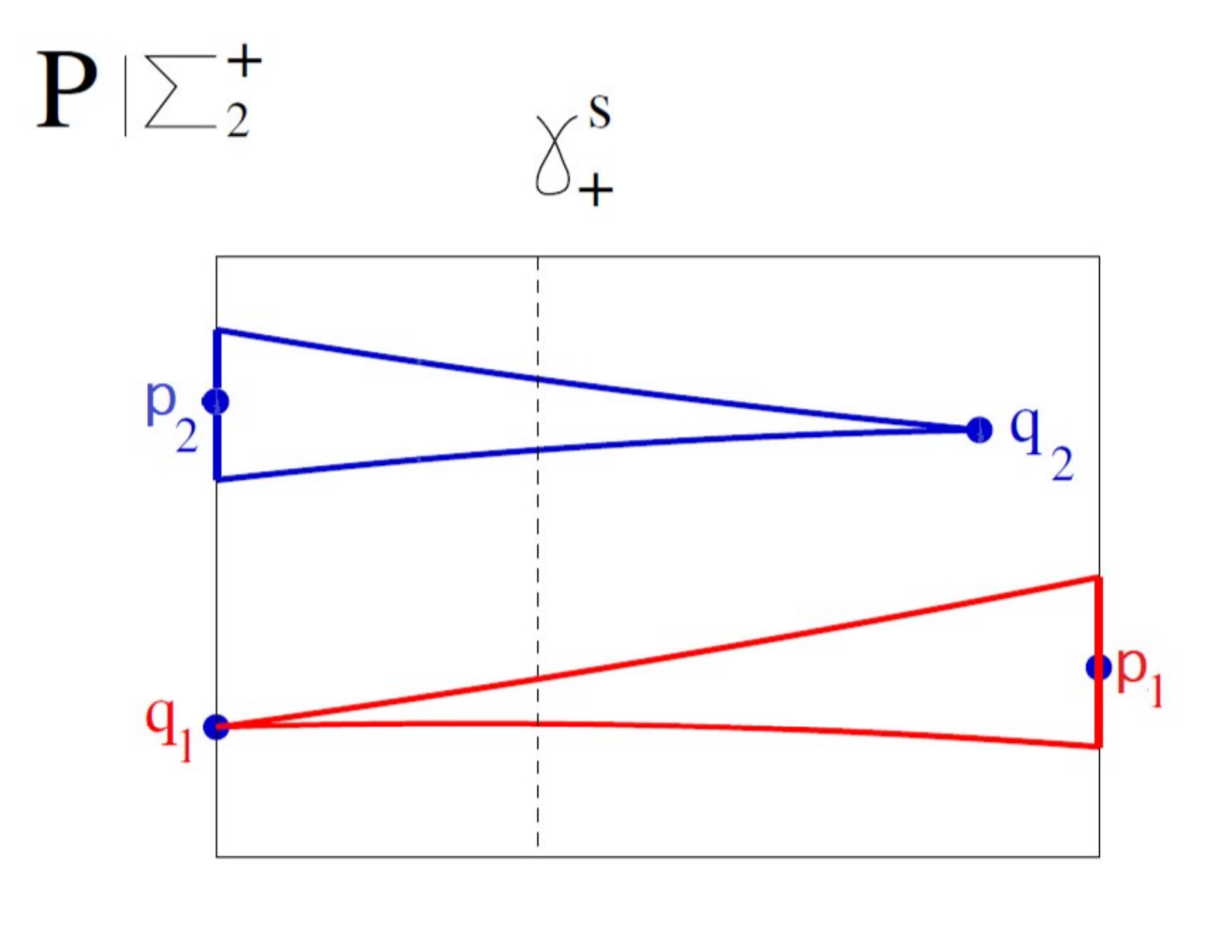}
\hspace{0.3cm}
\includegraphics[scale=0.15]{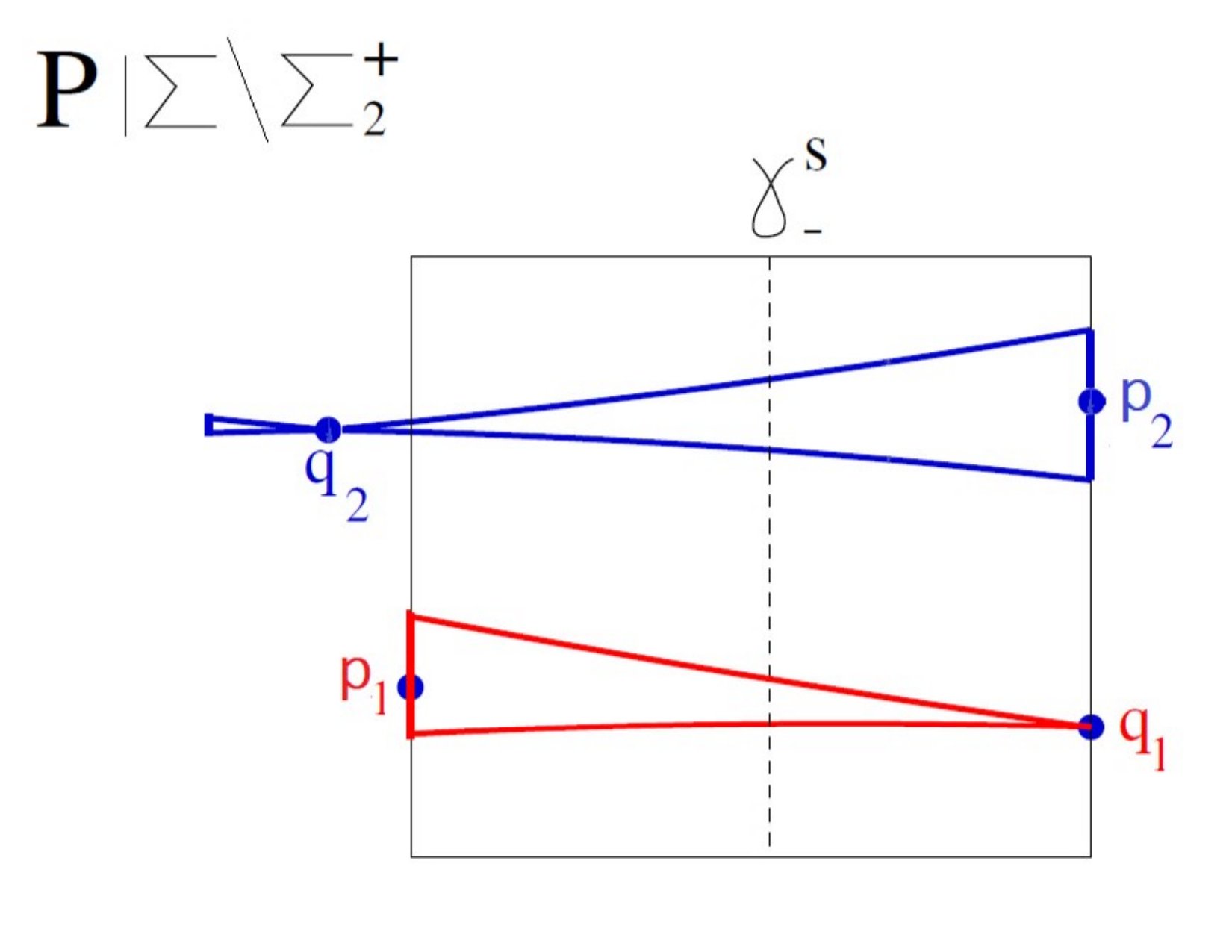}
\caption{{The left hand diagram displays $P(\Sigma)$ for  $X\in \cH\cE_1\setminus (\cH\cE_1\cap\cH\cE_2)$ and $q_2\in \Sigma_2^+$. The  diagram at the middle displays the restriction of $P|_{\Sigma^+_2}$  and the right hand diagram displays the restriction of $P$ to $\Sigma \setminus \Sigma^+_2$.} }\label{f.p.HE}
\end{figure}
 
  \item  The homoclinic class $K_-$ is a \emph{singular fake horseshoe} that intersects $\Sigma_-$ and is disjoint from $\Sigma_+$. 
  \item The intersection $K_-\cap K_+\cap \Sigma$ is contained in $W^s_2$ and consists of $p_2$ and the orbit of $q_1$. 
 \end{itemize}
 \end{prop}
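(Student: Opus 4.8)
The plan is to pass to the one-dimensional quotient and let the connection $q_1\in W^s_2$ dictate the combinatorics. Since $P$ preserves the $C^1$ stable foliation $\cF^s$ (item 10), it descends to a map $f$ of the quotient circle $\Sigma/\cF^s\cong\SS^1$ with discontinuities at $g_\pm=[\gamma^s_\pm]$, fixed points $w_i=[W^s_i]$ and critical values $\bar q_i=[q_i]$; under this quotient the phrase ``an iterate of an unstable segment cuts a stable leaf'' becomes ``$f^n$ of an arc covers a point'', so the four bullets turn into assertions about $f$. First I would record the forced cyclic order $g_+<w_1<g_-<w_2$ (each $p_i\in\Sigma_i$ lies between the two discontinuities bounding $\Sigma_i$), note that $f$ is increasing and expanded by $\lambda>\varphi$ on each arc $\Sigma_1=(g_+,g_-)$ and $\Sigma_2=(g_-,g_+)$, with the $\Sigma_i$-branch pinched (both one-sided limits equal) at $\bar q_i$, and then feed in the hypotheses: $X\in\cH\cE_1$ forces $\bar q_1=w_2$, while $X\notin\cH\cE_2$ together with $q_2\in\Sigma_+$ forces $\bar q_2\in(g_+,w_1)=\Sigma_+\cap\Sigma_1$.

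For the second bullet I would first check that $\Sigma_+=(w_2,w_1)$ (the arc through $g_+$) is forward invariant: its two pieces $(w_2,g_+)\subset\Sigma_2$ and $(g_+,w_1)\subset\Sigma_1$ map respectively onto $(w_2,\bar q_2)=\Sigma^+_2$ and onto all of $\Sigma_+$, using $\bar q_1=w_2$ to place the cusp of the $\Sigma_1$-branch exactly on the boundary leaf $W^s_2$. I would then show $\Sigma^+_2$ itself is trapping: on $(g_+,w_1)$ the map $f$ increases from $f(g_+^+)=w_2$ to the repelling fixed point $w_1$ with no interior fixed point, whence $f(x)\prec x$ there; in particular $f(\bar q_2)\prec\bar q_2$, so $f$ carries $(g_+,\bar q_2)$ into $\Sigma^+_2$. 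Thus $P$ restricts to $\overline{\Sigma^+_2}$ as a geometric Lorenz map whose branch over $\Sigma_1$ has its cusp $q_1$ on the fixed boundary leaf $W^s_2=W^s(p_2)$ — the announced one homoclinic connection. Transitivity of this restricted map follows from $\lambda>\varphi>\sqrt2$ exactly as in Propositions~\ref{p.--}--\ref{p.++}, via Lemmas~\ref{l.cusptogamma}, \ref{l.golden} and~\ref{l.cortaestavefixo}; since $\Sigma^+_2$ is an attracting region, the iterates of any unstable segment in $\Sigma$ eventually enter it and hence cut all but finitely many of its leaves. Saturating by the flow and adjoining $\sigma$ (the orbit of $\gamma^s_+\subset\Sigma^+_2$) gives the singular homoclinic class $K_+$.

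For the third bullet I would analyse $\Sigma_-=(w_1,w_2)$ (the arc through $g_-$). Its left piece $(w_1,g_-)\subset\Sigma_1$ maps onto $\Sigma_-$ with cusp $\bar q_1=w_2$ on the boundary, while its right piece $(g_-,w_2)\subset\Sigma_2$ maps onto $(\bar q_2,w_2)$, which covers $\Sigma_-$ but overshoots into $\Sigma^+_1\subset\Sigma_+$. Hence both branches cross $\Sigma_-$ in a Markov fashion, but a subinterval adjacent to $g_-$ escapes into $\Sigma_+$ and, since $\Sigma_+$ is forward invariant, never returns. The maximal invariant set of $P$ in $\overline{\Sigma_-}$ is therefore a hyperbolic Cantor set carrying the full $2$-shift; because $DP$ preserves the transverse orientation of $\cC^u$ (item 9) it is a \emph{fake} horseshoe, and because the dividing discontinuity is $\gamma^s_-$ the set accumulates on $\sigma$, so the flow-saturated object $K_-$ is a singular fake horseshoe meeting $\Sigma_-$ and disjoint from $\Sigma_+$.

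Finally I would assemble the chain-recurrence picture of the first and fourth bullets. Both $K_+$ and $K_-$ are homoclinic classes of $\sigma$, hence each is chain transitive and each contains $\sigma$; gluing two chain-transitive compacta along the common point $\sigma$ yields a single chain-recurrence class $K_-\cup K_+$, which by Proposition~\ref{p.quase-atrator} exhausts the chain-recurrent set in $U$. It is not transitive: $K_+$ is forward trapped in $\overline{\Sigma^+_2}$, so no genuine orbit visiting $K_+$ can be dense in $K_-\subset\Sigma_-$, while orbits of $K_-$ remain in $\Sigma_-$. For the last bullet, the closures meet only along the common boundary $\overline{\Sigma^+_2}\cap\overline{\Sigma_-}=W^s_2$, and on $W^s_2$ the points shared by both are the boundary fixed point $p_2$ and the cusp $q_1$ together with its forward $P$-orbit, which lies in $W^s_2=W^s(p_2)$ precisely because $X\in\cH\cE_1$. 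The main obstacle I anticipate is the third step: making rigorous that the maximal invariant set in $\overline{\Sigma_-}$ is exactly a singular fake horseshoe — i.e. controlling the escaping subinterval and the accumulation on $\gamma^s_-$ — and, relatedly, proving that $K_-\cup K_+$ is a single non-transitive chain class rather than two separate classes; the one-dimensional reduction and the sign estimate $f(x)\prec x$ on $(g_+,w_1)$ are the tools I would lean on throughout.
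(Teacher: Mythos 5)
Your argument is correct, and it follows the very decomposition the paper intends --- the restriction of $P$ to $\overline{\Sigma^+_2}$ and to its complement --- but where the paper's proof is literally a one-line reference to Figure~\ref{f.p.HE}, you supply the actual verifications: the cyclic order $g_+<w_1<g_-<w_2$, the identities $f((w_2,g_+))=(w_2,\bar q_2)$ and $f((g_+,w_1))=(w_2,w_1)$ forced by the pinching at $\bar q_1=w_2$, the sign estimate $f(x)\prec x$ on $(g_+,w_1)$ that makes $\overline{\Sigma^+_2}$ trapping, and the overshoot of the $\Sigma_2$-branch past $w_1$ that turns the invariant set of $\overline{\Sigma_-}$ into a horseshoe with an escaping gap adjacent to $\gamma^s_-$. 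Two points deserve a little more care before the proof is complete. First, for the uniqueness of the chain recurrence class, Proposition~\ref{p.quase-atrator} only gives uniqueness of the quasi-attractor; to conclude that $K_-\cup K_+$ exhausts the chain recurrent set you also need that every point of $\Lambda_P$ is chain-attainable \emph{from} $\sigma$, which follows from the pinched-annulus structure (every nested sequence of pinched annuli accumulates on the orbit of the cusps $q_1,q_2\in W^u(\sigma)$, so $\Lambda_P\subset\overline{W^u(\sigma)}$), but this should be said. Second, the invariant set of $P$ in $\overline{\Sigma_-}$ is not quite the full $2$-shift: the itineraries forced through the discontinuity (e.g.\ the sequence ``first rectangle, then the second forever'', realized only at $g_-$ itself) are precisely the orbits replaced by passages through $\sigma$ followed by the connection $q_1\in W^s_2$ --- this is the content of the word \emph{singular} and is also what produces the fourth bullet. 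You flag this gap yourself; with these two points spelled out your argument is a complete proof of a statement the paper only illustrates.
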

 \begin{proof} We refer the reader to the pictures illustrating the restriction of the return map $P$ to $\Sigma^+_2$ and to $\Sigma\setminus \Sigma^+_2$. See Figure \ref{f.p.HE}.
 \end{proof}
 
 {A similar result holds  for $X \in \mathcal{HE}_{i} \setminus (\mathcal{HE}_1 \cap \mathcal{HE}_2)$ and $q_i \in \Sigma_j$, $i \in \{1,2\}$ and $j \in \{-,+\}$.

 \subsection{Fat Lorenz attractor:  vector fields in $\cH^{1,2}_+$  and $\cH^{1,2}_-$}

{We start this section setting the concept of "fat Lorenz attractor".}
\begin{defi} \label{d-fat} Consider a vector field $X\in \cO_\varphi$ and  $\Lambda$ its  maximal invariant (compact) set in $U$.  We say that $\Lambda$ is a \emph{fat Lorenz Attractor} if  every unstable segment in $\Sigma\setminus\gamma^s_+$  (resp. $\Sigma\setminus\gamma^s_-$) has an iterate by $P$ which  cuts every stable leaf  excepted $\gamma^s_+$ (resp. $\gamma^s_-$),  See Figure \ref{f.p.Hi2}.
\end{defi}

Notice that Definition~\ref{d-fat} implies that $\Lambda$ is a transitive attractor. 

 \begin{figure}[th]
\centering
\includegraphics[scale=0.17]{f-pHi4.pdf}
\hspace{0.2cm}
\includegraphics[scale=0.17]{f-pHi6.pdf}
\caption{The maximal invariant set in $U$ of $X \in \cH^{1,2}_+ \cup \cH^{1,2}_-$ is a fat Lorenz.}\label{f.p.Hi2}
\end{figure}

{ 
\begin{prop}\label{p.H12+} If $X\in\cH^{1,2}_+$ (resp. $X\in\cH^{1,2}_-$) then the maximal invariant set in $U$ is a 
fat Lorenz attractor. 
 \end{prop}
 }

\begin{proof} We just refer the reader to the  pictures illustrating the return map $P$,  Figures \ref{f.p.Hi1} and~\ref{f.p.Hi2}. 
 \end{proof}

\section{Collisions and collapses}\label{s-collisions}
This section describes  the drastic changes of behaviour appearing in the topological dynamics when a family $X_\mu$ crosses the boundary of the regions we described in Section~\ref{s.cartografia}.

\subsection{Collisions}\label{ss.colisao}

Let us consider a $1$ parameter family $X_\mu\in\cO^{+,+}_\varphi$, $\mu\in[-1,1]$  with the following properties: 

\begin{itemize}\item  The family  crosses the hypersurface transversely  $\cH\cE^1_+$ at $\mu=0$:  this means that $X_0$ exhibits a heteroclinic connection $q_1\in W^s_2$ and $q_2\in\Sigma_+$.  
 \item for $\mu<0$ the vector field $X_\mu$ belongs to $\cL^+$:  it has an up Lorenz attractor and $\Sigma_-$ contains a fake horseshoe for $P$. 
 \item  for $\mu>0$ the vector field $X_\mu$ has a two-sided Lorenz attractor. 
\end{itemize}

In other words, $X_\mu$ consists of a generic unfolding of the heteroclinic connection $q_1\in W^s_2$.  
The cuspidal point $q_{1,\mu}$ is moving with the parameter $\mu$ and is crossing transversely $W^s_{2,\mu}$ (stable leaf through $p_{2,\mu}$) for $\mu=0$. 
For $\mu<0$, the upper-Lorenz attractor is intersecting exactly every stable leaf in $\Sigma^+$ between  $q_{2,\mu}$ and $q_{1,\mu}$, and the horseshoe in $\Sigma_-$ is bounded by  the stable leaves $W^s_{1,\mu}$, $W^s_{2,\mu}$ and contains the periodic points $p_{1,\mu}$ $p_{2,\mu}$. 

When $\mu$ tends to $0$, the point $q_{1,\mu}$ tends to a point $q_{1,0}$ in $W^s_2$, and this point $q_{1,0}$ is also the limit of the intersection of $W^s_2$ with one of the rectangles (the one containing $p_1$) of the Markov partition of the fake horseshoe. This corresponds to a Cantor set in $W^s_2$ of points of the fake horseshoe for $X_\mu$, $\mu<0$ whose diameter tends to $0$ as $\mu\to 0$: all the Cantor set tends to $q_{1,0}$. 

For the parameter $0$, the Lorenz attractor no longer admits an attracting neighborhood and is no longer robust. The fake horseshoe becomes singular and intersects the Lorenz attractor along $\sigma$ and the orbits of $p_{2,0}$ and 
of $q_{1,0}$. 
When $\mu>0$ the Lorenz attractor and the (singular) fake horseshoe merge into a two-sided Lorenz attractor. 

Notice that if all the vector fields $X_\mu$ are assumed to be of class $C^2$, then all along the parameter, $X_\mu$ admits a unique SRB measure $\nu_\mu$ (see \cite{APPV09}). For $\mu\leq 0$ the support of $\nu_\mu$ is the Lorenz attractor, and in particular, does not intersect  $\Sigma_-$.  
For $\mu>0$, the support of $\nu_\mu$ intersects any stable leaf in $\Sigma$. 
Thus the support of $\nu_\mu$ changes drastically at  the collision point. 

\begin{ques} Is the map $\mu\mapsto\nu_\mu$ continuous (for the weak topology) at $\mu=0$?
\end{ques}

\subsection{The collapse of the horseshoe: parameter families crossing $\cH^{1,2}_+$ or $\cH^{1,2}_-$}

Recall that $\cH^{1,2}_+$ is the codimension $2$ submanifold corresponding to the double homoclinic connection $q_1,q_2\in\gamma^s_+$.  We consider here a $2$-parameter family $X_{\mu}$, $\mu=(\mu_1,\mu_2)\in[-1,1]^2$ which is a generic unfolding of this double homoclinic connection: in other words, the family cuts transversely $\cH^{1,2}_+$ at $\mu=(0,0)$. We assume furthermore that, for any fixed $\mu_2$, the $1$ parameter family $X_{\mu_1,\mu_2}$ is transverse to $\cH^1$ at $\mu_1=0$ and, reciprocally, for any fixed $\mu_1$, the $1$ parameter family $X_{\mu_1,\mu_2}$ is transverse to $\cH^1$ at $\mu_2=0$. More precisely, one may assume that 
$X_{\mu_1,\mu_2}\in\cO_\varphi^{\omega_1,\omega_2}$ where $\omega_i\in\{-,+\}$ is the sign of $\mu_i$. 
When one unfolds this double homoclinic connection, if $q_{1,\mu}$ enters in $\Sigma_2$ (that is, $\mu_1>0$), then a fixed point $p_{1,\mu}\in\Sigma_1$ is created and tends to $q_{1,0}\in \gamma^s_+$ as $\mu$ tends to $0$.   In the same way, if $\mu_2>0$  then $q_2$ enters in $\Sigma_1$ and the fixed point $p_{2,\mu}$ is created in $\Sigma_2$ and tends to $q_{2,0}\in\gamma^s_+$. 
The stable leaves $W^s_{1,\mu}$ and $W^s_{2,\mu}$ bound the small strip $\Sigma^+$ in $\Sigma$ containing $\gamma^s_+$  (tending to the segment $\gamma^s_+$ as $\mu\to0$), and a large strip $\Sigma^-$. 
The vector field $X_\mu$ in $\cO^{+,+}_\varphi$ belongs to $\cL^+\cup\cL^-$ if and only if both $q_1$ and $q_2$ belong to the same component  $\Sigma^+$ or $\Sigma^-$.  
\begin{lema}\label{l.doubleconnection} For small $\mu$, $\{q_1,q_2\}$ is not contained in $\Sigma^+$. In other words, the closure of $\cL^+$ is disjoint from $\cH^{1,2}_+$. 
 $\{\mu|\{q_{1,\mu},q_{2,\mu} \}\subset \Sigma_-\}$ is an open subset containing $(0,0)$ in its closure. More precisely, for any $\alpha >0$, there is $\mu_0$ so that for any $0<\mu<\mu_0$   the vector field $X_{\mu,\alpha\mu}\in\cL^-.$ 
 \end{lema}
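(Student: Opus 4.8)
The plan is to pass to the one-dimensional quotient dynamics and to compare the rate at which the cusps $q_i$ approach $\gamma^s_+$ with the rate at which the newly born fixed points $p_i$ (equivalently the leaves $W^s_i$) approach it. Since $\cF^s$ is $C^1$ (item~\ref{i.am}) and Theorem~\ref{holonomia}), the return map $P$ descends to a $C^1$ expanding map $f$ of the leaf space $\Sigma/\cF^s\simeq\SS^1$, with two branches corresponding to $\Sigma_1$ and $\Sigma_2$ and a discontinuity at the image of $\gamma^s_+$. First I would fix a coordinate $x$ on the leaf space near $\gamma^s_+$ with $x=0$ at $\gamma^s_+$, $x<0$ on the $\Sigma_1$-side and $x>0$ on the $\Sigma_2$-side, and denote by $\bar q_i,\bar p_i$ the coordinates of the projections of $q_i,p_i$. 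Because $DP$ preserves the transverse orientation of $\cC^u$ (item 9), $f$ is increasing on each branch, and $f(0^-)=\bar q_1$, $f(0^+)=\bar q_2$, since both ends of $\Sigma_i$ are pinched onto the cusp $q_i$.

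The crucial local ingredient is the shape of $f$ at the discontinuity. Using the $C^2$ linearization at the Lorenz-like singularity provided by Sternberg's theorem (item 1 and Definition~\ref{naoressonancia}), the transition map through a neighbourhood of $\sigma$ produces the standard Lorenz cusp: on each side
\[
f(x)=\bar q_1-C_1(-x)^{\beta}+o(|x|^\beta)\ (x<0),\qquad f(x)=\bar q_2+C_2\,x^{\beta}+o(x^\beta)\ (x>0),
\]
with $C_1,C_2>0$ and an exponent $\beta=-\lambda_s/\lambda_u$ that the Lorenz-like inequality $-\lambda_s<\lambda_u$ forces to lie in $(0,1)$, so that $|f'|\to\infty$ at $x=0$. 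I would then parametrise the family so that, by transversality of $X_\mu$ to $\cH^{1,2}_+$ together with the hypothesis that $X_{\mu_1,\mu_2}\in\cO_\varphi^{\omega_1,\omega_2}$ with $\omega_i=\sgn(\mu_i)$, one has $\bar q_1=\mu_1+o(\mu_1)$ and $\bar q_2=-\mu_2+o(\mu_2)$. Solving $f(\bar p_i)=\bar p_i$ with $\beta<1$ then shows that the fixed points are born at the discontinuity at a \emph{sublinear} rate: for small $\mu_i>0$,
\[
\bar p_2=(\mu_2/C_2)^{1/\beta}+o(\mu_2^{1/\beta}),\qquad \bar p_1=-(\mu_1/C_1)^{1/\beta}+o(\mu_1^{1/\beta}),
\]
whence $|\bar p_i|=o(\mu_i)$. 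This sublinear birth rate — the leaves $W^s_1,W^s_2$, and hence the whole strip $\Sigma^+$, collapsing onto $\gamma^s_+$ strictly faster than the cusps $q_i$ do — is the heart of the matter and the step I expect to demand the most care, because a merely uniform expansion bound would only yield $|\bar p_i|\asymp\mu_i$ and would leave the position of $q_i$ relative to $W^s_j$ undecided.

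With these estimates the two assertions reduce to bookkeeping. Near $\gamma^s_+$ the strip is $\Sigma^+=(\bar p_1,\bar p_2)\ni 0$, so $q_1\in\Sigma^+\Leftrightarrow\mu_1<\bar p_2$ and $q_2\in\Sigma^+\Leftrightarrow\mu_2<|\bar p_1|$. Hence $X_\mu\in\cL^+$ would force $\mu_1<(\mu_2/C_2)^{1/\beta}$ together with $\mu_2<(\mu_1/C_1)^{1/\beta}$; eliminating $\mu_2$ gives $\mu_1>(C_1C_2^{\beta})^{1/(1-\beta^2)}>0$, impossible for small $\mu$. Thus $\{q_1,q_2\}\not\subset\Sigma^+$ for small $\mu$, and $\overline{\cL^+}$ is disjoint from $\cH^{1,2}_+$, which is the first assertion. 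For the second, along the path $\mu=(\mu_1,\alpha\mu_1)$ both inequalities $\bar q_1=\mu_1>\bar p_2\sim(\alpha\mu_1/C_2)^{1/\beta}$ and $|\bar q_2|=\alpha\mu_1>|\bar p_1|\sim(\mu_1/C_1)^{1/\beta}$ hold once $\mu_1$ is small, because $1-1/\beta<0$ makes $\mu_1^{1-1/\beta}\to+\infty$; therefore $q_1,q_2\in\Sigma^-$ and $X_{\mu_1,\alpha\mu_1}\in\cL^-$. Finally $\cL^-$ is open, its defining conditions being strict inequalities on the positions of $q_i$ relative to the $C^1$ leaves $W^s_j$, and the path just exhibited shows $(0,0)\in\overline{\cL^-}$, completing the proof.
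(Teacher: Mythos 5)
Your argument is correct, and it isolates exactly the mechanism the lemma turns on: the derivative of the return map blows up at $\gamma^s_+$, so the fixed points $p_1,p_2$ are born \emph{sublinearly} in $\mu$ while the cusps $q_i$ move away linearly, forcing $q_i\in\Sigma^-$. But your execution is genuinely different from the paper's, and more quantitative. For the first assertion the paper never locates the fixed points at all: it argues by contradiction that if $\{q_1,q_2\}\subset\Sigma^+$ then the thin strip $\Sigma^+$ would be $P$-invariant, which is incompatible with the expansion rate in the unstable cone being arbitrarily large ($\gg 2$) on a small neighborhood of $\gamma^s_+$. For the second assertion the paper takes a test point $p\in\Sigma_1$ at distance $\geq\tfrac12\alpha\mu$ from $\gamma^s_+$, observes that $P$ sends it to distance $\geq 99(1+\alpha)\mu$, and concludes that $W^s(p_1)$ must lie strictly closer to $\gamma^s_+$ than $q_2$ does — a soft argument using only that the expansion is unbounded near the discontinuity. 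You instead pass to the quotient map, invoke the Sternberg $C^2$ linearization to write the local normal form $f(x)=\bar q_i\pm C_i|x|^{\beta}+o(|x|^{\beta})$ with $\beta=-\lambda_s/\lambda_u\in(0,1)$, and solve for the fixed points explicitly, getting $|\bar p_i|\asymp\mu_i^{1/\beta}=o(\mu_i)$. What your route buys is the precise birth rate $\mu^{1/\beta}$ of the horseshoe strip and an explicit lower bound on $\mu_1$ in the contradiction for the first part; what it costs is the reliance on the linearization and on the $C^1$ regularity of the stable foliation to justify the one-dimensional normal form (though even the weaker two-sided bound $|f(x)-\bar q_i|\asymp|x|^{\beta}$, which is all you really need, already gives $|\bar p_i|=o(\mu_i)$, so this is not a gap). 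One cosmetic point: the explicit constant $(C_1C_2^{\beta})^{1/(1-\beta^2)}$ you extract when eliminating $\mu_2$ should be checked for the sign of the exponent, but since all that matters is that $\mu_1$ is bounded below by a positive constant, the conclusion stands.
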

\begin{proof} 

Assume (arguing by contradiction) that  $\{q_1,q_2\}\subset \Sigma^+$.  Thus $X\in\cL^+$. Then  $\Sigma^+$ is invariant under the action of $P$.  However, as $\Sigma^+$ is contained in an arbitrarily small neighbourhood of $\gamma^s_+$, the rate of expansion of $P$ in the unstable cone is arbitrarily large, in particular $\gg 2$. So $\Sigma^+$ cannot be invariant, ending the proof. 

The proof of the second point is as follows: consider one half-line $X_{\mu,\alpha\mu}$ and consider $\mu>0$ very small. 
Thus the point $q_1$ (resp. $q_2$) belongs to $\Sigma_2$  (resp. $\Sigma_1$)  and its distance to $\gamma^s_+$ is  $\mu$ (resp. $\alpha\mu$). 
Consider any point $p$ in $\Sigma_1$ at a small distance $>\frac12\alpha\mu$ from $\gamma^s_+$. Consider an unstable segment $\gamma_p$ realizing this distance so that $\gamma_p$ starts at $\gamma^s_+$ and ends at $p$.  Then $P(\gamma_p)$ is an unstable segment starting at $q_1$  and ending at $P(p)$.  The length  
$\ell(P(\gamma_p))$ is arbitrarily larger than $\ell(\gamma_p)$ (say, larger than $100 (1+\alpha)\mu\ell(\gamma_p)$ as  $\mu$ tends to $0$ (as the expansion rate close to $\gamma^s_+$ tends to infinity). 

One deduces that the point $P(p)$ belongs to $\Sigma_1$ and is at a distance larger than $ 99(1+\alpha)\mu$ of $\gamma^s_+$.  One deduces that $P(p)$ (and thus $p$) does not belong to the stable leaf through the fixed point $p_1\in \Sigma_1$.  In particular $q_2$ belongs to $\Sigma_-$.  
One proves in the same way as for $\mu>0$ small $q_1\in\Sigma_-$, and thus the vector filed belongs to $\cL^-$, ending the proof. 
Figure \ref{f.l.doubleconn} displays a local bifurcation diagram in this case.
\end{proof}

The lemma implies that, when one considers a segment in the parameter plane crossing $(0,0)$ and entering in $\cO^{+,+}_\varphi$ transversely to both $\cH^1$ and $\cH^2$, then one creates a down Lorenz attractor, which cuts every stable leaf in $\Sigma^-$.  When the parameter tends to $(0,0)$,  then $\Sigma^-$ tends to $\Sigma$ and the Lorenz attractor tends to what we call the fat Lorenz attractor.  The horseshoe, corresponding to $\Sigma^+$ collapses into the double homoclinic connection. 

\subsection{Switching from upper to down:  family crossing $\cH\cE_1\cap \cH\cE_2$}

In this section, we consider a two-parameter family $X_\mu\in \cO_\varphi^{+,+}$, $\mu=(\mu_1,\mu_2)$ crossing transversely $\cH\cE_1\cap \cH\cE_2$ at $\mu=0$. 
In other words, $X_{(0,0)}$ exhibits two heteroclinic connections $q_1\in W^s_2$ and $q_2\in W^s_1$ (recall that $W^s_i$ is the stable leaf through the fixed point $p_i$). We have seen that this behaviour implies that $X_{(0,0)}$ has two \emph{full Lorenz} which intersect along  $W^s(p_1)\cup W^s(p_2)\cup W^u(\sigma)$.  See Figure \ref{f.p.HE1HE2}.
Up to reparametrize, the family one may assume that 
$$\begin{array}{c}
\left(\mu_1=0 \Leftrightarrow q_1\in W^s_2\right) \mbox{ and }\left(\mu_2=0 \Leftrightarrow q_2\in W^s_1\right)\\
\left(\mu_1>0 \Leftrightarrow q_1\in \Sigma^+\right) \mbox{ and }\left(\mu_2>0 \Leftrightarrow q_2\in \Sigma^+\right)
\end{array}$$

\begin{figure}[th]
\centering
	\includegraphics[scale=0.17]{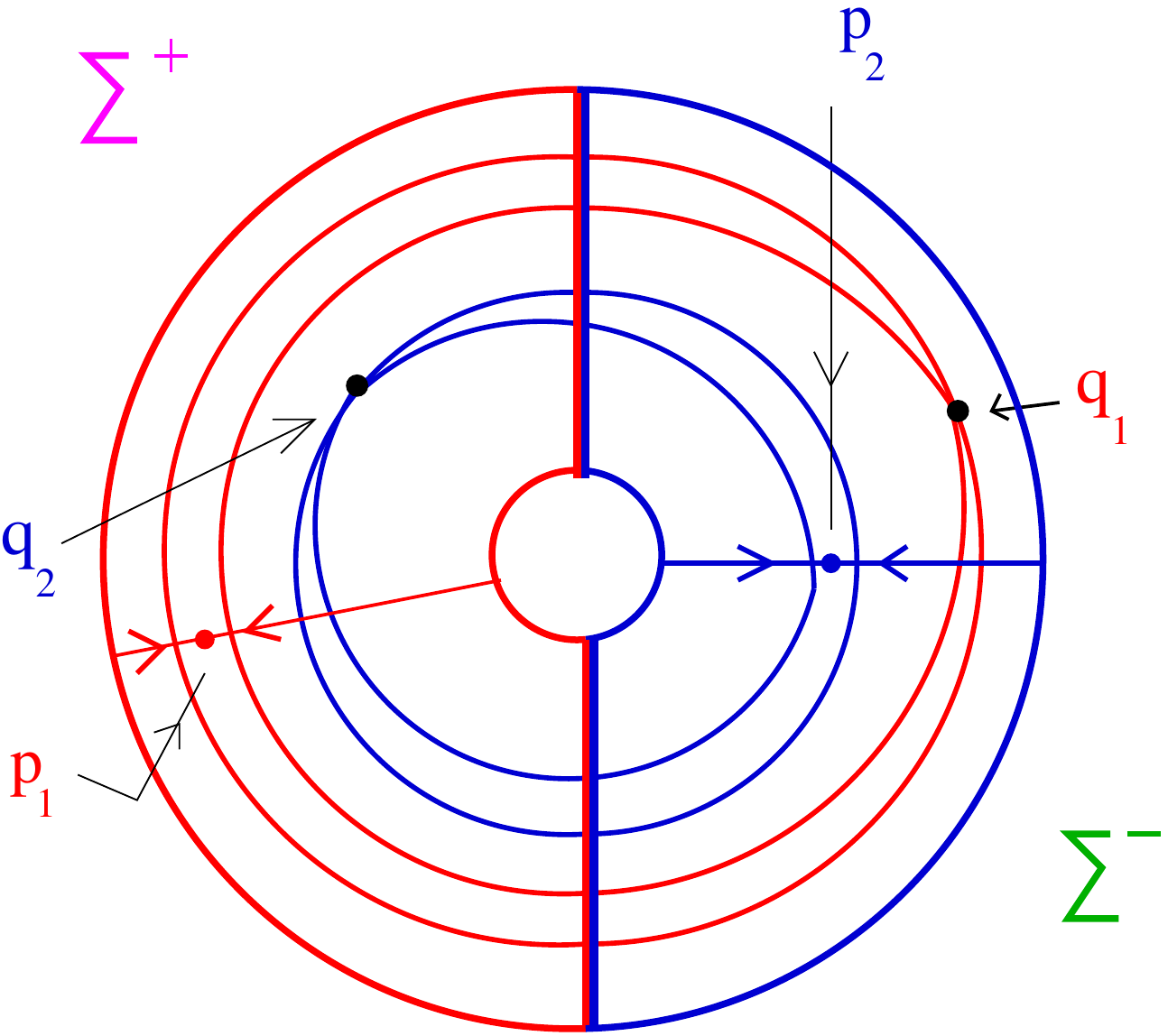}
		\hspace{0.3cm}
\includegraphics[scale=0.17]{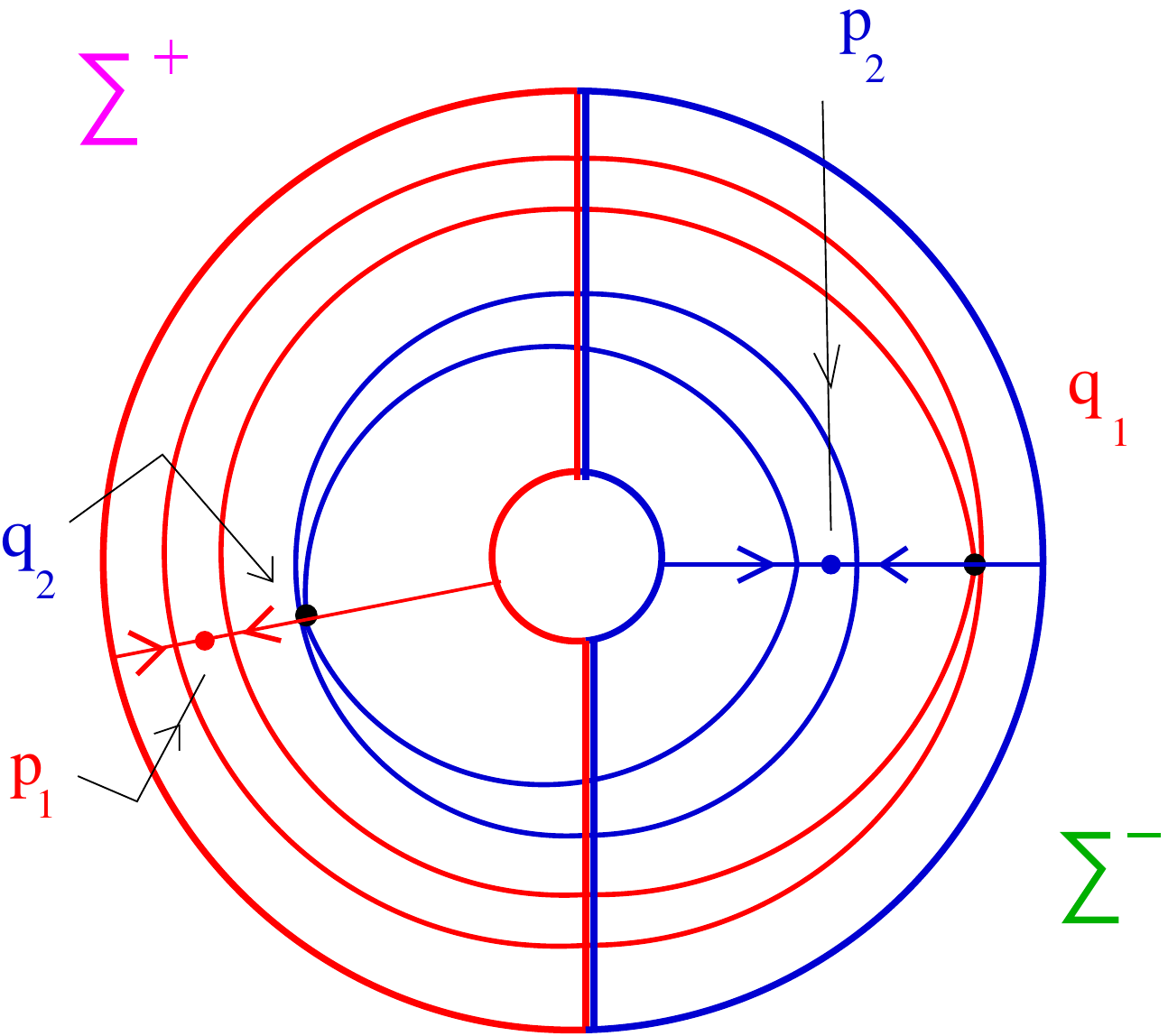}\
\hspace{0.3cm}
\includegraphics[scale=0.17]{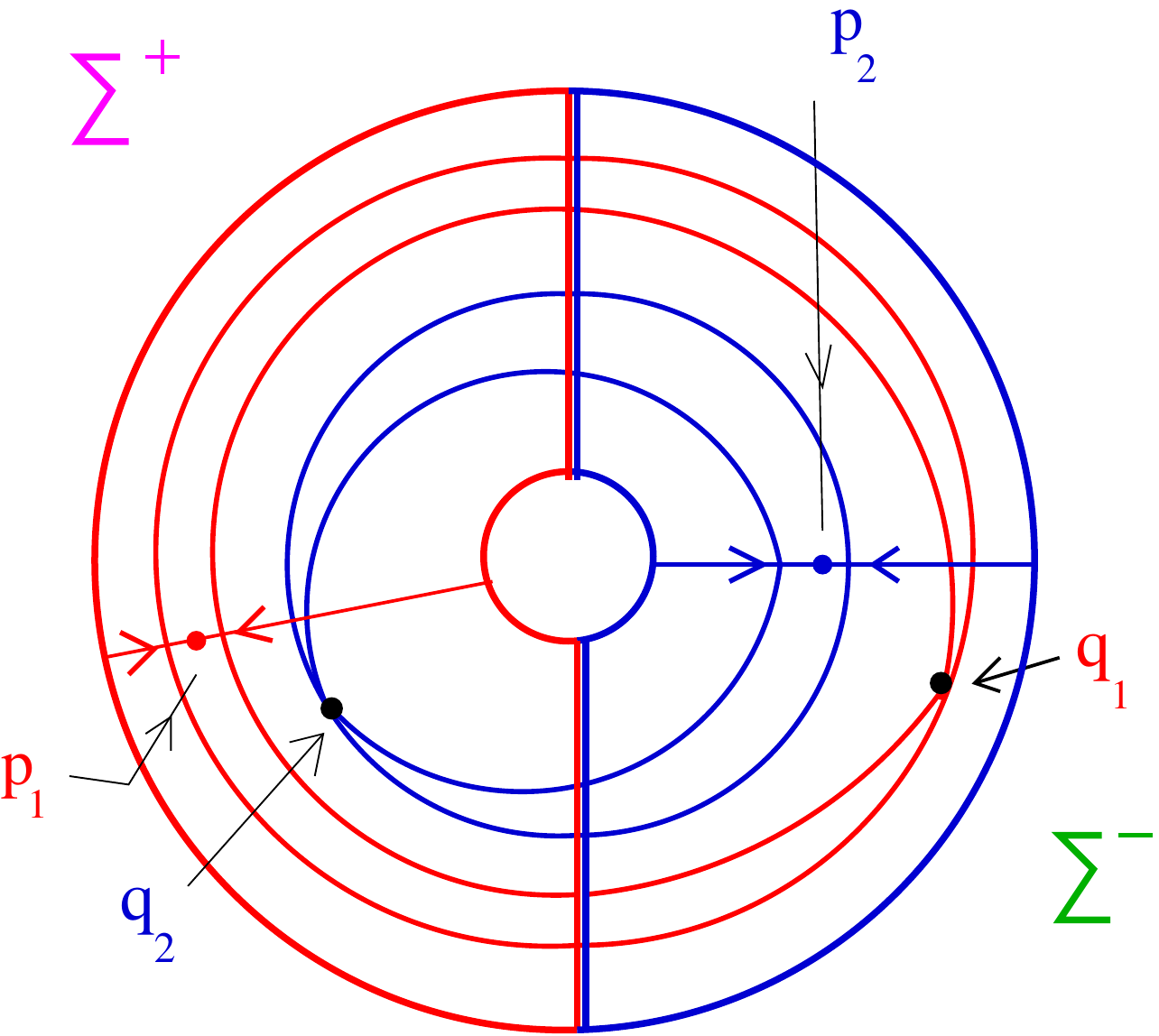}
\caption{Local bifurcation: (a) $(\mu_1,\mu_2), \mu_i >0$, $(\mu_1,\mu_2), \mu_i =0$, (c) $(\mu_1,\mu_2), \mu_i < 0$.}
\label{f-parametro}
\end{figure}

With these notations, note that Theorems~\ref{igual-a-l.switch} and \ref{1-igual-a-l.switch} are a reformulation of the results in the previous sections. 

\section{Parameter families $X_\mu\in\cO_\varphi$, with parameters in the torus}
\subsection{$\TT^2$-parameter families}

\begin{defi} Let $\pi\colon \RR^2\to\TT^2=\RR^2/\ZZ^2$ be the canonical projection. 
Let $V\subset \RR^2$ be an open subset so that the projection 
$\pi(V)$ is the whole torus $\TT^2$. 
We say that a family $\{X_\mu\in\cO_1\}_{\mu\in V}$ is a $C^r, r\geq 0,$ family of vector fields in $\cO_1$ parametrized by $\TT^2$ if 
\begin{itemize}
\item the map $\mu\to X_\mu$ is continuous for the $C^1$-topology. 
 \item the map $(p,\mu)\mapsto X_\mu(p)$ is of class $C^r$. 
 \item for any $\mu,\mu'$ so that $\mu'-\mu\in\ZZ^2$ one has:  the return maps 
 $P,P'$ of $X,X'$ on the transverse cross-section $\Sigma$ coincide: 
 $P=P'$.  In particular, the restriction of $X$ and $X'$  to the attracting region $U$  are smoothly topologically equivalent, by an equivalence whose restriction to the cross-section $\Sigma$ is the identity map.  
\end{itemize}

 Shortly, we say that $X_\mu$ is a $\TT^2$-parameter family. 
\end{defi}

\subsection{Essential families}
This section aims  to define the notion of essential $\TT^2$ families.
Roughly, we don't want  the family $X_\mu$ to be contained in a small neighbourhood of a given vector field $X_0$.  
We want that, when $\mu$ follows a simple closed  path in $\TT^2$, non-homotopic to a point, then the images $P(\Sigma^1)$ or $P(\Sigma^2)$ give a turn in $\Sigma$ in an essential way. 

Let us first present a non-intrinsic definition of  these phenomena. 
Afterwards, we will see  an intrinsic definition, showing that the definition does not depend on the choices. 
Consider a $\TT^2$-parameter family $\{X_\mu\}$ and let $\gamma^s_{+,\mu}$ and $\gamma^s_{-,\mu}$ the associated stable leaves corresponding to the discontinuities of the first return map. The leaves $\gamma^s_{+, \mu}$ vary continuously with $\mu$.  
This allows us to choose a parametrization of $\Sigma$, depending on $\mu$, and so that $\gamma^s_{+,\mu}$ is the segment $\{0\}\times [-1,1]$ in the annulus $\Sigma=\RR/\ZZ\times [-1,1]$. 

Consider now the points $q_{1,\mu}$ and $q_{2,\mu}$ (first intersection points of the unstable separatrices of $\sigma_\mu$ with $\Sigma$).  This defines two 
continuous maps 
$q_1\colon \TT^2\to \RR /\ZZ\times [-1,1] \mbox{ and } q_2\colon \TT^2\to \RR/\ZZ\times [-1,1]$. 
Then, composing $q_1$ and $q_2$ by the projection $\psi\colon \SS^1\times [-1,1]\to \SS^1$ one gets two continuous maps   $\psi \circ q_i\colon \TT^2\to \SS^1$, $i=1,2$. 
$\mbox{Let us denote}\quad Q=\left(\psi\circ q_1, \psi\circ q_2\right) \colon \TT^2\to \TT^2.$
\begin{defi}\label{d.grau} With the notation above, we say that the family $X_\mu$ is essential if the topological degree of $Q$ is  $1$ (or else, if $Q$ is homotopic to an orientation preserving homeomorphism). 
\end{defi}

{ In Definition~\ref{d.grau}, the homotopy class of $Q$ depends on the parametrization of $\Sigma$ that we choose.  Let us convince you that the topological degree of $Q$ does not depend on the choice of  parametrization of $\Sigma$.}

{ Consider the hypersurface  $\Gamma^s_+\subset \Sigma \times \TT^2$ defined by $\Gamma^s_+= \bigcup_{\mu} \gamma^s_{+,\mu}\times\{\mu\}$. In the parametrization above it is   $\{0\}\times [-1,1]\times \TT^2\subset\Sigma\times \TT^2$.  Thus it is a compact $3$-manifold with a boundary homeomorphic to $[-1,1]\times \TT^2$, whose boundary is contained in the boundary of $\Sigma\times \TT^2$.  As a consequence, the algebraic intersection number of a closed curve in $\Sigma\times \TT^2$ with $\Gamma^s_+$  is well defined. }

To every closed path $c\colon\SS^1\to \TT^2$ let us consider the closed paths
$q_{i,c}\colon  \SS^1\to \Sigma\times \TT^2$, $i\in\{1,2\}$  so that $q_{i,c}(\theta)= \left( q_{i,c(\theta)},c(\theta)\right)$ for $\theta\in\SS^1$.  

 Notice that $q_{i,c}$ depends continuously on the closed path $c$.  In particular, its algebraic intersection number with the hypersurface $\Gamma^s_+$ only depends on the homology class $[c]$.  We denote it $[q_{i,c}]\cdot\Gamma^s_+$. 
 One gets a map 
 $$[c]\mapsto \left([q_{1,c}]\cdot\Gamma^s_+,[q_{2,c}]\cdot\Gamma^s_+\right)\in\ZZ^2$$  defined on $H_1(\TT^2,\ZZ)\simeq \ZZ^2$ with values in $\ZZ^2$,  and this map is a linear map given by a $2$ by $2$ matrix with $\ZZ$-entries.  The topological degree of $Q$  is  the determinant of this linear map:  the family is essential if and only if the determinant is $1$, which does not depend on the parametrization  choice.  
 $\square$

\subsection{Building  $\TT^2$-families}

Consider a vector field $X\in\cO_1$ on $\mathbb{R}^3$. Thus, by definition, it is transversal to the annuli $\Sigma$, $D_1$ and $D^2$.  
Furthermore, the first return map  
$R\colon D_1\cup D_2\to\Sigma$ is smooth and maps $D_1$ and $D_2$ on two disjoint essential annuli in  the interior of $\Sigma$. 

Consider an annuli $\tilde D_i$, $i=1,2$, containing $D_i$ in its interior, and so that $R$ extends in a diffeomorphism $ \tilde R\colon \tilde D_1\cup \tilde D_2\to \Sigma$ which is the first return map from $\tilde D_i$ to $\Sigma\cup \tilde D_1\cup \tilde D_2$. 
We denote by $\Delta_i$ the union of the $X$-orbit segment joining points $p\in \tilde D_i$ to $\tilde R(p)$. Thus $(\Delta_i,X)$ is smoothly orbitally equivalent to $(\tilde D_i\times [0,1], \frac\partial{\partial t}).$ 
The next lemma expresses that one can realize by a continuous family of vector fields in $\mathbb{R}^3$ any continuous deformation of the return map $R$. 

\begin{lema} \label{l.family}Consider a continuous family $R_\mu\colon D_1\cup D_2\to \tilde R(\tilde D_1\cup \tilde D_2)$, $\mu\in V$, where $V\subset \RR^2$ is an open  disk containing $0$. One assumes $R_0=R$. 
Then there is a family of vector fields $X_\mu$ with the following properties:
\begin{itemize}
\item $X_0=X$
 \item for any $\mu$, $X_\mu$ satisfies all the topological properties (items 1 to 7)) of the definition of the set $\cO_1$.
 \item for any $\mu$, $X_\mu$ coincides with $X$ out of $\Delta_1\cup\Delta_2$. 
 \item for any $\mu$,  the restriction of $X_\mu$ to $\Delta_i$ is transverse to the fibers $\tilde D_i\times \{t\}$
 \item the return map of $X$ from $D_1\cup D_2$ to $\Sigma$ is $R_\mu$. 
\end{itemize}
\end{lema}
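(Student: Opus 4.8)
The plan is to realize the prescribed deformation of the return map entirely inside the flow boxes $\Delta_1\cup\Delta_2$, exploiting the product structure given by the orbital equivalence $(\Delta_i,X)\cong(\tilde D_i\times[0,1],\frac\partial{\partial t})$. In these coordinates the bottom face is $\tilde D_i\times\{0\}$, the top face $\tilde D_i\times\{1\}$ is identified with $\tilde R(\tilde D_i)\subset\Sigma$ through $\tilde R$, and the original flow carries $(p,0)$ straight up to $(p,1)$, hence to $\tilde R(p)\in\Sigma$; since $R=\tilde R|_{D_1\cup D_2}$, a point $p\in D_i$ returns to $R(p)$. Thus to produce the return map $R_\mu$ it suffices to modify the flow inside $\Delta_i$ so that the orbit of $(p,0)$ reaches the top face at $(\phi_\mu(p),1)$, where
$$\phi_\mu\eqdef \tilde R^{-1}\circ R_\mu\colon D_i\to\tilde D_i,$$
is a continuous family of embeddings with $\phi_0=\tilde R^{-1}\circ R=\mathrm{incl}$: the new orbit then hits $\Sigma$ at $\tilde R(\phi_\mu(p))=R_\mu(p)$, as desired.

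First I would extend $\phi_\mu$ to the ambient annulus. Because $\phi_0$ is the inclusion and $\phi_\mu(D_i)$ stays in $\mathrm{int}\,\tilde D_i$, while $V$ is a disk, the parametrized isotopy extension theorem yields a family of diffeomorphisms $\Phi_\mu\colon\tilde D_i\to\tilde D_i$, as regular in $(p,\mu)$ as $R_\mu$, with $\Phi_0=\mathrm{id}$, $\Phi_\mu|_{D_i}=\phi_\mu$, and $\Phi_\mu=\mathrm{id}$ on a collar of $\partial\tilde D_i$. Next I would suspend these maps. Fixing a smooth $\tau\colon[0,1]\to[0,1]$ with $\tau\equiv0$ near $0$ and $\tau\equiv1$ near $1$, set $\Psi_{\mu,t}\eqdef\Phi_{\tau(t)\mu}$, an isotopy from $\mathrm{id}$ to $\Phi_\mu$ that is locally constant in $t$ near $t=0,1$, and define on $\tilde D_i\times[0,1]$ the vector field
$$\frac{d}{dt}\Psi_{\mu,t}=Z_{\mu,t}\circ\Psi_{\mu,t},\quad \Psi_{\mu,0}=\mathrm{id},\qquad Y_\mu(q,t)\eqdef\frac{\partial}{\partial t}+Z_{\mu,t}(q).$$
By construction $dt(Y_\mu)\equiv1$, so $Y_\mu$ is transverse to every fiber $\tilde D_i\times\{t\}$; its bottom-to-top map is $\Psi_{\mu,1}=\Phi_\mu$, which equals $\phi_\mu$ on $D_i$; and since $Z_{\mu,t}$ vanishes where $\Psi_{\mu,t}$ is constant in $t$ or equal to the identity, one has $Y_\mu=\frac{\partial}{\partial t}$ near $t=0$, near $t=1$, and near $\partial\tilde D_i\times[0,1]$.

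I would then let $X_\mu$ coincide with $X$ outside $\Delta_1\cup\Delta_2$ and with the push-forward of $Y_\mu$ inside each $\Delta_i$. The boundary normalization of $Y_\mu$ makes the gluing smooth, supports the perturbation in a compact subset of $\mathrm{int}\,\Delta_i$, and gives $X_0=X$ (as $\Phi_0=\mathrm{id}$) together with transversality to the fibers in $\Delta_i$. The passage map $S$ from $\Sigma$ to $D_1\cup D_2$ and the whole local picture up to $\tilde D_i$ lie outside the support, so $\tilde q_i$ is unchanged while $q_i=R_\mu(\tilde q_i)$ moves as intended, and the first return map becomes $P_\mu=R_\mu\circ S$. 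For the properties 1)--7) of $\cO_1$: item 1) and the placement of $\Sigma,D^1,D^2$ and item 5) are untouched; item 7) persists because $\partial U$ and the transversality of the field to it are outside the support; and items 2), 3), 6) reduce to the assertions that $R_\mu$ is an embedding and that $R_\mu(D_i)=\tilde R(\phi_\mu(D_i))$ is an essential annulus in $\Sigma$, properties one reads directly from $R_\mu$.

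The main obstacle is the parametrized isotopy extension step, which must produce $\Phi_\mu$ simultaneously agreeing with $\phi_\mu$ on $D_i$, equal to the identity near $\partial\tilde D_i$ (so the perturbation glues to $X$ without effect outside $\Delta_i$), equal to the identity at $\mu=0$, and depending on $\mu$ with the regularity demanded of a $\TT^2$-family. This is exactly where the extra room between $D_i$ and $\tilde D_i$ is used: one must first verify that $\phi_\mu(D_i)$ never meets $\partial\tilde D_i$, so that the ambient isotopy can be taken with support away from the boundary. Once $\Phi_\mu$ is in hand, the suspension formula and the gluing are routine.
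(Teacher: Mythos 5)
Your proposal is correct and follows essentially the same route as the paper: the paper's (two-line) proof likewise extends $R_\mu$ to a diffeomorphism of $\tilde D_i$ coinciding with $\tilde R$ near $\partial\tilde D_i$ and then replaces $X$ inside $\Delta_i$ by a field agreeing with $X$ near $\partial\Delta_i$ whose entrance--exit map is this extension. Your isotopy-extension and suspension steps are just the details the paper leaves implicit.
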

\begin{proof}
 One extends $R_\mu$ to $\tilde D_i$ in a diffeomorphism $\tilde R_\mu$  so that all $\tilde R_\mu$ coincide with $\tilde R$ in a neighbourhood of the boundary $\partial \tilde D_i$. 
 Then we replace the restriction of $X$ in $\Delta_i$ with a vector field that coincides with $X$ in a neighbourhood of $\partial \Delta_i$ and which entrance exit map is $\tilde R_\mu$. 
\end{proof}

Assume now that the projection of $V$ on $\RR^2/\TT^2$ covers the whole torus $\TT^2$, and assume that the family $R_\mu$ is $\ZZ^2$-periodic  in the following sense:
\begin{itemize}
 \item for any $\mu_1,\mu_2\in V$ so that $\mu_2-\mu_1\in \ZZ^2$ then $R_{\mu_1}=R_{\mu_2}$. 
\end{itemize}
Then  the family $X_\mu$ defined in Lemma~\ref{l.family} is a $\TT^2$ parameter family of vector fields having $U$ as an attracting region. 

\subsection{An essential $\TT^2$-family of vector fields in $\cO_\varphi$} 

Consider $\Sigma\simeq \SS^1\times [-1,1]$ with the coordinates $\theta, t$ and consider the constant cone field $\cC$ defined by 
$$\cC(p)=\{u=\alpha\frac{\partial}{\partial \theta}+\beta\frac{\partial}{\partial t}\in T_p\Sigma \mbox{ so that } |\alpha|\geq |\beta|\}.$$

\noindent {We denote by $\cR_\alpha\colon \Sigma\to\Sigma$ the rotation of angle $\alpha \in\SS^1$, that is, $(\theta, t)\mapsto (\theta+\alpha,t)$. Recall that $R\colon D_1\cup D_2\to\Sigma$ is the first return map  of $D_i$ to $\Sigma\cup D_1\cup D_2$.}

Consider a vector field $X\in \cO_\varphi$ with the  following extra properties
\begin{itemize}
 \item The constant cone field $\cC$ is the unstable cone field $\cC^u$.
 \item The images $R(D_i)$ are product annulus $\SS^1\times I_i \subset\Sigma=\SS^1\times [-1,1]$.
\end{itemize}

Now  consider the $\ZZ^2$-family of maps $R_{\alpha,\beta}\colon D_1\cup D_2\to \Sigma$, $\alpha,\beta\in \TT^2=\RR^2/\SS^2$ defined by 
$$R=\cR_\alpha\circ R\mbox { on } D_1, \mbox{ and } R=\cR_\beta\circ R\mbox{ on } D_2$$

According to Lemma~\ref{l.family}, one can realize the periodic family $R_\mu$ by a $\TT^2$-parameter family of vector fields $X_\mu$, and one easily check

\begin{lema} The family $X_\mu$ is a $\TT^2$-parameter essential family in $\cO_\varphi$. 
 
\end{lema}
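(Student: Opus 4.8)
The statement packages two assertions: that every $X_\mu$ belongs to $\cO_\varphi$ (so that $\{X_\mu\}$ is a genuine $\TT^2$-parameter family \emph{in} $\cO_\varphi$), and that this family is essential. I would treat them separately.

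For membership in $\cO_\varphi$, the plan is to exploit the two extra hypotheses imposed on $X$: that the unstable cone field $\cC^u$ is the constant product cone field $\cC$, and that each $R(D_i)$ is a product annulus $\SS^1\times I_i$. A rotation $\cR_\alpha(\theta,t)=(\theta+\alpha,t)$ has derivative the identity, preserves the product splitting and the cone field $\cC$, and satisfies $\cR_\alpha(\SS^1\times I_i)=\SS^1\times I_i$, so the images $R_\mu(D_i)$ remain the same product annuli as for $X$. Since $P_\mu=\cR_\alpha\circ P$ on $\Sigma_1$ and $P_\mu=\cR_\beta\circ P$ on $\Sigma_2$, and $D\cR_\alpha=\mathrm{Id}$, one gets $DP_\mu=DP$ at every point; hence the strict invariance of $\cC^u$, the expansion factor $\lambda>\varphi$, the singular-hyperbolicity (items 8, 9) and the bunching inequality (item 10) are inherited verbatim from $X$, while items 1--7 hold by Lemma~\ref{l.family}. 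Finally $\cR_{\alpha+1}=\cR_\alpha$ gives $R_{\mu}=R_{\mu'}$ whenever $\mu'-\mu\in\ZZ^2$, so the construction descends to $\TT^2$ and $\{X_\mu\}$ is a $\TT^2$-parameter family.

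For essentiality, the key preliminary observation I would establish is that the two discontinuity leaves $\gamma^s_\pm$ and the two first hitting points $\widetilde q_i\in D_i$ of $W^u_i(\sigma)$ are independent of $\mu$. Indeed, the perturbation is supported in $\Delta_1\cup\Delta_2$, i.e.\ in the flow region realizing $R$ (from $\tilde D_i$ back to $\Sigma$); but the decision, for a point of $\Sigma$, to converge to $\sigma$ rather than reach $D_i$ is made during the unmodified $S$-journey, and the passage of $W^u_i(\sigma)$ from $\sigma$ to its first hit on $D_i$ also takes place before entering $\Delta_i$. Hence $\gamma^s_{\pm,\mu}=\gamma^s_\pm$ and $\widetilde q_i$ are constant, and I may fix once and for all a $\mu$-independent product parametrization $\Sigma\simeq\SS^1\times[-1,1]$ with $\gamma^s_+=\{0\}\times[-1,1]$. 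With this choice $q_{i,\mu}=R_\mu(\widetilde q_i)$, so writing $R(\widetilde q_i)=(\theta^0_i,t^0_i)$ we obtain $q_{1,\mu}=(\theta^0_1+\alpha,t^0_1)$ and $q_{2,\mu}=(\theta^0_2+\beta,t^0_2)$; projecting by $\psi$ gives $\psi\circ q_1(\alpha,\beta)=\theta^0_1+\alpha$ and $\psi\circ q_2(\alpha,\beta)=\theta^0_2+\beta$. Thus $Q(\alpha,\beta)=(\theta^0_1+\alpha,\,\theta^0_2+\beta)$ is a translation of $\TT^2$, an orientation-preserving homeomorphism of degree $1$. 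Equivalently, in the intrinsic formulation, the loop turning $\alpha$ once crosses $\Gamma^s_+$ with $q_1$ exactly once and leaves $q_2$ fixed, and symmetrically for $\beta$, so the intersection matrix is $\pm\mathrm{Id}$ with determinant $1$; the family is essential.

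The degree computation is immediate once the geometry is in place, so the substantive part---and the step I expect to require the most care---is the preliminary claim that $\gamma^s_\pm$ and $\widetilde q_i$ are $\mu$-independent. This rests on locating the support $\Delta_1\cup\Delta_2$ of the perturbation strictly inside the ``$R$-part'' of the flow and checking it is disjoint both from $W^s(\sigma)$ and from the $\sigma\to D_i$ arcs of $W^u_i(\sigma)$, together with the routine but necessary verification that the singular-hyperbolic items 8--10 survive the rotation.
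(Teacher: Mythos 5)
Your proof is correct; the paper in fact gives no argument at all for this lemma (it is introduced with ``one easily check''), and what you supply is exactly the intended verification: the rotations $\cR_\alpha,\cR_\beta$ have identity derivative and preserve the product annuli, so $DP_\mu=DP$ and items 8--10 (hence membership in $\cO_\varphi$) persist, while $\gamma^s_\pm$ and $\widetilde q_i$ are untouched because the perturbation is supported in $\Delta_1\cup\Delta_2$, making $Q(\alpha,\beta)=(\theta^0_1+\alpha,\theta^0_2+\beta)$ a translation of degree $1$. The only point deserving the care you already flag is that Lemma~\ref{l.family} guarantees only items 1--7, so the inheritance of the hyperbolicity and bunching conditions depends on realizing the modified flow inside $\Delta_i$ as a product with entrance--exit map $\tilde R_\mu$ (whose derivative equals $D\tilde R$), which your argument correctly exploits.
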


\section{Reduction to a $1$-dimensional dynamics}
\subsection{The action of the return map on the space of stable leaves}

Any $C^1$ vector field $X\in\cO_1$ is singular hyperbolic in the attracting region $U$, with a continuous strong stable direction. 
There is a well-defined \emph{stable foliation} (also called \emph{strong stable foliation} tangent to the stable distribution with leaves having the same regularity as $X$. 
It admits, therefore, a well-defined $2$-dimensional \emph{(weak)stable foliation} (also called \emph{center-stable foliation}) out of the strong stable manifold of the singularity. The leaves of the weak stable foliation are the orbits for the flow of the leaves of the strong stable: out of the strong stable manifold of $\sigma$, the vector is not tangent to the stable direction, so  these orbits are $2$-dimensional. 
Along the strong stable manifold of $\sigma$, the vector field is tangent to the one-dimensional strong stable leaf, so the foliation is singular. 
The strong stable foliation is not tangent to $\Sigma$. However, the $2$ dimensional center-stable foliation  cuts the annulus $\Sigma$ transversely along a  one-dimensional foliation $\cF^s$, which is  the stable foliation of the return map $P$. 
 The segments $\gamma^s_+$ and $\gamma^s_-$ are leaves of $\cF^s$.

{The foliation $\cF^s$ is transverse to the unstable cone field $\cC^u$.  
 The leaves of the foliation $\cF^s$ are segments crossing $\Sigma$ (connecting the two boundary components of $\Sigma$). 
  The leaves space  $\Sigma/\cF^s$ is a (topological) circle $\SS^1_X$.  The leaves $\gamma^s_+$ and $\gamma^s_-$ induce each a point $c_+$ and $c_-$ respectively, on $\SS^1_X$. 
Note that the flow $X^t$ preserves the center-stable foliation, and thus the first return map $P$ preserves the foliation $\cF^s$.
 As a consequence, $P$ passes to the quotient as a map $f=f_X$ defined from $\SS^1_X\setminus\{c_+,c_-\}$ to $\SS^1_X$. 
 As $P(\Sigma^i)$ is an essential pinched annulus in $\Sigma$, we deduce that $f_X$ restricted to each interval of $\SS^1_X\setminus\{c_+,c_-\}$ is a diffeomorphism on  a punctured circle. }
 
 \subsection{Increasing the regularity of the foliation}

 In a recent work \cite{ararujomelbourne}, Ara\'ujo and Melbourne adapt to our setting  a condition from \cite{HPS},  ensuring the smoothness of the strong stable foliation of $X$.
 \begin{itemize}\setcounter{enumi}{9}
 \item \label{i.am}  there exists $t>0$ such that
		\begin{equation}\|DX^t|_{E^s_x}\|\cdot \|DX^{-t}|_{E^{cu}_{X^t(x)}}\|\cdot \|DX^t|_{E_x^{cu}}\|<1 \label{eq.cont.}\quad \mbox{ for all $x \in \Lambda$}. \end{equation}
\end{itemize}
 \noindent As a consequence of Theorem \ref{holonomia},
 the weak stable foliation of $X$, and 
 the stable foliation $F^s$ of the first return map $P$ is of class $C^1$.  Therefore the circle $\SS^1_X$ is endowed with a natural $C^1$-structure, and $f_X$ is of class $C^1$. 

\section{Symbolic dynamics and topological classification}\label{ss-symbolic}
As for the classical geometric model of Lorenz attractor, we will see in that  section that, to any vector field in $\cO_1$ one can associate  combinatorial data, called the itinerary of the discontinuities. Furthermore, these itineraries provide a topological classification of the vector fields in the attracting region $U$. 
We fix a vector field $X\in\cO_1$ and its return map 
$P\colon\Sigma\to\Sigma$ in this section. 

\subsection{Itineraries for the return map $P$ on $\Sigma$}

Recall that $\Sigma$ is endowed with two specific stable leaves $\gamma^s_+$ and $\gamma^s_-$, which split  $\Sigma$ in $\Sigma_1$ and $\Sigma_2$.
Note that $\gamma^ s_+$ cuts both pinched annuli $P(\Sigma_i)$ along one stable leaf, except in the case where $q_i\in \gamma^s_+$. 
Thus $P^{-1}(\gamma^s_+)$ cut $\Sigma_i$ into two components (one of them being empty if $q_i\in\gamma^s_+$): 

\begin{itemize}
 \item we denote by $A_0$ and $A_1$ the two components of $\Sigma_1\setminus P^{-1}(\gamma^s_+)$  where $A_0$  starts at $\gamma^s_+$ (for the positive orientation of the circle $\SS^1$ of $\Sigma=\SS^1\times [-1,1]$. 
 If $q_1\in \gamma^s_+$ then $A_1=\emptyset$. 
 \item we denote by $B_0$ and $B_1$ the two components of $\Sigma_2\setminus P^{-1}(\gamma^s_+)$  where $B_0$ is starting at $\gamma^s_-$.  
 If $q_1\in \gamma^s_+$ then $B_1=\emptyset$. 
\end{itemize}

\noindent We consider $\XX=\{A_0,A_1,B_0,B_1\}^\NN$ the space of  infinite positive words in the alphabet $\{A_0,A_1,B_0,B_1\}$. 
A finite word of length $k$ is an element $\{\omega_0,\dots,\omega_{k-1}\}$ of $\{A_0,A_1,B_0,B_1\}^{k}$.

Given $\omega=\{\omega_i\}_{i\in\NN} \in \XX$ we denote by $[\omega]_k$ the initial word $\{\omega_0,\dots,\omega_{k-1}\}$  
of length $k$ of~$\omega$. 

\begin{rema} For any $k>0$ and any $\varepsilon\in +,-$, $P^{-k}(\gamma^s_\varepsilon)$ consists in at most $2^k$ stable leaves. 
\end{rema}

\begin{defi}For every $p\in \Sigma\setminus \bigcup_{j=0}^{k} P^{-j}(\gamma^s_+\cup\gamma^s_-)$ we denote by $[\omega(p)]_k=\{\omega_0(p),\dots,\omega_{k-1(p)}\}$ the word defined by $f^i(p)\in \omega_i(p)$ (recall that $\omega_i(p)$ is one of the four regions $A_0, A_1,B_0,B_1$).
$[\omega(p)]_k$ is called the \emph{$k$-itinerary} of $p$. 
\end{defi}

Figure \ref{f.itinerario} displays the choice of the alphabet above for the
$1$-dimensional dynamics as in the picture. 

\begin{figure}[th]
\centering
	\includegraphics[scale=0.22]{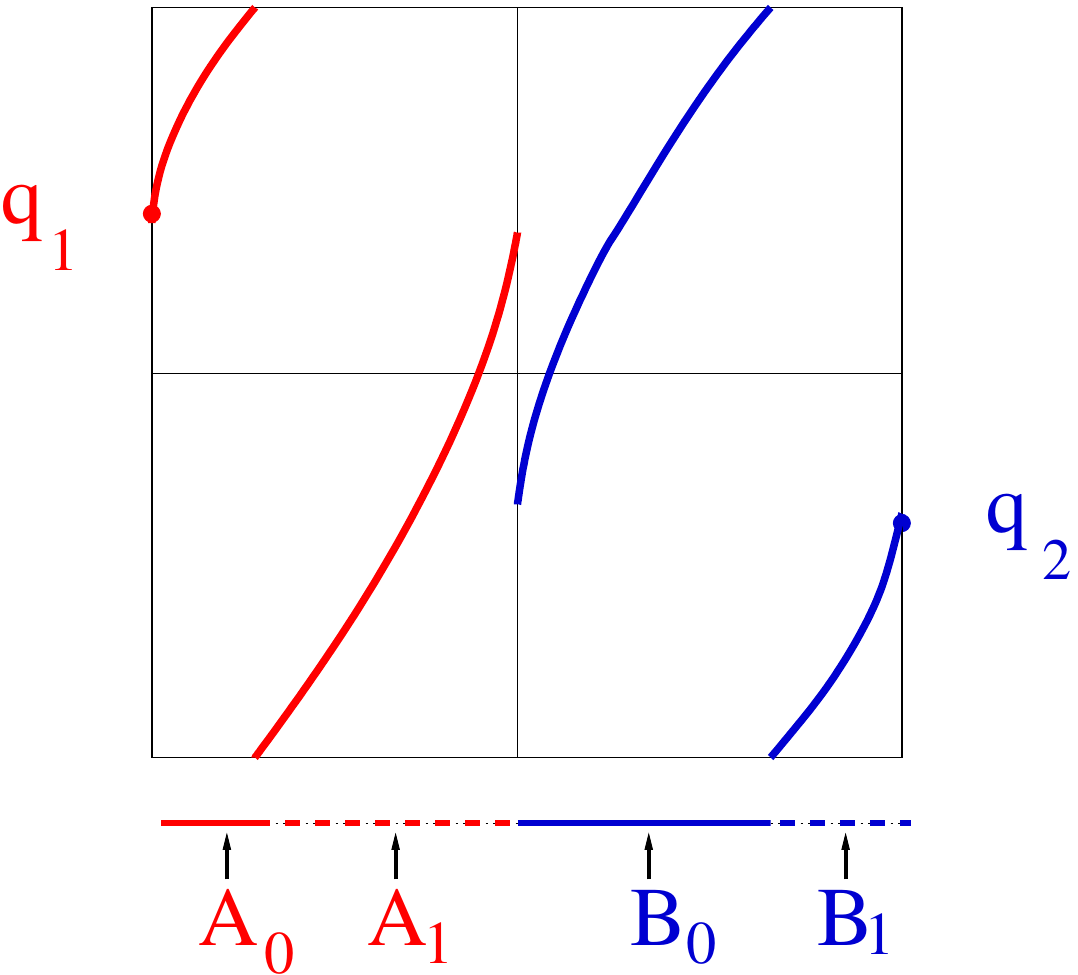}
	\caption{The chosen alphabet.}\label{f.itinerario}
\end{figure}
\begin{lema}\label{l.itinerary}  Consider any point $p\in \Sigma$, and $S\colon[-1,1]\to \Sigma$ a positively oriented unstable segment centered at $p$ (i.e., $S(0)=p$). 
Then for any $k\geq 0$,  the itinerary $[\omega(S(t))]_k$ is well defined and constant for $t>0$  (resp. $t<0$) small enough.  This itinerary is independent of the choice of $S$.  We denote them 
$[\omega_-(p)]_k \quad \mbox{and} \quad [\omega_+(p)]_k.$ 
 If $p_1$ and $p_2$ belong to the same stable leaf then 
 $[\omega_\pm(p_1)]_k = [\omega_\pm(p_2)]_k.$
 In other words, the itinerary depends only on the stable leaf, and not on the point in the leaf. 
\end{lema}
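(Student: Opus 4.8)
The plan is to reduce the entire statement to the one--dimensional quotient dynamics $f=f_X$ on the leaf space $\SS^1_X=\Sigma/\cF^s$, where the four--letter partition becomes a partition of the circle into arcs and the ``bad set'' becomes a finite set of points.

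First I would record that the partition $\{A_0,A_1,B_0,B_1\}$ is saturated by the stable foliation $\cF^s$. Indeed, its boundary consists of the leaves $\gamma^s_+,\gamma^s_-$ (which separate $\Sigma_1$ from $\Sigma_2$) together with $P^{-1}(\gamma^s_+)$; since $P$ maps leaves of $\cF^s$ to leaves of $\cF^s$, the set $P^{-1}(\gamma^s_+)$ is again a union of leaves. Hence each region $\omega_i\in\{A_0,A_1,B_0,B_1\}$ is a union of stable leaves, so the value $\omega_i(q)$ depends only on the leaf $\cF^s(q)$, and each region projects under $\pi\colon\Sigma\to\SS^1_X$ to an arc $\bar A_0,\bar A_1,\bar B_0,\bar B_1$. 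Thus, whenever the $k$--itinerary of a point $q$ is defined, it coincides with the $f$--itinerary of $\pi(q)$.

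Next, the set $\Gamma_k=\bigcup_{j=0}^{k}P^{-j}(\gamma^s_+\cup\gamma^s_-)$ on which $[\omega(\cdot)]_k$ fails to be defined is, by the preceding remark, a finite union of stable leaves, so its projection $C_k=\pi(\Gamma_k)$ is a finite subset of $\SS^1_X$. Since $\cF^s$ is transverse to the unstable cone field $\cC^u$, the unstable segment $S$ meets each of the finitely many leaves of $\Gamma_k$ transversely, hence in a finite set of parameters $t\in[-1,1]$; and the letter $\omega_i(S(t))$ (the region containing $P^i(S(t))$, $i\le k-1$) can change only when $S(t)$ crosses a leaf of $\Gamma_k$. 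Consequently, for $t\in(0,\varepsilon)$ small each $\omega_i(S(t))$ is constant, which defines $[\omega_+(p)]_k$, and symmetrically $[\omega_-(p)]_k$ is defined for $t\in(-\varepsilon,0)$. To obtain independence of $S$ and dependence only on the leaf, I would push $S$ to the quotient: by the requirement (item 9) that $DP$ preserve the transverse orientation of $\cC^u$, the circle $\SS^1_X$ carries a well--defined orientation and $f$ is orientation preserving. A positively oriented unstable segment therefore projects under $\pi$ to an orientation--preserving local homeomorphism onto an arc, so $\pi(S(t))$ for $t>0$ small is exactly the point $\bar p=\pi(p)$ approached from its positive side, independently of $S$. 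Since $f\circ\pi=\pi\circ P$ preserves the positive side, the stabilized value of $\omega_i(S(t))$ is the $i$--th letter of the itinerary of $\bar p$ approached from the right under $f$; this depends only on $\bar p$, hence only on $\cF^s(p)$, and not on $S$. The left approach gives $[\omega_-(p)]_k$ in the same way, and if $p_1,p_2$ lie on the same stable leaf then $\pi(p_1)=\pi(p_2)$, whence $[\omega_\pm(p_1)]_k=[\omega_\pm(p_2)]_k$.

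The main obstacle, and the only place where the hypotheses genuinely enter, is the orientation bookkeeping: one must check that ``positively oriented'' is coherent both under the projection $\pi$ and under iteration of $P$, so that approaching $\bar p$ from the right at time $0$ corresponds to approaching $f^i(\bar p)$ from the right at every step $i\le k-1$. This coherence is precisely what the requirement that $DP$ preserve the transverse orientation of $\cC^u$ provides; without it the two one--sided itineraries could fail to stabilize in a leaf--intrinsic way. All remaining points (transversality of $S$ to the finitely many bad leaves, local constancy of the letters, and saturation of the partition by $\cF^s$) are routine once the reduction to $\SS^1_X$ is in place.
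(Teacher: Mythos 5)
The paper states Lemma~\ref{l.itinerary} without proof, treating it as routine, so there is no written argument to compare yours against; judged on its own, your proof is correct and complete. The three ingredients you isolate are exactly the right ones: the partition $\{A_0,A_1,B_0,B_1\}$ is saturated by $\cF^s$ because its boundary is $\gamma^s_+\cup\gamma^s_-\cup P^{-1}(\gamma^s_+)$ and $P$ carries leaves to leaves; the bad set $\bigcup_{j=0}^{k}P^{-j}(\gamma^s_+\cup\gamma^s_-)$ is a finite union of leaves (the paper's own remark that $P^{-k}(\gamma^s_\pm)$ consists of at most $2^k$ leaves); and an unstable segment, being transverse to $\cF^s$, meets these finitely many leaves in finitely many parameters, so each letter is locally constant off a finite set and stabilizes as $t\to 0^{\pm}$. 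Passing to the quotient circle then gives independence of $S$ and dependence only on the leaf. One small correction of emphasis: the hypothesis that $DP$ preserves the transverse orientation of $\cC^u$ is not actually what the conclusion hinges on. All you need is that ``the positive side of a leaf'' is well defined, i.e.\ a coherent co-orientation of $\cF^s$, which comes from the fibration structure of $\Sigma\simeq\SS^1\times[-1,1]$ and the two components of the cone field; the stabilized $k$-itinerary for $t>0$ small is simply the constant itinerary of the component of $\Sigma\setminus\Gamma_k$ adjacent to $\cF^s(p)$ on its positive side, with no need to track which side of $f^i(\bar p)$ is being approached at each step. The orientation-preservation of $DP$ becomes genuinely relevant later (e.g.\ in the monotonicity statement of Proposition~\ref{p.order}), but your proof of this lemma would go through without it.
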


For any $p\in\Sigma$, one denotes by $\omega_-(p)$ and $\omega_+(p)$ the infinite words whose first segments of length $k$ are, respectively,  $[\omega_-(p)]_k$ and $[\omega_+(p)]_k$.  They are called the \emph{down-} and \emph{upper-itinerary} of $p$ respectively.  The \emph{itinerary }of $p$ is the pair of sequences $\omega(p)=(\omega_-(p),\omega_+(p)).$

\begin{rema} If $p\notin W^s(\sigma)$ (that is,  $P^k(p)\notin \gamma^s_\pm$ for all $k>0$),  then $\omega_-(p)=\omega_+(p)$ and the segment of length $k$ is \emph{$k$-itinerary} $[\omega(p)]_k$ of $p$.  This shows that our terminology and notations are consistent. 
\end{rema}

The following remark says that the itineraries $\omega_+$ and $\omega_-$ induce, in some sense, a semi-conjugacy of $(\Sigma, P)$ with $(\XX,\mathfrak{S})$, where $\mathfrak{S}$ is the shift on $\XX$. 
Being rigorous, $P$ is defined on $\Sigma\setminus (\gamma^s_+\cup \gamma^s_-)$, which is not invariant under $P$. 
Thus $\omega_{\pm}$ is a conjugacy in the restriction of $\Sigma\setminus W^s(\sigma)$. 
As $P$ is discontinuous along $\gamma^s+$ and $\gamma^s_-$, we also explain the itinerary of these points. {Recall the {\em{ star map}} (denoted by $\star$) defined on $\XX$ as follows:
given a sequence $w =(w_0, w_1,\cdots ) \in \XX$ and a letter $L \in \{A_0,A_1,B_0,B_1\}$, 
$
L \star w \eqdef (L, w_0, w_1, \cdots)$.}

\begin{rema}\begin{itemize}\item For any $x\in \Sigma\setminus (\gamma^s_+\cup \gamma^s_-)$ (that is, $P(x)$ is defined), then 
$$\omega_-(P(x))=\mathfrak{S}(\omega_-(x))\quad\mbox{and}\quad \omega_+(P(x))=\mathfrak{S}(\omega_+(x))$$

 \item For $x\in \gamma^s_+$   one has:
 $\quad \,\, \omega_+(x)= A_0\star\omega_+(q_1)$
 and
 $$\begin{array}{l}
 \omega_-(x)= B_1\star\omega_-(q_2), \mbox{ if } q_2\notin \gamma^s_+\\
\omega_-(x)= B_0\dots B_0\dots, \mbox{ if } q_2\in \gamma^s_+ \quad (\mbox{and then } B_1=\emptyset)
   \end{array}
$$
\item For $x\in \gamma^s_-$   one has:
 $\quad \,\, \omega_+(x)= B_0\star\omega_+(q_2)$
 and
 $$\begin{array}{l}
 \omega_-(x)= A_1\star\omega_-(q_1), \mbox{ if } q_1\notin \gamma^s_+\\
\omega_-(x)= A_0\star\omega_-(q_1) \mbox{ if } q_1\in \gamma^s_+ \quad (\mbox{and then } A_1=\emptyset)
   \end{array}
$$

\end{itemize}
\end{rema}
According to Lemma~\ref{l.itinerary}, the itineraries $\omega_-$ and $\omega_+$ are functions of the stable leaf.  So they pass to the quotient on the leaves space $\SS^1_X$. We still denote by $\omega_-$ and $\omega_+$ the quotient maps
$\omega_\pm\colon \SS^1_X\to\XX.$

\subsection{Order and topology}\label{ss.order}
We endow the alphabet $\{A_0,A_1,B_0,B_1\}$ with the total order 
$A_0<A_1<B_0<B_1<1$,
which corresponds to the order in an unstable segment   starting at $\gamma^s_+$ and crosses the corresponding regions in $\Sigma$. 
We endow $\XX$ with the corresponding lexicographic order that we denote by $\prec$ (and $\preccurlyeq$ for the non-strict order). 

\begin{prop}\label{p.order}Let $S\colon [0,1]\to \Sigma$ be an unstable segment, positively oriented,  whose interior is contained in 
$\Sigma\setminus \gamma^s_+$. 
Then:
\begin{itemize}
 \item for any $t\in(0,1)$  one has $\omega_-(S(t))\preccurlyeq\omega_+(S(t)),$
 \item for any $t_1,t_2\in[0,1]$ so that  $t_1<t_2$ one has $\omega_+(S(t_1))\prec\omega_-(S(t_2)).$
\end{itemize}
\end{prop}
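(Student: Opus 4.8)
The plan is to prove both statements simultaneously by induction on the length $k$ of the words, establishing first the \emph{weak} inequalities $[\omega_-(S(t))]_k\preccurlyeq[\omega_+(S(t))]_k$ and $[\omega_+(S(t_1))]_k\preccurlyeq[\omega_-(S(t_2))]_k$ for every $k$, and only afterwards upgrading the second one to a strict inequality. The driving mechanism is a monotonicity of the \emph{first letter}: by the very definition of the order $A_0<A_1<B_0<B_1$ in Section~\ref{ss.order}, as a positively oriented unstable segment is traversed in the positive direction \emph{without crossing $\gamma^s_+$}, the region label passes through $A_0,A_1,B_0,B_1$ in increasing order, the transitions occurring exactly when the segment meets $P^{-1}(\gamma^s_+)$ (inside $\Sigma_1$ or $\Sigma_2$) or $\gamma^s_-$. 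Since by hypothesis the interior of $S$ avoids $\gamma^s_+$, the first letter is a non-decreasing function of the position along $S$, the only potential decrease (the ``seam'' $B_1\mapsto A_0$) being precisely a crossing of $\gamma^s_+$, which does not occur.

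First I would treat the base case $k=1$, which is exactly this first-letter monotonicity: the first letter of $\omega_+(S(t))$ is the region just to the right of $S(t)$ and that of $\omega_-(S(t))$ the region just to its left, so $[\omega_-(S(t))]_1\preccurlyeq[\omega_+(S(t))]_1$; and for $t_1<t_2$ both relevant letters are labels of points of the arc $S((t_1,t_2))$, whence $[\omega_+(S(t_1))]_1\preccurlyeq[\omega_-(S(t_2))]_1$. For the inductive step, at a point $p=S(t)\notin\gamma^s_+$ I would distinguish whether the left and right first letters differ. If they differ (this happens exactly when $p\in\gamma^s_-$ or $p\in P^{-1}(\gamma^s_+)$, with the explicit values given by the Remark recalling the star relations), the first letter already settles the lexicographic comparison. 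If they coincide, then $p$ lies in the interior of a single region $R$, $P$ is continuous near $p$, and since $DP$ preserves the orientation of $\cC^u$ and expands it (item 9), the image $P\circ S$ is again a positively oriented unstable segment whose local interior avoids $\gamma^s_+$ (because $p\notin P^{-1}(\gamma^s_+)$). The star relations give $[\omega_\pm(p)]_{k+1}=R\star[\omega_\pm(P(p))]_k$, and I would apply the induction hypothesis at level $k$ to $P(p)$ on $P\circ S$; the same reduction handles the pair $(t_1,t_2)$ once all of $S((t_1,t_2))$ shares the region $R$, which the monotonicity forces as soon as the two boundary letters agree. Letting $k\to\infty$ yields the first bullet $\omega_-(S(t))\preccurlyeq\omega_+(S(t))$ and the weak form $\omega_+(S(t_1))\preccurlyeq\omega_-(S(t_2))$.

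The remaining and main point is to promote the second inequality to a strict one, i.e. to exclude $\omega_+(S(t_1))=\omega_-(S(t_2))$. Here I would invoke expansion. The open arc $J=S((t_1,t_2))$ is an unstable segment of positive length; as long as its forward iterates avoid $\gamma^s_+\cup\gamma^s_-$ they stay embedded unstable segments whose length is multiplied by at least $\lambda>1$ at each step. By the length bound for unstable segments that do not cut every stable leaf (as in the proof of Lemma~\ref{l.quase-atrator}), such unbounded growth is impossible, so there is a least $n\geq0$ and a point $p^*=S(t^*)$ with $t_1<t^*<t_2$ and $P^n(p^*)\in\gamma^s_+\cup\gamma^s_-$. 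Minimality of $n$ makes the first $n$ letters continuous across $t^*$, while $P^n\circ S$ crosses the discontinuity leaf transversally there, so the $n$-th letter jumps and $\omega_-(p^*)\neq\omega_+(p^*)$. On the other hand, applying the already-proved inequalities along $t_1<t^*<t_2$ gives
\[
\omega_+(S(t_1))\preccurlyeq\omega_-(p^*)\preccurlyeq\omega_+(p^*)\preccurlyeq\omega_-(S(t_2)),
\]
the middle step being the first bullet at $p^*$ (valid since $p^*\notin\gamma^s_+$). An equality $\omega_+(S(t_1))=\omega_-(S(t_2))$ would force all four words to coincide, hence $\omega_-(p^*)=\omega_+(p^*)$, a contradiction; therefore $\omega_+(S(t_1))\prec\omega_-(S(t_2))$.

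I expect the delicate part to be the bookkeeping at the ``seam'' $\gamma^s_+$: the cyclic order of regions genuinely decreases ($B_1\to A_0$) across $\gamma^s_+$, and the star relations even reverse $\omega_-$ and $\omega_+$ on $\gamma^s_+$ itself, so the hypothesis that the interior of $S$ avoids $\gamma^s_+$ must be verified at every inductive step, not only for $S$ but for each forward image $P^{j}\circ S$ on the relevant subarc. Checking that these image segments again avoid $\gamma^s_+$, together with the degenerate cases $A_1=\emptyset$ or $B_1=\emptyset$ (when some $q_i\in\gamma^s_+$), is where the care is needed; both are covered by the explicit itinerary formulas of the Remark and by Lemma~\ref{l.itinerary}, which guarantees that $\omega_\pm$ depend only on the stable leaf.
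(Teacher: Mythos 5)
Your argument is correct, and it rests on the same two mechanisms as the paper's proof: the order $A_0<A_1<B_0<B_1$ is exactly the transverse order in which a positively oriented unstable segment avoiding $\gamma^s_+$ meets the regions, and the uniform expansion of unstable segments forces some forward iterate of the open subarc $S((t_1,t_2))$ to meet $\gamma^s_+\cup\gamma^s_-$, which is what turns the weak inequality into a strict one. The organization, however, is genuinely different. The paper first proves everything on the set of points off $W^s(\sigma)$, where $\omega_-=\omega_+$ and the one-sided subtleties disappear: it locates the first index $k$ of disagreement, shows by induction that $P^j(S([t_1,t_2]))$ stays inside a single region for $j<k$, reads off the order of the $k$-th letters there, gets strictness from exponential length growth, and only then recovers the one-sided itineraries of arbitrary points by monotone approximation with sequences $t_{i,n}\to t_i$ chosen off $W^s(\sigma)$. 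You instead run an induction on the word length that handles $\omega_\pm$ directly at every point, pushing through the discontinuities with the star/shift relations of the Remark. Your route buys a uniform statement at level $k$ for all points at once and makes the role of the ``seam'' $B_1\to A_0$ at $\gamma^s_+$ completely explicit; its cost is precisely the bookkeeping you flag at the end --- when an endpoint $S(t_i)$ lies on $\gamma^s_\pm$, the reduction $[\omega_+(p)]_{k+1}=R\star[\omega_+(P(p))]_k$ must be read with $P(p)$ replaced by the cuspidal point $q_i$, and one must check at each step that the image subarc is again a positively oriented unstable segment whose interior avoids $\gamma^s_+$ (which holds because the subarc lies in a single open region $R$ and $R\cap P^{-1}(\gamma^s_+)=\emptyset$). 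The paper's limiting argument sidesteps all of this at the price of a slightly less transparent passage to the supremum/infimum. One small imprecision in your strictness step: when the first iterate $P^n(J)$ meets $\gamma^s_+$ rather than $\gamma^s_-$, it is the $(n-1)$-st letter (the crossing of $P^{-1}(\gamma^s_+)$ by $P^{n-1}(J)$) that jumps, not the $n$-th; in either case the jumping letter increases, so the conclusion $\omega_-(p^*)\prec\omega_+(p^*)$, and hence the contradiction you derive, is unaffected.
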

\noindent This proposition has a straightforward translation for the itineraries associated with the $1$-dimensional dynamic $f$.  

\begin{proof} For the first item, we already have seen that if $S(t)$ does not belong to $W^s(\sigma)$, then $\omega_-(S(t))=\omega_+(S(t)$, and there is nothing to prove. 

Consider $t_1<t_2$ so that $S(t_i)\notin W^s(\sigma)$.  As $\omega_+$ and $\omega_-$ coincide on $S(t_i)$ we just note $\omega^i=\omega_+(S(t_i))=\omega_-(S(t_i))$. 
If the first letter is not the same, that is $S(t_1)$ and $S(t_2)$ are not in the same region $A_0,A_1,B_0,B_1$,  then $(\omega^1)_0<(\omega^2)_0$, by choice of the order on our alphabet, and so $\omega^1\prec\omega^2$.

\begin{clai}\label{c.k} {Assume that $(\omega^1)_j=(\omega^2_j)$ for $j=0,\dots,k-1$ but $(\omega^1)_k\neq  (\omega^2)_k$.} 
\begin{itemize}\item $P^j$, $0\leq j\leq k$ is defined on the unstable segment $S([t_1,t_2])$ 
 \item $P^j(S([t_1,t_2])$ is contained in one of the regions $A_0,A_1,B_0,B_1$ for $0\leq j<k$,
 \item $P^k(S(t_1))$ and $P^k(S(t_2))$ are not in the same region
\end{itemize}
\end{clai}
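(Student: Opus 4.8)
The statement is really about \emph{confinement} of the iterated segment to a single region, the last bullet being essentially the hypothesis once $P^k$ is known to be defined. I would organize the proof around the quotient dynamics on the leaf space. Recall that $\cF^s$ is transverse to the unstable cone field $\cC^u$ and that its leaves cross $\Sigma$ from boundary to boundary; writing $\pi\colon\Sigma\to\SS^1_X$ for the projection to the leaf space, the four regions $A_0,A_1,B_0,B_1$ are saturated by $\cF^s$, so they project to four arcs (``laps'') of $\SS^1_X$ whose endpoints are $c_+$, $c_-$ and the two points of $\pi(P^{-1}(\gamma^s_+))$, and the induced map $f=f_X$ sends each lap homeomorphically onto a punctured circle. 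Since $\cC^u$ is a strict cone transverse to $\cF^s$, an unstable segment meets each stable leaf at most once, so $\pi$ embeds any unstable segment that does not wind around $\Sigma$. Although the claim is stated for arbitrary $t_1<t_2$, in its use inside the proof of Proposition~\ref{p.order} it suffices to treat $t_2$ close to $t_1$ (local monotonicity of the itinerary yields the global order statement), so I may assume $S([t_1,t_2])$ is short enough that $\bar J_0\eqdef\pi(S([t_1,t_2]))$ is an embedded arc.

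I would then argue by induction, letting $m$ be the first index $j$ for which $\pi(P^j(S([t_1,t_2])))$ fails to lie in a single lap. For $j<m$ the segment $P^j(S([t_1,t_2]))$ lies in one region, which is contained in $\Sigma_1$ or $\Sigma_2$ and hence disjoint from $\gamma^s_+\cup\gamma^s_-$; thus $P$ is a diffeomorphism there and $P^{j+1}(S([t_1,t_2]))=P(P^j(S([t_1,t_2])))$ is again a connected unstable segment, which gives the first bullet up to step $m$. Because the endpoints $S(t_1),S(t_2)$ have well-defined $k$-itineraries, no iterate $P^j(S(t_i))$ with $j\le k$ lies on a discontinuity leaf; hence for $j<m$ the arc $\pi(P^j(S([t_1,t_2])))$ is a \emph{proper} sub-arc of a lap, and applying the homeomorphism $f$ onto a punctured circle shows that $\pi(P^{j+1}(S([t_1,t_2])))$ is again an embedded, non-winding arc. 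Consequently at step $m$ the arc is embedded but meets at least two laps, so the endpoints $P^m(S(t_1))$ and $P^m(S(t_2))$ lie in distinct regions, i.e. $(\omega^1)_m\neq(\omega^2)_m$. Since for $j<m$ confinement forces the two endpoints into a common region (hence equal $j$-th letters), the first index at which the endpoint letters differ equals $m$; by hypothesis it equals $k$, so $m=k$. This is precisely the third bullet, while the confinement for $j<k=m$ is the second bullet.

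The main obstacle, and the feature that distinguishes this situation from the classical geometric Lorenz model, is that the leaf space is a \emph{circle} rather than an interval: a priori a long iterate of the segment could wind around $\SS^1_X$, re-enter a region, and so keep the two endpoints in a common region while the whole segment strays out of it, in which case the common value of the $j$-th letter at the endpoints would no longer certify confinement. The argument above controls this exactly at the critical moment, because each endpoint stays off every discontinuity leaf up to time $k$: the arc at step $m-1$ is therefore a proper sub-arc of a single lap, and each branch of $f$ maps a lap \emph{homeomorphically onto a punctured circle}, so its image cannot wind all the way around. Finally, the preserved transverse orientation of $\cC^u$ under $DP$ (item~9) makes the laps be traversed along the segment in a monotone cyclic order, so that an embedded arc straddling a lap boundary genuinely separates its two endpoints into different regions. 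Checking these non-winding and monotonicity bookkeeping steps is the only real work; the rest is the hyperbolicity of $P$ on each region already granted by the construction.
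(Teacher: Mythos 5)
Your overall strategy coincides with the paper's: both run an induction showing that confinement of $P^j(S([t_1,t_2]))$ to a single region propagates as long as the $j$-th letters of the two endpoint itineraries agree, and both read off the third bullet directly from the hypothesis. The difference lies in how the crucial implication ``confined at step $j-1$ and not confined at step $j$ $\Rightarrow$ the endpoints of $P^j(S([t_1,t_2]))$ lie in different regions'' is justified, and here your justification has a genuine gap. You argue that the arc $\pi(P^m(S([t_1,t_2])))\subset\SS^1_X$ is embedded and ``cannot wind all the way around'', and conclude that an embedded arc meeting two laps must separate its endpoints into distinct laps. For arcs in a circle this is false: an embedded arc can start in $A_0$, traverse $A_1$, $c_-$, $B_0$, $B_1$ and $c_+$, and terminate again in $A_0$ without covering the whole circle; its two endpoints then lie in the same region even though the arc is not confined. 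Were this to happen, the letters $(\omega^1)_m$ and $(\omega^2)_m$ would still agree, your identification $m=k$ would fail, and, worse, $P$ would not be defined on all of $P^m(S([t_1,t_2]))$ (it crosses $\gamma^s_+$ and $\gamma^s_-$), so the first bullet would break at step $m+1\le k$. Your incidental assertion that each branch of $f$ maps a single lap onto a punctured circle is also inaccurate (a branch maps the union of two laps onto the punctured circle; a single lap goes onto a proper arc having $c_+$ as an endpoint), and this inaccuracy hides exactly the fact you need.

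The missing ingredient, which is the paper's one-line argument, is that the bad scenario forces $P^m(S([t_1,t_2]))$ to cross $\gamma^s_+$ in its interior; but then $P^{m-1}(S([t_1,t_2]))$ would cross $P^{-1}(\gamma^s_+)$, contradicting its confinement to a single region, since the regions $A_0,A_1,B_0,B_1$ are by definition components of the complement of $\gamma^s_+\cup\gamma^s_-\cup P^{-1}(\gamma^s_+)$. Once the arc at step $m$ is known to avoid $c_+$, it lies in the interval $\SS^1_X\setminus\{c_+\}$, where the four laps occur in a linear order; a connected arc in an interval with both endpoints in the same lap is then contained in that lap, and your induction closes. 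Equivalently, in your language: the relevant fact is not that $f(J)$ fails to cover the whole circle, but that $f(L)\cap L'$ is connected for every pair of laps $L,L'$ because $c_+$ is an endpoint of the arc $f(L)$. With this correction your proof becomes the paper's proof transported to the leaf space. Your preliminary reduction to $t_2$ close to $t_1$ is harmless but unnecessary: the standing hypothesis of Proposition~\ref{p.order} that the interior of $S$ avoids $\gamma^s_+$ already prevents $\pi\circ S$ from winding.
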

\begin{proof}The third item says $(\omega^1)_k\neq  (\omega^2)_k$, which is our hypothesis. 
The proof of the two first items goes together and by induction. 
As $S(t_1)$ and $S(t_2)$ belong to the same region, and as the interior of $S$ is disjoint from $\gamma^s_+$ and $S$ is an unstable segment (transverse to the fibration by stable leaves), this implies that $S([t_1,t_2])$ is contained in one of the regions of the alphabet. As $P^0=id$ is clearly defined in $S([t_1,t_2])$, we proved both items for $j=0$.
 
We assume now that both items have been proved for $0,\dots, j-1$ and let us prove them for $j$. 
Hence $P^{j-1}(S([t_1,t_1]))$ is contained in one of the regions. Thus $P$ is well defined on this interval, meaning that $P^j$ is well defined on $S([t_1,t_2])$, proving the first item. 

If $j\neq k$, then $(\omega^1)_j=(\omega^2)_j$: in other words, the endpoints of the unstable  segment  $P^{j}(S([t_1,t_1]))$ belong to the same region. If the whole segment is contained in that region, we are done. Otherwise, $P^{j}(S([t_1,t_1]))$ crosses $\gamma^s_+$. 
This means that $P^{j-1}(S([t_1,t_1]))$ is crossing $P^{-1}(\gamma^s_+$, and this (by definition of the regions $A_0,A_1,B_0,B_1$) contradicts the fact that $P^{j-1}(S([t_1,t_1]))$ is contained in one of these regions. This ends the proof of the claim.  
\end{proof}

\begin{clai} With the hypotheses above, $(\omega^1)_k <  (\omega^2)_k$.
\end{clai}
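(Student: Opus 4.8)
The plan is to read off the inequality directly from the geometric meaning of the order $A_0<A_1<B_0<B_1$, namely that a positively oriented unstable segment not crossing $\gamma^s_+$ meets these four regions exactly in this increasing order. First I would record that meaning precisely. In the parametrization $\Sigma\simeq\SS^1\times[-1,1]$ in which $\gamma^s_+=\{0\}\times[-1,1]$, the four regions occupy consecutive arcs of the $\theta$-circle: starting just after $\theta=0$ one meets $A_0$, then $A_1$ (across $P^{-1}(\gamma^s_+)\cap\Sigma_1$), then $B_0$ and $B_1$ (across $\gamma^s_-$), before returning to $\gamma^s_+$. Thus the $\theta$-ranges of the regions are ordered exactly as the letters. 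Since the unstable cone $\cC^u$ is transverse to the fibers $\{\theta\}\times[-1,1]$, the coordinate $\theta$ is strictly monotone along any positively oriented unstable segment; hence such a segment whose interior avoids $\gamma^s_+$ keeps $\theta$ increasing inside the single arc $(0,1)$ and therefore meets the regions it traverses in strictly increasing alphabet order.

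Next I would exhibit such a segment at level $k$. By the previous claim, Claim~\ref{c.k}, the map $P^{k-1}$ is defined on $S([t_1,t_2])$ and $P^{k-1}(S([t_1,t_2]))$ is contained in a single region, i.e. in one component of $\Sigma_i\setminus P^{-1}(\gamma^s_+)$; in particular this segment is disjoint from $P^{-1}(\gamma^s_+)$. Applying $P$ once more, the image $Q:=P^k(S([t_1,t_2]))$ is an unstable segment whose interior is disjoint from $\gamma^s_+=P\bigl(P^{-1}(\gamma^s_+)\bigr)$. Because $DP$ preserves the transverse orientation of $\cC^u$ and expands it (item 9 of the construction of $\cO_1$), the segment $Q$ is again positively oriented, with $Q(t_1)=P^k(S(t_1))$ preceding $Q(t_2)=P^k(S(t_2))$ along it.

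Finally I would combine the two observations. The endpoint $Q(t_1)$ lies in the region $(\omega^1)_k$ and $Q(t_2)$ lies in $(\omega^2)_k$, and by Claim~\ref{c.k} these two regions are distinct. Since $Q$ is a positively oriented unstable segment whose interior does not cross $\gamma^s_+$, travelling from $Q(t_1)$ to $Q(t_2)$ one visits regions in strictly increasing alphabet order; as the two endpoints lie in different regions this forces $(\omega^1)_k<(\omega^2)_k$, which is the claim.

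I expect the only delicate point to be the verification that $Q$ cannot "wrap around" $\gamma^s_+$: this is exactly what the disjointness of $P^{k-1}(S([t_1,t_2]))$ from $P^{-1}(\gamma^s_+)$ provides, ensuring that $\theta$ stays in the single arc $(0,1)$ so that the monotonicity argument applies unambiguously. The orientation preservation by $P$ is used crucially here; without it the roles of $Q(t_1)$ and $Q(t_2)$ could be swapped and the inequality reversed.
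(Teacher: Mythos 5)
Your proposal is correct and follows essentially the same route as the paper: use Claim~\ref{c.k} to see that $P^{k-1}(S([t_1,t_2]))$ lies in a single region, hence is disjoint from $P^{-1}(\gamma^s_+)$, so that $P^{k}(S([t_1,t_2]))$ is a positively oriented unstable segment not crossing $\gamma^s_+$, and then invoke the fact that the order $A_0<A_1<B_0<B_1$ is exactly the order in which such a segment traverses the regions. Your write-up merely makes explicit (via the monotonicity of $\theta$ and the orientation preservation of $DP$) two points the paper leaves implicit.
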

\begin{proof} According to Claim~\ref{c.k}, the  $P^{k-1}(S([t_1,t_1]))$ is an unstable segment contained in one of the regions, and we have seen in the proof that this implies that $P^{k}(S([t_1,t_1]))$  is an unstable segment that does not cross $\gamma^s_+$.   
As already seen, the choice of the order on $A_0,A_1,B_0,B_1$ implies that either $P^{k}(S([t_1,t_1]))$ is contained in one region (which contradicts $(\omega^1)_k\neq  (\omega^2)_k$) or  $(\omega^1)_k <  (\omega^2)_k$, ending the proof of the claim.
\end{proof}

The claims above show that for any  $t_1<t_2$ with $S(t_i)\notin W^s(\sigma)$ one has 
$\omega^1\preccurlyeq\omega^2.$

\begin{clai} Given $t_1<t_2$ so that $S(t_i)\notin W^s(\sigma)$, then 
$\omega^1\neq \omega^2$.
\end{clai}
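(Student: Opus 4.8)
The plan is to argue by contradiction, pitting the uniform expansion of $P$ in the unstable cone against the uniform bound on the length of unstable segments that miss some stable leaf.

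First I would assume $\omega^1=\omega^2$; since $S(t_i)\notin W^s(\sigma)$ the up- and down-itineraries coincide at each $S(t_i)$, so this assumption reads $(\omega^1)_j=(\omega^2)_j$ for every $j\geq0$. The key observation is that the induction carried out in Claim~\ref{c.k} no longer terminates: with no first index of disagreement, it runs for all $j$ and shows that $P^j$ is defined on the whole segment $S([t_1,t_2])$ and that $P^j\big(S([t_1,t_2])\big)$ is an unstable segment lying in a single region $A_0,A_1,B_0,B_1$. The inductive step there needs only the equality of the $j$-th letters together with the confinement of $P^{j-1}\big(S([t_1,t_2])\big)$ to one region --- which keeps it off $P^{-1}(\gamma^s_+)$ and hence keeps $P^{j}\big(S([t_1,t_2])\big)$ off $\gamma^s_+$ --- and both hold at every stage.

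Next I would extract the contradiction. By item 9) of the definition of $\cO_1$ the unstable cone is expanded by a factor $\lambda>1$, so $\ell\big(P^j(S([t_1,t_2]))\big)\geq\lambda^j\,\ell(S([t_1,t_2]))$, which tends to $+\infty$ because $t_1<t_2$ makes $\ell(S([t_1,t_2]))>0$. On the other hand, each region $A_0,A_1,B_0,B_1$ is a union of stable leaves bounded by the discontinuity leaves and a leaf of $P^{-1}(\gamma^s_+)$, so an unstable segment confined to one region cannot meet every leaf of $\cF^s$; by the uniform length bound from the proof of Lemma~\ref{l.quase-atrator} its length stays bounded. The two facts are incompatible, giving $\omega^1\neq\omega^2$.

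The main obstacle I anticipate is purely bookkeeping: confirming that the induction of Claim~\ref{c.k} really propagates to all indices rather than halting at a first disagreement. Once the iterates are trapped in single regions forever, the expansion-versus-bounded-length clash closes the argument at once.
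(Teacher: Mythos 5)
Your argument is correct and is essentially the paper's own proof: both run the induction of Claim~\ref{c.k} without a terminating index to trap every iterate $P^j\bigl(S([t_1,t_2])\bigr)$ in a single region, and then derive a contradiction between the exponential growth of the lengths of these unstable segments and the bounded length of any unstable segment confined to one region. No meaningful difference in route or in the supporting facts invoked.
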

\begin{proof}As in Claim~\ref{c.k}, if $\omega^1=\omega^2$ then $P^j$ is well defined on $S([t_1,t_2])$ for any $j\geq 0$ and  $P^j(S([t_1,t_2])$ is contained in $1$ of the regions $A_0,A_1,B_0,B_1$.  The length of these iterates increases exponentially, and these forbid (for large iterates) these segments to be contained in one region, ending the proof of the claim. 
\end{proof}

Consider now any $t_1<t_2$. We need to prove
$\omega^1_+=\omega_+(S(t_1))\prec\omega_-(S(t_2))=\omega^2_-.$
By definition of $\omega_-$, there is a decreasing sequence $t_{1,n}<t_2$ tending to $t_1$ and so that:
\begin{itemize}
 \item $S(t_{1,n})\notin W^s(\sigma)$
 \item $[\omega^1_+]_n=[\omega^{1,n}]_n$  (where $\omega^{1,n}$ is the itinerary of $S(t_{1,n})$. 
\end{itemize}
Note that the sequence $\omega^{1,n}$ is strictly decreasing for $\prec$ and tends to $\omega^1_-$.  In other words, 
$\omega^1_-=\inf_{n\to+\infty} \omega^{1,n}. $

In the same way, we fix an increasing sequence $t_{2,n}>t_{1,0}$ tending to $t_2$ and so that
\begin{itemize}
 \item $S(t_{2,n})\notin W^s(\sigma)$
 \item $[\omega^2_-]_n=[\omega^{2,n}]_n$  (where $\omega^{2,n}$ is the itinerary of $S(t_{2,n})$. 
\end{itemize}

\noindent Then $\omega^2_+=\sup_{n\to+\infty} \omega^{2,n}. $
As $\omega^{1,n}\prec \omega^{2,n}$, we conclude 
$\omega^1_+\prec\omega^2_-$
by proving the second item of the proposition.

To end the proof, it remains to show that $\omega_-(S(t))\preccurlyeq\omega_+(S(t))$ for any $t\in (0,1)$. 
For that, we consider sequences $t_{-,n}<t_{-,n+1}<\dots<t<\dots<t_{+,n+1}<t_{+,n}$ tending to $t$ as $n\to +\infty$ and so that 
\begin{itemize}
 \item $S(t_{\pm,n})\notin W^s(\sigma)$
 \item $[\omega_-(S(t))]_n=[\omega^{-,n}]_n$  (where $\omega^{-,n}$ is the itinerary of $S(t_{-,n})$) 
 \item $[\omega_+(S(t))]_n=[\omega^{+,n}]_n$  (where $\omega^{+,n}$ is the itinerary of $S(t_{-,n})$ )
\end{itemize}

We know that $\omega^{-,n}\prec \omega^{+,n}$ (the itinerary is strictly increasing on the point out of $W^s(\sigma)$).  
So for every $n$, one has 
$[\omega_-(S(t))]_n\preccurlyeq [\omega_+(S(t))]_n$, ending the proof. 
\end{proof}

\subsection{Admissible itineraries}
Given $X\in \cO_1$, we associate 4 itineraries:
$\omega^+_+=\omega_+(\gamma^s_+)$, $\omega^+_-=\omega_-(\gamma^s_+)$, $\omega^-_+=\omega_+(\gamma^s_-)$ and $\omega^-_-=\omega_-(\gamma^s_-)$. 

\begin{defi} We say that a $\omega\in \XX$ is admissible for the vector field $X$ (or, shortly, $X$-admissible) if it satisfies the following inequalities
\begin{itemize}
 \item $\omega^+_+\preccurlyeq \mathfrak{S}^n(\omega)\preccurlyeq \omega^+_-$. 
 \item If $(\omega)_i\in\{A_0,A_1\}$ then $\mathfrak{S}^i(\omega)\preccurlyeq\omega^-_-$.
 \item If $(\omega)_i\in\{B_0,B_1\}$ then $\omega^-_+\preccurlyeq
 \mathfrak{S}^i\omega$.
\end{itemize}

\noindent We denote  by $\cA_X\subset \XX$ the set of $X$-admissible itineraries. Note that
 $\cA_X$ is a $\mathfrak{S}$-invariant compact set. 
\end{defi}


{We note that if $X$ does not exhibit homoclinic loops, then $\underset{x \to c^-_-}{\lim} f^i_X(x)=\underset{x \to c^+_+}{\lim} f^i_X(x)$ and $\underset{x \to c^-_+}{\lim} f^i_X(x)=
\underset{x \to c^+_-}{\lim} f^i_X(x)$, for all $i \in \mathbb{N}$, and therefore} 
 
 \begin{equation}\label{e-itinerario}
\begin{array}{c}
\mathfrak{S}(\omega^+_+)=\mathfrak{S}(\omega^-_-)\\
\mathfrak{S}(\omega^+_-)=\mathfrak{S}(\omega^-_+)
\end{array}
\end{equation}

Remember that $(\omega^+_+)_0= A_0$ and $(\omega^-_+)_0=B_0$ one gets that these $4$ itineraries are determined by $\omega^+_-$ and $\omega^-_-$. 
{If $X$ exhibits a homoclinic loop, the equalities (\ref{e-itinerario})
are no longer true. 
For instance, this is the case when $W^u_+(\sigma) \cap W^s_+(\sigma) \neq \emptyset$ and $W^u_-(\sigma) \cap W^s(\sigma) = \emptyset$, $\omega_+^+$ is periodic and $\omega_-^-$ is not. See Figure \ref{f-itinerario-45}.
However, since $\underset{x \to c^-_-}{\lim} f_X(x)=\underset{x \to c^+_+}{\lim} f_X(x)$ and $\underset{x \to c^-_+}{\lim} f_X(x)=\underset{x \to c^+_-}{\lim} f_X(x)$, the set $\cA_X$ is still determined by 
$\{\omega_+^+,\omega_-^+\}$ or $\{\omega_-^-,\omega_+^-\}$.}

\begin{figure}[!h]
	\centering
	\includegraphics[scale=0.06]{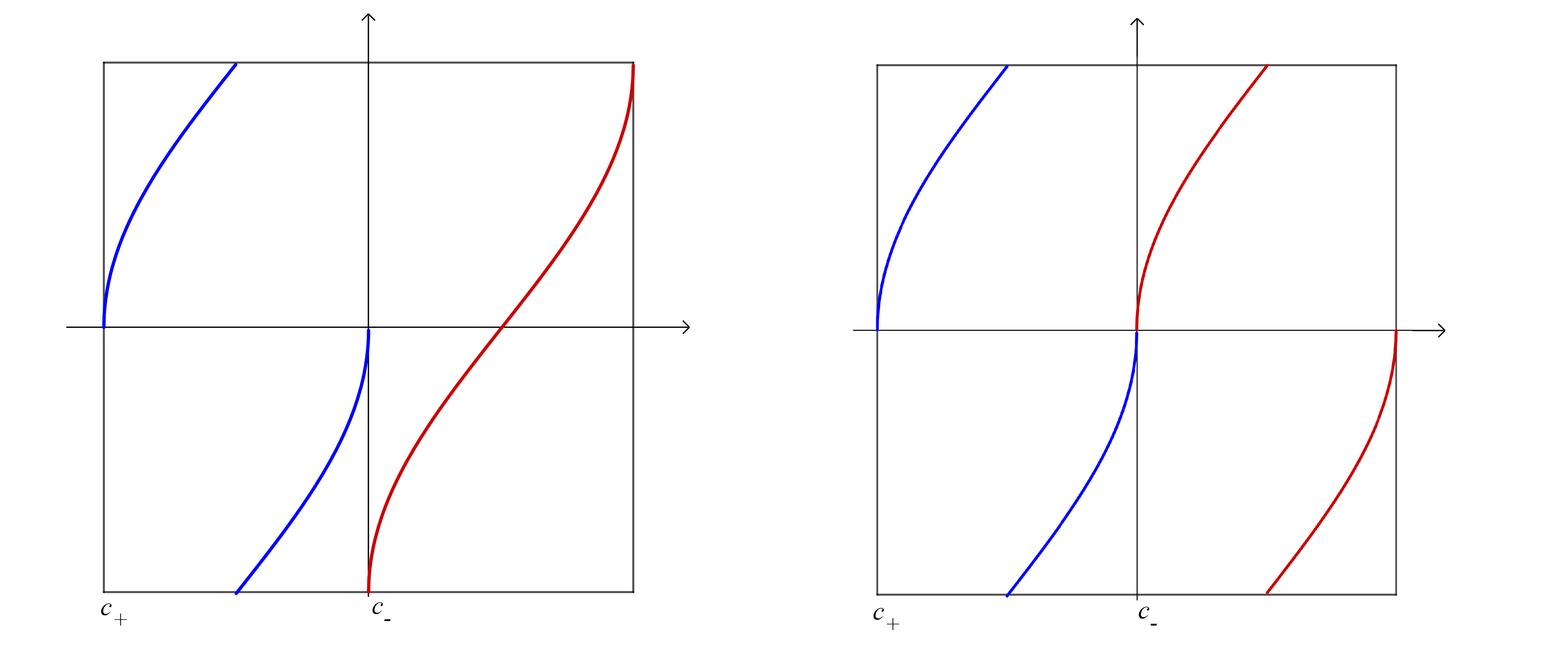}
	\caption{(a) $\mathcal{H}_-^1 \cap \mathcal{H}^2_+$,\hspace{2,5cm} (b)  $\mathcal{H}_-^1 \cap \mathcal{H}_-^2$.}\label{f-itinerario-45}
\end{figure}

\begin{eqnarray*} \mbox{Itineraries of} \quad \mathcal{H}_-^1 \cap \mathcal{H}_-^2 &\hspace{3cm}&  \mbox{Itineraries of} \quad \mathcal{H}_-^1 \cap \mathcal{H}_+^2\\
\omega_+^+=A_0B_0A_0B_0\ldots\,\,\,\,\, &&\,\,\,\,\, \omega_+^+=A_0B_0B_0B_0\ldots\\
\omega_-^-=A_1A_1A_1A_1\ldots \,\,\,\,\,&&\,\,\,\,\, \omega_-^-=A_1A_1A_1A_1\ldots \\
\omega_-^+=B_0A_0B_0A_0\ldots \,\,\,\,\,&&\,\,\,\,\, \omega_-^+=B_1A_1A_1A_1\ldots\\
\omega_+^-=B_0B_0B_0B_0\ldots \,\,\,\,\,&&\,\,\,\,\, \omega_+^-=B_0B_0B_0B_0\ldots	
\end{eqnarray*}	

\vspace{1cm}

\begin{lema}\label{lemma.ad}If $p\in\Sigma$ then $\omega_-(p)$ and $\omega_+(p)$ are $X$-admissible. 
\end{lema}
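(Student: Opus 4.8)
The plan is to verify the three defining inequalities of admissibility first for points whose entire $P$-orbit avoids the discontinuity leaves, and then to reach the general case by a closedness-and-approximation argument. The only two ingredients are Proposition~\ref{p.order} (the monotonicity of itineraries along an unstable segment) and the shift-equivariance of the itineraries recorded in the preceding remark.

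First I would record the elementary position estimates coming directly out of Proposition~\ref{p.order}. Applying it to a full, positively oriented unstable loop whose two endpoints lie on $\gamma^s_+$ and whose interior is disjoint from $\gamma^s_+$, and using that this loop's endpoint itineraries are $\omega^+_+$ and $\omega^+_-$, one gets for every $y\in\Sigma$ the sandwich $\omega^+_+\preccurlyeq\omega_-(y)\preccurlyeq\omega_+(y)\preccurlyeq\omega^+_-$. Running the same proposition on an unstable segment that starts at a point $y\in\Sigma_1$ and ends on $\gamma^s_-$ (its interior then stays in $\Sigma_1$, hence off $\gamma^s_+$) gives $\omega_+(y)\prec\omega^-_-$ for every $y\in\Sigma_1$; symmetrically, a segment from $\gamma^s_-$ into a point $y\in\Sigma_2$ gives $\omega^-_+\prec\omega_-(y)$ for every $y\in\Sigma_2$. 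These are exactly the $i=0$ instances of admissibility conditions (1), (2) and (3).

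Next I would bootstrap to all shifts via equivariance. For $x\notin W^s(\sigma)$ the whole orbit avoids $\gamma^s_+\cup\gamma^s_-$, so $\omega_+(x)=\omega_-(x)=:\omega(x)$ and $\mathfrak{S}^i\omega(x)=\omega(P^i(x))$ for every $i\ge 0$, with $P^i(x)\notin W^s(\sigma)$. The sandwich applied to $P^n(x)$ gives condition (1); if the $i$-th letter of $\omega(x)$ is $A_0$ or $A_1$ (resp.\ $B_0$ or $B_1$) then $P^i(x)\in\Sigma_1$ (resp.\ $\Sigma_2$), so the $\Sigma_1$- (resp.\ $\Sigma_2$-) estimate applied to $P^i(x)$ gives condition (2) (resp.\ (3)). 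Hence $\omega(x)\in\cA_X$ for every orbit-regular $x$.

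Finally I would drop the hypothesis $p\notin W^s(\sigma)$ by approximation. On any unstable segment $S$ through $p$, the set $W^s(\sigma)\cap S=\bigcup_{j\ge0}P^{-j}(\gamma^s_+\cup\gamma^s_-)\cap S$ is countable, since each $P^{-j}(\gamma^s_\pm)$ is a finite union of stable leaves and each such leaf meets $S$ transversely in one point; thus orbit-regular points accumulate $p=S(0)$ from both sides. By Lemma~\ref{l.itinerary}, $\omega_+(p)$ (resp.\ $\omega_-(p)$) is the product-topology limit of the itineraries $\omega(S(t))$ as $t\to0^+$ (resp.\ $t\to 0^-$) through such points, and by the previous paragraph each of these lies in $\cA_X$. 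I expect the main obstacle to be precisely this last step: one must check that $\cA_X$ is closed, which follows because each defining relation $\alpha\preccurlyeq\,\cdot\,$ and $\,\cdot\,\preccurlyeq\beta$ is closed for the lexicographic order while the hypotheses on a single letter are clopen, so the intersection over all indices is closed. With $\cA_X$ closed, $\omega_\pm(p)\in\cA_X$, completing the proof.
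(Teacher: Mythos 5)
Your proof is correct and follows essentially the same route as the paper: both rest on Proposition~\ref{p.order} applied to suitable unstable segments to get the sandwich $\omega^+_+\preccurlyeq\omega_-\preccurlyeq\omega_+\preccurlyeq\omega^+_-$ and the $\Sigma_1$/$\Sigma_2$ position estimates against $\omega^-_-$ and $\omega^-_+$, and both handle points of $W^s(\sigma)$ by approximating along an unstable segment with points whose orbit avoids $\gamma^s_\pm$. Your packaging --- first verifying admissibility for orbit-regular points via shift-equivariance and then invoking closedness of $\cA_X$ --- is a slightly more systematic version of the paper's direct passage to the limit of the inequalities, but it is the same argument.
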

\begin{proof} Consider an unstable segment $S\colon [0,1]\to \Sigma$ whose interior is disjoint from $\gamma^s_+$ and  so that $S(0),S(1)\in \gamma^s_+$ and $p\in S([0,1])$.  Then Proposition~\ref{p.order} applies and implies that if $p\notin \gamma^s_+$ then 
$\omega^+_+\preccurlyeq \omega_-(p)\preccurlyeq\omega_+(p)\preccurlyeq \omega^+_-$. 
In particular, 
$$\begin{array}{c}
\omega^+_-=\max\{ \omega_-(p), \omega_+(p), p\in\Sigma\}\\
\omega^+_+=\min\{ \omega_-(p), \omega_+(p), p\in\Sigma\}
  \end{array}
$$
In particular, this shows that $\omega_-(p)$ and $\omega_+(p)$ satisfy the first item of the definition of $X$-admissibility. 

The other two items correspond to several cases whose proof is very similar. Let us present one of these cases.
Let $p=S(t)$ so that $(\omega_+(p))_0\in\{A_0,A_1\}$.  
Then there is a  decreasing sequence $t_n\to t$ so that $S(t_i)\notin W^s(\sigma)$ and 
$[\omega_+(p)]_n=[\omega^n]_n$
where $\omega^n$ denotes $\omega_-(S(t_n))=\omega_+(S(t_n))$.

Then $\omega_+(p)=\inf  \omega^n$. 
On the other hand, $S(t_n)$ is a point out of $W^s(\sigma)$ and contained in $A_0\cup A_1$, and thus in $\Sigma_1$.  Proposition~\ref{p.order} implies that $\omega^n\prec \omega^-_-$, finishing this case, and the proof. 
\end{proof}

\subsection{Realizing $X$-admissible itineraries}

This section aims to prove

\begin{prop}\label{p.realisacao} Given any $\omega\in \cA_X$, there is $p\in\Sigma$ so that
$$\omega\in\{\omega_+(p),\omega_-(p)\}.$$
\end{prop}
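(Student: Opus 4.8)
The plan is to push everything down to the one-dimensional quotient dynamics on the leaf space $\SS^1_X$ and to produce the realizing leaf by a Dedekind-cut argument, exactly as in the kneading theory of Lorenz maps. By Lemma~\ref{l.itinerary} the itineraries descend to functions $\omega_\pm\colon\SS^1_X\to\XX$, and by Proposition~\ref{p.order} these are monotone once $\SS^1_X$ is cut open at the point $c_+$ corresponding to $\gamma^s_+$: concretely, $\omega_-$ and $\omega_+$ behave as the left- and right-limits of a single nondecreasing map, with $\omega_-(x)\preccurlyeq\omega_+(x)$. So it suffices to locate the leaf at which the prescribed value $\omega$ is attained. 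First I would record, from Lemma~\ref{lemma.ad}, that $\omega^+_+$ and $\omega^+_-$ are the global minimum and maximum of all itineraries; hence the first admissibility inequality at $n=0$ gives $\omega^+_+\preccurlyeq\omega\preccurlyeq\omega^+_-$.

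If $\omega=\omega^+_-$ then $\omega=\omega_-(\gamma^s_+)$ and we finish with $p\in\gamma^s_+$; otherwise I set
$$p=\sup\{x\in\SS^1_X : \omega_+(x)\preccurlyeq\omega\},$$
the supremum taken for the linear order obtained by cutting $\SS^1_X$ at $c_+$. Monotonicity makes this set a nonempty proper initial segment, so $p$ is a well-defined leaf, and passing to the one-sided limits (which, as shown inside the proof of Proposition~\ref{p.order}, compute $\omega_-(p)$ from below and $\omega_+(p)$ from above) yields the bracketing $\omega_-(p)\preccurlyeq\omega\preccurlyeq\omega_+(p)$. If $p\notin W^s(\sigma)$ then $\omega_-(p)=\omega_+(p)$, so $\omega=\omega_\pm(p)$ and we are done.

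The substantial case is $p\in W^s(\sigma)$, where $\omega_-(p)\prec\omega_+(p)$ and $\omega$ lies in the closed gap $[\omega_-(p),\omega_+(p)]$; the task is to exclude that $\omega$ lies strictly inside. Let $m$ be the first index at which $\omega_-(p)$ and $\omega_+(p)$ differ, i.e. the first time the orbit of $p$ meets a coding boundary. Then $[\omega]_m=[\omega_-(p)]_m=[\omega_+(p)]_m$ and $\mathfrak{S}^m\omega$ is squeezed between $\mathfrak{S}^m\omega_-(p)$ and $\mathfrak{S}^m\omega_+(p)$. There are two subcases. If the boundary met at time $m$ is $\gamma^s_-$ directly, the two endpoints are exactly $\omega^-_-$ (first letter $A_1$) and $\omega^-_+$ (first letter $B_0$); then, according to whether $(\omega)_m\in\{A_0,A_1\}$ or $(\omega)_m\in\{B_0,B_1\}$, the second or third admissibility condition at index $m$ directly contradicts $\omega^-_-\prec\mathfrak{S}^m\omega\prec\omega^-_+$, forcing $\mathfrak{S}^m\omega$ to be an endpoint and hence $\omega\in\{\omega_-(p),\omega_+(p)\}$. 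In the other subcase the split occurs one step earlier on $P^{-1}(\gamma^s_+)$, so the two branches separate as $A_0/A_1$ (or $B_0/B_1$) before hitting $\gamma^s_+$; using the $\star$-formulas for $\omega_\pm$ on $\gamma^s_+$ one identifies the gap endpoints with sequences $A_\bullet\star\omega_+(q_1)$ and $A_\bullet\star\omega_-(q_2)$ (resp. the $B_\bullet$ versions), to which the first and, where needed, the second/third admissibility inequalities again apply.

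The main obstacle is precisely this combinatorial bookkeeping: matching every type of coding-boundary gap to the correct admissibility inequality, tracking the orientation of $P$ on the unstable cone (item~9) that determines which one-sided limit continues with which kneading itinerary, and handling the degenerate homoclinic strata where a region such as $A_1$ is empty and the identities~(\ref{e-itinerario}) fail, so that the four kneading sequences need not be linked by a single shift. Should the direct gap analysis become unwieldy, I would instead prove by induction on $n$ that every finite admissible word $[\omega]_n$ cuts out a nonempty closed sub-arc (cylinder) of $\SS^1_X$, using conditions 1--3 to guarantee at each step that the $f_X$-image of the previous cylinder covers the arc prescribed by the next letter; the nested intersection $\bigcap_n$ is then nonempty by compactness and any leaf in it realizes $\omega$. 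Either route reduces the proposition to the monotonicity of Proposition~\ref{p.order} together with the explicit description of the kneading data $\{\omega^+_+,\omega^+_-,\omega^-_+,\omega^-_-\}$.
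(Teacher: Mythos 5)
Your proposal is correct, but your primary argument takes a genuinely different route from the paper's. The paper derives Proposition~\ref{p.realisacao} from Lemma~\ref{l.realisacao}: it proves by induction on $n$ that every cylinder $\Omega_n(\omega)$ is the nonempty closure of a single connected component of $\Sigma\setminus\Gamma_n$, the admissibility inequalities being used exactly to show that $P(\Omega_n(\omega))$ meets $\Omega_n(\mathfrak{S}(\omega))$ in each of the cases determined by which of $\gamma^s_\pm$ bounds $\Omega_n(\omega)$; the realizing point is then any $p\in\bigcap_n\Omega_n(\omega)$. That is precisely the fallback you sketch in your last paragraph. Your main route instead works on the quotient circle and produces the leaf directly as the Dedekind cut $p=\sup\{x:\omega_+(x)\preccurlyeq\omega\}$, with the monotonicity of Proposition~\ref{p.order} giving $\omega_-(p)\preccurlyeq\omega\preccurlyeq\omega_+(p)$, and the three admissibility inequalities serving only to exclude a strict gap at the first splitting index $m$: conditions 2 and 3 applied at index $m$ when the orbit of $p$ hits $\gamma^s_-$, and condition 1 applied at index $m+1$ when it hits $P^{-1}(\gamma^s_+)$ one step before $\gamma^s_+$ (there the gap endpoints are $A_0\star\omega^+_-$ and $A_1\star\omega^+_+$, or their $B_\bullet$ analogues, which after one more application of the star formulas is what you wrote). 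I checked that these exclusions close up, provided one uses the orientation preservation of item 9 to identify $\mathfrak{S}^m\omega_\pm(p)$ with $\omega_\pm(P^m(p))$ side for side, a point you correctly flag as the delicate one. What your argument buys is conceptual economy: it localizes all the work at a single leaf and exhibits the admissibility inequalities as exactly the kneading conditions excluding the countably many gaps of the itinerary map. What the paper's induction buys is the stronger structural statement that each $\Omega_n(\omega)$ is a full component of $\Sigma\setminus\Gamma_n$, i.e.\ a full stable-saturated strip, which the paper reuses when building the conjugacies in Theorem~\ref{L.T} and Theorem~\ref{igual-a-t.conjugado}; if you adopt the cut argument, you would need to recover that cylinder structure separately for those later applications.
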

\begin{rema}\label{r.realisacao}  Proposition~\ref{p.order} implies that any two points satisfying the conclusion of Proposition~\ref{p.realisacao} belong to the same stable leaf. 
 
\end{rema}

Proposition~\ref{p.realisacao} is a direct consequence of Lemma~\ref{l.realisacao} below:

\begin{lema}\label{l.realisacao} Given any $\omega\in \cA_X$, the set $\Omega_n$ of points $p$ in $\Sigma$ so that 
$$[\omega]_n\in \{[\omega_+(p)]_n,[\omega_-(p)]_n\}\quad \quad \mbox{is a non empty compact subset of $\Sigma$.}$$
 
\end{lema}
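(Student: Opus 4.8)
The plan is to prove the two assertions separately: compactness first, which is soft, and then non-emptiness, which carries all the weight and is where admissibility enters.

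\textbf{Compactness.} Write $w=[\omega]_n$ and let $V_w$ be the set of points $p$ lying outside the discontinuity set $\bigcup_{j=0}^{n-1}P^{-j}(\gamma^s_+\cup\gamma^s_-)$ whose genuine $n$-itinerary equals $w$; on such points $\omega_+(p)$ and $\omega_-(p)$ agree in their first $n$ letters. Since $P$ is a diffeomorphism on each of the four regions $A_0,A_1,B_0,B_1$, the set $V_w=\bigcap_{j=0}^{n-1}P^{-j}(\text{region }w_j)$ is open. I claim $\Omega_n=\overline{V_w}$. Indeed the up- (resp. down-) itinerary of $p$ is by definition the common itinerary of $S(t)$ for $t>0$ (resp. $t<0$) small along an unstable segment $S$ through $p$; since $S$ meets $W^s(\sigma)$ in only countably many points, such $S(t)$ may be chosen in $V_w$ arbitrarily close to $p$, so $p\in\Omega_n$ forces $p\in\overline{V_w}$, and conversely $\overline{V_w}\subset\Omega_n$ by one-sided continuity of the itinerary. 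As $\Sigma$ is compact, $\Omega_n=\overline{V_w}$ is compact.

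\textbf{Non-emptiness.} Since the itineraries are functions of the stable leaf (Lemma~\ref{l.itinerary}), I pass to the one-dimensional quotient $f=f_X$ on the circle $\SS^1_X$ with its two marked points $c_+,c_-$, and prove $\Omega_n\neq\emptyset$ by induction on $n$. The engine is the shift-invariance of $\cA_X$: if $\omega\in\cA_X$ then $\mathfrak{S}\omega\in\cA_X$, so the inductive hypothesis applies to $\mathfrak{S}\omega$ and produces a point $p'\in\Omega_n(\mathfrak{S}\omega)$. Using the one-sided continuous extensions of $f$ across $c_\pm$ (which are exactly what the symbols $\omega_+$ and $\omega_-$ record), one checks the recursion $\Omega_{n+1}(\omega)=\overline{I_{\omega_0}}\cap f^{-1}\bigl(\Omega_n(\mathfrak{S}\omega)\bigr)$, where $I_{\omega_0}$ is the arc carrying the first symbol $\omega_0$. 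Thus it suffices to produce a point of $\Omega_n(\mathfrak{S}\omega)$ lying in the image arc $f(\overline{I_{\omega_0}})$.

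\textbf{The covering relation, and where admissibility enters.} The four arcs $\overline{I_{A_0}},\overline{I_{A_1}},\overline{I_{B_0}},\overline{I_{B_1}}$ map under $f$ onto explicit sub-arcs of $\SS^1_X$ whose endpoints are the leaves of $q_1,q_2$ and the points $c_\pm$: using that $f$ is orientation-preserving and monotone on each continuity arc, together with the one-sided values $f(c_+)=q_1$ and $f(c_-)=q_2$, one computes these images and reads their endpoints, through the monotone itinerary map of Proposition~\ref{p.order}, as the kneading sequences $\omega^+_+,\omega^+_-,\omega^-_+,\omega^-_-$. Consequently the condition that $\Omega_n(\mathfrak{S}\omega)$ meets $f(\overline{I_{\omega_0}})$ translates, symbol by symbol, into comparisons of $\mathfrak{S}\omega$ with these kneading sequences; these comparisons are exactly the three defining inequalities of $X$-admissibility (the global bound $\omega^+_+\preccurlyeq\mathfrak{S}\omega\preccurlyeq\omega^+_-$ together with the one-sided bounds by $\omega^-_-$ when $\omega_0\in\{A_0,A_1\}$ and by $\omega^-_+$ when $\omega_0\in\{B_0,B_1\}$, which govern the crossing of $c_-$). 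Hence admissibility of $\omega$ is precisely what guarantees the intersection, closing the induction. The base case, and the degenerate situations where a region is empty (a homoclinic loop such as $q_1\in\gamma^s_+$ forcing $A_1=\emptyset$), are handled by observing that admissibility forbids $\omega$ from beginning with an empty symbol, as is visible from the explicit form of $\omega^+_+$ and $\omega^+_-$. Finally, Proposition~\ref{p.realisacao} follows since the $\Omega_n$ are nested, non-empty and compact, so $\bigcap_n\Omega_n\neq\emptyset$.

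\textbf{Main obstacle.} The delicate point is the bookkeeping at the discontinuities: along any leaf of $W^s(\sigma)$ the itinerary genuinely splits into its up- and down-versions, and the recursion and covering relation must be verified with the correct one-sided extension of $f$ at each crossing of $c_+$ and $c_-$. Matching the endpoints of each $f(\overline{I_{\omega_0}})$ to the kneading sequences $\omega^\pm_\pm$ — and checking that the four symbol cases reproduce exactly the three admissibility inequalities, including the degenerate homoclinic configurations where a region collapses — is the core computation and the place where an error would most easily creep in.
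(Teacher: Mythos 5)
Your proof is correct and follows essentially the same route as the paper's: compactness by identifying $\Omega_n(\omega)$ with the closure of the set of points whose genuine $n$-itinerary equals $[\omega]_n$ (the content of the paper's Lemma~\ref{l.itinerarios-contantes}), and non-emptiness by induction on $n$ using shift-invariance of $\cA_X$ together with a covering relation whose validity, read through the monotone itinerary map of Proposition~\ref{p.order}, is exactly the set of admissibility inequalities (the paper's four-case analysis of how $P(\Omega_n(\omega))$ meets $\Omega_n(\mathfrak{S}(\omega))$, where the cuspidal-triangle cases bounded by $\gamma^s_\pm$ are the ones that genuinely use admissibility). The only cosmetic difference is that you work with the quotient map $f_X$ on $\SS^1_X$ rather than with $P$ on $\Sigma$, which is harmless since itineraries are functions of the stable leaf.
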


Assuming that Lemma~\ref{l.realisacao} is true, then the sequence $\Omega_n$ is a nested sequence of non-empty compact sets, and any $p\in\bigcap\Omega_n$ satisfies that $\omega\in\{\omega_+(p),\omega_-(p)\}.$  Note that, indeed, 
this intersection is precisely  the stable leaf through $p$, according to Proposition~\ref{p.order}. 
It remains to prove Lemma~\ref{l.realisacao}. 

\subsection{Proof of Lemma~\ref{l.realisacao}}

For any itinerary $\omega\in\cA_X$, we denote by $\Omega_n(\omega)$ the set of points $q\in\Sigma$ so that $[\omega]_n\in \{[\omega_-(q)]_n,[\omega_+(q)]_n\}$.

\begin{lema}\label{l.itinerarios-contantes}  Let $\omega\in \cA_X$ so that there is some $p\in\Sigma$ for which $\omega\in\{\omega_-(p),\omega_+(p)\}$. 
Fix $n\in\NN$ and denote $\Omega_n(\omega)$ the set of points $q\in\Sigma$ so that $[\omega]_n\in [\omega_-(q)]_n,[\omega_+(q)]_n$. 
Then $\Omega_n(\omega)$ is the closure of a connected component of 
$$\Gamma_n\eqdef \Sigma\setminus\bigcup_{i=0}^{n-1} P^{-i}(\gamma^s_+\cup P^{-1}(\gamma^s_+)\cup\gamma^s_-).$$
 
\end{lema}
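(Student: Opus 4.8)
The plan is to prove that the length-$n$ itinerary is a well-defined, locally constant function on $\Gamma_n$, that it separates the connected components of $\Gamma_n$, and that the value $[\omega]_n$ is attained on exactly one component $C_0$; then to check that $\Omega_n(\omega)=\overline{C_0}$ by a careful boundary analysis. First I would record that $\Gamma_n$ is exactly the locus where the first $n$ letters are unambiguous: $p\in\Gamma_n$ means $P^i(p)\notin\gamma^s_+\cup\gamma^s_-\cup P^{-1}(\gamma^s_+)$ for $0\le i\le n-1$, which is precisely the complement of the discontinuities separating the four regions $A_0,A_1,B_0,B_1$. Hence each $P^i(p)$, $0\le i\le n-1$, lies in the interior of one region, so $[\omega_+(q)]_n=[\omega_-(q)]_n=:[\omega(q)]_n$ for every $q\in\Gamma_n$ (even when $P^m(q)\in\gamma^s_\pm$ for some later $m\ge n$, since the first $n$ letters only depend on $q,\dots,P^{n-1}(q)$). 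Since the four regions are open and $P$ is continuous away from $\gamma^s_\pm$, the map $q\mapsto[\omega(q)]_n$ is continuous, hence constant on each connected component of $\Gamma_n$. Note also that $\bigcup_{i<n}P^{-i}(\gamma^s_+\cup\gamma^s_-\cup P^{-1}(\gamma^s_+))$ is a union of leaves of $\cF^s$, so $\Gamma_n$ and all its components are saturated by the stable foliation.

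For existence I would use the hypothesis $\omega\in\{\omega_-(p),\omega_+(p)\}$, say $\omega=\omega_+(p)$. Choosing a positively oriented unstable segment $S$ through $p$ and recalling that only finitely many leaves of the discontinuity set meet $S$, there is $\varepsilon>0$ with $S(t)\in\Gamma_n$ and $[\omega(S(t))]_n=[\omega_+(p)]_n=[\omega]_n$ for every $t\in(0,\varepsilon)$. Let $C_0$ be the connected component of $\Gamma_n$ containing these points; by the previous paragraph its $n$-itinerary is $[\omega]_n$.

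The crux is uniqueness. Here I would invoke Proposition~\ref{p.order}: along any unstable segment whose interior avoids $\gamma^s_+$ the full itinerary is strictly increasing for $\prec$, so its length-$n$ truncation $[\cdot]_n$ is non-decreasing in the unstable direction. Because the itineraries depend only on the stable leaf (Lemma~\ref{l.itinerary}), such a segment meets every leaf exactly once and realizes the cyclic order of $\SS^1_X$ cut at $c_+$; thus the components of $\Gamma_n$ appear along it as an ordered, finite family of intervals. Across any boundary leaf, lying in $P^{-i}(\gamma^s_+)$, $P^{-i}(\gamma^s_-)$ or $P^{-(i+1)}(\gamma^s_+)$ for the smallest relevant $i<n$, the letter $\omega_i$ genuinely changes (the $\Sigma_1/\Sigma_2$ type, hence $A\leftrightarrow B$, in the first two cases, and the subscript $0\leftrightarrow1$ in the third), so $[\cdot]_n$ changes and, being non-decreasing, strictly increases. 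A non-decreasing function that strictly increases across every component boundary is injective on components, so $C_0$ is the \emph{unique} component on which the $n$-itinerary equals $[\omega]_n$. I expect this conversion of strict monotonicity into injectivity, together with the case-by-case check that each boundary type changes one of the first $n$ letters, to be the main obstacle.

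Finally I would identify $\Omega_n(\omega)$ with $\overline{C_0}$. For $\overline{C_0}\subseteq\Omega_n(\omega)$: interior points of $C_0$ belong to $\Omega_n(\omega)$ by construction, and a boundary point $q\in\partial C_0$ is approached from the $C_0$-side along the unstable direction, so the corresponding one-sided itinerary, $\omega_+(q)$ or $\omega_-(q)$ according to which transverse side $C_0$ lies on, has truncation $[\omega]_n$, whence $q\in\Omega_n(\omega)$. Conversely, if $q\in\Omega_n(\omega)$ then either $q\in\Gamma_n$, so $q$ lies in a component whose $n$-itinerary is $[\omega]_n$, which must be $C_0$ by uniqueness, or $q$ lies on a boundary leaf and one of its one-sided itineraries equals $[\omega]_n$, placing $q$ adjacent to a component with $n$-itinerary $[\omega]_n$, again $C_0$; in both cases $q\in\overline{C_0}$. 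Therefore $\Omega_n(\omega)=\overline{C_0}$ is the closure of a single connected component of $\Gamma_n$, which is in particular non-empty, as asserted.
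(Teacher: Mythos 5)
Your proof is correct and follows essentially the same strategy as the paper's: the first $n$ letters are constant on each connected component of the complement of the discontinuity leaves (because the iterates $P^i(S)$, $0\le i<n$, of an unstable segment inside a component stay in a single alphabet region), the one-sided itineraries $\omega_\pm$ extend this value to the two boundary leaves, and distinct components are separated because some letter among the first $n$ must change across a boundary leaf. The only organizational difference is that you deduce this separation from the monotonicity of Proposition~\ref{p.order}, whereas the paper re-runs the underlying mechanism directly (joining a point $q$ outside the component to its interior by an unstable segment contained in a single alphabet region and locating the smallest $i$ for which the segment's $i$-th iterate meets $\gamma^s_+\cup\gamma^s_-\cup P^{-1}(\gamma^s_+)$); both routes rest on the same transversality and monotonicity facts.
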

\begin{proof} First, notice that $\Gamma_n$ consists of the union of finitely many stable leaves. 
Consider an unstable segment $S\colon[0,1]\to\Sigma$ whose interior is disjoint from $\Gamma^n$ and has its endpoints on $\Gamma_n$.  Let $\Omega_n$ be the closure of the connected component of $\Sigma\setminus \Gamma_n$ containing $S((0,1))$.  
Then for any 
$0\leq i <n$, $P^i(S((0,1)))$ is well defined and disjoint from $\gamma^s_\pm$ and of $P^{-1}(\gamma^s_+)$. In other words, $P^i(S((0,1)))$ is contained in one of the regions representing  the alphabet. Thus $[\omega_-(S(t))]_n$ does not depend on $t\in(0,1)$ and is equal to $[\omega_+(S(0))]_n$ and $[\omega_-(S(1))]_n$.  
One deduces that $[\omega_-]_n$ and $[\omega_+]_n$ are equal and constant
on the interior of $\Omega_n$, and $[\omega_-]_n$ takes the same value on one of the boundary stable leaves, and $[\omega_+]_n$ on the other boundary stable leaf. 
This proves that $\Omega_n(\omega)$ is a union of such closures of connected components $\Omega_n$. 

Fix $\Omega_n\subset \Omega_n(\omega)$ and let $q\notin \Omega_n$. 
If $q$ is not in the same region $\{A_0,A_1,B_0,B_1\}$ as $\Omega_n$ then 
$[\omega_\pm(q)]_n$ is not $[\omega]_n$.  Otherwise, there is an unstable segment ( still denoted by $S$) in this region (hence disjoint from $\gamma^s_+\cup\gamma^s_-\cup P^{-1}(\gamma^s_+)$, joining $q$ to a point $p$ in the interior of $\Omega_n$. 

The interior  $\interior(S)$ of segment $S$ crosses the boundary of $\Omega_n$ that is  crossed $\Gamma_n$. 
Let $i$ be the smallest integer so that $\interior(S)\cap P^{-i}(\gamma^s_+\cup\gamma^s_-\cup P^{-1}(\gamma^s_+))$. 

Then $P^{i-1}(S)$ is  contained in the closure of  one of the regions
$\{A_0,A_1,B_0,B_1\}$ but not $P^i(S)$.  This implies that  the two endpoints of $P_i(S)$ are not in the same region $\{A_0,A_1,B_0,B_1\}$.  This 
implies  that $[\omega_+(q)]_i$ and $[\omega_-(q)]_i$ are different from $[\omega]_i$, proving that $q\notin \Omega_n(\omega)$ and  finishes the proof. 
\end{proof}

\begin{proof}[Prof of Lemma~\ref{l.realisacao}.] The proof goes by induction.
 We want to prove that, for every $n\geq 0$ and every $\omega\in\cA_X$,   $\Omega_n(\omega)$ is the closure of one connected component of $\Sigma\setminus \Gamma_n$.
 Let us check if this is true for $n=0$. Each itinerary of length $0$ is one letter of our alphabet, which corresponds to a connected component of $\Sigma\setminus\Gamma_0$, and its closure is the one we announced. 
  We now prove it also holds for $n=1$.  Assume, for instance, that $(\omega)_0=A_0$. Thus $\Omega_0(\omega)=\bar A_0$ and $P(\Omega_0(\omega))$ are a cuspidal triangle starting at $q_1$, and ending at $\gamma^s_+$. 
  By definition of $\cA_X$, one has  
 $\omega_+(\gamma_+^s)\preccurlyeq\omega\preccurlyeq\omega_-(\gamma^s_-).$ 
 As the first letter of $\omega$ is the same as the first letter of $\omega_+(\gamma^s_+) $ one gets 
 $\mathfrak{\omega_+}(\gamma^s_+)=\omega_+(q_1)\preccurlyeq\mathfrak{S}(\omega).$
 In particular, $(\omega)_1=(\mathfrak{S}(\omega))_0$  either is strictly bigger  than or is equal to $(\omega_+(q_1))_0$.  
 In both cases, $P(\Omega_0(\omega))$ intersects $\Omega_0(\mathfrak{S}(\omega))$, proving that $\Omega_2(\omega)$ is not empty. Now Lemma~\ref{l.itinerarios-contantes} asserts that it is the closure of a connecting component of $\Sigma\setminus \Gamma_2$ proving the induction hypothesis in that case. 
 The case $(\omega)_0=A_1,B_0,B_1$ is very similar. 
 
\noindent  We assume that the induction hypotheses have been proved for $i=0\dots n$.  
Consider $\omega\in\cA_X$. The induction hypothesis, $\Omega_n(\omega)$, is the closure of one connected component of $\Sigma\setminus \Gamma_n$.
 We split the proof into cases. 
 
 \noindent\underline{Case 1:} $\gamma^s_+$ and $\gamma^s_-$ are not contained in the compact set 
 $\Omega_n(\omega)$. 
 
 Then $P$ is defined on $\Omega_n(\omega)$, and the boundary
 $\partial P(\Omega_n(\omega))$ is contained in $\Gamma_{n-1}$. 
 This implies that $P(\Omega_n(\omega))$ crosses every stable leaf in the closure of a connected component of $\Sigma\setminus \Gamma_{n-1}$, which has to be $\Omega_{n-1}(\mathfrak{S}(\omega))$ (by Lemma~\ref{l.itinerarios-contantes}).
 By the induction hypothesis $\Omega_n(\mathfrak{S}(\omega))$ is a connected component of $\Sigma\setminus \Gamma_n$ and is contained in $\Omega_{n-1}(\mathfrak{S}(\omega))$. 
 This implies that $P(\Omega_n(\omega))$ intersects $\Omega_{n-1}(\mathfrak{S}(\omega))$.  Thus 
 $\Omega_{n+1}(\omega)$ is not empty and therefore is a connected component of $\Sigma\setminus \Gamma_{n+1}$ by Lemma~\ref{l.itinerarios-contantes}.
 
 \noindent\underline{Case 2:} We now assume that $1$ of the boundary components of  $\Omega_n(\omega)$ is $\gamma^s_+$ and the other is not $\gamma^s_-$. Up to reverse the orientation, we assume that the positively oriented unstable segments starting at $\gamma^s_+$ enter in $\Omega_n(\omega)$. 
 Then $P(\Omega_n(\omega))$ is a cuspidal triangle starting at $q_1$ and ending on a stable leaf in $\Gamma_{n-1}$. 
  Now $\Omega_{n-1}(\mathfrak{S}(\omega))$ is (induction hypothesis) the closure of a connected component in $\Sigma\setminus \Gamma_{n-1}$, which contains $P(\Omega_n(\omega))$ and thus contains $q_1$. 
 Now $\Omega_{n}(\mathfrak{S}(\omega))$ is (induction hypothesis) the closure of a connected component in $\Sigma\setminus \Gamma_{n}$. 
 
 \begin{clai}$P(\Omega_n(\omega))\cap \Omega_{n}(\mathfrak{S}(\omega) \neq \emptyset$
  
 \end{clai}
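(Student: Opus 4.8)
The plan is to exploit the combinatorial meaning of admissibility together with the geometric fact that $P$ preserves the orientation of the unstable cone field $\cC^u$ (item 9). First I would pin down the geometry of the cuspidal triangle $P(\Omega_n(\omega))$. By hypothesis $\Omega_n(\omega)$ is the closure of a component of $\Sigma\setminus\Gamma_n$ having $\gamma^s_+$ as one boundary leaf, entered by the positively oriented unstable segments issued from $\gamma^s_+$; in particular its first itinerary letter is $A_0$ and $[\omega]_n=[\omega^+_+]_n$, where $\omega^+_+=\omega_+(\gamma^s_+)=A_0\star\omega_+(q_1)$. Since the boundary leaf $\gamma^s_+$ collapses to the cusp $q_1$ under $P$ and $P$ preserves the transverse orientation, the triangle $P(\Omega_n(\omega))$ emanates from $q_1$ in the positive unstable direction and ends on its far boundary leaf $\ell\subset\Gamma_{n-1}$, crossing transversally every stable leaf strictly between the leaf of $q_1$ and $\ell$.

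Next I would locate $\Omega_n(\mathfrak{S}(\omega))$ relative to $q_1$ using the itinerary order. The admissibility of $\omega$ (the case $k=0$ of the first inequality) gives $\omega^+_+\preccurlyeq\omega$, and since $[\omega^+_+]_n=[\omega]_n$ this inequality is decided at a position $\geq n$; shifting once, the first $n-1$ letters still agree and the inequality survives, so $\omega_+(q_1)=\mathfrak{S}(\omega^+_+)\preccurlyeq\mathfrak{S}(\omega)$. Thus $\Omega_n(\mathfrak{S}(\omega))$ lies on the positive side of $q_1$, or else contains $q_1$ when $[\omega_+(q_1)]_n=[\mathfrak{S}(\omega)]_n$. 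In this latter degenerate case the cusp $q_1\in P(\Omega_n(\omega))$ already belongs to $\Omega_n(\mathfrak{S}(\omega))$ and the claim holds immediately.

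Finally, assuming $[\omega_+(q_1)]_n\prec[\mathfrak{S}(\omega)]_n$, I would combine the two previous steps. Since $\Omega_n(\mathfrak{S}(\omega))\subseteq\Omega_{n-1}(\mathfrak{S}(\omega))$ and $\ell$ is the positive boundary leaf of $\Omega_{n-1}(\mathfrak{S}(\omega))$ — it is the far edge of the triangle, positively beyond $q_1$, lying in $\Gamma_{n-1}\cap\partial\Omega_{n-1}(\mathfrak{S}(\omega))$ — the whole rectangle $\Omega_n(\mathfrak{S}(\omega))$ sits in the interval of stable leaves between $q_1$ and $\ell$. As the triangle crosses every stable leaf in that interval, it crosses those of $\Omega_n(\mathfrak{S}(\omega))$, whence $P(\Omega_n(\omega))\cap\Omega_n(\mathfrak{S}(\omega))\neq\emptyset$. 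The main obstacle I anticipate is purely orientational bookkeeping: one must be sure that the ``positive side of $q_1$'' for the lexicographic order $\prec$ genuinely matches the direction in which the triangle extends. This is exactly what the orientation preservation of $DP$ on $\cC^u$ guarantees, and once that correspondence between $\prec$ and the positive unstable direction is invoked (precisely as in Proposition~\ref{p.order}), the conclusion follows.
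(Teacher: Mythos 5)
Your proof is correct and follows essentially the same route as the paper's: both derive $\omega_+(q_1)=\mathfrak{S}(\omega^+_+)\preccurlyeq\mathfrak{S}(\omega)$ from admissibility together with the shared first letter $A_0$, and then use the monotonicity of itineraries along a positively oriented unstable segment through $q_1$ (Proposition~\ref{p.order}) to place $\Omega_n(\mathfrak{S}(\omega))$ on the non-negative side of the cusp, inside the swath of stable leaves of $\Omega_{n-1}(\mathfrak{S}(\omega))$ swept by the cuspidal triangle $P(\Omega_n(\omega))$. Your case split (the component containing $q_1$ versus the component strictly beyond it) is exactly the paper's alternative that $S^{-1}(\Omega_n(\mathfrak{S}(\omega)))$ either coincides with $S^{-1}(\Omega_n(\omega_+(q_1)))$ or lies strictly past $t_0$.
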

 \begin{proof}
 Note that the first letter of $\omega$ is $A_0$.  As $\omega\in\cA_X$ on has $\omega_+(\gamma^s_+)\preccurlyeq\omega$.  As their first letters are equal, this implies 
 $\mathfrak{\omega}_+(\gamma^s_+)=\omega_+(q_1)\preccurlyeq\mathfrak{S}(\omega).$
 Consider a positively oriented unstable segment $S\colon[0,1]\to \Sigma$ crossing every stable leaf in $\Omega_{n-1}(\mathfrak{S}(\omega))$ and containing $q_1=S(t_0)$. 
 Then $S$ is crossing $\Omega_n(\mathfrak{S}(\omega))$
 at point $S(t)$. 
  Recall that $\omega_+(q_1)\preccurlyeq\mathfrak{S}(\omega)$.  
 Recall that the function $[\omega_+(S(t))]n$ is non-decreasing with $t$, so that $S^{-1}(\Omega_n(\mathfrak{S}(\omega)))\subset [0,1]$
 is not inferior to $S^{-1}(\Omega_n(\omega_+(q_1))$. 
 As a consequence, $S^{-1}(\Omega_n(\mathfrak{S}(\omega)))\subset [0,1]$ either coincides with $S^{-1}(\Omega_n(\omega_+(q_1))$ or is strictly larger than $t_0$. 
 In both cases, $P(\Omega_n(\omega))$ intersects $\Omega_{n}(\mathfrak{S}(\omega)) $, concluding the proof of the Claim. 
\end{proof}
 
 This implies that $\Omega_{n+1}(\omega)$ is not empty, and Lemma~\ref{l.itinerarios-contantes} concludes that case.
 
 \noindent\underline{Case 3:} We now assume that $1$ of the boundary components of  $\Omega_n(\omega)$ is $\gamma^s_-$ and the other is not $\gamma^s_+$.
 
  Up to reverse the orientation, we assume that the positively oriented unstable segments starting at $\gamma^s_+$ enter in $\Omega_n(\omega)$. 
 Then $P(\Omega_n(\omega))$ is a cuspidal triangle starting at $q_2$ and ending on a stable leaf in $\Gamma_{n-1}$. 
 Note that the first letter of $\omega$ is $B_0$.  As $\omega\in\cA_X$ on has $\omega_+(\gamma^s_-)\preccurlyeq\omega$.  As their first letters are equal, this implies 
 $$\mathfrak{S}(\omega)\preccurlyeq\mathfrak{S}\omega_+(\gamma^s_+)=\omega_+(q_2).$$
 The proof follows now in a similar way to Case 2.  
 
 \noindent\underline{Case 4:} Finally, we assume that both boundary components of  $\Omega_n(\omega)$ are $\gamma^s_-$ and  $\gamma^s_+$.
 This implies that $\Omega_n(\omega)$ is the closure of $\Sigma_1$ or of $\Sigma_2$, and thus $P(\Sigma)$ crosses  $\Omega_n(\mathfrak{S}(\omega))$ concluding. 
 Now the proof of Lemma~\ref{l.realisacao} (and therefore of Proposition~\ref{p.realisacao}) is complete. 
\end{proof}

\subsection{Itineraries, conjugacy, and topological equivalence}

\begin{teo}\label{L.T} Consider $X,Y\in \cO_1$ and let $f_X,f_Y$ be the corresponding $1$-dimensional dynamics. 
Assume that the itineraries   
$\omega_-(\gamma^s_-,X)=\omega_-(\gamma^s_-,Y)$ and $\omega_-(\gamma^s_+,X)=\omega_-(\gamma^s_+,Y)$. 
Then there is an orientation preserving map of $\SS^1$, which is a conjugation between $f_X$ and $f_Y$. 
\end{teo}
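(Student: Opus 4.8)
The plan is to show that the two prescribed down-itineraries pin down the entire symbolic picture, and then to realize the conjugacy as the unique orientation-preserving bijection matching itineraries. First I would record that the admissible set $\cA_X$ is completely determined by the pair $\bigl(\omega_-(\gamma^s_+,X),\omega_-(\gamma^s_-,X)\bigr)=(\omega^+_-,\omega^-_-)$. Indeed, the defining inequalities of $\cA_X$ involve only the four boundary itineraries $\omega^+_+,\omega^+_-,\omega^-_+,\omega^-_-$, and these four are determined by $\{\omega^-_-,\omega^+_-\}$: off the homoclinic locus via the relations \eqref{e-itinerario}, and, as explained right after that display, also on it. Since by hypothesis $\omega^+_-$ and $\omega^-_-$ agree for $X$ and $Y$, all four boundary itineraries agree, and therefore $\cA_X=\cA_Y=:\cA$.

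Next I would isolate the co-countable dense set on which the coding is a genuine bijection. I set $\cA^0:=\{\omega\in\cA:\ \mathfrak{S}^k(\omega)\notin\{\omega^+_+,\omega^+_-,\omega^-_+,\omega^-_-\}\text{ for all }k\ge 0\}$; as $\cA$ and the four boundary itineraries are the same for $X$ and $Y$, so is $\cA^0$. Using the shift relation $\omega_\pm(P(x))=\mathfrak{S}(\omega_\pm(x))$, the fact that $\omega_-$ is strictly monotone hence injective on $\SS^1_X$ (Proposition~\ref{p.order}), and that a leaf $\bar p$ lies in $W^s(\sigma)$ iff $\omega_-(\bar p)\prec\omega_+(\bar p)$ iff some forward image equals $c_+$ or $c_-$ (whose itineraries are exactly the boundary ones), I would show $\cA^0=\{\omega(\bar p):\bar p\in\SS^1_X\setminus W^s(\sigma)\}$, where $\omega(\bar p):=\omega_-(\bar p)=\omega_+(\bar p)$ on this set. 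Since $W^s(\sigma)=\bigcup_k f_X^{-k}\{c_+,c_-\}$ is countable, $\SS^1_X\setminus W^s(\sigma)$ is dense, and by Propositions~\ref{p.order}, \ref{p.realisacao} and Remark~\ref{r.realisacao} the map $\bar p\mapsto\omega(\bar p)$ is an order-preserving bijection of this dense set onto $\cA^0$; the same holds for $Y$.

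Then I would define $h_0:\SS^1_X\setminus W^s(\sigma,X)\to\SS^1_Y\setminus W^s(\sigma,Y)$ by matching itineraries, letting $h_0(\bar p)$ be the unique single-itinerary leaf of $Y$ with $\omega(h_0(\bar p))=\omega(\bar p)$. By the previous step $h_0$ is a well-defined bijection, and it is orientation preserving since the coding is strictly $\prec$-monotone on both sides. It intertwines the one-dimensional maps: $\omega(f_X\bar p)=\mathfrak{S}\,\omega(\bar p)=\mathfrak{S}\,\omega(h_0\bar p)=\omega(f_Y h_0\bar p)$, and because $f_X\bar p$ and $f_Y h_0\bar p$ are again single-itinerary points where the coding is injective, this yields $h_0\circ f_X=f_Y\circ h_0$ on the dense set. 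Finally I would extend $h_0$: being an orientation-preserving bijection between dense subsets of the two circles, it extends uniquely to an orientation-preserving homeomorphism $h:\SS^1_X\to\SS^1_Y$ by one-sided limits, necessarily carrying $c_+,c_-$ of $X$ to those of $Y$ (their split itineraries are the matching boundary itineraries, so the discontinuity sets correspond); the relation $h\circ f_X=f_Y\circ h$ then passes to the closure on $\SS^1_X\setminus\{c_+,c_-\}$ by continuity.

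The main obstacle, and the only genuinely delicate point, is the second step: proving that ``being realized by a single-itinerary (equivalently, non-$W^s(\sigma)$) leaf'' is an intrinsic combinatorial property $\cA^0$ of the admissible set together with the boundary itineraries, so that $X$ and $Y$ share the same dense coded set \emph{and} the same pattern of gaps (the $W^s(\sigma)$-leaves, which must correspond in order for the extension to remain injective). Everything else is monotonicity bookkeeping together with the standard order-completion argument for extending a monotone dense bijection to a circle homeomorphism.
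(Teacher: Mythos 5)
Your proposal is correct and follows essentially the same route as the paper: both arguments rest on the facts that $\cA_X=\cA_Y$ is determined by the two prescribed boundary itineraries, that every admissible itinerary is realized by a unique stable leaf (Proposition~\ref{p.realisacao} and Remark~\ref{r.realisacao}), and that the coding is order-preserving (Proposition~\ref{p.order}), so that matching itineraries yields a monotone bijection of punctured circles, hence a homeomorphism, which conjugates $f_X$ and $f_Y$ by shift-equivariance. The only organizational difference is that the paper defines $h$ on every leaf at once via the realization statement, whereas you first build it on the dense set of single-itinerary leaves and then extend by monotone completion; this is a harmless variant which incidentally makes explicit the well-definedness of $h$ at the $W^s(\sigma)$-leaves, a point the paper leaves implicit.
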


\begin{proof} We have seen that the sets $\cA_X$ and $\cA_Y$ of admissible itineraries for $X$ and $Y$ are wholly determined by $\omega_-(\gamma^s_\pm, X)$ and $\omega_-(\gamma^s_\pm, Y)$, respectively.  Thus 
$\cA_X=\cA_Y\eqdef\cA.$

Now for any $\omega\in\cA$, Proposition~\ref{l.realisacao} and Remark~\ref{r.realisacao} imply that there are a unique point $x_\omega\in \SS^1_X$ and $y_\omega\in\SS^1_Y$ so that $\omega\in \{\omega_-(x_\omega,f_X),\omega_+(x_\omega,f_X)\}$ and $\omega\in \{\omega_-(y_\omega,f_Y),\omega_+(y_\omega,f_Y)\}$. 

We define $h(x_\omega)=y_\omega$.  This defines a bijection from $\SS^1_X$ to $\SS^1_Y$, which sends $\gamma^s_{i,X}$ on $\gamma^s_{i,Y}$.
The punctured circle is an interval endowed with an order (from the positive orientation of the unstable segments), and Proposition~\ref{p.order} implies that $\omega\mapsto x_\omega$ and $\omega\mapsto y_\omega$ are  increasing.
This implies that $h\colon x_\omega\mapsto y_\omega$ is an increasing bijection from $\SS^1_X\setminus\{\gamma^s_{+,X}\}$ onto $\SS^1_Y\setminus\{\gamma^s_{+,Y}\}$.  An increasing bijection between intervals is a homeomorphism and so $h$ is a homeomorphism. 
The fact that $h$ is a conjugacy now comes from the fact that $x_{\mathfrak{S}(\omega)}= f_X(x)$ for $x\notin \{\gamma^s_{\pm,X}\}$.
\end{proof}

{Recall that the discontinuities are fixed points for the conjugacy $h$ constructed above.}
We finish proving Theorem \ref{igual-a-t.conjugado}, which establishes that 
 the restriction to the maximal invariant set of $X,\, Y \in \mathcal{O}_1$ is topologically equivalent by a conjugacy close to identity if, and only if, $X$ and $Y$ have the same itineraries.
 
 We are ready to present a proof of Theorem \ref{igual-a-t.conjugado}:
 
\begin{proof} ({\em{of Theorem \ref{igual-a-t.conjugado}}})
{($\Rightarrow$) A conjugation $\mathbb{H}$ between $X|_{\Lambda_X}$ and $Y|_{\Lambda_Y}$ induces a topological conjugation $h:\SS^1 \to \SS^1$ between $f_X:\SS^1 \to \SS^1$ and $f_Y:\SS^1 \to \SS^1$. 
Let  $\varepsilon=\frac{1}{2}\min\{d(D_1,\Sigma;d(D_2,\Sigma))\}$ (see Section \ref{ss-the-open-set-1}
 for the definitions of $D_1$ and $D_2$). If $d(\mathbb{H},id)<\varepsilon$, $h$ is orientation preserving and $h(c_{i,X})=c_{i,Y}$ for $i \in \{-,+\}$, and therefore $X$ and $Y$ have the identical  itineraries.\\
($\Leftarrow$ ) Fix $p \in \Sigma \cap \Lambda_X$.  Because $X$ and $Y$ have the same itineraries, Lemma \ref{l.realisacao} together with Theorem \ref{L.T}  ensures the existence of $q \in \Sigma \cap \Lambda_Y$ such that for all $n \in \mathbb{N}$, points in $\gamma^s_{X}(P_X^{-n}(p))$ and  $\gamma^s_{Y}(P_Y^{-n}(q)$ have the same itineraries. Thus $C_n=\{P_Y^{n}(\gamma^s_{Y}(P_Y^{-n}(q))\}_{n \in \mathbb{N}}$ is a sequence of compact sets in $\gamma^s_Y(q)$ converging to a single point, and we define $H(p)=\underset{n \in \mathbb{N}}{\cap} C_n$. Lema \ref{lemma.ad} implies that $H$ is onto. For $p_1,p_2 \in \Sigma$ such that $\gamma^s_X(p_1) \neq \gamma^s_X(p_2)$, it is easy to see that $H(p_1)\neq H(p_2)$. For $p_1$ and $p_2$ in the same leave, there exists $n_1 \in \mathbb{N}$ for which $P^{-n_1}(p_1)$ and $P^{-n_1}(p_2)$ belongs to different connected component of $\Sigma \setminus \{\gamma^s_{-,X},\gamma^s_{+,X}\}$, which implies that $\gamma^s_{X}(P_X^{-n_1}(p_1)) \cap \gamma^s_X(P_X^{-n_1})(p_2)) = \emptyset$ and hence $H_n(p_1) \cap H_n(p_2) \neq \emptyset$ for all $n>n_1$, providing $H$ injectivity. The existence of unstable cone fields around $\Sigma$ and the continuity of $h$ and $h^{-1}$ give the continuity of $H$ and $H^{-1}$. To finish, for $p \in \Sigma \cap \Lambda_X$, consider $\alpha \subset \mathcal{O}(p)$ and $\beta \subset \mathcal{O}(H(p))$, curves parametrized by the arc length, joining $p$ to $D^1 \cup D^2$ and $H(p)$ to $D^1 \cup D^2$ respectively, in a way that $\alpha(t_1),\beta(t_2) \notin D^1 \cup D^2$ for all $0<t_1<\ell(\alpha)$ and $0<t_2<\ell(\beta)$). For $\rho$ being the ratio of the length of $\alpha$ to the length of $\beta$, we define $\mathbb{H}(t)=\beta(\rho t)$. Extend this map to segments of trajectories leaving $D^1 \cup D^2$ and  returning to $\Sigma$ in the same way as before. $\mathbb{H}$ defines a topological equivalence. Note that $\mathbb{H}(D_i)=D_i$ and $\mathbb{H}(\Sigma)=\Sigma$, which implies that $d(\mathbb{H},id)<\varepsilon$ and we have the result.}
	
\end{proof}

\noindent Diego Barros and Maria Jos\'e Pacifico\\
Instituto de Matem\'atica,
Universidade Federal do Rio de Janeiro \\
Rio de Janeiro, Brazil\\
(e-mail: $[D. B.]$ {\em diegosbarros@macae.ufrj.br} and
$[M.J.P.]$ {\em pacifico@im.ufrj.br})

\vspace{0.3cm}
\noindent Christian Bonatti\\
Institut de Math\'ematiques de Bourgogne, CNRS, Universit\'e de Bourgogne\\
 Dijon, France\\ 
(e-mail: bonatti@u-bourgogne.fr)


\begin{thebibliography}{16}



\bibitem{ABS82} {Afraimovich, V. S.; Bykov, V. V.; Shil'nikov, L. P. }, {On attracting structurally unstable limit sets of Lorenz attractor type. } {(Russian) Trudy Moskov. Mat. Obshch. } 44, 150-212, 1982.


\bibitem{ABS77} {Afraimovich, V. S.; Bykov, V. V.; Shil'nikov, L. P. }, {On the origin and structure of the Lorenz attractor.}{Akademiia Nauk SSSR Doclady} Vol. 234, pp. 336-339, 1977.

\bibitem{Afra4}
{Afraimovich V.S., Pesin Y.}
 {The Dimension of {L}orenz type attractors.}
 { In Sov. Math.Phys. Rev.}, 6:169--241, 1983.
Gordon and Breach Harwood Acad. Publ., 1987.

\bibitem{ArPa10}{Ara{\'u}jo, V., Pacifico, M. J. },
{ Three-dimensional flows}, volume 53, 2010, editora Springer Science \& Business Media.


\bibitem{Ar21} {Ara{\'u}jo, V.}, {On the Statistical Stability of Families of Attracting Sets and the Contracting Lorenz Attractor}, Journal of Statistical Physics 182:53, 2021.

\bibitem{AM16} { Ara{\'u}jo, V., Melbourne I.}, {Exponential Decay of Correlations for Nonuniformly Hyperbolic Flows with a $C^{1+\alpha}$ Stable Foliation, Including the Classical Lorenz Attractor},
Ann. Henri Poincar\'e 17, 2975?3004, 2016.

\bibitem{AMV15} { Ara{\'u}jo, V., Melbourne I., Varandas P.}, {Rapid Mixing for the Lorenz Attractor and Statistical Limit Laws for Their Time-1 Maps}, Commun. Math. Phys. 340, 901-938, 2015.

\bibitem{APPV09}{Ara{\'u}jo, V., Pacifico, M. J., Pujals, P., Viana, M. }, {Singular-hyperbolic attractors are chaotic},
{Trans. Amer. Math. Soc.}, volume { 361},  pages {2431--2485}, {2009}.

\bibitem{AP11} {Ara{\'u}jo, V., Pacifico, M. J.}, {Lorenz-Like Chaotic Attractors Revised},
Dynamics, games and science. I, 61-79, Springer Proc. Math., 1, Springer, Heidelberg, 2011.

\bibitem{AGP14} {Ara{\'u}jo, V., Galatolo S., Pacifico, M. J.}, {Statistical Properties of Lorenz-like Flows, Recent Developments and Perspectives},
International Journal of Bifurcation and Chaos. 24, 2014


\bibitem{ararujomelbourne}{Ara{\'u}jo, V., Melbourne, I.},{ Existence and smoothness of the stable foliation for sectional hyperbolic attractors}, {Bulletin of the London Mathematical Society},volume 49, pages 351-367, 2017.

\bibitem{AV12} {Ara{\'u}jo, V., Varandas P.},
{Robust Exponential Decay of Correlations for Singular-Flows},
Commun. Math. Phys. 311, 215-246, 2012.


\bibitem{Bu79} {Bumimovich L., Sinai Y.,} {Stochasticity of the attractor in the Lorenz model}, {Nonlinear Waves, Proc. Winter School, Gorki 1977} (Moscow: Nauka) pp 212-26, 1979.

\bibitem{Co}{Conley, C.}, {Some aspects of the qualitative theory of differential equations. Dynamical systems} (Proc. Internat. Sympos., Brown Univ., Providence, R.I., 1974), Vol. I, pp. 1012. Academic Press, New York, 1976.

 \bibitem{Lorenzo95}{D\'{\i}az, L.,} {Robust nonhyperbolic dynamics and heterodimensional cycles.} {Ergodic Theory Dynam. Systems} 15, 291?315, 1995.

\bibitem{SP10} {Galatolo, S., Pacifico, M.J.,} {Lorenz-like flows: exponential decay of correlations for the Poincar\'e map, logarithm law, quantitative recurrence}, Ergodic Theory and Dynamical Systems, 30, 1703-1737, 2010.

\bibitem{GN}
{Galatolo S., Nisoli I.},
{Rigorous computation of invariant measures and fractal dimension for
  maps with contracting fibers: 2D Lorenz-like maps}.
 { Ergodic Theory and Dynamical Systems}, 36(6):1865--1891, 2016.

\bibitem{GPN18} {Galatolo, S., Nisoli, I., Pacifico, M.J.}, 
{Decay of correlations, quantitative recurrence and logarithm law for contracting Lorenz attractors.} {J. Stat. Phys.} 170(5), 862-882, 2018.

\bibitem{GKKK22} {Gonchenko, S., Karatetskaia, E., Kazakov, A., Kruglov, V.},
{Conjoined Lorenz twins-a new pseudohyperbolic attractor in three-dimensional maps and flows.}
{Chaos: An Interdisciplinary Journal of Nonlinear Science 32, 12, Paper N. 121107, 13 pp, 2022.}


\bibitem{GTS09} {Gonchenko S. V., L. P. Shilnikov L. P., and Turaev D.}, 
{On Global Bifurcations in Three-Dimensional Diffeomorphisms Leading to Wild Lorenz-Like Attractors}, Regular and Chaotic Dynamics, 2009, Vol. 14, No. 1, pp. 137-147, Pleiades Publishing, Ltd., 2009.

\bibitem{Gu76}
{Guckenheimer, J.}, {A strange, strange attractor,} The Hopf bifurcation and its applications. Springer New York, 368-381, 1976.

\bibitem{GH83}  {Guckenheimer, J., Holmes, P.}, {Nonlinear oscillations, dynamical systems, and bifurcations of vector fields.} In: Applied Mathematical Sciences, vol. 42. Springer, New York, 1983.

\bibitem{GW}{Guckenheimer, J., Williams, R. F.},{ Structural stability of Lorenz attractors},{ Publications Math{\'e}matiques de l'Institut des Hautes {\'E}tudes Scientifiques},	volume 50, pages 59-72, 1979.

\bibitem{Hartman}{Hartman, P.},{ Ordinary differential equations}, Classics in Applied Mathematics, vol. 38, Society for Industrial and Applied Mathematics (SIAM), Philadelphia, PA, 2002, Corrected reprint of the second (1982) edition.

\bibitem{HHTZ94}
{Hassard B., Hastings S., Troy W.,  Zhang J.},
{A computer proof that the Lorenz equations have {``}chaotic{''}
  solutions},
  {Appl. Math. Lett.}, {7}:{79--83}, {1994}.

\bibitem{HT92}
{Hastings S.,  Troy W.,}
 {A shooting approach to the Lorenz equations}.
Bull. Amer. Math. Soc., {27}:{298--303}, {1992}.


\bibitem{HPS} {Hirsch, M. W., Pugh, C. C., Shub, M.}, { Invariant manifolds}. Lecture Notes in Mathematics, Vol. 583. Springer-Verlag, Berlin-New York, 1977


\bibitem{K21} {Kazakov, A.}, {On bifurcations of Lorenz attractors in the Lyubimov?Zaks model,}
Chaos: An Interdisciplinary Journal of Nonlinear Science, 31(9), p.093118, 2021.


\bibitem{LTu20} {Li D., Turaev D. }, {Persistent heterodimensional cycles
in periodic perturbations of Lorenz-like attractors}, Nonlinearity 33, 971-1015, 2020.


\bibitem{Lo63}{Lorenz, E. N.},{ Deterministic nonperiodic flow},{ Journal of the atmospheric sciences},
volume 20, pages 130-141, 1963.

 \bibitem{LMP05} {Luzzatto S., Melbourne I.,  Paccaut F.}, {The Lorenz Attractor is Mixing}, 
 Commun. Math. Phys. 260, 393-401, 2005.


\bibitem{LZ83}  {Lyubimov, D.V. and Zaks, M.A.},  Two mechanisms of the transition to chaos in finite-dimensional models of convection. Physica D: Nonlinear Phenomena, 9(1-2), pp.52-64, 1983.?

\bibitem{Ma88}{Ma{\~n}{\'e}, R.}, {A proof of the $C^1$ stability conjecture},{ Publications Math{\'e}matiques de l'Institut des Hautes {\'E}tudes Scientifiques}, volume 66, 161-210, 1987.




\bibitem{MM95}
{Mischaikow K., Mrozek M.},
{Chaos in the Lorenz equations: a computer assisted proof (I)}.
{\em {Bull. Amer. Math. Soc.}}, {32}:{66--72}, {1995}.

\bibitem{MM98}
{Mischaikow K.,  Mrozek M.},
 {Chaos in the Lorenz equations: a computer assisted proof (II)}.
 {\em {Math. Comp.}}, {67}:{1023--1046}, {1998}.

\bibitem{ML}
{Mora L.,  Lizana C.}
 {Lower Bounds for the {H}ausdorff dimension of the Geometric {L}orenz
  attractor the homoclinic case}.
 { Discrete and Continuous Dynamical Systems}, 22(3):699--709,
  November 2008.


\bibitem{MPS05}{Morales, C.A., Pacifico, M.J., San Martin, B.,} {Expanding Lorenz attractors through resonant double homoclinic loops. }
SIAM J. Math. Anal. 36(6), 1836-1861, 2005.


\bibitem{MPS06}{ Morales, C.A., Pacifico, M.J., San Martin, B.}, {Contracting Lorenz attractors through resonant double homoclinic loops.} SIAM J. Math. Anal. 38(1), 309-332, 2006.




\bibitem{MPP04}{Morales, C. A. ; Pacifico, M. J., Pujals, E.},{ Robust transitive singular sets for 3-flows are partially hyperbolic attractors or repellers},{ Annals of mathematics},
pages 375-432, 2004.



\bibitem{MPR20}{Moreira C.G., Pacifico, M.J., Roma\~na Ibarra, S.},
{ Hausdorff dimension, Lagrange and Markov dynamical spectra for geometric Lorenz attractors. }{Bull. Amer. Math. Soc. (N.S.)} 57, 269?292, 2020.

\bibitem{OT17}  {Ovsyannikov, I.I., Turaev, D.,}  {Analytic proof of the existence of the Lorenz attractor in the extended Lorenz model}, Nonlinearity 30, 115-137, 2017.

\bibitem{Pa84} {Pacifico, M. J.}, {Structural stability of vector fields on 3-manifolds with boundary}. J. Differential Equations 54, no. 3, 346-372, 1984. 


\bibitem{PT10}{Pacifico, M.J., Todd, M.},{Thermodynamic formalism for contracting Lorenz flows.}J.Stat.Phys.139 (1),159-176, 2010.

\bibitem{PYY} {Pacifico M. J., Yang F., Yang J.}, {Equilibrium states for sectional hyperbolic attractors},



\bibitem{Ry90}{Rychlik, M. R.}, {Lorenz attractors through Shilnikov-type bifurcation.} Ergodic Theory Dynam. Systems 10, 793-821, 1990.

\bibitem{Rob89} {Robinson, C.}, {Homoclinic bifurcation to a transitive attractor of Lorenz type.} Nonlinearity 2, 495-518, 1989.


\bibitem{Rob92} {Robinson C.}, {Homoclinic bifurcation to a transitive attractor of Lorenz type, II,}
{SIAM J. Math. Anal.}, 23, pp. 1255-1268, 1992.

\bibitem{Rob84} {Robinson C.}, {Transitivity and invariant measures for the geometric model of the Lorenz equations}, {Ergod. Theor. Dyn. Syst}, 4,  605-11, 1984. 

\bibitem{Rob00} {Robinson, C.},
{Nonsymmetric Lorenz attractors from a homoclinic bifurcation,} 
{SIAM J. Math. Anal.} 32, 119-141, 2000.


\bibitem{Sh81} {Shilnikov L P}, {Bifurcation theory and quasihyperbolic attractors} Usp. Mat. Nauk 36 240-1, 1981.

\bibitem{Smania18} {Smania, D., Vidarte, J.}, {Existence of Ck-invariant foliations for Lorenz-type maps,} {J. Dynam. Differential Equations}, 30, 227-255, 2018.


\bibitem{Si81}{Sinai Y.}, {Hyperbolic conditions for the Lorenz model},  {Physica D}, 2 , 3-7, 1981.

\bibitem{Sm67}{Smale, S.},{ Differentiable dynamical systems,}{ Bulletin of the American mathematical Society}, volume 73, pages 747-817, 1967.


\bibitem{Sparrow}{Sparrow, C.} {The Lorenz Equations: Bifurcations, Chaos and Strange Attractors}{Spring-Verlag New York Inc, 1982}.


\bibitem{St00}
{Steingenber T.}
{Hausdorff {D}imension of {A}ttractor for {T}wo {D}imensional
  {L}orenz {T}ransformations}.
 { Israel Journal of Mathematics}, 116:253--269, 2000.

\bibitem{Tu00}{Tucker, W.},{ The Lorenz attractor exists},{ Comptes Rendus de l'Acad{\'e}mie des Sciences-Series I-Mathematics},
volume 328, pages 1197-1202, 1999.

\bibitem{Tu2}
W.~Tucker.
 {A rigorous ODE solver and Smale{'}s 14th problem.}
 { {Found. Comput. Math.}}, {2}({1}):{53--117}, {2002}.

\bibitem{Vi00} {Viana, M.}, {What's new on Lorenz strange attractors?}, {Math. Intell.} 22(3), 6-19, 2000.


\bibitem{Divakar}
{Viswanath D.},
 {The fractal property of the {L}orenz attractor}.
 {El {S}ivier- {P}hysica}, 190:115--128, 2004.

\bibitem{W67} {Williams R, } {One-dimensional nonwandering sets}, Topology 6 473-87, 1967.

\bibitem{W74} { Williams R, } {Expanding attractors}, Publ. Math. IHES 43 169-203, 1974. 

\bibitem{Williams}{Williams, R.},{ The structure of Lorenz attractors},{ Publications Math{\'e}matiques de l'Institut des Hautes {\'E}tudes Scientifiques},
volume 50, pages 73-99, 1979.



\end{thebibliography}
\end{document}